\documentclass[11pt,reqno]{amsart}
\usepackage{amsmath, amsthm, amsopn, amssymb, microtype, bm}
\usepackage{mathrsfs} 
\usepackage{mathscinet}
\usepackage{bbm}
\usepackage{enumerate,tikz, etoolbox, intcalc, geometry, caption, subcaption, afterpage}
\usepackage[english]{babel}
\usepackage{enumitem}
\usepackage{graphs} 
\usepackage{MnSymbol}

\usepackage[colorlinks=true,urlcolor=blue,pdfborder={0 0 0}]{hyperref}
\hypersetup{linkcolor=[rgb]{0,0,0.6}}
\hypersetup{citecolor=[rgb]{0,0.6,0}}
\usepackage[nameinlink]{cleveref}

\theoremstyle{definition}
\newtheorem{definition}{Definition}[section]
\theoremstyle{remark}
\newtheorem{example}[definition]{Example}

\newtheorem{remark}[definition]{Remark}

\theoremstyle{plain}
\newtheorem{thm}[definition]{Theorem}
\newtheorem{theorem}[definition]{Theorem}
\newtheorem*{theorem*}{Theorem}
\newtheorem{prop}[definition]{Proposition}
\newtheorem{lemma}[definition]{Lemma}

\newtheorem{corol}[definition]{Corollary}

\def\ol{\overline}
\def\ov{\overline}
\def\tl{\widetilde}
\def\wh{\widehat}

\newcommand{\N}{\mathbb N}
\newcommand{\Z}{\mathbb Z}

\newcommand{\B}{\mathcal B}

\newcommand{\be}{\begin{equation}}
\newcommand{\ee}{\end{equation}}
\newcommand{\ba}{\begin{aligned}}
\newcommand{\ea}{\end{aligned}}
\newcommand{\mc}{\mathcal}
\newcommand{\ignore}[1]{}

\newcommand{\Om}{\Omega}

\numberwithin{equation}{section}

\begin{document}

\title{Subdiagrams and invariant measures for generalized Bratteli diagrams}

\author[Bezuglyi]{Sergey Bezuglyi}
\address{Department of Mathematics, University of Iowa, Iowa City, IA 52242-1419, USA}
\email{sergii-bezuglyi@uiowa.edu}
\author[Jorgensen]{Palle Jorgensen}
\address{Department of Mathematics, University of Iowa, Iowa City, IA 52242-1419, USA}
\email{palle-jorgensen@uiowa.edu}
\author[Karpel]{Olena Karpel}
\address{AGH University of Krakow, Faculty of Applied Mathematics, al. Adama Mickiewicza~30, 30-059 Krak\'ow, Poland, and
B.~Verkin Institute for Low Temperature Physics and Engineering, 47~Nauky Ave., Kharkiv, 61103, Ukraine}

\email{okarpel@agh.edu.pl}

\author[Raszeja]{Thiago Raszeja}
\address{Pontifícia Universidade Católica do Rio de Janeiro, Departamento de Matemática, Rua Marquês de São Vicente, 225, Gávea - Rio de Janeiro, RJ - Brazil}
\email{tcraszeja@gmail.com}

\author[Sanadhya]{Shrey Sanadhya}
\address{School of Mathematics and Statistics F07, University of Sydney, NSW 2006, Australia, and Einstein Institute of Mathematics, 
The Hebrew University of Jerusalem, 
Edmond J. Safra Campus,
Jerusalem, Israel.}
\email{shrey.sanadhya@sydney.edu.au}

\subjclass[2020]{37A05, 37B05, 37A40, 54H05, 05C60}

\keywords{Borel dynamical systems, Bratteli-Vershik model, tail-invariant measures, subdiagrams, measure extension.}

\date{}

\begin{abstract} The results of this paper contribute to the study of invariant measures of Borel dynamical systems that can be modeled using generalized Bratteli diagrams. In this context, we study tail invariant measures on the path spaces of generalized Bratteli diagrams, allowing countably infinite vertex sets at each level. Our main focus is on subdiagrams of generalized Bratteli diagrams and the problem of extending tail invariant probability measures from vertex and edge subdiagrams to the ambient diagram. We establish necessary and sufficient conditions for the finiteness of such extensions, formulated in terms of incidence matrices and associated stochastic matrices. Several classes of generalized Bratteli diagrams and their subdiagrams are analyzed in detail, including simple, stationary, and bounded size diagrams. We develop constructive, step-by-step procedures for measure extension and for approximating invariant measures by measures supported on suitable subdiagrams. In addition, we provide explicit examples of generalized Bratteli diagrams that admit no probability tail invariant measures, a phenomenon absent for standard Bratteli diagrams with finite vertex sets. Finally, we address convergence questions for sequences of invariant measures arising from approximations by subdiagrams, clarifying the relationship between combinatorial structure and measure-theoretic behavior.
\end{abstract}

\maketitle
\tableofcontents

\section{Introduction}

The study of ergodic invariant measures is a central problem in the theory of dynamical systems. Its importance is underscored by the fact that the collection of ergodic invariant probability measures serves as an invariant for several equivalence relations arising in ergodic theory as well as in Borel and Cantor dynamics. In particular, it was proved in \cite{Dougherty_Jackson_Kechris1994} that the number of ergodic invariant probability measures is a complete invariant for Borel isomorphism (orbit equivalence) of aperiodic nonsmooth hyperfinite equivalence relations.
Since every such equivalence relation can be realized as a tail equivalence relation on the path space of a generalized Bratteli diagram, methods from the theory of Bratteli diagrams provide a natural framework for the explicit description of these measures. In the present paper, we focus on the construction of tail invariant measures generated by subdiagrams, which constitutes the main theme of our work.

\subsection{Why do we need Bratteli diagrams?}
Bratteli diagrams and the dynamical systems arising on their associated path spaces provide a flexible and powerful framework for modeling a broad class of equivalence relations and dynamical phenomena. Originally introduced in the study of AF $C^*$-algebras, Bratteli diagrams have since become an important tool in the construction of models in measurable, Cantor, and Borel dynamics, where they encode orbit structures, invariant measures, and asymptotic properties of dynamical systems in a combinatorial manner. 

Over the last several decades, Bratteli diagrams have
been intensively studied and used in various areas of dynamics. 
We refer below to other papers where the role of Bratteli diagrams has been discussed in detail. Here we outline only several directions of research and give the corresponding references (the list of cited papers is far from complete). 

Discrete structures such as graphs and sequences of finite partitions have proved to be very useful in the theory of dynamical systems; we mention some old papers by Krieger \cite{Krieger1976}, Vershik \cite{ Vershik1973, Vershik_1981, Vershik_1982}, and a more recent paper \cite{GambaudoMartens2006}. 

In the 1990s, these ideas found new applications in Cantor dynamics. Putnam \cite{Putnam1989} 
showed that, for every minimal homeomorphism $\varphi$ 
of a Cantor set $X$, there exists a  sequence of refining partitions into clopen sets 
that approximates the orbits of $\varphi$ and the topology on
$X$. Based on this result, Herman-Putnam-Skau \cite{HermanPutnamSkau1992} proved that every minimal homeomorphism of a Cantor set can be realized as a
homeomorphism $\varphi_B$ (called a \textit{Vershik map}) 
of a path space $X_B$ of a Bratteli diagram $B$. This crucial result opened the study of (i) \textit{orbit equivalence} of minimal Cantor systems, see \cite{GiordanoPutnamSkau1995, GiordanoPutnamSkau1999}, Glasner-Weiss 
\cite{GlasnerWeiss_1995}, \cite{GiordanoMatuiPutnamSkau2010, GiordanoMatuiPutnamSkau2008} and (ii) constructing \textit{models} of transformations in Cantor and Borel dynamics, see \cite{Forrest1997}, \cite{Durand_Host_skau_1999}, \cite{BezuglyiDooleyKwiatkowski_2006}, 
 \cite{Medynets_2006}, \cite{DownarowiczKarpel_2019},
\cite{Shimomura2020}. 

Another direction of research concerns the study of various classes of Bratteli diagrams. Among them, we distinguish the following classes: (a) substitution dynamical systems are represented by \textit{stationary Bratteli diagrams} (simple and non-simple, standard and generalized); they were considered in \cite{Forrest1997}, \cite{Durand_Host_skau_1999}, \cite{BezuglyiKwiatkowskiMedynetsSolomyak2010}, \cite{Bezuglyi_Jorgensen_Sanadhya_2023}, \cite{BezuglyiJorgensenKarpelSanadhya2025}, (b) \textit{finite rank}  Bratteli diagrams, which, in particular, represent interval exchange transformations, were the subject of study in  \cite{Downarowicz_Maass_2008}, \cite{BressaudDurandMaass2010}, \cite{DonosoDurandMaassPetite_2021}, \cite{BezuglyiKwiatkowskiYassawi2014}, (c) finite and $\sigma$-finite \textit{tail invariant measures} on standard and generalized Bratteli diagrams were studied in \cite{AdamskaBezuglyiKarpelKwiatkowski2017},  
 \cite{BezuglyiKarpelKwiatkowski2019}, \cite{BezuglyiKarpel_2020},
 \cite{BezuglyiJorgensenKarpelSanadhya2025}, (d) \textit{eigenvalues} of Cantor minimal systems were considered  in a series of papers, see e.g. 
\cite{DurandFrankMaass_2019} for references, (e) other applications of Bratteli diagrams can be found in  \cite{AminiElliottGolestani2021},
\cite{DurandPerrin2022}, 
\cite{GiordanoGoncalvesStarling2017},
\cite{GiordanoMatuiPutnamSkau2010}, \cite{DownarowiczMaass2008}, 
\cite{GjerdeJohansen2000}, 
\cite{Putnam2018},  \cite{Trevinio2018}.  
The reader who is interested in the classification of stationary Bratteli diagrams, various links to operator algebras, and $K$-theory can find more information in 
\cite{Bratteli1972}, \cite{Effros1981}, 
\cite{EffrosHandelmanShen1980}, 
\cite{BratteliJorgensenKimRoush2000},
\cite{BratteliJorgensenKimRoush2001},
\cite{BratteliJorgensenKimRoush2002}.

\subsection{What do we study in the paper?}
In this paper, we study generalized Bratteli diagrams with countably infinite vertex sets at each level and focus on questions concerning tail invariant measures, including their existence, structure, and behavior under restriction to subdiagrams. This work continues our recent investigations 
(see the author’s previous work cited in the references) on various aspects of generalized Bratteli diagrams. The main emphasis here is on \textit{subdiagrams} and the corresponding problems of extension and finiteness of tail invariant measures.
We also use sequences of standard subdiagrams to approximate the dynamical properties of dynamical systems and tail invariant measures. In some sense, we can view this problem as an approximation of Borel dynamical systems on a zero-dimensional Polish space by Cantor dynamical systems.

A related family of problems was studied for standard Bratteli diagrams in \cite{AdamskaBezuglyiKarpelKwiatkowski2017}. The generalized setting is substantially more challenging, due to its richer combinatorial structure and the fact that the associated path space is, in general, a zero-dimensional Polish space that is not locally compact.

\subsection{What is a Bratteli diagram?}
We briefly discuss the main notions used throughout the paper. A \textit{Bratteli diagram} is a countable graded graph
$B = (V,E)$ where the sets of vertices $V$ and edges $E$ are decomposed into disjoint unions 
 $V = \bigcup_{n \in \N_0} V_n$, $E = \bigcup_{n \in \N_0} E_n$.
 For a standard Bratteli diagram, every level $V_n$ is finite
 while for a generalized Bratteli diagram, the sets $V_n$
are allowed to be countably infinite. The set $E_n$
consists of edges connecting the vertices of level $V_n$
to the vertices of level $V_{n+1}$.
We assume that every vertex has finitely many incoming edges.
The set $E_n$ determines the 
\textit{incidence matrix} 
$F_n = (f_{v,w}^{(n)})$ where $f_{v,w}^{(n)}$ is the number of 
edges connecting $v \in V_{n+1}$ and $w\in V_n$. 

Associated with a diagram $B$ is its path space $X_B$ consisting of all infinite paths obtained by concatenating edges across successive levels.
Two infinite paths are called \textit{tail equivalent} if they coincide from some level onward. This relation defines the \textit{tail equivalence relation} $\mathcal R$, which serves as a prototypical example of a hyperfinite equivalence relation arising in Cantor and Borel dynamics. 

Given a (generalized) Bratteli diagram $B$, we can consider a subgraph $\ol B$ of $B$ such that $\ol B$ is again a (generalized) Bratteli diagram; we call $\ol B$ a subdiagram of $B$. Let $\ol{\mathcal R}$ be the tail equivalence relation on $X_{\ol B}$ and $\nu$ a probability $\ol{\mathcal R}$-invariant measure. The set $X_{\ol B}$ is a subset of $X_B$ which, in general, is not tail invariant with respect to $\mathcal R$. Then the measure $\nu$ can be extended to the set $\mathcal R(X_{\ol B})$ to preserve tail invariance, and we obtain in such a way an $\mathcal R$-invariant measure on $X_B$. This procedure is called the \textit{measure extension} and is described in detail in Section  \ref{sect meas ext subdgrms}.
 
The problem of existence and classification of $\mathcal R$-invariant measures is one of the central questions in the theory of Bratteli diagrams, motivated by their role as models for hyperfinite equivalence relations and dynamical systems.
 Further definitions and background on generalized Bratteli diagrams are collected in \Cref{Sec: Basics}. 
  
\subsection{Motivation}
A key motivation for our work comes from Borel and Cantor dynamics. As mentioned above, every homeomorphism of a Cantor set is realized as a transformation (Vershik map) on the path space of a standard Bratteli diagram \cite{HermanPutnamSkau1992}, \cite{Medynets_2006}, \cite{DownarowiczKarpel_2019}, \cite{Shimomura2020}.  
In Borel dynamics, the following result was proved in \cite{BezuglyiDooleyKwiatkowski_2006}.

\textit{Let $T$ be an aperiodic Borel automorphism acting on a standard Borel space $(X, \B)$. Then there exists a 
generalized ordered Bratteli diagram $B=(V,E,\geq)$ and a Vershik automorphism $\varphi : X_B \to X_B$ such that $(X, T)$ is Borel isomorphic to $(X_B,\varphi)$.}

Equivalently, the orbit equivalence relation of such a system coincides with the tail equivalence relation on the diagram. As a consequence, every aperiodic hyperfinite countable Borel equivalence relation (CBER) admits a realization as a tail equivalence relation on a Bratteli diagram, standard or generalized.  

Hence, many structural properties of a Cantor or Borel dynamical system 
$(X, T)$ can be encoded by a corresponding Bratteli diagram. In particular, the set of ergodic $T$-invariant measures is exactly the set of tail invariant measures on the Bratteli diagram. We refer to recent books \cite{Putnam2018}, \cite{DurandPerrin2022}, and surveys \cite{Durand2010}, \cite{BezuglyiKarpel2016} where this correspondence has been discussed in detail.

One of the central problems in dynamical systems is the description of all ergodic invariant measures.
The importance of this problem is underscored by the following result proved in \cite{Dougherty_Jackson_Kechris1994}: \textit{the number of ergodic probability invariant measures is a complete invariant for Borel isomorphism (equivalently, orbit equivalence) of aperiodic nonsmooth hyperfinite countable Borel equivalence relations (CBER).} 
This shows that finding the set of ergodic probability tail invariant measures for a generalized Bratteli diagram is equivalent to the classification of tail equivalence relations up to isomorphism, because such equivalence relations are hyperfinite CBER.

While invariant measures for standard Bratteli diagrams are by now well understood, the generalized setting exhibits new phenomena that do not appear in the case of standard diagrams. In particular, generalized Bratteli diagrams may fail to support any probability tail invariant measures, may support infinitely many such measures with complicated structure, or may admit measures whose behavior under restriction to subdiagrams is highly nontrivial. These issues are closely related to recurrence and transience properties of associated infinite incidence matrices and to subtle asymptotic features of the diagram.

\subsection{Three methods for constructing tail invariant measures}
There are three general methods for constructing tail invariant measures for a given Bratteli diagram.
 If the diagram is stationary, that is $F_n = F$ for all $n$, 
 then Perron–Frobenius theory for finite and infinite matrices 
 can be used to describe such measures, see \cite{BezuglyiKwiatkowskiMedynetsSolomyak2010}, \cite{BezuglyiJorgensenKarpelSanadhya2025}. The second approach, called the inverse limit method, was developed in the paper 
\cite{BezuglyiKarpelKwiatkowskiWata2024}, where we showed how ergodic tail invariant measures can be obtained as limits of infinite-dimensional vectors.
 The third method is based on the study of subdiagrams and measures supported on them. This method is the core of the current paper. 

The primary focus of this paper is the systematic investigation of subdiagrams and measure extension problems for generalized Bratteli diagrams. Given a generalized Bratteli diagram 
$B$, one may form subdiagrams either by restricting to proper subsets of vertices (\textit{vertex subdiagrams}) or by removing edges while keeping all vertices (\textit{edge subdiagrams}). Each subdiagram has its own path space and tail equivalence relation, and an invariant probability measure on the subdiagram can be canonically extended to a (possibly infinite) invariant measure on the saturation of the subdiagram inside the original path space. A central question is to determine when such extensions are finite, and how this depends on the combinatorial structure of the diagram. 

\subsection{The main results and outline of the paper} \Cref{Sec: Basics} presents a collection of essential definitions and results used throughout the paper. We recall the basic definitions and results concerning generalized Bratteli diagrams, path spaces, tail equivalence relations, and invariant measures, including the connection with infinite Perron–Frobenius theory. For additional background, the reader may consult the introductory sections of other papers such as \cite{BezuglyiKarpel2016}, \cite{BezuglyiKarpel_2020}, 
\cite{BezuglyiKarpelKwiatkowskiWata2024}, \cite{BezuglyiJorgensenKarpelSanadhya2025}.

Our first group of results, proved in \Cref{sect meas ext subdgrms}, establishes necessary and sufficient conditions for the finiteness of measure extensions from vertex and edge subdiagrams (see \Cref{thm3 sufficient cond}, \ref{edge_fin_krit}, and \ref{Thm sect 3 adding edges}). This partially resolves \cite[Problem 3, Section 9]{BezuglyiJorgensenKarpelSanadhya2025}. These criteria are formulated in terms of the incidence matrices of the diagram and, more importantly, in terms of an associated sequence of row-stochastic matrices naturally derived from the diagram. This approach allows us to give quantitative conditions that apply uniformly to all tail invariant probability measures, rather than depending on a specific choice of measure. In particular, we obtain sufficient conditions guaranteeing the finiteness of extensions for every probability tail invariant measure on a given subdiagram. 

A substantial part of the paper is devoted to constructive methods. In \Cref{sect ext from classes sbdgrms}, we consider the measure extension procedure applied to some classes of generalized Bratteli diagrams. Assuming that a subdiagram is simple and therefore standard, we found conditions that guarantee the finiteness of the measure extension (see \Cref{thm meas ext finite sbdgrm} and \Cref{thm ness cond finite sbdgrm}). Next, we considered a similar problem for a stationary subdiagram. The results are formulated in \Cref{prop ext from stat BD} and \Cref{prop s4 stat diagr ext}. One more class of generalized Bratteli diagrams studied in this section consists of bounded size diagrams containing a ``fat odometer''. This means that there exists an odometer with a dominating number of edges in the diagram. It turns out that depending on the parameters of the bounded size diagram and the odometer, we can obtain finite or infinite values for the extended measure, see \Cref{thm: ext fat odom}. 

In \Cref{sect step-by-step} and \Cref{sect Approximation}, we present step-by-step procedures for extending measures from subdiagrams and for approximating invariant measures by measures supported on simpler subdiagrams, including stationary and vertex subdiagrams. These constructions are particularly useful in situations where direct analysis of the full diagram is difficult, but where controlled approximations are available. The main results of \Cref{sect step-by-step} are proved in \Cref{thm s5} and \Cref{thm s5-2}. In \Cref{sect Approximation}, we focus on the following problem. Let $B = B(F)$ be a generalized stationary Bratteli diagram, $(F_n)$ a sequence of finite square matrices approximating $F$, and $B_n = B(F_n)$ the corresponding sequence of stationary standard subdiagrams. Let $\mu$ and $\mu_n$ be the tail invariant measures defined by Perron-Frobenius eigenpairs for $F$ and $F_n$. How is the approximation of an infinite matrix by a sequence of finite matrices translated in terms of tail invariant measures defined on the corresponding stationary diagrams? The answer is given in \Cref{prop appr truncated matrices}, where we prove that $\mu_n$ is weakly convergent on cylinder sets to $\mu$. 
As an application of this result, we study generalized Bratteli diagrams arising from infinite Leslie matrices and related models. We give a constructive approach to convergence questions, clarifying how invariant measures behave under approximation by subdiagrams and how limits can be described in terms of the underlying combinatorial data. We note that the convergence of measures using sequences of finite subgraphs or sublattices is a crucial technique in fields such as thermodynamic formalism and statistical mechanics.

Another important aspect of our work is the identification of generalized Bratteli diagrams that admit no probability tail invariant measures, see \Cref{sect B(2^k)}. This phenomenon has no analogue in the setting of standard Bratteli diagrams and highlights the qualitative differences between standard and generalized diagrams. We provide explicit constructions and criteria illustrating how the combinatorial growth of incidence matrices can prevent the existence of invariant probability measures altogether. This partially resolves \cite[Problem 4, Section 9]{BezuglyiJorgensenKarpelSanadhya2025} The main results are proved in \Cref{thm:varphi_i_i-t0}, \Cref{Thm:wand_1} and \Cref{s8 cor1}, \Cref{s8 cor2}. 

\Cref{ssect path space} is devoted to the study of the path space of a subdiagram of a Bratteli diagram $B$. We give conditions under which a closed subset $P$ of $X_B$ is generated by a subdiagram, see \Cref{thm s8 path space}. We also discuss the 0-1 procedure and the duality between edge and vertex subdiagrams. The question of finding properties that are invariant under the 0-1 procedure is discussed in \Cref{prop:0-1properties}. 

In the final section (\Cref{Sec: conv of measures}), we again address the question of convergence of measures on cylinder sets applied to the path space of a generalized Bratteli diagram. 
\medskip

\section{Basics on Bratteli diagrams and tail invariant
measures}\label{Sec: Basics}

This section recalls the main definitions and results
concerning generalized Bratteli diagrams. This class of Bratteli diagrams was defined in \cite{BezuglyiDooleyKwiatkowski_2006} and then studied in recent publications
 \cite{BezuglyiJorgensen2022}, 
\cite{BezuglyiJorgensenKarpelSanadhya2025}, \cite{Bezuglyi_Jorgensen_Sanadhya_2023}, \cite{BezuglyiKarpelKwiatkowski2024}, see also \cite{BezuglyiKarpelKwiatkowskiWata2024}, \cite{BezuglyiJorgensenKarpelKwiatkowski2025}, \cite{BezuglyiDudkoKarpel2024}. 

In this paper, we use the standard notation $\N$, $\Z$, and $\N_0 = \N \cup \{0\}$ for natural numbers, integers, and nonnegative integers, respectively.  

\begin{definition}\label{Def:generalized_BD} A 
\textit{generalized Bratteli diagram} is a graded graph 
$B = (V, E)$ such that the vertex set $V$ and the edge set $E$ are represented as partitions into levels, $V = \bigsqcup_{i=0}^\infty  V_i$ and $E = \bigsqcup_{i=0}^\infty  E_i$, such that the following properties hold:
\begin{enumerate}[label=(\roman*)]
  \item The number of vertices at each level $V_i$, $i \in \N_0$, is countably infinite (if necessary, we will identify each $V_i$ with $\Z$ or $\N$). For $i \in \N_0$, the set $V_i$ is called the $i$-th level of the diagram $B$. We assume that for all $i \in \N_0$, 
the set $E_i$ of edges between $V_i$ and $V_{i+1}$ is countable.
  \item For every edge $e\in E$, we define the \textit{range} and \textit{source} maps $r$ and $s$ such that $r(E_i) = V_{i+1}$ and $s(E_i) = V_{i}$ for $i \in \N_0$. It is required that $s^{-1}(v)\neq \emptyset$ for all $v\in V$ and $r^{-1}(v)\neq\emptyset$ for all $v \in V\setminus V_0$.
  
  \item For every vertex $v \in V \setminus V_0$, the set $r^{-1}(v)$ of edges with range $v$ is finite (we write $|r^{-1}(v)| < \infty$ where $|\cdot|$ denotes the cardinality of a set). 
\end{enumerate} 
If $B =(V, E)$ has the property that $|V_n| < \infty$ for each level $n \in N_0$, then $B$ is called a \textit{standard Bratteli diagram.}
\end{definition} 

When the vertices
at each level are indexed by $\Z$, the generalized Bratteli diagram $B$ is called a \textit{two-sided infinite} diagram, and when the vertices are indexed by
$\N$ (or $\N_0$), $B$ is called a \textit{one-sided infinite} diagram.

The structure of a generalized Bratteli diagram $B$ is
completely determined by a sequence of countably infinite nonnegative matrices. Let $n,m \in \N_0$ such that $m>n$. For a vertex $v \in V_m$ and a vertex $w \in V_{n}$, denote
by $E(v, w)$ the set of all finite paths between $v$ and
$w$ (this set may be empty). For $n \in \N_0$, let $f^{(n)}_{v,w} = |E(v, w)|$ for all $w \in V_n$ and
$v \in V_{n+1}$. Clearly, we can use the equality $E(v, w) = E(w, v)$ because this set represents all finite paths between the vertices $w$ and $v$. The notation $E(V_0, v)$ means that we consider all finite paths between vertices from $V_0$ and $v$. We define the infinite matrix $F_n$, $n \in \N_0$ by setting 
\begin{equation}\label{Notation:f^i}
    F_n = (f^{(n)}_{v,w} : v \in V_{n+1}, w\in V_n),\ \   
    f^{(n)}_{v,w}  \in \N_0.
\end{equation} 
The matrices $F_n$, $n \in \N_0$, are called \textit{incidence matrices}. Note that in the notation of the entries of $F_n$ in \eqref{Notation:f^i}, the order of the indices $v$ and $w$ is important.
The assumption $|r^{-1}(v)| <
\infty$ implies that every row of $F_n$, $n \in \N_0$ contains finitely many
nonzero entries. On the other hand, the columns of $F_n$ may contain infinitely many nonzero entries.  
We will use the notation $B = B(F_n)$ when we need to indicate what matrices determine a generalized Bratteli diagram. If  $F_n = F$ for every $n \in \N_0$, then the diagram $B$ is called
\textit{stationary}. We will write $B = B(F)$ in this
case.

\begin{remark}
Note that the notation $f^{(n)}_{v,w}$ will be used in two cases: for the $(v,w)$-entry of $F^n$ (if $B$ is a stationary diagram) and for the $(v,w)$-entry of the incidence matrix $F_n$. It will be clear from the 
context in which the case is considered.
\end{remark}

We will also consider generalized Bratteli diagrams whose incidence matrices have the properties of equal row sums (ERS) and equal column sums (ECS). More precisely, $B = B(F_n)$ has the $ERS(r_n)$ property for a sequence of positive integers $(r_n)_{n \in \N_0}$ if, for every $n \in \N_0$ and every $v \in V_{n+1}$,
$$
\sum_{w\in V_n} f^{(n)}_{v,w} = r_n. 
$$ 
If for every $n \in \N_0$ and every $w \in V_{n}$, we have 
$$
\sum_{v\in V_{n+1}} f^{(n)}_{v,w} = c_n,
$$
then we say that $B(F_n)$ has the $ECS(c_n)$ property for a sequence of positive integers $(c_n)_{n \in \N_0}$.

To define the path space of a generalized Bratteli diagram
$B$, we consider a finite or infinite sequence of edges
 $ x = (e_i: e_i\in E_i)$ (it is called a \textit{path}) such 
that $s(e_i)=r(e_{i-1})$. We denote the set of all infinite
paths $x$ starting at some vertex in $V_0$ by $X_B$ and call 
it the \textit{path space} of the diagram $B$. For a finite path
$\ol e = (e_0, ... , e_n)$, we write
$s(\ol e) = s(e_0)$ and $r(\ol e) = r(e_n)$. The set
$$
    [\ol e] := \{x = (x_i) \in X_B : x_0 = e_0, ..., x_n = e_n\}, 
$$ 
is called the \textit{cylinder set} associated with $\ol e$. Note that sometimes it is convenient to consider two sided cylinder set. Let $m > n$ and let $\ol e = (e_n, ... , e_{m})$ where $s(e_n) \in V_n$ and $r(e_{m}) \in V_{m+1}$. The set
$$
[\ol e] := \{x = (x_i) \in X_B : x_n = e_n, ..., x_{m} = e_m\}, 
$$ 
is called the \textit{two sided cylinder set} associated with $\ol e$.

The \textit{topology} on the path space $X_B$ is generated by
cylinder sets that are clopen in this topology.
This topology coincides with the topology defined by the 
following metric on $X_B$: for $x = (x_i), \, y = (y_i)$, set 
$$
\mathrm{dist}(x, y) = \frac{1}{2^N},\ \ \ N = \min\{i \in \N_0 : 
x_i \neq y_i\}.
$$
The path space $X_B$ is a zero-dimensional Polish space and, therefore, a standard Borel space. In general, $X_B$ is not locally compact. As usual, we will consider the Bratteli diagrams whose path spaces have no isolated points.

\begin{definition}\label{Def:Tail_equiv_relation}
Two paths $x= (x_i)$ and $y=(y_i)$ in $X_B$ are called 
\textit{tail equivalent} if there exists an $n \in \mathbb{N}_0$ 
such that $x_i = y_i$ for all $i \geq n$. This notion defines a \textit{countable Borel equivalence relation (CBER)} $\mathcal R$ in the path space $X_B$, which is called the \textit{tail equivalence relation}.
\end{definition}

In this paper, we are interested in  \textit{tail invariant measures}
on the path space $X_B$ of a generalized Bratteli diagram
$B$. The term \textit{measure} is always used for a
non-atomic positive Borel measure. We are mostly 
interested in \textit{full measures}, i.e., every cylinder 
set must be of positive measure. The set of 
probability tail invariant measures is denoted by 
$M_1(\mc R)$.

Every tail invariant measure can be characterized in terms of a sequence of 
positive vectors associated with the vertices of each level, see  \Cref{BKMS_measures=invlimits} below. 

\begin{definition}\label{def: tail inv meas} Let 
$B =(V, E)$ be 
a generalized Bratteli diagram and $\mathcal R$ the tail equivalence relation on the path space $X_B$. A measure $\mu$ on 
$X_B$ is called \textit{tail invariant} if, for any cylinder sets
$[\ol e]$ and $[\ol e']$ such that $r(\ol e) = r(\ol e')$, we have
$\mu([\ol e]) = \mu([\ol e'])$.
\end{definition}

We note that if a Borel measure $\mu$ on $X_B$ takes finite values on all cylinder sets, then $\mu$ is uniquely 
determined by its values on cylinder sets in $X_B$. 

For every generalized Bratteli diagram, there exists a naturally defined 
sequence of \textit{Kakutani-Rokhlin towers}.

\begin{definition} \label{Def:Kakutani-Rokhlin} Let 
$B =(V, E)$ 
be a generalized Bratteli diagram. For $w \in V_n, 
n \in \N_0$, denote 
$$
X_w^{(n)} = \{x = (x_i)\in X_B : s(x_{n}) = w\}.
$$
The collection of all such sets forms a partition
$\zeta_n$ of $X_B$ into  
\textit{Kakutani-Rokhlin towers}
corresponding to the vertices of $V_{n}$. 
Each finite path $\ov e = (e_0, \ldots, e_{n-1})$ with
$r(e_{n-1}) 
= w$, determines a ``floor'' of the tower $X_w^{(n)}$. It is identified with the set 
$$
X_w^{(n)}(\ov e) = \{x = (x_i)\in X_B : x_i = e_i,\; i = 
0,\ldots, n-1 \}.
$$
Then 
$$
X_w^{(n)} = \bigcup_{\ol e \in E(V_0, w)} X_w^{(n)}(\ov e).
$$ 
From this definition, it follows that the sequence of partitions $(\zeta_n)_{n \in \N_0}$ is refining. 
\end{definition}

\begin{definition}\label{Def:Height} Let $n \in \N$, for $v \in V_n$ and $v_0 \in
V_0$, we set $h^{(n)}_{v_0, v} = |E(v_0, v)| $ and define 
$$
H^{(n)}_v = \sum_{v_0 \in V_0} h^{(n)}_{v_0, v}, \ \ n \in \N.
$$ 
Set $H^{(0)}_v = 1$ for all $v\in V_0$. Thus in case of generalized Bratteli diagrams we set $H^{(0)} =\{ H_{v}^{(0)}: v \in V_0\}$ to be an infinite vector with all entries equal to $1$. Recall that in the papers on standard Bratteli diagrams (see \cite{BezuglyiKarpelKwiatkowski2015, AdamskaBezuglyiKarpelKwiatkowski2017}, for example), the authors used a root vertex $\{v_0\}$ and multiple edges connecting $\{v_0\}$ with the vertices of the first level. Hence, unlike the current case, for standard diagrams, the initial vector $H^{(0)}$ would be nontrivial. For $n \in \N$ we set $H^{(n)}_v = |E(V_0, v)|$. 
This gives us the vector $H^{(n)} = \langle H^{(n)}_{v} : 
v \in V_n \rangle$ associated with every level $n\in \N_0$. 
We call $H^{(n)}_v$ the \textit{height of the tower} $X_v^{(n)}$ 
corresponding to the vertex $v\in V_n$.
\end{definition}

From \Cref{Def:Height} it follows that $h_{w,v}^{(n)}$ is the 
$(v,w)$-entry in the product of matrices $F_{n-1} \ \cdots \ F_0$  and 
 \be\label{eq2: FH=H}
F_n H^{(n)} = H^{(n+1)}, \quad n \in \N_0.
 \ee
 
\begin{thm}\label{BKMS_measures=invlimits}
 Let $B = (V,E)$ be a Bratteli diagram (generalized or standard) 
 with the sequence of incidence matrices $(F_n)_{n  \in \N_0}$. 
\begin{enumerate}

\item Let $\mu \in  M_1(\mc R)$. For every 
$n\in \N_0$, define the vector $\ol p^{(n)} = \langle p^{(n)}_w : w \in V_n \rangle$ where 
\begin{equation}\label{eq:def_p_n}
    p^{(n)}_w= \mu(X_w^{(n)}(\ov e)),\ \ r(\ol e) = w, \ \ w\in V_n. 
\end{equation} 
Then the vectors   $\ol p^{(n)}$ satisfy the relation 
\begin{equation}\label{eq:formula_p_n}
F_n^{T} \ol p^{(n+1)} =\ol p^{(n)}, \quad n \in \N_0.
\end{equation}

\item Conversely, suppose that $\{\ol p^{(n)}= (p_w^{(n)}) 
\}_{n \in \N_0}$ is a sequence of nonnegative vectors such 
that $F_n^{T}\ol p^{(n+1)} =\ol p^{(n)}$ for all $n \in 
N_0$. Then there exists a uniquely determined tail 
invariant measure $\mu$ such that, for every finite path $\ol e$ with $r(\ol e) = w,$ we have $\mu(X_w^{(n)}(\ov e))= p_w^{(n)}$ for $w\in V_n, n \in \mathbb N_0$. The measure $\mu$ is in $M_1(\mc R)$ if the vector $\ol p^{(0)}$ is probability. 
\end{enumerate}
\end{thm}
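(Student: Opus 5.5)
For part (1), I would first check that $p^{(n)}_w$ is well defined: if $\ol e, \ol e'$ are two finite paths from $V_0$ to $w \in V_n$, then $r(\ol e) = r(\ol e') = w$. \Cref{def: tail inv meas} then gives $\mu(X^{(n)}_w(\ol e)) = \mu(X^{(n)}_w(\ol e'))$. Next I fix $w \in V_n$ and a path $\ol e$ ending at $w$, and partition the floor $X^{(n)}_w(\ol e)$ according to the next edge $e_n$, which has $s(e_n) = w$. Every path in the floor continues through exactly one such edge, so
\[
X^{(n)}_w(\ol e) = \bigsqcup_{v \in V_{n+1}} \ \bigsqcup_{e \in E(w,v)} X^{(n+1)}_v(\ol e e).
\]
This is a countable disjoint union, since $E_n$ is countable. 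By countable additivity and well-definedness,
\[
p^{(n)}_w = \sum_{v \in V_{n+1}} f^{(n)}_{v,w}\, p^{(n+1)}_v = (F_n^T \ol p^{(n+1)})_w .
\]
The sum may have infinitely many nonzero terms, because columns of $F_n$ can be infinite. This causes no difficulty: all terms are nonnegative and the total is at most $p^{(n)}_w \le 1$.

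For part (2), I would build $\mu$ on the semiring $\mathcal S$ of cylinder sets, together with $\emptyset$. Every cylinder has the form $X^{(n)}_w(\ol e)$, and I set $\mu_0(X^{(n)}_w(\ol e)) = p^{(n)}_w$. Finite additivity under refinement by one level is exactly the relation $F_n^T \ol p^{(n+1)} = \ol p^{(n)}$, read backwards from the computation in (1). By induction on the number of levels this extends to refinement by any number of levels. Hence $\mu_0$ is finitely additive on $\mathcal S$: two cylinders are either nested or disjoint, and any finite disjoint decomposition of a cylinder into cylinders can be refined to a common deepest level.

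The main obstacle is countable additivity. In the standard case one argues by compactness, but $X_B$ need not be locally compact here, and a cylinder can split into infinitely many subcylinders already at the next level. So I must show directly that $[\ol e] = \bigsqcup_k C_k$ implies $\mu_0([\ol e]) = \sum_k \mu_0(C_k)$. The inequality $\ge$ follows from finite additivity and monotonicity. For $\le$, I would argue through the well-defined identity $p^{(n)}_w = \sum_v f^{(n)}_{v,w} p^{(n+1)}_v$, which is a convergent series with nonnegative terms, and approximate. Suppose the defect $\delta = \mu_0([\ol e]) - \sum_k \mu_0(C_k)$ is positive. At the next level, $[\ol e]$ splits into countably many subcylinders, and one of them, say $D_1$, must carry a positive defect relative to the $C_k$ contained in it. If $[\ol e]$ is itself some $C_k$ there is nothing to prove, so assume not. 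Iterating gives a nested sequence of cylinders $D_j$, each with positive defect and none contained in a single $C_k$. Their intersection is an infinite path $x$. Then $x$ lies in some $C_k$, and since $C_k$ is a cylinder, the $D_j$ are contained in $C_k$ for large $j$, a contradiction. The only subtle point is confirming that a positive defect propagates to one child, which uses the countable series identity. This is where infinite columns require care, and I would spell it out carefully.

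Carathéodory's extension then gives a Borel measure $\mu$, unique because $\mathcal S$ generates the Borel $\sigma$-algebra and $\mu$ is $\sigma$-finite on it: $X_B$ is a countable union of the level-$0$ cylinders. Tail invariance holds by construction, since cylinders with a common range $w$ receive the same value $p^{(n)}_w$. If $\ol p^{(0)}$ is a probability vector, then $\mu(X_B) = \sum_{w \in V_0} p^{(0)}_w = 1$, so $\mu \in M_1(\mathcal R)$.
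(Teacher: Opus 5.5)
The paper gives no proof of its own. It cites \cite[Theorem 2.9]{BezuglyiKwiatkowskiMedynetsSolomyak2010} for standard diagrams, where countable additivity on cylinders is automatic because cylinders are compact, and \cite[Theorem 2.3.2]{BezuglyiJorgensen2022} for the generalized case. Your argument is a self-contained alternative, and its core is correct, but one step needs repair (second paragraph).

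Part (1) is fine as written, including your remark that $(F_n^T\ol p^{(n+1)})_w$ may be an infinite series. In part (2), your defect-propagation argument is the right substitute for compactness. It works precisely because the one-step relation $F_n^T\ol p^{(n+1)}=\ol p^{(n)}$ is a countable additivity identity. When you write it out, carry the invariant that the $C_k$ contained in $D_j$ partition $D_j$. It holds inductively: $D_j$ has positive defect, so it equals no $C_k$, so every $C_k\subseteq D_j$ lies in a single child. This invariant, not positivity of the defect alone, is what excludes $D_j\subsetneq C_k$; relative to an empty family the defect is just $\mu_0(D_j)$, which can be positive. With the invariant, $x\in D_j\subseteq C_k$ forces $D_j=C_k$, which contradicts positive defect.

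The step that is wrong as stated is the semiring claim: in the generalized setting the cylinders, together with $\emptyset$, do not form a semiring. Suppose $w=r(\ol e)\in V_n$ has infinitely many outgoing edges, i.e.\ column $w$ of $F_n$ has infinitely many nonzero entries, as in the lower-triangular diagram $B_\Delta$ of \Cref{sect Approximation}. Then $[\ol e]\setminus[\ol e e]$ is the union of infinitely many disjoint children and is not a finite union of cylinders. So the textbook semiring form of Carath\'eodory's theorem does not apply verbatim.

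The repair uses only what you have proved.
\begin{enumerate}
\item Put $\mu^*(A)=\inf\{\sum_k\mu_0(C_k): A\subseteq\bigcup_k C_k\}$, taken over countable cylinder covers.
\item Given such a cover of a cylinder $C$, either some member contains $C$, or the maximal members contained in $C$ are pairwise disjoint and cover $C$. The latter holds because a cylinder has only finitely many strictly larger cylinders. By monotonicity and your countable additivity, $\mu^*(C)=\mu_0(C)$.
\item Each cylinder $D$ is $\mu^*$-measurable. For every cylinder $C$, the set $C\cap D$ is empty or a cylinder, and $C\setminus D$ is a countable disjoint union of cylinders $E_j$. Hence $\mu_0(C)=\mu_0(C\cap D)+\sum_j\mu_0(E_j)$, which is exactly what the Carath\'eodory measurability check needs.
\end{enumerate}
Your uniqueness argument needs no semiring. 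Cylinders together with $\emptyset$ form a $\pi$-system generating the Borel sets, and $X_B$ is a countable union of level-$0$ cylinders of finite measure.
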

For \textit{proof} of \Cref{BKMS_measures=invlimits} see 
\cite[Theorem 2.9.]{BezuglyiKwiatkowskiMedynetsSolomyak2010} (for 
standard Bratteli diagrams) and 
\cite[Theorem 2.3.2]{BezuglyiJorgensen2022} (for generalized Bratteli 
diagrams). 

It follows from \Cref{BKMS_measures=invlimits},
\begin{equation}\label{eq: Measure_Xv_H_P}
    p_v^{(n)} = \dfrac{\mu(X_v^{(n)})}{H_v^{(n)}}, \quad n \in \N_0.
\end{equation}
In our study of generalized \textit{stationary} Bratteli diagrams, the Perron-Frobenius theory for infinite matrices will play a crucial role, see \cite[Chapter 7]{Kitchens1998}, \cite{Seneta2006}, 
\cite{VereJones_1967}, \cite{VereJones_1968} for references. A concise treatment of Perron-Frobenius theory of infinite matrices is provided in \cite[Appendix A.]{BezuglyiJorgensenKarpelSanadhya2025}. Below, we recall the main result.

Let $A$ be an infinite, nonnegative, irreducible, and aperiodic matrix. For $i \in \Z$, let 
\[
\lambda = \lim\limits_{n \rightarrow \infty} \sqrt[n]{(A^n)_{i,i}} = \sup\limits_{n\in \mathbb{N}} \sqrt[n]{(A^n)_{i,i}} \leq \infty.
\] 
Then the value of $\lambda$ does not depend on $i$. We call $\lambda$ the Perron eigenvalue of $A$. We will assume that $\lambda < \infty$. The matrix $A$ is called \textit{recurrent} if for $i \in \Z$
$$
\sum\limits_{n = 0}^{\infty} \frac{(A^n)_{ii}}{\lambda^n} = \infty, 
$$
and \textit{transient} if
$$
\sum\limits_{n = 0}^{\infty} \frac{(A^n)_{ii}}{\lambda^n} < \infty. 
$$

\begin{thm}[Generalized Perron-Frobenius theorem] 
\label{thm Perron-Frobenius Thm}
Let $A$ be a real, nonnegative, irreducible, aperiodic, and recurrent infinite matrix with a finite Perron eigenvalue $\lambda$. Then 
\begin{enumerate}
\item  there exist strictly positive eigenvectors $\eta = (\eta_i)_{i\in \Z}$ and $\xi = (\xi_i)_{i \in \Z}$ such that $\eta A = \lambda \eta$, $A \xi = \lambda \xi$;

\item $\eta$ and $\xi$ are unique up to constant multiples;

\item $\langle\eta, \xi\rangle < \infty$ if and only if $A$ is 
positive recurrent;

\item if $A$ is positive recurrent and $\langle \eta, \xi\rangle = 1$, then
\[
\lim_{n\to\infty} \frac{(A^n)_{i,j}}{\lambda^n} = \xi_i \eta_j
\quad \text{for all } i,j \in \Z.
\]
\end{enumerate}
\end{thm}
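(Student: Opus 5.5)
The statement is the generalized Perron--Frobenius theorem for infinite matrices, which the paper quotes from Vere-Jones and Seneta. The proof I would follow goes through renewal theory for first-return generating functions. Fix a reference index $0$. Write $f^{(n)}_{ij}$ for the taboo entries: the weighted sum over paths of length $n$ from $i$ to $j$ that do not pass through $0$ at intermediate times. Define $L_{ij}(z)=\sum_n f^{(n)}_{ij}z^n$ and $P_{ij}(z)=\sum_n (A^n)_{ij}z^n$, and set $R=1/\lambda$. The first-return decomposition gives $P_{00}(z)=1/(1-L_{00}(z))$. Hence recurrence, meaning $P_{00}(R)=\infty$, is equivalent to $L_{00}(R)=1$.

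\textbf{Step 1 (existence).} Set $\eta_j=\sum_n f^{(n)}_{0j}R^n$ and $\xi_i=\sum_n f^{(n)}_{i0}R^n$, with the convention $\eta_0=\xi_0=1$. To see that $\eta A=\lambda\eta$, decompose a path from $0$ to $j$ according to its last step. Paths that return to $0$ before that step contribute $L_{00}(R)=1$ times the one-step term, which closes the identity exactly at the index $0$. Irreducibility gives $\eta_j>0$. Finiteness comes from $\eta_j\,(A^m)_{j0}R^m\le \eta_0$ for a suitable $m$ with $(A^m)_{j0}>0$. The argument for $\xi$ is symmetric.

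\textbf{Step 2 (uniqueness).} Let $\eta'$ be a positive $\lambda$-subinvariant vector, i.e.\ $\eta'A\le\lambda\eta'$, normalized so that $\eta'_0=1$. Iterating the inequality while splitting off the visits to $0$ shows $\eta'\ge\eta$ entrywise. Then $\eta'-\eta$ is again nonnegative and subinvariant, and it vanishes at $0$. By irreducibility it must vanish identically. The same argument applies to $\xi$.

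\textbf{Step 3 (positive recurrence).} Compute
\[
\langle\eta,\xi\rangle=\sum_j\sum_{n,m}f^{(n)}_{0j}f^{(m)}_{j0}R^{n+m}=\sum_k k\,f^{(k)}_{00}R^k=R\,L'_{00}(R).
\]
The middle equality holds because each first-return loop of length $k$ is counted once for every intermediate position. The mean return time $R\,L_{00}'(R)$ is finite exactly when $A$ is positive recurrent, which gives item (3).

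\textbf{Step 4 (limit), the main obstacle.} Here I apply the discrete renewal theorem (Erd\H{o}s--Feller--Pollard) to the probability sequence $f^{(k)}_{00}R^k$. This sequence sums to $1$ and is aperiodic because $A$ is aperiodic. The theorem yields $(A^n)_{00}R^n\to 1/(RL'_{00}(R))=1/\langle\eta,\xi\rangle$. For general $i,j$, write $(A^n)_{ij}R^n$ as a convolution of first-passage terms $i\to0$, the $0\to0$ returns, and last-exit terms $0\to j$. Dominated convergence is justified because the first-passage and last-exit sums are $\xi_i$ and $\eta_j$, both finite. This gives the limit $\xi_i\eta_j/\langle\eta,\xi\rangle$, which equals $\xi_i\eta_j$ under the normalization $\langle\eta,\xi\rangle=1$.
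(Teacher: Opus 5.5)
The paper does not prove this theorem. It quotes it from Kitchens, Seneta and Vere-Jones, and your renewal-theoretic argument is the standard proof found in those sources.

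Steps 1--3 are sound:
\begin{itemize}
\item In Step 1, the eigen-identities hold automatically at every index $j\neq 0$, and at $j=0$ they reduce to $L_{00}(R)=1$.
\item The minimality argument in Step 2 is the usual one.
\item In Step 3, the term $\eta_0\xi_0=1=\sum_k f^{(k)}_{00}R^k$ supplies the remaining count of each loop.
\end{itemize}
The paper leaves positive recurrence undefined, which affects Step 3. With Kitchens' definition (finite mean return time $RL_{00}'(R)$), Step 3 is immediate. With Vere-Jones' definition ($\lim_n (A^n)_{00}R^n>0$), you also need the null case of the renewal theorem from Step 4.

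The gap is in Step 4. For $i,j\neq 0$, splitting a path into a first passage $i\to 0$, returns $0\to 0$, and a last exit $0\to j$ only accounts for paths that actually visit $0$. Write ${}_0a^{(n)}_{ij}$ for the weighted sum over length-$n$ paths from $i$ to $j$ that avoid $0$ altogether, and set $u_m=(A^m)_{00}R^m$. The correct identity is
\[
(A^n)_{ij}R^n={}_0a^{(n)}_{ij}R^n+\sum_{a,b\ge 1,\ a+b\le n} f^{(a)}_{i0}R^a\,u_{n-a-b}\,f^{(b)}_{0j}R^b ,
\]
so you must also show that ${}_0a^{(n)}_{ij}R^n\to 0$.

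This uses recurrence once more. Take a shortest path $0\to i$ (length $p$, weight $w_1>0$) and a shortest path $j\to 0$ (length $q$, weight $w_2>0$). Sandwiching a $0$-avoiding path $i\to j$ between them produces, injectively, a first-return loop at $0$. Hence $w_1w_2R^{p+q}\sum_n {}_0a^{(n)}_{ij}R^n\le L_{00}(R)=1$, the series converges, and its terms tend to $0$.

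A smaller point: dominated convergence for the main sum needs $\sup_m u_m<\infty$, not only $\xi_i,\eta_j<\infty$. This bound follows from the renewal equation, since $u_m\le\sum_k f^{(k)}_{00}R^k=1$ by induction. It also follows from $(\eta A^m)_0=\lambda^m\eta_0$. With these two additions your argument is complete.
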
 We will call $(\lambda, \xi)$ the \textit{Perron eigenpair} (or simply the \textit{eigenpair}) for $A$. For stationary generalized Bratteli diagrams, an application of \Cref{BKMS_measures=invlimits} and \Cref{thm Perron-Frobenius Thm} gives the following result (see
\cite{BezuglyiJorgensen2022, BezuglyiJorgensenKarpelSanadhya2025}).

\begin{thm} \label{thm PFThm} Let $B = B(F)$ be a stationary Bratteli diagram such 
that the incidence matrix $F$ is 
irreducible, aperiodic, and recurrent. Let $\xi = (\xi_v : v \in V_0)$ be a right eigenvector corresponding to the Perron eigenvalue
$\lambda$ for $A= F^{T}$, $A\xi = \lambda \xi$. 

\begin{enumerate}
    \item Then there exists a tail invariant measure $\mu$ on the path space $X_B$ such that for every finite path $\ol e$, $r(\ol e) = v, v \in V_n $, 
 \be\label{eq inv meas stat BD}
\mu^{(n)}_v : =  \mu([\ol e]) = \frac{\xi_v}{\lambda^{n}}.
\ee 
Here $\mu^{(n)}_v$ denotes the measure of a cylinder set given by any finite path ending at the vertex $v \in V_n$.

\item The measure $\mu$ is finite if and only if the right eigenvector
$\xi = (\xi_v)$ has the property $\sum_v \xi_v <\infty$.
\end{enumerate}
\end{thm}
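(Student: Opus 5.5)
The idea is to apply \Cref{BKMS_measures=invlimits} to an explicit sequence of vectors built from the Perron eigenvector. Set
\[
\ol p^{(n)} = \lambda^{-n}\xi, \qquad n \in \N_0,
\]
where $\xi$ is the strictly positive right eigenvector of $A = F^T$ with eigenvalue $\lambda$. Its existence and positivity come from \Cref{thm Perron-Frobenius Thm}, applied to $A$. The hypotheses transfer from $F$ to $A$: the transpose of an irreducible, aperiodic, recurrent matrix has the same properties, because $(A^n)_{ii} = (F^n)_{ii}$, so $\lambda$ and the recurrence series coincide.

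\emph{Part (1).} Since all incidence matrices equal $F$, we have
\[
F^T \ol p^{(n+1)} = \lambda^{-(n+1)} A\xi = \lambda^{-n}\xi = \ol p^{(n)}.
\]
So the compatibility relation \eqref{eq:formula_p_n} holds. Part (2) of \Cref{BKMS_measures=invlimits} then gives a unique tail invariant measure $\mu$ with $\mu([\ol e]) = p^{(n)}_v = \xi_v/\lambda^n$ whenever $r(\ol e)=v\in V_n$. This is exactly \eqref{eq inv meas stat BD}.

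Two points need care here.
\begin{itemize}
\item \Cref{BKMS_measures=invlimits}(2) is stated for arbitrary nonnegative compatible sequences, not only probability ones. It produces a (possibly $\sigma$-finite) measure determined by its values on cylinders, and each cylinder has finite measure.
\item The measure should be non-atomic. Take a point $x$ with $r(x_{n-1}) = v_n \in V_n$. Then
\[
\mu(\{x\}) = \lim_n \xi_{v_n}/\lambda^n .
\]
This limit should be $0$ when $\lambda>1$ and $\xi$ is bounded along the path. I expect this to be the only delicate point, and I would handle it by citing the generalized construction in \cite{BezuglyiJorgensen2022}, since the paper takes ``measure'' to mean non-atomic.
\end{itemize}

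\emph{Part (2).} Finiteness can be read off at level $0$. The sets $X^{(0)}_v$, $v\in V_0$, partition $X_B$, and each is a single level-$0$ cylinder of measure $p^{(0)}_v = \xi_v$ (its height is $H^{(0)}_v = 1$). By countable additivity,
\[
\mu(X_B) = \sum_{v\in V_0} \xi_v .
\]
Hence $\mu$ is finite if and only if $\sum_v \xi_v < \infty$. Both directions are then immediate, with no obstacle beyond the identification in part (1).
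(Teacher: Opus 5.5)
Your proof is correct and follows the route the paper indicates (it simply cites \Cref{BKMS_measures=invlimits} and \Cref{thm Perron-Frobenius Thm}): apply \Cref{BKMS_measures=invlimits}(2) to the compatible sequence $\ol p^{(n)}=\lambda^{-n}\xi$ with $\xi>0$ from Perron--Frobenius, then read off finiteness from $\mu(X_B)=\sum_{v\in V_0}\xi_v$. On the atom issue you flag, which the paper does not discuss, boundedness of $\xi$ along a path is not available in general; the argument that works is recurrence: on $X^{(0)}_v$, $\xi_v^{-1}\mu$ is the law of the edge chain choosing each edge from $w$ to $u$ with probability $\xi_u/(\lambda\xi_w)$, its return probabilities are $(A^n)_{vv}/\lambda^n$ so it is recurrent, hence a path of positive mass would return to $v$ infinitely often, and since each first-return excursion has probability at most some $q<1$, its mass is $0$.
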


\begin{remark}\label{Standard_stationary} A study of tail invariant measures for standard stationary diagrams was done in \cite{BezuglyiKwiatkowskiMedynetsSolomyak2010}, where an expression similar to \eqref{eq inv meas stat BD} was obtained. See \cite[Remark 3.9.]{BezuglyiKwiatkowskiMedynetsSolomyak2010}.
\end{remark}

For a generalized Bratteli diagram $B = B(F_n)$ with the incidence matrices $(F_n)_{n \in \N_0}$, we define a sequence of \textit{stochastic incidence matrices} $(Q_n)_{n \in \N_0}$. These matrices will play a key role in our quantitative analysis of generalized Bratteli diagrams. For $n \in \N_0$ we set
$Q_n = (q_{v,w}^{(n)} : v \in V_{n+1}, w \in V_n)$, where 
\be\label{eq_stoch matrix Q_n}
q_{v,w}^{(n)} =  {f}_{v,w}^{(n)}\ \cdot \frac{H_{w}^{(n)}}
{H_{v}^{(n + 1)}}.
\ee
Then we get from \eqref{eq2: FH=H} that 
\begin{equation}\label{e7}
\sum_{w \in V_{n}}q_{v,w}^{(n)} =1, \quad v \in\ 
V_{n + 1},  
\end{equation}
so that $Q_n$ is row stochastic for each $n \in \N_0$. 
\medskip 

In the paper, we will often consider generalized Bratteli diagrams with additional properties, which are defined as follows.

\begin{definition}[Bratteli diagrams of bounded size]
\label{Def:BD_bdd_size} A generalized Bratteli 
diagram $B(F_n)$ with $V_n = \mathbb{Z}$ for all $n \in \N_0$ is called of \textit{bounded size} if there exists 
a sequence of pairs of natural numbers $(t_n, L_n)_{n \in \N_0}$ 
such that, for all $n \in \mathbb{N}_0$ and all $v \in V_{n+1}$,
\begin{equation}\label{eq: Bndd size}
s(r^{-1}(v)) \in \{v - t_n, \ldots, v + t_n\} \quad \mbox{and} 
\quad \sum_{w \in V_{n}} f^{(n)}_{v,w} = \sum_{w \in V_{n}} |E(w,v)| 
\leq L_n.
\end{equation} 
If the sequence $(t_n, L_n)_{n \in \N_0}$ is constant, i.e., $t_n = t$ and $L_n = L$ for all $n \in \N_0$, then we say that the 
diagram $B(F_n)$ has \textit{uniformly bounded size}. 
\end{definition}
Such diagrams were studied in 
\cite[Section 3]{BezuglyiJorgensenKarpelSanadhya2025}. We will implicitly use the conventions: $(a)$ for all bounded size Bratteli diagrams, the vertices of every level are indexed by integers; $(b)$ for each $n \in \N_0$, the parameters $(t_n, L_n)$ are chosen as minimally possible.

\begin{definition}\label{Def:irreducible_GBD} 
A generalized Bratteli diagram $B =(V,E)$, where all levels $V_i$ are identified with a set $V_0$ (e.g. $V_0 = 
\mathbb{N}$ or $\mathbb{Z}$),  is called  \textit{irreducible} if 
for any vertices $i, j \in V_0$ and any level $V_n$ there exist
$m > n$ and a finite path connecting $i \in V_n$ and $j \in V_m$. In 
other words, the $(j, i)$-entry of the matrix $F_{m-1} \cdots 
F_n$ 
is nonzero. Otherwise, the diagram is called \textit{reducible}.

\end{definition}

In the class of generalized Bratteli diagrams, we can consider a new phenomenon of stationarity, namely, horizontally stationary Bratteli diagrams.

\begin{definition}\label{Def: Horz}
 Let $B = (V, E)$ be a generalized Bratteli diagram defined by the
 sequence of incidence matrices $(F_n)$. Suppose that the following properties hold: for every $n\in \N_0$, 
\begin{enumerate}[label=(\roman*)]
    \item the vertices of $V_n$ are identified with $\Z$;
    
    \item for every $i \in V_{n+1}$ and $j \in V_n$, the equality
$f_{i,j}^{(n)} = f_{i+1,j+1}^{(n)}$ holds.
\end{enumerate}
\noindent
We refer to such a Bratteli diagram as \textit{horizontally stationary}.
\end{definition} 

Such diagrams have been defined and studied in
\cite{BezuglyiJorgensenKarpelKwiatkowski2025}. The incidence matrices $F_n$ of such diagrams are banded Toeplitz matrices, i.e., the diagonals parallel to the main diagonal of $F_n$ consist of equal entries. 

Let $B = (V, E)$ be a horizontally stationary generalized Bratteli diagram. 
Denote by $\tau$ the shift on $\Z$: $\tau (i) = i+1$, $i\in \Z$. Then $\tau$ defines a transformation acting on the set of all edges $E$. For 
$e \in E$, 
$\tau(e)$ is the edge such that $s(\tau (e)) = \tau (s(e))$ and
$r(\tau (e)) = \tau (r(e))$. This operation is well defined because $B$ is horizontally stationary. 
Two edges, $e$ and $f$ from the set $E$, are called \textit{parallel}
if there exists $k \in \Z$ such that $s(f) = \tau^k(s(e))$ 
and $r(f) = \tau^k(r(e))$. We will also write $\tau^k(e) =f$. 
Therefore, the shift $\tau$ generates a transformation acting on the path space $X_B$: If $\ol x = (x_n)_{n \in \N_0}$, and $\tau(\ol x) = \ol y$ where
$\ol y = (y_n)_{n \in \N_0}$ then for each $n \in \N_0$, we have $y_n = \tau(x_n)$. 
Such paths, $\ol x = (x_n)_{n \in \N_0}$ and $\ol y = (y_n)_{n \in \N_0}$, are called \textit{parallel}. In this paper, we provide a class of horizontally stationary generalized Bratteli diagrams with no probability tail invariant measure (see \Cref{sect B(2^k)}). Such diagrams are also studied in \Cref{Ex_hor_stat_IO}, \Cref{Ex:OmegaYdense} and \Cref{ex:0-1proc}.

We also recall the notion of isomorphism of two Bratteli diagrams (see, e.g., \cite[Page 326]{Durand2010}, \cite[Definition 2.8.]{BezuglyiJorgensenKarpelSanadhya2025}):

\begin{definition} \label{def_isom BD}
Two (standard or generalized) Bratteli diagrams $B = (V, E)$ and $B' = 
(V', E')$ 
are called \textit{isomorphic} if there exist two sequences of
bijections $(g_n : V_n \rightarrow V_n')_{n \in \N_0}$ and 
$(h_n : E_n \rightarrow E_n')_{n \in \N_0}$ such that 
for every $n \in \N_0$, we have $g_n(V_n) = V_n'$ and $h_n(E_n) = 
E_n'$, and $s' \circ h_n = g_n \circ s$, $r' \circ h_n = 
g_n \circ r$, where $s'$ and $r'$ are source and range maps in $B'$.

\end{definition} 

\section{Subdiagrams and measure extension from a subdiagram}
\label{sect meas ext subdgrms}

In this section, we consider the procedure for the measure extension from vertex and edge subdiagrams of a generalized Bratteli diagram. We will apply this method to some specific classes of diagrams in the following sections. 

\subsection{Vertex and edge subdiagrams and measure extension}

Let $B = (V,E)$ be a generalized Bratteli diagram and let 
$(F_n)_{n \in \N_0}$ be the corresponding sequence of incidence matrices. To produce a subdiagram, one removes some edges 
from $E_n$ or restricts the diagram to a proper subset of vertices 
$W_n$ of $V_n$ for every level $n$. In this way, we construct 
edge and vertex subdiagrams, see the definition below.  
More generally, a closed subset $Y$ of the path space $X_B$ 
represents the path space of a non-trivial subdiagram if and only if $\mathcal R|_{Y \times Y}$ is a countable Borel equivalence relation. 

We will study the two basic cases of vertex and edge subdiagrams separately, although some ideas and results are similar for both classes. Moreover, it is possible to represent an edge subdiagram as a vertex subdiagram after using the $ 0-1$ procedure described below. As a rule, objects related to a subdiagram are denoted by barred symbols.

\begin{definition}\label{def: vertex sbdgr} 
Let $B = (V, E)$ be a generalized Bratteli diagram. Consider a sequence $\overline W = (W_n)_{n \in \N_0}$ of proper subsets $W_n$ of $V_n$. In other words, $W'_n = V_n \setminus W_n \neq \emptyset$ for all $n \in \N_0$. The {\em vertex subdiagram} $\overline B =  (\ol W, \ol G)$ is formed by the vertices $\ol W= (W_n)_{n \in \N_0}$ and the set of edges $\ol G = (G_n)_{n \in \N_0}$ where for every $n\in \N_0$, $e \in  G_n$ if and only if $s(e) \in W_{n}$ and $r(e) \in W_{n+1}$. We call the sequence $(W_n)_{n \in \N_0}$ \textit{admissible} if
$\ol B$ is a generalized (or standard) Bratteli diagram.
 \end{definition} 

\begin{remark} 
We mention here the following obvious properties of subdiagrams.
\begin{enumerate}
    \item A sequence $\ol W = (W_n)_{n \in \N_0}$ of subsets of $(V_n)_{n \in \N_0}$ is admissible if for all $n \in \N_0$
$$
W_n \subset \bigcup_{v \in W_{n+1}} s(r^{-1}(v)). 
$$
\item Suppose that $\ol B =(\ol W, \ol G)$ is a subdiagram of a generalized Bratteli diagram $B$. 
If the set $W_n$ is finite, then every $W_m$ is finite for $m < n$. If there exists a level $n$ such that $W_n$ is infinite, then all levels $W_m$ for $m \geq n$ are infinite. Indeed, every vertex of $W_n$ should have an outgoing edge, and every vertex in $W_{n+1}$ should have only finitely many incoming edges. After telescoping up to level $n$, we obtain that $\ov B$ is a generalized Bratteli diagram. Hence, without loss of generality, we can assume that either all $W_n$ are
finite or infinite.

\item For $n \in \N_0$ the incidence matrix $\ol F_n$ of $\overline B =  (\ol W, \ol G)$ has the size $|W_{n+1}| \times |W_n|$, and it is represented by a truncated part of $F_n$ corresponding to the vertices from $W_{n}$ and $W_{n+1}$. We say, in this case, that $\ol W = (W_n)_{n \in \N_0}$ is the support of $\ol B$.
\end{enumerate}
\end{remark} Recall that for infinite matrices $A = (a_{i,j}: i,j \in \Z)$, $B = (b_{i,j}: i,j \in \Z)$, we say that $A \leq B$ if for every $i,j \in \Z$, we have $a_{i,j} \leq b_{i,j}$.
\begin{definition}\label{Def_edge_Sub_dig} A subdiagram $\overline{B} (\overline{F}_n)$ of a generalized Bratteli $B= B(F_n)$ is called an {\em edge subdiagram} 
 if it is supported by all vertices $V_n$, $n \in \N_0$, and 
 whose incidence matrices $(\overline{F}_n)_{n \in \N_0}$ are countably infinite matrices such that $\overline{F}_n \leq F_n$ for every $n \in \mathbb{N}_0$. 
 \end{definition}
 Thus, by definition, an {\em edge subdiagram} is obtained from the diagram $B$ by ``removing'' some edges and leaving all vertices of $B$ unchanged. It is natural to assume that, after removing edges from $B$, we have a nontrivial subdiagram whose path space 
 $X_{\ol B}$ is an uncountable Polish space 
 supporting the countable tail equivalence relation $\ol{\mc R}$.
 
 For every $n \in \N_0$, we denote $F'_n = F_n - \overline{F}_n$. We also assume, without loss of generality, that  $\overline{F}_n < F_n$ for infinitely many $n$. It is not hard to see that the study of any subdiagram $\ol B$ of $B$ is reduced to the cases of edge and vertex subdiagrams. Indeed, $\ol B$ can be viewed as an edge subdiagram of a vertex subdiagram of $B$.

\begin{remark}\label{rem on edge diagram}
For a generalized Bratteli diagram $B(F_n)$, we define an edge subdiagram $\ol B$ by removing some edges from the sets $E_n$. It produces the sequence of new incidence matrices $(\ol F_n)$ such that $\ol F_n \leq F_n$. The entries $\ol f^{(n)}_{v, w}$ show the number of edges from $\ol B$ that connect the vertices $w \in V_n$ and $v\in V_{n+1}$. But this information is not sufficient to define $\ol B$ uniquely because we do not know which edges of 
the set $E(w, v)$ are removed.

We also observe that a subset $Y$ of the path space $X_B$ can be defined by forbidding some cylinder sets. For example, if $C = [e_n, \ldots, e_{n+m}]$ is a fixed cylinder set, consider the set $Y_C$ that consists of all infinite paths that do not go through $C$. We note that the set $Y_C$ is not the path space of an edge subdiagram because we do not remove any edges. That is, for every edge $e_i$ from $C$, there exists an infinite path in $X_B$ that belongs to $[e_i]$. In this case, we have $\ol F_n = F_n$ for all $n$. This definition is similar to the definition of the shift of finite type in symbolic dynamics. In the present paper, we do not consider this procedure.  
\end{remark}

\subsection{Measure extension from a subdiagram.} 

 Similarly to \Cref{Def:Kakutani-Rokhlin}, given $n \in \N_0$ and $v \in W_n$, we set 
\[
\overline{X}_v^{(n)} := \{x = (x_i)\in X_{\overline B} : s(x_{n}) = v\}.
\]
Then $\overline{X}_v^{(n)}$ denotes the ``inner tower'' in a subdiagram $\overline{B}$ that is determined by a vertex $v$. This set is formed by the cylinder subsets of $X_{\ol B}$ corresponding to the finite paths in $\ol B$ that terminate at $v \in W_n$. For $n \in \N$, the number of such paths is denoted by $\overline{H}_v^{(n)}$, and it is the height of the tower $\ol X_v^{(n)}$. As before, we assume that every entry $H_{v}^{(0)}$ of the vector $\ol H^{(0)}$ is equal to $1$.

Similarly to \Cref{Def:Height}, for $n \in \N$, we 
set $\ol H^{(n)} = \langle \ol H^{(n)}_v : v \in V_n \rangle$. Then 
\begin{equation}\label{formula for heights}
\ol F_n \ol H^{(n)} = \ol H^{(n+1)}, \ \ n \in \N_{0}.
\end{equation}

Suppose that $\ol B$ is a (vertex or edge) subdiagram of a generalized Bratteli diagram $B$. Then we can consider an ``inner'' probability measure $\ol\mu$ defined on the path space $X_{\ol B}$ and invariant with respect to $\mathcal {\ol R}$. It is obvious that $\mathcal R|_{X_{\ol B}\times X_{\ol B}} = \mathcal {\ol R}$. For $n \in \N_0$ and vertex $v$ from the $n$-th level of $\ol B$, we set 
\begin{equation}\label{eq: p_v_n_bar}
  \overline{p}_v^{(n)} = \dfrac{\overline{\mu}(\overline{X}_v^{(n)})}{\overline{H}_v^{(n)}}, \quad n  \in \N_0.  
\end{equation} Then, $\overline{p}_v^{(n)}$ is the $\ol \mu$-measure of a single cylinder set $[\ol e] \subset X_{\ol B}$  with $r(\ol e) =v$. 

Let $\widehat X_{\ol B}:= \mathcal R(X_{\ol B})$ be the subset of paths in $X_B$ that are tail equivalent to paths from $X_{\ol B}$. In other words, the $\mathcal R$-invariant subset $\widehat X_{\ol B} $ of $X_B$ is the saturation of $X_{\ol B}$ with respect to the equivalence relation $\mathcal R$ (or $X_{\ol B}$ is a countable complete section of  $\mathcal R$ on $\widehat X_{\ol B}$). Let $\ov \mu$ be a probability measure on $X_{\ol B}$ invariant with respect to the tail equivalence relation $\mathcal{\ol R}$ defined on $\ol B$. Then $\ov \mu$ can be canonically extended to the measure $\widehat{\ov \mu}$ (finite or infinite) on the space $\widehat X_{\ol B}$ by tail invariance. In case of need, we can think that $\widehat{\ov \mu}$ is extended to the whole space $X_{B}$ by setting $\widehat {\ov \mu} (X_B \setminus \widehat{X}_{\ol B}) = 0$.

Specifically, take a finite path $\ol e$ in the subdiagram $\ol B$ from a vertex $v_0$ to a vertex $v$. 
Let $[\ol e]$ denote the cylinder subset of $X_{\ol B}$ determined by $\ol e$. For any finite path $f \in E(V_0, v)$ from the diagram $B$ with the same range $v$, we set $\widehat{\ov \mu} ([f])  = \ov \mu([\ol e])$. In such a way, the measure $\widehat{\ov \mu}$ is extended to the  $\sigma$-algebra of Borel subsets of $\wh X_{\ol B}$ generated by all cylinder sets.
By construction, $\wh {\ov \mu}$  is $\mathcal R$-invariant and its restriction on $X_{\ol B}$ coincides with $\ov \mu$. We note that the total value $\wh{\ov \mu}(\wh X_{\ol B})$ can be either finite or infinite depending on the structure of $\ol B$ and $B$ (see \eqref{extension_method}, \eqref{extension_method_vertex},\Cref{thm3 sufficient cond} and \Cref{edge_fin_krit} below). The set $\wh X_{\ol B}$  is said to be the {\em support} of $\widehat{\ov \mu}$.

\begin{remark}
Denote by $\wh X_{\ol B}^{(n)}$ the set of all paths 
$x = (x_i)_{i= 0 }^{\infty}$  from $X_B$ such that $(x_0, \ldots, x_{n-1})$ is a finite path in $B$ which starts at level $V_0$ and ends at some vertex $v$ from $\ol B$, and the tail  $(x_{n},x_{n+1},\ldots)$ belongs to $\overline{B}$.
For instance, for a vertex subdiagram $\ov B$ and $n \in \N_0$ we have
\begin{equation}\label{n-th level}
\wh X_{\ol B}^{(n)} = \{x = (x_i)\in \wh X_{\ol B} : s(x_i) \in W_i, \ \forall i \geq n\}.
\end{equation}
It is obvious that for $n \in \N_0$, $\wh X_{\ol B}^{(n)} \subset \wh X_{\ol B}^{(n+1)}$ and
\begin{equation}\label{extension_method}
\widehat{\ov \mu}(\wh X_{\ol B}) = \lim_{n\to\infty} \widehat{\ov \mu}(\wh X_{\ol B}^{(n)}).
\end{equation}
More precisely, if $\ov B$ is a vertex subdiagram then, for $n \in \N_0$, $\widehat{\ov \mu}(\wh X_{\ol B}^{(n)}) = \sum_{w\in W_n}  H^{(n)}_w \ov p^{(n)}_w$ and thus we have
\begin{equation}\label{extension_method_vertex}
\widehat{\ov \mu}(\wh X_{\ol B}) = \lim_{n\to\infty}\sum_{w\in W_n}  H^{(n)}_w \ov p^{(n)}_w
\end{equation}
where for $n \in \N$, $\ol p_w^{(n)}$ is the measure of the cylinder set from $X_{\ol B}$ terminating at $w \in V_n$ and $H^{(n)}_w$ is the height of $w$-tower in $B$. 
\end{remark}

In the case of an edge subdiagram $\ol B$, the vertex set of $\ol B$ at level $n \in \N_0$ is $V_n$ and we obtain  a slightly different formula
\begin{equation}\label{extension_method_edge}
\widehat{\ov \mu}(\wh X_{\ol B}) = \lim_{n\to\infty}\sum_{w\in V_n}  H^{(n)}_w \ov p^{(n)}_w.
\end{equation}

\subsection{Measure extension from a vertex subdiagram} 
In this subsection, we prove the statements focusing on conditions for the finiteness of measure extension from vertex subdiagrams. 

We fix our notation here. Let $B = B(F_n)$ be a generalized Bratteli 
diagram determined by the sequence of incidence matrices $(F_n)$. 
By $\overline B$, we denote a vertex subdiagram supported by 
a sequence $(W_n)$, $W_n \subset V_n$. Recall that $|W_n|$ can be finite or infinite. Let $\ol \mu$ be a probability tail invariant measure on the path space $X_{\ol B}$. 
The extension of $\ol \mu$ onto  $\wh X_{\overline B}$ is denoted
by $\wh{\ol \mu}$. 
By $(Q_n)_{n \in \N_0}$ we denote the sequence of row stochastic matrices defined 
in terms of $(F_n)_{n \in \N_0}$ and the vector of heights, see \eqref{eq_stoch matrix Q_n}. In our formulas below, for $n \in \N_0$ we also use the vector $\ol p^{(n)}$
defined by the measure $\ol \mu$ similarly to \eqref{eq:def_p_n} and \eqref{eq:formula_p_n}. We recall that notations containing a bar are related to subdiagrams, in general. 

The following theorem gives a criterion for $\wh{\ol  \mu} \,\big( \wh X_{\overline B} \big)$ to be finite. Recall for $n \in \N_0$, $W'_n = V_n \setminus W_n$,

\begin{theorem} \label{thm3 criterion vertex subd}
Let $\overline B$ be a vertex subdiagram of $B(F_n)$, $n \in \N_0$ and let $\ol \mu$, $H_v^{(n)}$, $\ol p^{(n)}$, $(Q_n)$, $X_{\overline B}$, and $\wh X_{\overline B}$ be as defined above. Then we have 
\be\label{eq3:three criteria}
\ba
\wh{\ol \mu}(\wh X_{\overline B})  < \infty  
\Longleftrightarrow & \sum_{n=0}^\infty \sum_{v \in W_{n+1}} 
\sum_{w \in W'_n} f^{(n)}_{v,w} H_w^{(n)} \ol p_v^{(n+1)} < \infty \\
\Longleftrightarrow & \ 
\sum_{n=0}^\infty \sum_{v \in W_{n+1}} \wh{\ol \mu }
(X_v^{(n+1)}) \sum_{w \in W'_n} q^{(n)}_{v,w} < \infty \\
\Longleftrightarrow & \ 
\sum_{n=0}^\infty \,\,\bigg( \sum_{v \in W_{n+1}}
H_v^{(n+1)}\overline p_v^{(n+1)} - 
 \sum_{w \in W_{n}} H_w^{(n)}\overline p_w^{(n)}\bigg) 
< \infty.
\ea
\ee
\end{theorem}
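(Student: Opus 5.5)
The plan is to write $\wh{\ol\mu}(\wh X_{\ol B})$ as a telescoping series and identify its increments with the three expressions in \eqref{eq3:three criteria}. Set $a_n := \wh{\ol\mu}(\wh X_{\ol B}^{(n)}) = \sum_{w\in W_n} H^{(n)}_w \ol p^{(n)}_w$. By \eqref{extension_method_vertex}, $\wh{\ol\mu}(\wh X_{\ol B}) = \lim_n a_n = a_0 + \sum_{n\ge 0}(a_{n+1}-a_n)$. Since $\wh X_{\ol B}^{(n)}\subset \wh X_{\ol B}^{(n+1)}$, every increment is nonnegative. Moreover $a_0 = \sum_{w\in W_0}\ol p^{(0)}_w = \ol\mu(X_{\ol B}) = 1$ is finite. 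Hence finiteness of the extension is equivalent to convergence of the third series in \eqref{eq3:three criteria}, which is exactly $\sum_n (a_{n+1}-a_n)$.

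Next I compute the increment. Using $H^{(n+1)}_v = \sum_{w\in V_n} f^{(n)}_{v,w}H^{(n)}_w$ from \eqref{eq2: FH=H}, split the sum over $w\in V_n$ into $w\in W_n$ and $w\in W'_n$. This gives
\[
a_{n+1} = \sum_{v\in W_{n+1}}\sum_{w\in W_n} f^{(n)}_{v,w}H^{(n)}_w\ol p^{(n+1)}_v + \sum_{v\in W_{n+1}}\sum_{w\in W'_n} f^{(n)}_{v,w}H^{(n)}_w\ol p^{(n+1)}_v .
\]
For the first double sum, exchange the order of summation (all terms are nonnegative, so Tonelli applies). The relation $\ol F_n^T\ol p^{(n+1)} = \ol p^{(n)}$ from \Cref{BKMS_measures=invlimits} applies to the subdiagram, whose incidence matrix $\ol F_n$ is the restriction of $F_n$ to $W_{n+1}\times W_n$. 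It gives $\sum_{v\in W_{n+1}} f^{(n)}_{v,w}\ol p^{(n+1)}_v = \ol p^{(n)}_w$ for $w\in W_n$. So the first double sum equals $a_n$, and $a_{n+1}-a_n$ is exactly the $n$-th term of the first series. This proves the equivalence of the first and third conditions, and indeed equality of the two series term by term.

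For the second condition I rewrite the same term using \eqref{eq_stoch matrix Q_n}, namely $f^{(n)}_{v,w}H^{(n)}_w = q^{(n)}_{v,w}H^{(n+1)}_v$. I also use \eqref{eq: Measure_Xv_H_P} applied to $\wh{\ol\mu}$ on $X_B$: for $v\in W_{n+1}$ every cylinder ending at $v$ has $\wh{\ol\mu}$-measure $\ol p^{(n+1)}_v$, so $\wh{\ol\mu}(X^{(n+1)}_v) = H^{(n+1)}_v\ol p^{(n+1)}_v$. Substituting turns the first series into the second term by term.

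The only delicate point is that $a_n$ may be infinite, and the subtraction $a_{n+1}-a_n$ must make sense. I will handle this by working with the identity $a_{n+1} = a_n + (\text{$n$-th term})$ in $[0,\infty]$ rather than with differences. Then $\lim_n a_n = 1 + \sum_n(\text{terms})$ holds in $[0,\infty]$, and all three equivalences follow without assuming finiteness in advance. The third expression is interpreted in the same way, as $a_{n+1}-a_n$ whenever these quantities are finite.
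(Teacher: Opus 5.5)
Your proposal is correct and follows the paper's argument. The paper decomposes $\wh X_{\ol B}$ into the layers $\wh X^{(n+1)}_{\ol B}\setminus \wh X^{(n)}_{\ol B}$, whose measures are exactly your increments $a_{n+1}-a_n$; you also supply the details the paper leaves to the reader, namely the interchange of summation via $\ol F_n^T\ol p^{(n+1)}=\ol p^{(n)}$ and the bookkeeping in $[0,\infty]$ when some $a_n$ is infinite.

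One caution concerns the second criterion. The identity $\wh{\ol\mu}(X^{(n+1)}_v)=H^{(n+1)}_v\ol p^{(n+1)}_v$ holds for $X^{(n+1)}_v\cap\wh X^{(n+1)}_{\ol B}$, not for the whole tower in $X_B$. The whole tower also contains paths that leave $\ol B$ above level $n+1$ and later re-enter, so a cylinder $[f]$ in $X_B$ ending at $v$ can have $\wh{\ol\mu}([f])>\ol p^{(n+1)}_v$. The paper tacitly uses the same restricted reading through its convention $\wh{\ol\mu}([f])=\ol\mu([\ol e])$, and under that reading your term-by-term identification is valid.
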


\begin{proof} 
A detailed proof can be found in \cite{BezuglyiKarpelKwiatkowski2015}, where this result is proved for standard Bratteli diagrams (see \cite[Theorem 2.2]{BezuglyiKarpelKwiatkowski2015}). The case of generalized Bratteli diagrams is considered analogously. 
The statement is based on the following representation of the set $\wh X_{\ol B}$:
\begin{equation*}
     \wh X_{\overline B} = \wh X_{\overline B}^{(0)} \,\bigcup\, \big( \wh X_{\overline B}^{(1)} \setminus \wh X_{\overline B}^{(0)} \big)\,\bigcup\, \big( \wh X_{\overline B}^{(2)} \setminus \wh X_{\overline B}^{(1)} \big)\,\bigcup \cdots.
\end{equation*} 
Thus, it follows that 
\begin{align}
    \wh{\ol  \mu} \,\big( \wh X_{\overline B} \big)   =&\  1 + \underset{n=0}{\overset{\infty}{\sum}}\, \wh{\ol  \mu} \,\,\big( \wh X_{\overline B}^{(n+1)} \setminus \wh X_{\overline B}^{(n)} \big) \nonumber\\
=&\ 1 + \sum_{n=0}^\infty \sum_{v \in W_{n+1}} 
\sum_{w \in W'_n} f^{(n)}_{v,w} H_w^{(n)} \ol p_v^{(n+1)}\label{eq: exten_representation}\\
=&\  1 +  \sum_{n=0}^\infty \,\,\bigg( \sum_{v \in W_{n+1}}
H_v^{(n+1)}\overline p_v^{(n+1)} - 
 \sum_{w \in W_{n}} H_w^{(n)}\overline p_w^{(n)}\bigg).\label{eq: exten_representation_1}
\end{align}
We leave the details to the reader. 
\end{proof}

The following statement provides a convenient sufficient condition 
for the finiteness of the extension of a measure from a vertex subdiagram. It is important to note that the condition
of \Cref{thm3 sufficient cond} is formulated in terms 
of the (stochastic) incidence matrix. The conclusion of this theorem 
holds for any probability tail invariant measure $\ol\mu$ on $\ol B$.
We use here the notations from \Cref{thm3 criterion vertex subd}. For a similar result in the case of standard Bratteli diagrams, see
\cite[Theorem 2.2]{AdamskaBezuglyiKarpelKwiatkowski2017}.

\begin{theorem}\label{thm3 sufficient cond}
Let $B$ be a generalized Bratteli diagram, 
$(Q_n)_{n \in \N_0}$ the sequence of stochastic matrices, and $\overline B$ a vertex subdiagram supported by $(W_n)_{n \in \N_0}$. If \begin{equation}\label{eq3:suff cond finiteness}
    \sum_{n=0}^\infty\,\,\underset{v \in W_{n+1}}{\sup} 
    \Big(\,\underset{w \in W'_{n}}{\sum} q_{v,w}^{(n)} \Big) < \infty,
\end{equation} then for any probability tail invariant measure $\ol\mu$ on $X_{\overline B}$, the extension $\wh{\ol\mu} \big(\wh X_{\overline B}\big)$ is finite.
\end{theorem}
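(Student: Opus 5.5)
The plan is to use the second criterion in \eqref{eq3:three criteria} from \Cref{thm3 criterion vertex subd}. By \eqref{eq: exten_representation}, the extended measure is
\[
\wh{\ol\mu}(\wh X_{\ol B}) = 1 + \sum_{n=0}^\infty a_n, \qquad a_n = \sum_{v \in W_{n+1}} \wh{\ol\mu}(X_v^{(n+1)}) \sum_{w\in W'_n} q^{(n)}_{v,w}.
\]
Here $\wh{\ol\mu}(X_v^{(n+1)}) = H^{(n+1)}_v \ol p^{(n+1)}_v$ for $v\in W_{n+1}$. This follows from $q^{(n)}_{v,w} = f^{(n)}_{v,w} H^{(n)}_w / H^{(n+1)}_v$.

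Write $\varepsilon_n = \sup_{v\in W_{n+1}} \sum_{w\in W'_n} q^{(n)}_{v,w}$ and $M_{n} = \sum_{v\in W_{n}} H^{(n)}_v \ol p^{(n)}_v = \wh{\ol\mu}(\wh X^{(n)}_{\ol B})$. Pulling out the supremum gives $a_n \le \varepsilon_n M_{n+1}$.

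The key step is a recursion for $M_n$. By the telescoping identity \eqref{eq: exten_representation_1}, $a_n = M_{n+1} - M_n$. Combining this with the bound above gives $M_{n+1} - M_n \le \varepsilon_n M_{n+1}$, that is, $(1-\varepsilon_n) M_{n+1} \le M_n$, with $M_0 = 1$ because $\ol\mu$ is a probability measure.

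Since $\sum \varepsilon_n < \infty$, we have $\varepsilon_n \to 0$, so there is $N$ with $\varepsilon_n \le 1/2$ for all $n \ge N$. For $n \ge N$,
\[
M_{n+1} \le M_N \prod_{k=N}^{n} (1-\varepsilon_k)^{-1},
\]
and the product converges because $\sum \varepsilon_k < \infty$, using $-\log(1-x) \le 2x$ for $x \le 1/2$. Hence $\sup_n M_n < \infty$, and by \eqref{extension_method} the extension $\wh{\ol\mu}(\wh X_{\ol B}) = \lim M_n$ is finite.

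The main obstacle is the finitely many levels $n < N$, where $\varepsilon_n$ could equal $1$ and the recursion yields nothing. This is handled by noting that each $M_n$ is finite for every fixed $n$. Indeed, $M_n = \wh{\ol\mu}(\wh X^{(n)}_{\ol B})$, and since $\wh X^{(n)}_{\ol B}$ consists of paths whose tail from level $n$ lies in $\ol B$, its measure is at most the measure of finitely many rearrangements of the first $n$ edges. More concretely, $M_n \le \sum_{v\in W_n} H_v^{(n)} \ol p^{(n)}_v$, and this is finite by the identity $\ol p^{(n)} = \ol F_{n-1}^T \cdots$ and $\sum_{v \in W_0}\ol p^{(0)}_v = 1$.

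If that bound is not immediate, I would instead bound $M_N$ directly by $\prod_{k<N} \sup_{v\in W_{k+1}} H^{(k+1)}_v / \ol H^{(k+1)}_v$. Failing that, I would simply observe that the conclusion concerns the tail of the series, so finiteness of $M_N$ is the only extra input needed. I expect this point to require the most care in the generalized (infinite-level) setting.
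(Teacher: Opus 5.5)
The step you flag as the main obstacle is a genuine gap, and it cannot be closed as the statement stands.

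\textbf{The finiteness claim fails.} You claim that $M_n=\wh{\ol\mu}(\wh X^{(n)}_{\ol B})$ is finite for each fixed $n$. This is false for generalized subdiagrams; the paper itself remarks that the extension may already be infinite at a single level. Your justification is also circular, since the displayed bound is just the definition of $M_n$.

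Finiteness fails exactly when some $\varepsilon_n=1$. The hypothesis permits this for finitely many $n$ once $W_{n+1}$ is infinite, and then the theorem itself fails. For example:
\begin{enumerate}
\item Take $V_n=\N_0$ and $W_n=2\N_0$.
\item Let $2k\in V_1$ receive one edge from $2k\in V_0$ and $2^k$ edges from $2k+1\in V_0$.
\item Let $2k+1\in V_1$ receive one edge from $2k+1\in V_0$.
\item For $n\ge1$, let each $j\in V_{n+1}$ receive two edges from $j\in V_n$.
\end{enumerate}
Then $\varepsilon_0=\sup_k 2^k/(1+2^k)=1$ and $\varepsilon_n=0$ for $n\ge1$, so the hypothesis holds. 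Here $\ol B$ is a disjoint union of odometers. The measure giving mass $2^{-k-1}$ to the odometer through the vertices $2k$ has
\[
M_1=\sum_k(1+2^k)2^{-k-1}=\infty .
\]
Your fallback through $\sup_v H^{(k+1)}_v/\ol H^{(k+1)}_v$ fails for the same reason, since that supremum is $\infty$ here. The remark that ``only the tail matters'' does not help when a single term is infinite.

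\textbf{The missing hypothesis.} What is missing is $\varepsilon_n<1$ for all $n$. This is automatic when $W_{n+1}$ is finite, since each $v\in W_{n+1}$ has an incoming edge from $W_n$. The paper's proof uses it tacitly. It bounds $\sup_{w\in W_n}H^{(n)}_w/\ol H^{(n)}_w$ by its value at $n=1$ divided by $\prod_{i\ge1}(1-\varepsilon_i)$, and then sums $\varepsilon_n$ times these bounds. That requires the level-one value, which is at most $(1-\varepsilon_0)^{-1}$, to be finite, and the product to be nonzero.

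\textbf{How to repair your argument.} Under that hypothesis your variant works and is a bit more direct than the paper's: you run the same recursion on the total mass rather than on the height ratios, so no final summation is needed. However, derive the recursion without subtracting. Since $\sum_{w\in W_n}q^{(n)}_{v,w}\ge1-\varepsilon_n$ and $\ol F_n^T\ol p^{(n+1)}=\ol p^{(n)}$, we get
\[
(1-\varepsilon_n)M_{n+1}\le\sum_{v\in W_{n+1}}\ol p^{(n+1)}_v\sum_{w\in W_n}f^{(n)}_{v,w}H^{(n)}_w=M_n ,
\]
and this holds in $[0,\infty]$. By contrast, rearranging $M_{n+1}-M_n\le\varepsilon_nM_{n+1}$ presupposes $M_{n+1}<\infty$. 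Starting from $M_0=1$, you get $M_n\le\prod_{k\ge0}(1-\varepsilon_k)^{-1}<\infty$ for every $n$, with no need to split at $N$.
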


\begin{proof}
Let $\ol H_v^{(n)}$ be the height of the inner tower $\ol X_v^{(n)}$, $v \in W_n$. By \eqref{eq: exten_representation}, we have 
\begin{equation}\label{eq3 estimate}
\ba 
    \wh{\ol \mu} \,\big( \wh X_{\overline B} \big) = &\ 1 + \,\, \underset{n=0}{\overset{\infty}{\sum}}\,\, \underset{v \in W_{n+1}}{\sum}\,\, \underset{w \in W'_n}{\sum} f_{v,w}^{(n)}\,\, H_{w}^{(n)}\,\, \overline p_v^{(n+1)}\\
 = &\ 1+ \sum_{n =0}^\infty \underset{v \in W_{n+1}}{\sum}\,\,  \underset{w \in W'_n}{\sum} f_{v,w}^{(n)}\, \frac{H^{(n)}_{w}}{H_v^{(n+1)} }\,\, (\overline p_v^{(n+1)}\,  
 \ol H_v^{(n+1)})  \cdot \frac{H_v^{(n+1)}}{\ol H_v^{(n+1)}}\\
    = &\ 1+ \sum_{n =0}^\infty \underset{v \in W_{n+1}}{\sum}\,\, \ol \mu \Big(\overline X_v^{(n+1)}\Big) \dfrac{ H_v^{(n+1)}}{\overline H_v^{(n+1)}}  \underset{w \in W'_n}{\sum} q^{(n)}_{v,w} \\
&\ \leq 1+  \sum_{n =0}^\infty \underset{v \in W_{n+1}}{\mathrm{sup}} \Big(\underset{w \in W'_n}{\sum} q_{v,w}^{(n)}\Big) \underset{v \in W_{n+1}}{\sum}\,\, 
\ol\mu \Big(\overline X_v^{(n+1)}\Big) \frac{ H_v^{(n+1)}}{\overline H_v^{(n+1)}}. 
    \ea
\end{equation}
Next, we estimate the quantity $\dfrac{H_v^{(n+1)}}{\overline H_v^{(n+1)}}$, $v \in W_{n+1}$, from above. Let 
\be\label{eq3 M_n}
M_n = \sup_{w \in W_n} \  \frac{H_w^{(n)}}{\overline H_w^{(n)}}  
\ee 
(it will be shown below that $M_n$ is finite). Then using that 
$F_n H^{(n)} = H^{(n+1)}$ and \eqref{eq3 M_n}, we write for 
$v \in W_{n+1}$
$$
\ba 
\frac{H_v^{(n+1)}}{\overline H_v^{(n+1)}} = &\ 
\frac{1}{\overline H_v^{(n+1)}} \Bigg(\underset{w \in W_n}{\sum} f_{v,w}^{(n)} H_w^{(n)} + \underset{w \in W'_n}{\sum} f_{v,w}^{(n)} H_w^{(n)}\Bigg) \\
\leq  &\ 
\frac{M_n}{\overline H_v^{(n+1)}} \underset{w \in W_n}{\sum} \ol f_{v,w}^{(n)} \overline H_w^{(n)} + \frac{1}{\overline H_v^{(n+1)}}  \underset{w \in W'_n}{\sum} f_{v,w}^{(n)} H_w^{(n)}.
\ea
$$
Since  $\overline H^{(n+1)}_v = \sum_{w \in W_n} \overline f_{v,w}^{(n)}\, \overline H^{(n)}_w$,  we get
$$
\ba
\frac{H_v^{(n+1)}}{\overline H_v^{(n+1)}} \leq &\  
M_n + \frac{H_v^{(n+1)}}{\overline H_v^{(n+1)}} \underset{w \in W'_n}{\sum} f_{v,w}^{(n)} \,\, \dfrac{H_w^{(n)}}{H_v^{(n+1)}}\\
= &\ M_n + \dfrac{H_v^{(n+1)}}{\overline H_v^{(n+1)}} \underset{w \in W'_n}{\sum} q_{v,w}^{(n)}\\
\leq &\  M_n + \dfrac{H_v^{(n+1)}}{\overline H_v^{(n+1)}} \underset{v \in W_{n+1}}{\sup}\bigg(\underset{w \in W'_n}{\sum} q_{v,w}^{(n)}\bigg).
\ea
$$
Denote $\varepsilon_n = \underset{v \in W_{n+1}}{\sup}\bigg(\underset{w \in W'_n}{\sum} q_{v,w}^{(n)}\bigg)$. Thus, we get for $v \in W_{n+1}$
$$
 \dfrac{H_v^{(n+1)}}{\overline H_v^{(n+1)}} (1 - \varepsilon_n) \leq M_n 
$$ 
and therefore, by definition of $M_n$, we have
$
 M_{n+1} \leq \dfrac{M_n}{1- \varepsilon_n}$. Hence,
$$
    M_n \leq \dfrac{M_1}{\underset{i=1}{\overset{n}{\prod}}(1- \varepsilon_i)} \leq \dfrac{M_1}{\underset{i=1}{\overset{\infty}{\prod}}(1- \varepsilon_i)} : = M.
$$
We note that, by \eqref{eq3:suff cond finiteness}, $\sum_{n=0}^{\infty} \varepsilon_n < \infty$, the infinite product 
$\prod_{i=1}^{\infty}(1- \varepsilon_i)$
converges. To finish the proof, we use the upper bounds found above and estimate 
the $n$-th term of \eqref{eq3 estimate} as follows:
$$
\ba 
 \underset{v \in W_{n+1}}{\sum}\,\, \underset{w \in W'_n}{\sum} f_{v,w}^{(n)}\,\, H_{w}^{(n)}\,\, \overline p_v^{(n+1)} 
 \leq &\ 
 \underset{v \in W_{n+1}}{\mathrm{sup}} \Big(\underset{w \in W'_n}{\sum} q_{v,w}^{(n)}\Big) M_{n+1} \underset{v \in W_{n+1}}{\sum}\,\, \ol\mu \Big(\overline X_v^{(n+1)}\Big) \\ 
 = &\ \underset{v \in W_{n+1}}{\mathrm{sup}} \Big(\underset{w \in W'_n}{\sum} q_{v,w}^{(n)}\Big) M_{n+1},\\
\ea
$$
where we used $\sum_{v \in W_{n+1}} \ol\mu ( X_v^{(n+1)}) = 1$.
Finally, we obtain 
\begin{equation}\label{mes}
\wh{\ol  \mu} \,\big( \wh X_{\overline B} \big) \leq 1 + M \,\, \underset{n=0}{\overset{\infty}{\sum}} \,\, \underset{v \in W_{n+1}}{\mathrm{sup}} \Big(\underset{w \in W'_n}{\sum} q_{v,w}^{(n)}\Big).
\end{equation}. 
Again using \eqref{eq3:suff cond finiteness}, we get $\wh{\ol  \mu} \,\big( \wh X_{\overline B} \big) < \infty$. 
\end{proof} 

\begin{remark}
It is important to note that the condition \eqref{eq3:suff cond finiteness} implies that the heights of the towers $X^{(n)}_w$
and $\ol X^{(n)}_w$ in $B $ and $\ol B$ grow proportionally as $n$ tends to infinity. This means that there exists some $M$ such that
for all $w \in W_n$
\be\label{eq3: heights ratio}
\frac{ H^{(n)}_w}{\ol H^{(n)}_w} \leq M.
\ee
 In other words, if the measure extension $\wh{\ol\mu}(\wh X_{\ol B})$ is finite,  then
\eqref{eq3: heights ratio} holds. 

Reformulating this result, we can
state that if there exists a sequence $(n_k)$ such that, for some $w\in W_n$,
$$
\frac{\ol H^{(n_k)}_w}{ H^{(n_k)}_w} \to 0, \quad n_k \to \infty,
$$
then the extension of the measure $\wh{\ol\mu}(\wh X_{\ol B})$ is infinite. 
\end{remark}

Let $B$ be a generalized Bratteli diagram and $\mu$ be a probability tail invariant measure on $X_B$. Let $\ov B$ be a (vertex) subdiagram of $B$ defined by a sequence of vertices $(W_n)_{n \in \N_0}$, where $W_n \subset V_n$ for every $n \in \N_0$. For every $n \in \mathbb{N}$ and every $v \in W_n$, we recall that 
$$
\ov H_w^{(n)} = |\{\ov e \in E(W_0, w): \ov e \mbox{ lies in } \ov B\}|
$$ 
and
$$
H_w^{(n)} = |\{\ov e \in E(V_0, w): \ov e \mbox{ lies in } B\}|.
$$

The following result gives a sufficient condition for the measure of the path space of a subdiagram to be zero.

\begin{prop}
Assume that for every $\varepsilon>0$ there exists $n : = n_{\varepsilon} \in \N_{0}$ such that
$$
\forall w \in W_n, \quad \dfrac{\ov H_w^{(n)}}{H_w^{(n)}} < \varepsilon.
$$
Then $\mu(X_{\overline{B}}) = 0$.
\end{prop}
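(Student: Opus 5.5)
We plan to compute $\mu(X_{\ol B})$ at a single level $n = n_\varepsilon$, where everything is encoded by the vector $\ol p^{(n)}$ of $\mu$ (in the sense of \Cref{BKMS_measures=invlimits}). The key observation is that $X_{\ol B}$ is contained, for every $n$, in the set of paths whose initial segment $(x_0,\dots,x_{n-1})$ is a finite path of $\ol B$ ending at some $w\in W_n$. The number of such initial segments ending at $w$ is $\ov H_w^{(n)}$. Hence
$$
X_{\ol B} \subset \bigcup_{w\in W_n}\ \bigcup_{\ol e \in E(W_0,w),\ \ol e\subset \ol B} [\ol e].
$$

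By tail invariance, every cylinder $[\ol e]$ with $r(\ol e)=w\in V_n$ has $\mu$-measure $p^{(n)}_w$, and $\mu(X^{(n)}_w) = H^{(n)}_w p^{(n)}_w$ by \eqref{eq: Measure_Xv_H_P}. Therefore
$$
\mu(X_{\ol B}) \le \sum_{w\in W_n} \ov H^{(n)}_w\, p^{(n)}_w = \sum_{w\in W_n} \frac{\ov H^{(n)}_w}{H^{(n)}_w}\, \mu(X^{(n)}_w) < \varepsilon \sum_{w\in W_n}\mu(X^{(n)}_w) \le \varepsilon\, \mu(X_B) = \varepsilon.
$$
The last step uses that the towers $X^{(n)}_w$, $w\in V_n$, are disjoint (they form the partition $\zeta_n$) and that $\mu$ is a probability measure. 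Since $\varepsilon>0$ is arbitrary, $\mu(X_{\ol B})=0$.

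The only points needing care are measurability and countable additivity, and both are routine. The union above is countable (each $w$ contributes finitely many cylinders, and $W_n$ is countable), so $\sigma$-additivity applies. If some $W_n$ is infinite, the sum is an infinite series, but the bound holds termwise, so nothing changes. I expect no real obstacle here; the main step is recognizing that the containment of $X_{\ol B}$ in the $\ol B$-cylinders of level $n$ gives exactly the ratio $\ov H/H$ weighted by the tower measures.
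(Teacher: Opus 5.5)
Your argument is correct and is essentially the paper's proof. Both bound $\mu(X_{\ol B})$ by the measure of the union of the $\ov H^{(n)}_w$ cylinders of $\ol B$ ending in $W_n$ at level $n=n_\varepsilon$, rewrite $\sum_{w\in W_n}\ov H^{(n)}_w p^{(n)}_w$ as $\sum_{w\in W_n}(\ov H^{(n)}_w/H^{(n)}_w)\,\mu(X^{(n)}_w)$, and use that the towers partition $X_B$ (one cosmetic point, shared with the paper: the middle inequality should be $\le$ rather than $<$ in case $\sum_{w\in W_n}\mu(X^{(n)}_w)=0$, which does not affect the conclusion).
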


\begin{proof} Fix $\varepsilon > 0$ and let $n = n_{\varepsilon}$ be such that for every $w \in W_n$ $\dfrac{\ov H_w^{(n)}}{H_w^{(n)}} < \varepsilon$. Observe that
$$
\sum_{w \in W_n} H_w^{(n)} p_w^{(n)} < \sum_{w \in V_n} H_w^{(n)} p_w^{(n)} = 1.$$
Hence we have
$$
\mu(X_{\overline{B}}) \leq \mu(\bigcup_{w \in W_n} \ol X_w^{(n)}) = \sum_{w \in W_n} \ov H_w^{(n)}p_w^{(n)} = \sum_{w \in W_n} \dfrac{\ov H_w^{(n)}}{H_w^{(n)}} H_w^{(n)} p_w^{(n)} < \varepsilon \sum_{w \in W_n} H_w^{(n)} p_w^{(n)} < \varepsilon.
$$ Since $\varepsilon>0
$ is arbitrary, we obtain $\mu(X_{\overline{B}}) = 0$.
\end{proof}

\begin{remark}
It is important to note that the infiniteness of the measure extension from a vertex subdiagram $\ov B = \ov B(W_n)$ may arise for a different reason. In the cases considered above, like 
\Cref{thm3 criterion vertex subd}, the infinity of $\wh{\ol\mu}(\wh X_{\ol B})$ was caused by the divergence of a series indexed by $n$. By contrast, when 
 $\ol B$ itself is a generalized Bratteli diagram - that is, when each $W_n$ is a countable set for all $n$ - the value $\wh{\ol\mu}(\wh X_{\ol B})$ may already be infinite at some level $V_n$ due to the summation over all $w \in W_n$. It is not difficult to construct examples illustrating this phenomenon.
 
\end{remark}

\begin{example}
To illustrate the result of \Cref{thm3 sufficient cond}, we consider the case of a generalized Bratteli diagram $B = B(F_n)$ 
where every matrix $F_n$ for $n\in \N_0$ has the equal row sum property. In other words, 
for every $n \in N_{0}$ and $v \in V_{n+1}$ there is an $r_n \in \N$, such that 
\be\label{eq3:ERS}
\sum_{w \in V_n} f^{(n)}_{v,w} = r_n.
\ee
We write, in this case, that $B \in ERS(r_n)$. As a simple consequence
of \eqref{eq3:ERS}, we have
$$
H^{(n)}_v = r_0\ \cdots \ r_{n-1}, \quad v \in V_n.
$$
Hence, it follows from \eqref{eq_stoch matrix Q_n} that
$q^{(n)}_{v,w} = \dfrac{1}{r_n}f^{(n)}_{v,w}$. 
Then \eqref{eq3:suff cond finiteness} has the form (here $W'_n =
V_n \setminus W_n$)
\be\label{eq3:example}
\ba
R = & \sum_{n=0}^\infty\  \sup_{v \in W_{n+1}} \left( \frac{1}{r_n}
\sum_{w \in W'_n} f^{(n)}_{v,w}\right)\\ 
= & \sum_{n=0}^\infty\ \frac{1}{r_n} \sup_{v \in W_{n+1}} \left(
\sum_{w \in (V_n \setminus W_n)} f^{(n)}_{v,w}\right)\\
= & \sum_{n=0}^\infty\ \frac{1}{r_n} \sup_{v \in W_{n+1}} \left(
\sum_{w \in V_n} f^{(n)}_{v,w} -  \sum_{w \in W_n} \ol f^{(n)}_{v,w}  \right)\\ 
= &  \sum_{n=0}^\infty\ \frac{1}{r_n} \sup_{v \in W_{n+1}} \left( r_n - \sum_{w \in W_n}\ol f^{(n)}_{v,w}\right)\\
= & \sum_{n=0}^\infty\ \frac{1}{r_n} \sup_{v \in W_{n+1}} ( r_n - \ol r_v^{(n)})
< \infty, 
\ea
\ee
where $\ol r_v^{(n)}$ is the sum of entries in the $v$-th row of 
$\ol F_n$. 
If additionally, the subdiagram $\ol B  = \ol B(\ol F_n) $ has
incidence matrices $\ol F_n$ that satisfy the ERS property with the sum of the rows equal to $\ol r_n$, then the condition
$$
R = \sum_{n=0}^\infty\ (1 - \frac{\ol r_n}{r_n}) < \infty
$$
guarantees the finiteness of the measure extension $\wh{\ol\mu}
(\wh X_{\ol B})$. 
\end{example}

\subsection{Measure extension from an edge subdiagram}
In this subsection, we consider the measure extension 
from an edge subdiagram.  

\begin{thm}\label{edge_fin_krit}
Let $\overline{B} =  B(\ol F_n)$ be an edge subdiagram of a generalized Bratteli diagram $B = B(F_n)$. For a tail invariant probability measure $\overline{\mu}$ on $X_{\overline{B}}$, the measure extension from $\ol B$ is determined by the formula:
\be\label{eq s3 edge meas ext}
\widehat{\overline{\mu}}(\widehat{X}_{\overline{B}})  = 1+ 
\sum_{n=0}^\infty \sum_{v\in V_{n+1}} \sum_{w\in V_{n}} {f'}_{v,w}^{(n)} H_w^{(n)} \overline{p}_v^{(n+1)},
\ee
where ${f'}^{(n)}_{v,w} = {f}^{(n)}_{v,w} - \ol{f}^{(n)}_{v,w}$ is the entry of 
$F'_n = F_n - \ol F_n$ and $\overline{p}_v^{(n+1)}$ is the measure
$\ol\mu$ of a cylinder set with range $v \in V_{n+1}$. 
Hence, the extension $\widehat{\overline{\mu}} (\widehat{X}_{\overline{B}})$ is finite if and only if
\begin{equation}\label{eq:edge_sbd_mu_ext}
\sum_{n=0}^{\infty} \sum_{v \in V_{n+1}} \sum_{w \in V_{n}} 
{f'}_{v,w}^{(n)} H_w^{(n)}\overline{p}_v^{(n+1)} < \infty.
\end{equation}
\end{thm}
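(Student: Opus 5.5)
The argument follows the proof of \Cref{thm3 criterion vertex subd}, with the role of the removed vertices $W'_n$ now played by the removed edges counted by $F'_n$. For an edge subdiagram, $\wh X_{\ol B}^{(n)}$ is the set of paths $x\in X_B$ whose tail $(x_n,x_{n+1},\dots)$ lies in $\ol B$; the initial segment $(x_0,\dots,x_{n-1})$ is an arbitrary path of $B$. These sets increase with $n$, so we use the disjoint decomposition
\[
\wh X_{\ol B}=\wh X_{\ol B}^{(0)}\cup\bigcup_{n\ge 0}\big(\wh X_{\ol B}^{(n+1)}\setminus \wh X_{\ol B}^{(n)}\big),
\]
and compute the $\wh{\ol\mu}$-measure of each piece. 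The first piece is $\wh X_{\ol B}^{(0)}=X_{\ol B}$, whose measure is $\ol\mu(X_{\ol B})=1$.

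Next we identify the difference sets. A path $x$ belongs to $\wh X_{\ol B}^{(n+1)}\setminus\wh X_{\ol B}^{(n)}$ exactly when $(x_{n+1},x_{n+2},\dots)$ lies in $\ol B$ and $x_n$ is an edge of $E_n$ that is not in $\ol B$. Group such paths by $v=r(x_n)\in V_{n+1}$ and $w=s(x_n)\in V_n$. For a fixed pair $(v,w)$ there are ${f'}^{(n)}_{v,w}$ choices of the removed edge $x_n$ and $H^{(n)}_w$ choices of the initial segment $(x_0,\dots,x_{n-1})\in E(V_0,w)$. Every resulting finite path $(x_0,\dots,x_n)$ ends at $v$, and the set of its continuations inside $\wh X_{\ol B}^{(n+1)}$ is the set of paths with that prefix followed by an infinite $\ol B$-path from $v$.

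The key observation is that each of these sets is a ``transplanted'' copy of the inner cylinder $[\ol e]\subset X_{\ol B}$, where $\ol e$ is any $\ol B$-path ending at $v$. By the definition of the extension, this set therefore has $\wh{\ol\mu}$-measure $\ol\mu([\ol e])=\ol p^{(n+1)}_v$. The identification is justified as follows. The tail-invariant extension assigns to the set of paths with a fixed prefix ending at $v$ and tail in $X_{\ol B}$ the same measure as to an inner cylinder with the same range, since both are images of each other under the partial bijection of $\mathcal R$ that swaps prefixes. If no $\ol B$-path ends at $v$, then no path of the piece passes through $v$ and the term vanishes; this is consistent with $\ol p_v^{(n+1)}=0$, and we adopt that convention.

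Summing over $(v,w)$ by countable additivity gives
\[
\wh{\ol\mu}\big(\wh X_{\ol B}^{(n+1)}\setminus \wh X_{\ol B}^{(n)}\big)=\sum_{v\in V_{n+1}}\sum_{w\in V_n}{f'}^{(n)}_{v,w}H^{(n)}_w\ol p^{(n+1)}_v .
\]
Adding these terms over $n$, together with the first piece of measure $1$, yields \eqref{eq s3 edge meas ext}. The finiteness criterion \eqref{eq:edge_sbd_mu_ext} is then immediate, because all the terms are nonnegative.

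The main obstacle is the rigorous justification that $\wh{\ol\mu}$ is a well-defined countably additive measure on $\wh X_{\ol B}$ which gives these transplanted cylinders the value $\ol p_v^{(n+1)}$; the paper so far only asserts this. I would handle it by noting that for each $n$ the map that replaces a prefix ending at $v$ by a fixed inner prefix ending at $v$ is a Borel partial injection in $\mathcal R$. The measure $\wh{\ol\mu}$ is defined on each $\wh X_{\ol B}^{(n)}$ as the sum of the pushforwards of $\ol\mu$ under these maps, and these definitions are consistent as $n$ grows by the $\ol{\mathcal R}$-invariance of $\ol\mu$. With this in place, the remaining counting steps are routine.
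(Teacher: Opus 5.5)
Your argument is correct, but it is organized differently from the paper's proof.

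The paper never describes the shells $\wh X^{(n+1)}_{\ol B}\setminus \wh X^{(n)}_{\ol B}$ explicitly. Instead it works with tower heights:
\begin{itemize}
\item it introduces the height defect ${H'}^{(n)}_w = H^{(n)}_w-\ol H^{(n)}_w$ and $R_n=\sum_{w\in V_n}{H'}^{(n)}_w\,\ol p^{(n)}_w$, so that $\wh{\ol\mu}(\wh X^{(n)}_{\ol B})=1+R_n$;
\item it combines the recursion ${H'}^{(n+1)}_v=\sum_{w}{f'}^{(n)}_{v,w}H^{(n)}_w+\sum_{w}\ol f^{(n)}_{v,w}{H'}^{(n)}_w$ with the compatibility relation $\ol F_n^{T}\ol p^{(n+1)}=\ol p^{(n)}$;
\item this shows that $R_{n+1}-R_n$ equals the $n$-th term of the series, and the proof finishes by telescoping.
\end{itemize}

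You replace this algebra by a direct count. The $n$-th term is the measure of the set of paths whose last edge outside $\ol B$ lies in $E_n$. It is obtained from the $\sum_w {f'}^{(n)}_{v,w}H^{(n)}_w$ prefixes that end at $v$ with a removed edge. This is the same shell decomposition used for vertex subdiagrams in \Cref{thm3 criterion vertex subd}. It has three advantages:
\begin{itemize}
\item it gives each term a transparent meaning;
\item it yields the identity in $[0,\infty]$ directly by countable additivity, without forming differences;
\item the compatibility relation is absorbed into the well-definedness of $\wh{\ol\mu}$, which you rightly flag and sketch (the paper takes the extension for granted).
\end{itemize}
The paper's route, in turn, gives the partial extensions $\wh{\ol\mu}(\wh X^{(n)}_{\ol B})=1+R_n$ in closed form through the tower heights, and it needs no separate identification of the shells.

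One small point. Your convention for vertices not reached by any $\ol B$-path is unnecessary. Moreover, the accompanying claim that no path of the shell passes through such a vertex would not follow from your definition of $\wh X^{(n)}_{\ol B}$. Since $\ol B$ is itself a generalized Bratteli diagram on all of $V$, every vertex is the range of a $\ol B$-path from $V_0$. So this case never occurs, and $\wh X^{(n)}_{\ol B}\subset\wh X_{\ol B}$.
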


\begin{remark} Formula \eqref{eq s3 edge meas ext} remains true when both sides are infinite. We also note that the case when the measure extension can be infinite 
is different for generalized Bratteli diagrams. 
In contrast to standard Bratteli diagrams, there exist edge 
generalized subdiagrams such that 
$$
\sum_{v\in V_{n+1}} \sum_{w\in V_{n}} {f'}_{v,w}^{(n)} H_w^{(n)} \overline{p}_v^{(n+1)} = \infty
$$
for some level $n$ because the summation is over an infinite set. Hence, the extension of the measure
$\widehat{\overline{\mu}}(\widehat{X}_{\overline{B}})$ will be finite if both of the following conditions are satisfied: for every level, the added edges produce
a finite extension of the measure, and the sum of these increments over $n$ 
converges. Clearly, the second condition is more difficult to control.
\end{remark}

\begin{proof}[Proof of \Cref{edge_fin_krit}] Recall that $\widehat X_{\ol B} $ is the saturation of edge subdiagram $X_{\ol B}$ with respect to the tail equivalence relation $\mathcal R$. As before for $w \in V_n$ and $n \in \N_0$, we denote
\[
\widehat{X}_w^{(n)} = \{x = (x_i)\in \widehat{X}_{\ol B} : s(x_{n}) = w\}, \hspace{1cm} \wh X_{\ol B}^{(n)} = \bigcup_{w \in V_n} \widehat{X}_w^{(n)}.
\]
For a probability measure $\overline{\mu}$ on $\overline{B}$, as before we have
\[
\widehat{\overline{\mu}}
 (\wh X _{\ov B}) = \lim_{n \rightarrow \infty}
\widehat{\overline{\mu}} (\wh X_{\ov B}^{(n)}).
\]
For $n \in \N_0$, let ${H'}_w^{(n)} = H_w^{(n)} - \ov H_w^{(n)}$ and set
$$
R_{n} = \sum_{w \in V_{n}} {H'}_w^{(n)} \ov p_w^{(n)}.
$$
Then for $n \in \N_0$ we have $\widehat{\overline{\mu}} (\wh X_{\ov B}^{(n)}) = \ov \mu (X_{\ov B}) + R_n = 1 + R_n$.
In other words, $R_n$ shows the value of the measure of added cylinder sets ending at level $n$. Hence $R_n$ can be represented as $\sum_{i= 1}^n (R_i - R_{i-1})$ (Note that $R_0 = 0$). The quantity $R_i - R_{i-1}$ indicates the increment of the measure when 
the length of cylinder sets is increased by one. Note that for $n \in \N_0$
$$
{H'}_v^{(n+1)} = \sum_{w \in V_n}  {f'}_{v,w}^{(n)} H_w^{(n)} +  \sum_{w \in V_n} \ov f_{v,w}^{(n)} {H'}_w^{(n)}.
$$ 
Hence we have for $n \in \N_0$,
\begin{equation}\label{formula}
R_{n+1} = \sum_{v \in V_{n+1}} \sum_{w \in V_n} {f'}_{v,w}^{(n)} H_w^{(n)} \ov p_v^{(n+1)} + \sum_{v \in V_{n+1}} \sum_{w \in V_n} \ov f_{v,w}^{(n)} {H'}_w^{(n)} \ov p_v^{(n+1)}.
\end{equation}
On the other hand, we can represent the second summand in (\ref{formula}) as follows:
$$
\sum_{v \in V_{n+1}} \sum_{w \in V_n} \ov f_{v,w}^{(n)} {H'}_w^{(n)} \ov p_v^{(n+1)} = \sum_{w \in V_n}  {H'}_w^{(n)} \ov p_w^{(n)} = R_n.
$$
Thus for $n \in \N_0$
\be\label{eq s3 R(n+1) - R(n)}
R_{n+1} - R_n = \sum_{v \in V_{n+1}} \sum_{w \in V_n} {f'}_{v,w}^{(n)} H_w^{(n)} \ov p_v^{(n+1)}.
\ee
Hence it follows
$$
\wh{\ol\mu} (\wh X_{\ol B}) = 1 + \lim_{n\to\infty} R_n =
1+ \lim_{n\to\infty} \sum_{i= 1}^n (R_i - R_{i-1}) =
1+ \lim_{n\to\infty} \sum_{i= 0}^{n-1} (R_{i+1} - R_{i})
$$
Using \eqref{eq s3 R(n+1) - R(n)}, we get 
\begin{eqnarray*}
\widehat{\overline{\mu}}(\widehat{X}_{\overline{B}})
  =  1 + \sum_{n=0}^\infty \sum_{v\in V_{n+1}} \sum_{w\in V_{n}} {f'}_{v,w}^{(n)} H_w^{(n)} \overline{p}_v^{(n+1)},
\end{eqnarray*} as needed.
\end{proof} 

\begin{thm}
Let $B$, $\ov B$, $\ov \mu$ be as in \Cref{edge_fin_krit}. 
Define for $n \in \N_0$,
\be\label{eq s3 sup-inf}
a_n = \sup_{v \in V_n} \frac{H_v^{(n)}}{\ol H_v^{(n)}},\qquad
b_n = \inf_{v \in V_n} \frac{H_v^{(n)}}{\ol H_v^{(n)}}
\ee
where, $H_v^{(n)}$ and $\ol H_v^{(n)}$ denote the heights of the $v$-th tower in $B$ and $\ol B$, respectively, $v \in V_n$, $n \in \N_0$.
Then the extension of the measure $\widehat{\overline{\mu}}(\wh X_{\ov B})$ is finite if 
\be \label{eq s3 edge diagr ext}
\sum_{n=0}^\infty (a_{n+1} - b_n) < \infty.
\ee
\end{thm}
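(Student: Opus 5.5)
The plan is to bound each increment in the series \eqref{eq s3 edge meas ext} of \Cref{edge_fin_krit} by $a_{n+1}-b_n$. By that theorem it suffices to show that $\sum_n (R_{n+1}-R_n)<\infty$, where
$$
R_{n+1}-R_n=\sum_{v\in V_{n+1}}\sum_{w\in V_n} {f'}^{(n)}_{v,w}H^{(n)}_w\,\ol p^{(n+1)}_v .
$$
So I will estimate, for each fixed $v\in V_{n+1}$, the inner sum $\sum_{w} {f'}^{(n)}_{v,w}H_w^{(n)}$ by a multiple of $\ol H^{(n+1)}_v$. Then I will use the normalization $\sum_{v\in V_{n+1}} \ol H_v^{(n+1)}\ol p_v^{(n+1)}=\ol\mu(X_{\ol B})=1$.

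The key step is the following computation. Since $F_nH^{(n)}=H^{(n+1)}$ by \eqref{eq2: FH=H} and $F'_n=F_n-\ol F_n$, we get
$$
\sum_{w\in V_n}{f'}^{(n)}_{v,w}H^{(n)}_w = H^{(n+1)}_v-\sum_{w\in V_n}\ol f^{(n)}_{v,w}H^{(n)}_w .
$$
By definition of $b_n$, we have $H^{(n)}_w\ge b_n\ol H^{(n)}_w$ for every $w\in V_n$. By \eqref{formula for heights}, $\sum_w \ol f^{(n)}_{v,w}\ol H^{(n)}_w=\ol H^{(n+1)}_v$. Hence the subtracted term is at least $b_n\ol H^{(n+1)}_v$. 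By definition of $a_{n+1}$, $H^{(n+1)}_v\le a_{n+1}\ol H^{(n+1)}_v$. Therefore
$$
\sum_{w\in V_n}{f'}^{(n)}_{v,w}H^{(n)}_w\le (a_{n+1}-b_n)\,\ol H^{(n+1)}_v .
$$

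Multiplying by $\ol p^{(n+1)}_v$ and summing over $v\in V_{n+1}$ gives $R_{n+1}-R_n\le a_{n+1}-b_n$. Then \eqref{eq s3 edge meas ext} yields
$$
\wh{\ol\mu}(\wh X_{\ol B})\le 1+\sum_{n=0}^\infty (a_{n+1}-b_n)<\infty .
$$

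The only delicate points are bookkeeping ones, and I would check them rather than expect a real obstacle. First, the quantities $a_n$ and $b_n$ must make sense. We have $\ol H^{(n)}_v\ge 1$ because $\ol B$ is a Bratteli diagram, so every vertex is reached by a path in $\ol B$. Also $b_n\ge 1$, since $H^{(n)}_v\ge \ol H^{(n)}_v$. If some $a_{n+1}=\infty$, the hypothesis \eqref{eq s3 edge diagr ext} fails, so we may assume all $a_n$ are finite. Second, the termwise summation over the possibly infinite set $V_{n+1}$ is legitimate, because all terms are nonnegative (Tonelli).
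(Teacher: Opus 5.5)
Your proof is correct and follows essentially the same route as the paper: both reduce, via \Cref{edge_fin_krit}, to bounding $\sum_{w}{f'}^{(n)}_{v,w}H^{(n)}_w$ by $(a_{n+1}-b_n)\,\ol H^{(n+1)}_v$. Both get this bound from $F_nH^{(n)}=H^{(n+1)}$, $\ol F_n\ol H^{(n)}=\ol H^{(n+1)}$ and the definitions of $a_{n+1}$ and $b_n$, and then finish with $\sum_{v}\ol H^{(n+1)}_v\ol p^{(n+1)}_v=1$. The only cosmetic difference is that the paper first takes a supremum over $v\in V_{n+1}$ before summing against $\ol\mu(\ol X^{(n+1)}_v)$, whereas you apply the bound pointwise in $v$, which is slightly cleaner.
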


\begin{proof} Based on \Cref{edge_fin_krit}, it suffices to show that if \eqref{eq s3 edge diagr ext} holds, then the series
$$
J = \sum_{n=0}^\infty \sum_{v \in V_{n+1}} \sum_{w \in V_n} {f'}_{v,w}^{(n)} H_w^{(n)} \ov p_v^{(n+1)}
$$
converges. In the computation below, we use the fact that $\ol\mu$ is a probability measure on $\ol B$, that is, $\sum_{v \in V_{n}} \ov \mu (\ov X_v^{(n)}) = 1$ for $n \in \N_0$. We also use $F_n' = F_n - \ol F_n$ and relation \eqref{eq2: FH=H}.
\begin{align*}
\sum_{n=0}^\infty\sum_{v \in V_{n+1}} \sum_{w \in V_n} {f'}_{v,w}^{(n)} H_w^{(n)} \ov p_v^{(n+1)} =&\ \sum_{n=0}^\infty \sum_{v \in V_{n+1}} \sum_{w \in V_n} {f'}_{v,w}^{(n)} H_w^{(n)} \ov p_v^{(n+1)} \frac{\ov H_v^{(n+1)}}{\ov H_v^{(n+1)}}  \\
 =&\ \sum_{n=0}^\infty\sum_{v \in V_{n+1}} \sum_{w \in V_n} 
 {f'}_{v,w}^{(n)} H_w^{(n)} \ov \mu (\ov X_v^{(n+1)}) \frac{1}{\ov H_v^{(n+1)}}  \\
 \leq &\  \sum_{n=0}^\infty \sum_{v \in V_{n+1}} \ov \mu (\ov X_v^{(n+1)}) \sup_{v \in V_{n+1}} \left( \sum_{w \in V_n} {f'}_{v,w}^{(n)} \frac{H_w^{(n)}}{\ov H_v^{(n+1)}} \right)\\
  = & \ \sum_{n=0}^\infty \sup_{v \in V_{n+1}} \left( \sum_{w \in V_n} {f'}_{v,w}^{(n)} \frac{H_w^{(n)}}{\ov H_v^{(n+1)}} \right)\\
= &\ \sum_{n=0}^\infty \sup_{v \in V_{n+1}} \left( \sum_{w \in V_n}
(f_{v,w}^{(n)} - \ol f_{v,w}^{(n)}) \frac{H_w^{(n)}}{\ov H_v^{(n+1)}} \right) \\
= &\ \sum_{n=0}^\infty \sup_{v \in V_{n+1}} \left( \sum_{w \in V_n}
[f_{v,w}^{(n)}\frac{H_w^{(n)}}{ H_v^{(n+1)}}\frac{H_v^{(n+1)}}{\ol H_v^{(n+1)}} - \ol f_{v,w}^{(n)} \frac{\ol H_w^{(n)}}
{\ol H_v^{(n+1)}} \frac{H_w^{(n)}}{\ol H_w^{(n)}}]\right)\\
\leq  &\ \sum_{n=0}^\infty \sup_{v \in V_{n+1}} \left( \frac{H_v^{(n+1)}}{\ol H_v^{(n+1)}} - b_n\right)\\
\leq &\ \sum_{n=0}^\infty (a_{n+1} - b_n).
\end{align*}
Note that, for every $v \in V_{n+1}$, the summation over
$w$ ranges over a finite set. In other words, $f_{v,w}^{(n)}$ is nonzero for finitely many $w \in V_n$. 
In the final steps of the calculation, we used the definition of $a_n$, $b_n$ as in \eqref{eq s3 sup-inf}. 
Therefore, condition \eqref{eq s3 edge diagr ext} implies that
$\widehat{\overline{\mu}}(\wh X_{\ov B})$ is finite. 
\end{proof}

Now we provide an example of a generalized Bratteli diagram $B$ and its edge subdiagram $\ov B$ such that $\widehat{\overline{\mu}}(\widehat{X}_{\ol B}) = \infty$.

\begin{example}\label{ex: edge}
Let $B$ be a stationary generalized Bratteli diagram with the matrix $A$ transpose to the incidence matrix, where  
\begin{equation*}
A =  \left(
  \begin{array}{cccccccccccc}
   \ddots & \vdots & \vdots & \vdots & \vdots & \vdots & \vdots 
   & \vdots & \vdots & \vdots & \vdots & \udots\\
    \cdots & 2^4 & 0 & 2^3 & 0 & 0 & \textbf  0 & 0 & 0 & 0 & 0 & \cdots\\
    \cdots & 0 & 2^3 & 0 & 2^2 & 0 & \textbf 0 & 0 & 0 & 0 & 0 & \cdots\\
    \cdots & 0 & 0 & 2^2 & 0 & 2 & \textbf  0 & 0 & 0 & 0 & 0 & \cdots\\
    \cdots & 0 & 0 & 0 & 2 & 1 & \textbf 1 & 0 & 0 & 0 & 0 & \cdots\\
    \cdots & \textbf 0 & \textbf 0 &\textbf  0 &\textbf  0 &\textbf{1} & \textbf 1 & 
    \textbf{2} &\textbf 0 &\textbf 0 &\textbf 0 & \cdots\\
    \cdots & 0 & 0 & 0 & 0 & 0 & \textbf  2 & 0 & 2^2 & 0 & 0 & \cdots\\
    \cdots & 0 & 0 & 0 & 0 & 0 &\textbf  0 & 2^2 & 0 & 2^3 & 0 & \cdots\\
    \cdots & 0 & 0 & 0 & 0 & 0 &\textbf  0 & 0 & 2^3 & 0 & 2^4 & \cdots\\
    \udots & \vdots & \vdots & \vdots & \vdots & \vdots & \vdots & 
    \vdots & \vdots & \vdots & \vdots & \ddots\\
    \end{array}
\right)
\end{equation*} 
(we use the bold font to indicate the 0-th row and 0-th column in the matrix.)

Let $\ov B$ be a stationary edge subdiagram of $B$ defined by the matrix $\ov A = {\ov F}^{T}$ transpose to the incidence matrix: 
\begin{equation*}
\ov A =  \left(
  \begin{array}{cccccccccccc}
   \ddots & \vdots & \vdots & \vdots & \vdots & \vdots & \vdots 
   & \vdots & \vdots & \vdots & \vdots & \udots\\
    \cdots & 2 & 0 & 1 & 0 & 0 & \textbf  0 & 0 & 0 & 0 & 0 & \cdots\\
    \cdots & 0 & 2 & 0 & 1 & 0 & \textbf 0 & 0 & 0 & 0 & 0 & \cdots\\
    \cdots & 0 & 0 & 2 & 0 & 1 & \textbf  0 & 0 & 0 & 0 & 0 & \cdots\\
    \cdots & 0 & 0 & 0 & 2 & 1 & \textbf 1 & 0 & 0 & 0 & 0 & \cdots\\
    \cdots & \textbf 0 & \textbf 0 &\textbf  0 &\textbf  0 &\textbf{1} & \textbf 1 & 
    \textbf{2} &\textbf 0 &\textbf 0 &\textbf 0 & \cdots\\
    \cdots & 0 & 0 & 0 & 0 & 0 & \textbf  1 & 0 & 2 & 0 & 0 & \cdots\\
    \cdots & 0 & 0 & 0 & 0 & 0 &\textbf  0 & 1 & 0 & 2 & 0 & \cdots\\
    \cdots & 0 & 0 & 0 & 0 & 0 &\textbf  0 & 0 & 1 & 0 & 2 & \cdots\\
    \udots & \vdots & \vdots & \vdots & \vdots & \vdots & \vdots & 
    \vdots & \vdots & \vdots & \vdots & \ddots\\
    \end{array}
\right)
\end{equation*} 
Then the subdiagram $\ov B$ supports a finite tail-invariant measure 
$\ov \mu$ generated by the eigenvalue $\lambda = 3$ and the 
corresponding right eigenvector $\xi$ of $\ov A$ given by
$$
\xi = \left(\ldots, \dfrac{1}{2^3}, 
\dfrac{1}{2^2}, \dfrac{1}{2},1, \mathbf{1},
\dfrac{1}{2}, \dfrac{1}{2^2}, 
\dfrac{1}{2^3}, \ldots \right)^T
$$ 
The 0-th entry of $\xi$ is shown in bold font. 
It is easy to see that 
$$
\widehat{\overline{\mu}}(\widehat{X}_{\ol B}^{(1)}) = \sum_{i \in \mathbb{Z}} \frac{1}{2^i} \cdot 2^i = \infty.
$$
This example shows that $\widehat{\overline{\mu}}(\widehat{X}_{\ol B}) = \infty$ because the extension from the first level is already infinite. We refer our readers to \cite[Example 7.7]{BezuglyiJorgensenKarpelSanadhya2025} for a simple generalization of \Cref{ex: edge}.
\end{example}

It is easy to observe that instead of removing edges from a generalized Bratteli diagram to obtain an edge subdiagram (as described above), we can also add edges to obtain an ``edge extension" of the diagram. In particular,
\begin{definition}
Let $B' = (V, E')$ be a generalized Bratteli diagram and $B = (V, E)$ be a diagram obtained from $B'$ by adding edges to every level in $E'$. Then we call the generalized Bratteli diagram $B$ an \textit{edge extension} of $B'$. 
\end{definition}

\Cref{Thm sect 3 adding edges} below concerns edge extensions of generalized Bratteli diagrams of bounded size (see \Cref{Def:BD_bdd_size}). The proof uses the result about the structure of such diagrams, which is proved in \cite{BezuglyiJorgensenKarpelSanadhya2025}.
For the reader's convenience, we state this result.

\begin{prop}\cite[Corollary 3.6]{BezuglyiJorgensenKarpelSanadhya2025}\label{upper_cone}
Let $B=(V, E)$ be a generalized Bratteli diagram of bounded size defined by a sequence $(t_n, L_n)_{n \in \N_0}$ (see \Cref{Def:BD_bdd_size}). 
Let $n,m \in \mathbb{N}_0$, such that $m\leq n$, let $v \in V_{n+1}$ and $E(V_m, v)$ be the set of all finite paths 
$\ov e = (e_m, \ldots, e_{n})$ such that $r(e_n) = v \in V_{n+1}$ and $s(e_m) \in V_m$, then
$$
s(E(V_m, v)) \subset \left\{v - \sum_{i = m}^{n} t_i, \ldots, v + 
\sum_{i = m}^n t_i\right\}.
$$
\end{prop}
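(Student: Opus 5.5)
The statement is a path-support bound: every vertex in $V_m$ from which a finite path reaches $v\in V_{n+1}$ lies within $\sum_{i=m}^n t_i$ of $v$. I will prove it by induction on the length $n-m+1$ of the paths, using only the first condition in \eqref{eq: Bndd size} of \Cref{Def:BD_bdd_size}. That condition says that for every $u\in V_{k+1}$ one has $s(r^{-1}(u))\subset\{u-t_k,\ldots,u+t_k\}$, where the vertices are identified with $\Z$ and the difference of indices is taken in $\Z$.

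\emph{Base case $m=n$.} Here a path $\ov e=(e_n)$ is a single edge with $r(e_n)=v$. By the bounded size condition at level $n$, we get $s(e_n)\in\{v-t_n,\ldots,v+t_n\}$, which is exactly the claimed set.

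\emph{Inductive step.} Assume the claim holds for paths from $V_{m+1}$ to $v$, and take $m<n$ and a path $\ov e=(e_m,e_{m+1},\ldots,e_n)\in E(V_m,v)$. Its truncation $(e_{m+1},\ldots,e_n)$ lies in $E(V_{m+1},v)$. So by the induction hypothesis, $u:=s(e_{m+1})=r(e_m)\in V_{m+1}$ satisfies $|u-v|\le\sum_{i=m+1}^n t_i$. The edge $e_m$ has range $u$, so the bounded size condition at level $m$ gives $|s(e_m)-u|\le t_m$. The triangle inequality in $\Z$ then yields $|s(e_m)-v|\le t_m+\sum_{i=m+1}^n t_i=\sum_{i=m}^n t_i$.

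There is no genuine obstacle here. The only point needing care is the convention that vertices at different levels are compared through the common identification $V_k=\Z$ that the paper fixes for bounded size diagrams. Once that convention is made explicit, the induction is immediate.
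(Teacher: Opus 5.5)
Your proof is correct. The paper gives no proof of its own here; it imports the statement as Corollary 3.6 of the cited work. Your induction is the natural argument: it amounts to summing the per-edge bound $|s(e_k)-r(e_k)|\le t_k$ over $k=m,\dots,n$ and applying the triangle inequality, which is equivalent to saying that the product $F_n\cdots F_m$ of banded incidence matrices has band half-width at most $\sum_{i=m}^n t_i$.
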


We will call the set of vertices $C_v^{(n+1)} = \{s(E(V_m, v)): m \leq n\}$ the \textit{upper cone} corresponding to the vertex $v \in V_{n+1}$. Note that the union of the cylinder sets that correspond to the finite paths from $E(V_0, v)$ forms the tower $X_v^{(n+1)}$, and the upper cone $C_v^{(n+1)}$ consists of all vertices from the levels $0, \ldots, n$, which are connected to $v \in V_{n+1}$ by finite paths.

\begin{theorem}\label{Thm sect 3 adding edges}
Let $\ol B = (V,\ol E)$ be a generalized Bratteli diagram of bounded size defined by a sequence $(t_n, L_n)_{n \in \N_0}$ (see \Cref{Def:BD_bdd_size}). Let $\ov \mu$ be a probability tail invariant measure on $\ov B$. Then there exists an edge extension $B$ of $\ol B$ obtained by adding countably many edges to $\ol E_n$ for every level $n \in \N_0$, such that the measure extension $\wh {\ov \mu}$ to $X_B$ is finite. 
\end{theorem}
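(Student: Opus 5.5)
We will build $B$ from $\ol B$ by adding edges whose number is small relative to $\ol\mu$, and then verify finiteness with the edge-subdiagram formula of \Cref{edge_fin_krit}. There $\ol B$ is an edge subdiagram of $B$, and $F'_n = F_n - \ol F_n$ counts the added edges. The formula reads
\[
\wh{\ol\mu}(\wh X_{\ol B}) = 1+\sum_{n=0}^\infty \sum_{v\in V_{n+1}}\sum_{w\in V_n} {f'}^{(n)}_{v,w} H^{(n)}_w \ol p^{(n+1)}_v .
\]
So it suffices to choose the added edges so that the $n$-th term is at most $2^{-n}$. The difficulty is that $H^{(n)}_w$ is the height in the \emph{new} diagram $B$, which depends on all edges added at earlier levels. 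We therefore construct the extension level by level, by induction on $n$.

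The construction at level $n$ runs as follows. Suppose the edges of $B$ between levels $0,\dots,n$ are already fixed, so all heights $H^{(n)}_w$, $w\in V_n$, are known finite numbers. By \Cref{upper_cone}, the upper cone of each $v$ is finite, and every vertex has finitely many incoming edges in $B$ as long as we add finitely many edges into each vertex. Hence every $H^{(n)}_w$ is finite. For each $v\in V_{n+1}$ we add at most one edge (or finitely many) ending at $v$, with source $w(v)\in V_n$ chosen in the window $\{v-t_n,\dots,v+t_n\}$. This keeps the diagram of bounded size with the same $t_n$; only $L_n$ grows, to at most $L_n+1$. We do this only for those $v$ with $\ol p^{(n+1)}_v>0$, and only if
\[
H^{(n)}_{w(v)}\,\ol p^{(n+1)}_v \le 2^{-n}\,2^{-|v|-2}.
\]
If the inequality fails for every admissible $v$, we add the edge at a far-away vertex instead. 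The key point is that $\ol\mu$ is a probability measure, so $\sum_{v}\ol H^{(n+1)}_v\ol p^{(n+1)}_v=1$, and hence $\ol p^{(n+1)}_v\to0$ as $|v|\to\infty$. Choosing an infinite set $S_n\subset V_{n+1}$ of vertices with $\ol p^{(n+1)}_v$ small enough relative to the (fixed) $H^{(n)}_{w(v)}$ therefore makes the level-$n$ increment at most $2^{-n}$, while countably many edges are still added at every level, as the statement requires. The case $\ol p_v^{(n+1)}=0$ for all but finitely many $v$ (so that $\ol\mu$ is not full) is harmless, since edges into such $v$ contribute zero.

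Summing over $n$ then gives $\wh{\ol\mu}(\wh X_{\ol B})\le 1+\sum_n 2^{-n}<\infty$. We also check that $\ol B$ is really an edge subdiagram of $B$, i.e.\ that $\ol F_n\le F_n$, which holds by construction. The main obstacle is the dependence of the level-$n$ heights on the earlier additions. The inductive scheme handles it because, at the moment we choose the level-$n$ edges, the heights $H^{(n)}_w$ are already determined and finite, and the choice of $S_n$ is made afterwards using the decay $\ol p^{(n+1)}_v\to 0$ along $v$. The one point to double-check is that this decay holds along an infinite set of $v$, which follows from the summability of $\ol H_v^{(n+1)}\ol p_v^{(n+1)}$ together with $\ol H_v^{(n+1)}\ge1$.
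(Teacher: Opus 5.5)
Your overall strategy is sound and differs from the paper's. You build $B$ level by level and bound each increment of the exact formula \eqref{eq s3 edge meas ext}, using that the heights $H^{(n)}_w$ in $B$ depend only on $F_0,\dots,F_{n-1}$, so they are fixed before $F'_n$ is chosen.

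\textbf{The gap.} The decisive step is not justified as written. To get an infinite $S_n$ with $\sum_{v\in S_n}H^{(n)}_{w(v)}\ol p^{(n+1)}_v\le 2^{-n}$, you need the \emph{product} $H^{(n)}_{w(v)}\ol p^{(n+1)}_v$ to be small along an infinite set of $v$. The decay $\ol p^{(n+1)}_v\to0$ does not give this, because $H^{(n)}_{w(v)}$ changes with $v$. Finiteness of each individual height says nothing about its growth in $|v|$; it is all you extract from \Cref{upper_cone}, and it holds in any generalized Bratteli diagram anyway. So the point you flag ``to double-check'' is the wrong one.

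Your selection criterion $H^{(n)}_{w(v)}\ol p^{(n+1)}_v\le 2^{-n}2^{-|v|-2}$ can also pick out only finitely many vertices. Since $\ol H^{(n+1)}_v\le\prod_{i\le n}L_i$ and $H^{(n)}_{w(v)}\ge1$, you have $H^{(n)}_{w(v)}\ol p^{(n+1)}_v\ge \ol\mu(\ol X^{(n+1)}_v)/\prod_{i\le n}L_i$. Hence, whenever the tower masses $\ol\mu(\ol X^{(n+1)}_v)$ decay only polynomially in $|v|$, the inequality fails for all large $|v|$. The fallback ``add the edge at a far-away vertex'' is then left unjustified.

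\textbf{The fix.} The missing ingredient is a uniform height bound, and this is exactly where bounded size is needed. Add exactly one edge into each chosen vertex, not ``finitely many'' with no uniform bound. Then every $u\in V_{i+1}$ has at most $L_i+1$ incoming edges in $B$, so $H^{(n)}_w\le K_n:=\prod_{i=0}^{n-1}(L_i+1)$ for all $w\in V_n$, and $K_n$ is known in advance. Since $\sum_v\ol p^{(n+1)}_v\le\sum_v\ol H^{(n+1)}_v\ol p^{(n+1)}_v=1$, choose $N_n$ with $\sum_{|v|>N_n}\ol p^{(n+1)}_v<2^{-n}/K_n$. Then add one edge, say parallel to an existing one, into every $v\in V_{n+1}$ with $|v|>N_n$. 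This adds infinitely many edges at every level, and the level-$n$ increment is below $2^{-n}$. With this bound the induction is not even needed, since each $N_n$ depends only on $\ol\mu$ and the $L_i$.

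\textbf{Comparison with the paper.} The paper adds edges diagonally: at step $n$ it adds one edge to each of $\ol E_0,\dots,\ol E_{n-1}$, each parallel to an edge of $\ol B$. The ranges are pushed out until $\ol\mu(\ol X^{(n)}_w)<2^{-k}$, and the upper cones are kept pairwise disjoint via \Cref{upper_cone}. This ensures the new mass through each added edge is at most $\ol\mu$ of a single tower of $\ol B$. Once repaired, your route is simpler. The exact formula of \Cref{edge_fin_krit} already accounts for paths through several added edges, so no disjointness of cones is needed; you use the uniform height bound instead.
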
 

\begin{proof} Identify the set of vertices $V_n$ of $B$ at each level $n \in \N_0$ with integers. We first describe the procedure of adding edges to the diagram $\ol B$ to obtain $B$. At the first step of the construction, we add exactly one edge to $\ol E_0$. At the second step, we add one edge each to $\ol E_0$ and $\ol E_1$. Proceeding in this fashion at the $n$-th step, we add one edge to each $\ol E_0,\ldots, \ol E_{n-1}$.

Recall that for bounded size Bratteli diagrams, the set of vertices that can be reached from a given vertex and lie above this vertex forms the upper cone (see \Cref{upper_cone} above). We add the edges to the diagram $\ov B$ so that the upper cones corresponding to the vertices that form the ranges of the added edges, are pairwise disjoint. Now we estimate the measure of the added infinite paths using the measures of the towers of the diagram $\ov B$ corresponding to the range of the added edges. By choosing the range of the added edges to be a large enough number, we ensure that the measures of the corresponding towers are small and form a convergent series. 
We will choose additional edges so that the measure extension of the path space on step $n$ is bounded from above as follows:
$$
\wh{\ol\mu}(\wh X^{(n)}_{\ol B}) \leq 
\sum_{k = 0}^{l_n} \frac{1}{2^k},
$$
where for $n \in \N$, $l_n = 1 + 2 +\ldots + n$. In this process, overall, we will add infinitely many edges to each level $E_n$ of the diagram, and the measure extension to the whole path space $\wh X_{\ov B}$ will be finite. Now we discuss how the bound mentioned above could be achieved. Since $\ov \mu$ is a  probability measure, we have
$$
\sum_{w \in V_n} \ov \mu (\ov X^{(n)}_w) =1, \qquad n \in \N_0.
$$
Thus, for every $ n \in \N_0$ and every $k\in \mathbb{N}$ 
there exists $m^{(n+1)}_k\in V_{n+1}$ such that
for every $w \in V_{n+1}$ with $w \geq m^{(n+1)}_k$ one has
$$
\ov \mu (\ov X^{(n+1)}_w) < \frac{1}{2^k}.
$$

In the first step, pick any vertex $w^{(1)}_1 \in V_1$ such that $w^{(1)}_1 \geq m_1^{(1)}$, and
add an edge to $\ov E_0$  such that this edge connects the vertex $w^{(1)}_1$ at level $V_1$ to a vertex $u^{(0)}_1$ at level $V_0$. Choose $u^{(0)}_1$ such that in the initial diagram we had $E(w^{(1)}_1, u^{(0)}_1) \neq \emptyset$.
We will not add any more edges in this step. 
Since $E(w^{(1)}_1, u^{(0)}_1) \neq \emptyset$, the measure of the paths that we have added is not greater than $\ov \mu(\ov X^{(1)}_{w^{(1)}_1} \cap \ov X^{(0)}_{u^{(0)}_1})$ and hence is not greater than $\ov \mu(\ov X^{(1)}_{w^{(1)}_1})$.
Thus, after the first step, the measure extension $\wh {\ov \mu}$ to the path space $\wh X^{(1)}_{\ov B}$ can be estimated from the above by
$$
1 + \ov \mu(\ov X^{(1)}_{w^{(1)}_1}) 
< 1 + \frac{1}{2}. 
$$

In the second step, pick a vertex $w_2^{(1)} \in V_1$ 
such that $w_2^{(1)} > \max\{w_1^{(1)}, m_2^{(1)}\}$. Similar to the first step, add an edge to $\ov E_0$ such that this edge connects the vertex $w^{(1)}_2$ at level $V_1$ to a vertex $u^{(0)}_2$ at level $V_0$. Choose $u^{(0)}_2$ so that, in the initial diagram, there is already an edge between $w^{(1)}_2$ and $u^{(0)}_2$. Also, we pick a vertex $w_3^{(2)} \in V_2$ 
such that $w_3^{(2)} > \max\{m_3^{(2)}, w_2^{(1)} + t_1\}$. This will ensure that
$$
\ov \mu (\ov X^{(2)}_{w_3^{(2)}}) < \frac{1}{2^3}
$$
and the ``upper cone'' that corresponds to $w_3^{(2)}$ does not contain previously chosen vertices $w_1^{(1)}$, $w_2^{(1)}$. 
Similarly as before, add an edge to $\ov E_1$ that connects the vertex $w^{(2)}_3$ at level $V_2$ to a vertex $u^{(1)}_3$ at level $V_1$. Choose $u^{(1)}_3$ such that in the initial diagram we had $E(w^{(2)}_3, u^{(1)}_3) \neq \emptyset$.
Thus, after the second step, the measure extension to the new path space will be estimated from the above by
$$
1 + \ov \mu(\ov X^{(1)}_{w^{(1)}_1}) + \ov \mu(\ov X^{(1)}_{w^{(1)}_2}) + \ov \mu(\ov X^{(2)}_{w^{(2)}_3}) 
< 1 + \frac{1}{2} + \frac{1}{2^2} + \frac{1}{2^3}. 
$$ Proceed similarly by choosing vertices $w^{(n)}_l$ and adding edges $e \in \ov E_{n-1}$ connecting $w^{(n)}_l \in V_n$ and $u^{(n-1)}_l \in V_{n-1}$ such that there already existed an edge between $w^{(n)}_l$ and $u^{(n-1)}_l$, and the following conditions hold: 
$$
w^{(n)}_l > \max\{m^{(n)}_l, w^{(n-1)}_{l-1} + t_{n-1}\}$$
for $n > 1$, and 
$$
w^{(1)}_{l_n + 1} > \max\{m^{(1)}_n, w^{(n)}_{l_{n}} + \sum_{k = 1}^{n-1}t_{k}\}.
$$ Then, the upper cones corresponding to all vertices $w_l^{(n)}$ are pairwise disjoint and
$$
\wh{\ol \mu} (\wh X_{\ol B}) < \sum_{k = 0}^{\infty} \frac{1}{2^k} < \infty. 
$$
\end{proof}


\section{Measure extension for certain classes of generalized Bratteli diagrams}
\label{sect ext from classes sbdgrms} 
In this section, we apply the results proved in \Cref{sect meas ext subdgrms} to some classes of generalized Bratteli diagrams. We will consider measure extensions from a variant of odometer (which we call a \textit{fat odometer}, see \Cref{sec: fat odometer}) and subdiagrams $\ol B$ defined by finite 
subsets $\ol W_n$. 

\subsection{Measure extension from simple subdiagrams} 

Recall that a standard Bratteli diagram $(V,E)$ is called \textit{simple} if there exists a telescoping $(\widetilde{V},\widetilde{E})$ of
$(V,E)$ so that the incidence matrices of $(\widetilde{V},\widetilde{E})$ have non-zero entries at each level. 

Let $B = B(F_n)$ be a generalized Bratteli diagram. In this section, we will impose some additional conditions on the sequence matrices $F_n$ (in particular, we will assume that these matrices have equal row sums). Let $\ov B = B(W_n)$ be a vertex subdiagram of $B$ where for each $n \in \N_0$, $W_n \subset V_n$ 
and $W_n$ is a finite set. In other words, $B(W_n)$ denotes the subdiagram of $B$ supported by vertices $W_n \subset V_n$ for $n \in \N_0$. Suppose that $\ol B$ is a simple standard Bratteli diagram. 
Hence, without loss of generality, we can assume that $E(w, v) \neq \emptyset$ for all $w \in W_n$ and $v \in W_{n+1}$, $n \in N_0$. Let $\nu$ be a probability tail invariant measure on $X_{\ov B}$. For $n \in \N_0$, let $\ov p_v^{(n+1)} = \nu([\ov e])$ for a cylinder set $[\ov e] \subset X_{\ov B}$ with $r(\ov e) = v \in W_{n+1}$. For $n \in \N_0$, we denote by $\ov F_n$ the restriction of $F_n$ onto $\ov B$. Hence, the sequence $(\ol F_n)_{n \in \N_0}$ denotes the incidence matrices for $\ol B$.
This means that $\ol f^{(n)}_{v w} = f^{(n)}_{v w}$ if and only if
$v \in \ol W_{n+1}$ and $w \in \ol W_n$, $n \in \N_0$.
We set
$$
\ov M_n = \max\{f_{v,w}^{(n)} \;| \; v \in W_{n+1}, w \in W_n\}
$$
and 
$$
\ov m_n = \min\{f_{v,w}^{(n)} \; |\;  v \in W_{n+1}, w \in W_n\},
$$
where $n \in \N_0$. 
We are interested in the following \textit{problem:} Find conditions on $B$ and $\ov B$ under which every probability tail invariant measure $\nu$ on $X_{\ov B}$ has a finite extension $\wh \nu$ to the set $\wh X_{\ov B}$.

For $n \in \N_0$, let $\ov H_w^{(n)}$ denote the height of the tower $\ov X_w^{(n)}$, $w \in W_n$, in the subdiagram $\ov B$ (see \eqref{formula for heights}).

\begin{lemma}\label{claim:bd_heighs}  Let $\ov m_n$, $\ov M_n$, and $W_n$ be as above. Also, for $v \in W_n$, let $\ov H_v^{(n)}$ denote the height of the $v$-tower in $\ol B$. Then
    $$
    \prod_{i = 0}^n \ov m_i |W_i| \leq \ov H_v^{(n+1)} \leq \prod_{i = 0}^n \ov M_i |W_i|,\quad n\in \N_0.
    $$    
\end{lemma}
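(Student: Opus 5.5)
The plan is an induction on $n$ using the recursion \eqref{formula for heights}, $\ov H^{(n+1)}_v = \sum_{w\in W_n}\ov f^{(n)}_{v,w}\,\ov H^{(n)}_w$ for $v\in W_{n+1}$. We work under the standing assumption of this subsection: $\ov B$ is simple and already telescoped, so $\ov f^{(n)}_{v,w}=f^{(n)}_{v,w}\ge 1$ for all $v\in W_{n+1}$, $w\in W_n$. Hence $\ov m_n\ge 1$, and by definition
$$\ov m_n\le \ov f^{(n)}_{v,w}\le \ov M_n \quad\text{for all } v\in W_{n+1},\ w\in W_n.$$
The point to keep in mind is that the sum over $w$ runs over all of $W_n$, with every term nonzero. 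This is what produces the factor $|W_n|$ on both sides.

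\emph{Base case $n=0$.} Since $\ov H^{(0)}_w=1$ for all $w\in W_0$, we have $\ov H^{(1)}_v=\sum_{w\in W_0}\ov f^{(0)}_{v,w}$. This sum has exactly $|W_0|$ terms, each lying in $[\ov m_0,\ov M_0]$. Therefore
$$\ov m_0|W_0|\le \ov H^{(1)}_v\le \ov M_0|W_0|.$$

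\emph{Inductive step.} Assume that for every $w\in W_n$
$$\prod_{i=0}^{n-1}\ov m_i|W_i|\le \ov H^{(n)}_w\le\prod_{i=0}^{n-1}\ov M_i|W_i|.$$
Substitute these bounds into the recursion for $\ov H^{(n+1)}_v$, together with the bounds on the entries $\ov f^{(n)}_{v,w}$. The sum again has exactly $|W_n|$ terms. Each term is at least $\ov m_n\prod_{i<n}\ov m_i|W_i|$ and at most $\ov M_n\prod_{i<n}\ov M_i|W_i|$. Summing over the $|W_n|$ terms gives the claimed inequality at level $n+1$.

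There is no genuine obstacle here. The only point needing care is justifying that every entry between consecutive levels of $\ov B$ is positive, since otherwise the lower bound fails. This positivity comes from the simplicity assumption and the telescoping convention stated just before the lemma. The upper bound holds even without it.
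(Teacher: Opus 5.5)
Your proof is correct and is essentially the paper's own argument: induction on $n$ via $\ov H^{(n+1)}_v=\sum_{w\in W_n}\ov f^{(n)}_{v,w}\,\ov H^{(n)}_w$, where each of the $|W_n|$ summands is bounded termwise using $\ov m_n\le \ov f^{(n)}_{v,w}\le \ov M_n$ together with the inductive hypothesis. One small correction to your closing remark: positivity of the entries is not actually needed for the lower bound, since a zero entry forces $\ov m_n=0$, and the lower bound then holds trivially.
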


\begin{proof}
Recall that $\ov H_w^{(0)} = 1$ for all $w \in W_0$ and for $n \in \N$, we have $\ov F_{n-1} \ \ \cdots \ov F_0\  \ov H^{(0)} = \ov H^{(n)}$. Since $\ov B$ is simple, we have, for all $v \in W_1$, 
    $$
    \ov m_0 |W_0| \leq \ov H_v^{(1)} = \sum_{w \in W_0} f_{v,w}^{(0)} \leq \ov M_0 |W_0|.
    $$
    It is easy to see that by induction on $n \in \N_0$, we have
    $$
    \ov H_v^{(n+1)} = \sum_{w \in W_n} f_{v,w}^{(n)}\ov H_w^{(n)} \leq \prod_{i = 0}^{n-1}\ov M_i|W_i|\sum_{w \in W_n} f_{v,w}^{(n)} \leq \prod_{i = 0}^n \ov M_i |W_i|.
    $$
    Similarly,
    $$
    \ov H_v^{(n+1)} \geq \prod_{i = 0}^n \ov m_i |W_i|.
    $$
\end{proof}

\begin{lemma}\label{claim:bd_meas_cyl}
For $\ov m_n$, $\ov M_n$, $W_n$, $\ov H_w^{(n)}$ and $\ov p_w^{(n)}$ as above, we have  
    \begin{equation}\label{eq:meas_cyl}
    \frac{1}{\prod_{i = 0}^n\ov M_i|W_i|} \leq \sum_{v \in W_{n+1}}\ov p_v^{(n+1)} \leq \frac{1}{\prod_{i = 0}^n\ov m_i|W_i|}, \quad n\in \N_0.
\end{equation}
    \end{lemma}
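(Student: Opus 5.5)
The plan is to combine the normalization of the probability measure $\nu$ on $X_{\ov B}$ with the height bounds of \Cref{claim:bd_heighs}. Since $\nu$ is a probability tail invariant measure on the path space of the standard diagram $\ov B$, the towers $\ov X_v^{(n+1)}$, $v \in W_{n+1}$, partition $X_{\ov B}$. By \eqref{eq: p_v_n_bar}, each tower has measure $\ov H_v^{(n+1)} \ov p_v^{(n+1)}$, so
\begin{equation*}
\sum_{v \in W_{n+1}} \ov H_v^{(n+1)}\, \ov p_v^{(n+1)} = 1, \quad n \in \N_0.
\end{equation*}
This identity is the only measure-theoretic input needed.

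Next, write $L_n = \prod_{i=0}^n \ov m_i |W_i|$ and $U_n = \prod_{i=0}^n \ov M_i |W_i|$. \Cref{claim:bd_heighs} gives $L_n \le \ov H_v^{(n+1)} \le U_n$ uniformly in $v \in W_{n+1}$. Since all $\ov p_v^{(n+1)} \ge 0$, we can insert these bounds into the identity termwise:
\begin{equation*}
L_n \sum_{v \in W_{n+1}} \ov p_v^{(n+1)} \le 1 \le U_n \sum_{v \in W_{n+1}} \ov p_v^{(n+1)}.
\end{equation*}
Dividing by $L_n$ and by $U_n$, both of which are positive, yields \eqref{eq:meas_cyl}.

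The only point needing care is that $L_n > 0$, so that the division is legitimate. This is where simplicity enters. We have already assumed, after telescoping, that $E(w,v)\neq\emptyset$ for all $w \in W_n$ and $v \in W_{n+1}$. Hence $\ov m_i \ge 1$, and $|W_i| \ge 1$ because the $W_i$ are nonempty finite sets, which gives $L_n \ge 1$. Finiteness of $W_{n+1}$ ensures all sums are finite, so the argument needs no convergence considerations. I expect no real obstacle beyond checking that the telescoping assumption is in force for the lower bound.
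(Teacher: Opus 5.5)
Your proposal is correct and follows the paper's proof: both combine the normalization $\sum_{v \in W_{n+1}} \ov H_v^{(n+1)} \ov p_v^{(n+1)} = 1$ with the uniform height bounds of \Cref{claim:bd_heighs}. Your remark that simplicity gives $\ov m_i \ge 1$, so the lower product is positive and the division is legitimate, is a detail the paper leaves implicit.
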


\begin{proof} Since $\nu(X_{\ov B}) = 1$, from \eqref{eq: p_v_n_bar} it follows
$$
\sum_{v \in W_{n+1}} \ov p_v^{(n+1)} \ov H_v^{(n+1)} = 1.
$$ Thus \eqref{eq:meas_cyl} follow from the equality above and \Cref{claim:bd_heighs}.
\end{proof}
Recall our main formula for finding the value of the measure extension \eqref{eq: exten_representation}: 
\begin{equation}\label{eq:meas_ext}
    \wh \nu(\wh X_{\ov B}) = 1 + \sum_{n = 0}^{\infty} \sum_{v \in W_{n+1}}\sum_{w \in W_n'} f_{v,w}^{(n)} \ov p_v^{(n+1)}H_w^{(n)},
\end{equation}
where $W_n' = V_n \setminus W_n$, and $\ov p_v^{(n+1)} = \nu([\ov e])$ for a cylinder set $[\ov e] \subset X_{\ov B}$ with $r(\ov e) = v \in W_{n+1}$.
Denote for $n \in \N_0$,
\be\label{eq s4 M'_n}
M'_n = \max\{f_{v,w}^{(n)} \;| \; v \in W_{n+1}, w \in W'_n\}
\ee
and 
\be\label{eq s4 m'_n}
m_n' = \min\{f_{v,w}^{(n)} \; |\;  v \in W_{n+1}, w \in W'_n\}.
\ee

\begin{remark}
Formally, for $n \in \N_0$, the set $W_n'$ is infinite, but in the measure extension procedure, we are interested only in the finite set of vertices 
$$
W_n'' = \bigcup_{v \in W_{n+1}}\{w \in W_n' \; | \;f_{v,w}^{(n)} > 0\}
$$ 
because $W_{n+1}$ is finite and $|r^{-1}(v)| < \infty$ for every $v \in W_{n+1}$. To simplify the notation, we continue using the notation $W_n'$ below, which means the set of vertices $W_n''$. For $n \in \N_0$, we also use the notation $\ol M_n$, $\ol m_n$, $M'_n$, and $m_n'$ introduced above. Recall that the term ERS refers to the property of \textit{equal row sums} (see \Cref{Sec: Basics}).
\end{remark}

\begin{theorem}\label{thm meas ext finite sbdgrm}
Suppose that the generalized Bratteli diagram $B= B(F_n)$ has the ERS property with parameters $(r_n)_{n=0}^{\infty}$. In other words, for all $n \in \N_0$,
\[\sum_{w \in V_n} f_{v,w}^{(n)} = r_n, \quad \forall v \in V_{n+1}.\]
Let $\ol B = B(W_n)$ be a simple standard subdiagram as above. If the set $\left\{\frac{M'_n}{m'_n} : n \in \N\right\}$ is bounded, and 
$$
\lim_{n\to\infty}\alpha_n : = \lim_{n\to\infty} \prod_{i = 0}^n \frac{r_i}
{\ov m_i |W_i|}
$$
exists, then for every probability tail invariant measure  $\nu $ the 
measure extension $\wh \nu(\wh X_{\ol B})$ is finite.
\end{theorem}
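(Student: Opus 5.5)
The plan is to compute the series in \eqref{eq:meas_ext} directly, using the ERS property to make all heights in $B$ explicit, and then to show that it telescopes against the sequence $(\alpha_n)$. First, by the ERS property and $F_n H^{(n)} = H^{(n+1)}$ with $H^{(0)} \equiv 1$, induction gives $H_w^{(n)} = r_0 \cdots r_{n-1}$ for every $w \in V_n$, with the empty product equal to $1$ for $n=0$. Substituting this into \eqref{eq:meas_ext} gives
\[
\wh\nu(\wh X_{\ov B}) = 1 + \sum_{n=0}^\infty \Big(\prod_{i=0}^{n-1} r_i\Big) \sum_{v\in W_{n+1}} \ov p_v^{(n+1)} \sum_{w\in W'_n} f^{(n)}_{v,w}.
\]

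Next, I bound the inner sum over $W'_n$. For $v \in W_{n+1}$ we have $\sum_{w\in W'_n} f^{(n)}_{v,w} = r_n - \sum_{w \in W_n} f^{(n)}_{v,w}$. We assumed (after telescoping, using simplicity) that $E(w,v)\neq\emptyset$ for all $w\in W_n$ and $v\in W_{n+1}$. Hence every $f^{(n)}_{v,w}$ with $w \in W_n$ is at least $\ov m_n$, and so $\sum_{w\in W'_n} f^{(n)}_{v,w} \le r_n - \ov m_n|W_n|$, uniformly in $v$. In particular $r_n \ge \ov m_n |W_n|$, so the ratios $r_i/(\ov m_i|W_i|)$ are at least $1$ and $(\alpha_n)$ is nondecreasing. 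Combining this with the upper bound in \Cref{claim:bd_meas_cyl}, namely $\sum_{v\in W_{n+1}} \ov p_v^{(n+1)} \le 1/\prod_{i=0}^n \ov m_i|W_i|$, the $n$-th term is at most
\[
\frac{\prod_{i=0}^{n-1} r_i\,\bigl(r_n - \ov m_n|W_n|\bigr)}{\prod_{i=0}^{n} \ov m_i|W_i|} = \alpha_n - \alpha_{n-1},
\]
where we set $\alpha_{-1} := 1$.

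Summing over $n$, the series telescopes, and I obtain $\wh\nu(\wh X_{\ov B}) \le 1 + \lim_n \alpha_n - 1 = \lim_n \alpha_n$. This is finite provided the assumed limit is a finite number, which is how I read ``exists''. Since $(\alpha_n)$ is monotone, existence of a finite limit is the same as boundedness. The argument does not use the specific measure $\nu$ beyond \Cref{claim:bd_meas_cyl}, so the conclusion holds for every probability tail invariant measure on $X_{\ov B}$.

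The boundedness of $M'_n/m'_n$ does not seem to be needed for this upper bound. I expect it to matter only for the companion necessity statement, where one bounds the terms from below by means of $\ov M_n$ and $m'_n$. The only delicate point is the uniform bound $\sum_{w\in W_n} f^{(n)}_{v,w}\ge \ov m_n|W_n|$, which relies on the reduction to a telescoped simple subdiagram. Telescoping changes the levels and hence the $r_n$, $\ov m_n$ and $|W_n|$. I would therefore state explicitly that the hypotheses are imposed on the telescoped diagram, and that the total extended measure is invariant under telescoping.
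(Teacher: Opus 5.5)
Your argument is correct, and at the key step it takes a different and sharper route than the paper. The paper bounds the inner sum crudely, $\sum_{w\in W'_n} f^{(n)}_{v,w}\le M'_n|W'_n|$. It then controls the cardinality through $r_n\ge \ov m_n|W_n|+m'_n|W'_n|$, that is, $|W'_n|\le (r_n-\ov m_n|W_n|)/m'_n$. This gives $\wh\nu(\wh X_{\ov B})\le 1+\sum_n \frac{M'_n}{m'_n}(\alpha_n-\alpha_{n-1})$, and the boundedness of $M'_n/m'_n$ is used only to compare this with the telescoping series.

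You instead apply the ERS identity row by row: $\sum_{w\in W'_n} f^{(n)}_{v,w}=r_n-\sum_{w\in W_n}f^{(n)}_{v,w}\le r_n-\ov m_n|W_n|$. This never passes through $|W'_n|$, and it buys two things:
\begin{enumerate}
\item The hypothesis on $M'_n/m'_n$ becomes superfluous. So does the tacit requirement $m'_n>0$, i.e.\ that every $v\in W_{n+1}$ be joined to every relevant vertex of $W'_n$, which the paper needs in order to divide by $m'_n$.
\item You get the explicit bound $\wh\nu(\wh X_{\ov B})\le\lim_n\alpha_n$, instead of $1+K(\lim_n\alpha_n-1)$ with $K=\sup_n M'_n/m'_n$.
\end{enumerate}
Your observation that $r_n\ge\ov m_n|W_n|$ makes $(\alpha_n)$ nondecreasing, so that ``the limit exists'' must be read as ``is finite'', is also correct; the paper uses this implicitly.

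One small precision about the point you call delicate. The inequality $\sum_{w\in W_n}f^{(n)}_{v,w}\ge\ov m_n|W_n|$ holds directly from the definition of $\ov m_n$ as a minimum. Simplicity, in the form $E(w,v)\neq\emptyset$ that the paper builds into the standing assumptions, is needed only to have $\ov m_n\ge 1$, so that $\alpha_n$ is defined. Your telescoping caveat therefore concerns the paper's ``without loss of generality'' reduction, not your proof of the statement as formulated.
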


\begin{proof} Recall that $\ov H_w^{(0)} = 1$ for all $w \in W_0$. Using the fact that for $n \in \N$, we have $\ov F_{n-1} \ \ \cdots \ov F_0\  \ov H^{(0)} = \ov H^{(n)}$ and the ERS property we get that, for all $n\in \N$ and all $w \in V_n$, $H_w^{(n)} = r_0 \ \cdots \ r_{n-1}$. It follows from the definition of $M'_n$ and \eqref{eq:meas_ext} that
\begin{align*}
    \wh \nu(\wh X_{\ov B}) \leq &\ 1 + \sum_{n = 0}^{\infty} \sum_{v \in W_{n+1}}\sum_{w \in W_n'} M_n'\;\ov p_v^{(n+1)}(r_0\ \cdots\ r_{n-1})\\
= &\ 1 + \sum_{n = 0}^{\infty} M_n'\; (r_0\ \cdots\ r_{n-1})|W_n'|\sum_{v \in W_{n+1}} \ov p_v^{(n+1)},
\end{align*}
where we set $r_{-1} = 1.$
Thus, from \Cref{claim:bd_meas_cyl} it follows
\[
\wh \nu(\wh X_{\ov B}) \leq 1 + \sum_{n = 0}^{\infty} M_n'\; (r_0\ \cdots\ r_{n-1})|W_n'| \frac{1}{\prod_{i = 0}^n\ov m_i|W_i|}
\] 
Note that, for $n \in \N_0$ and every $v \in V_{n+1}$, we have
$$
r_n = \sum_{w \in V_n} f_{v,w}^{(n)} = \sum_{w \in W_n} f_{v,w}^{(n)} + \sum_{w \in W_n'} f_{v,w}^{(n)} \geq \ov m_n |W_n| + m_n'|W_n'|.
$$
Thus
$$
|W_n'| \leq \frac{r_n - \ov m_n|W_n|}{m_n'}. 
$$
Hence, we have
$$
\ba
\wh \nu(\wh X_{\ov B}) \leq &\ 1 +  \sum_{n = 0}^{\infty} \frac{M_n'}{m_n'}(r_0 \cdots r_{n-1})(r_n - \ov m_n |W_n|)\frac{1}{\prod_{i = 0}^n\ov m_i |W_i|}\\
=&\ 1 + \sum_{n = 0}^{\infty} \left(\frac{M_n'}{m_n'}\frac{r_0 \cdots r_{n-1}r_n}{\prod_{i = 0}^n\ov m_i |W_i|} - \frac{M_n'}{m_n'}\frac{r_0 \cdots r_{n-1}}{\prod_{i = 0}^{n-1}\ov m_i |W_i|}\right) \\ 
= & \ 1 + \sum_{n = 0}^{\infty}\frac{M_n'}{m_n'} (\alpha_n - 
\alpha_{n-1}),
\ea
$$ 
where $\alpha_{-1} =1$.

By the assumption of the theorem, we deduce from the last inequality that 
$\wh \nu(\wh X_{\ov B}) < \infty$.
\end{proof}

The proof of \Cref{thm meas ext finite sbdgrm} is essentially based on an application of \Cref{claim:bd_heighs}, where we bound the heights of the subtowers using the equal row sum property.

Next, we discuss the necessary conditions for the finiteness of the measure extension from the subdiagram $\ol B = B(W_n)$. We use the notation introduced in \Cref{thm meas ext finite sbdgrm}. 

\begin{theorem}\label{thm ness cond finite sbdgrm} 
Let $B$ and $\ol B$ be as in \Cref{thm meas ext finite sbdgrm}.
Let $\nu$ be a probability tail invariant measure on $X_{\ol B}$.
If the extension of $\nu$ to $\wh X_{\ol B}$ is finite and the set 
$\left\{\frac{m'_n}{M'_n} : n \in \N\right\}$ is bounded from below by a positive constant $\delta$, then the limit 
$$
\lim_{n\to \infty} \beta_n := \lim_{n\to \infty} 
  \prod_{i = 0}^n \frac{r_i}{\ov M_i |W_i|}
$$
exists. 
\end{theorem}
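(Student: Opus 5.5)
Mirror the proof of \Cref{thm meas ext finite sbdgrm}, reversing every inequality. Start from the exact formula \eqref{eq:meas_ext}. By the ERS property, $H_w^{(n)} = r_0\cdots r_{n-1}$ for every $w\in V_n$. Bound each $f^{(n)}_{v,w}$ with $v\in W_{n+1}$, $w\in W_n'$ from below by $m_n'$, so the $n$-th term is at least
$$
m_n'\,(r_0\cdots r_{n-1})\,|W_n'|\sum_{v\in W_{n+1}}\ov p_v^{(n+1)} .
$$
Then use the left inequality of \Cref{claim:bd_meas_cyl}, namely $\sum_{v\in W_{n+1}}\ov p_v^{(n+1)}\ge 1/\prod_{i=0}^n \ov M_i|W_i|$. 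This gives a lower bound of the $n$-th term by $m_n'|W_n'|\,(r_0\cdots r_{n-1})/\prod_{i=0}^n\ov M_i|W_i|$.

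Next, bound $|W_n'|$ from below via the row sum. For each $v\in W_{n+1}$,
$$
r_n=\sum_{w\in W_n}f^{(n)}_{v,w}+\sum_{w\in W_n'}f^{(n)}_{v,w}\le \ov M_n|W_n|+M_n'|W_n'| .
$$
Here $W_n'$ is understood as the finite set $W_n''$ from the preceding remark. Hence $|W_n'|\ge (r_n-\ov M_n|W_n|)/M_n'$, and the $n$-th term is at least
$$
\frac{m_n'}{M_n'}\Big(\frac{r_0\cdots r_n}{\prod_{i=0}^n\ov M_i|W_i|}-\frac{r_0\cdots r_{n-1}}{\prod_{i=0}^{n-1}\ov M_i|W_i|}\cdot\frac{1}{1}\Big)=\frac{m_n'}{M_n'}(\beta_n-\beta_{n-1}),
$$
with $\beta_{-1}=1$. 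Note that the second summand is exactly $\beta_{n-1}$ because the factor $\ov M_n|W_n|$ cancels. Using $m_n'/M_n'\ge\delta$, we get
$$
\infty>\wh\nu(\wh X_{\ov B})\ge 1+\delta\sum_{n\ge 0}(\beta_n-\beta_{n-1}) .
$$

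To conclude, observe that $\beta_n-\beta_{n-1}=\beta_{n-1}\big(r_n/(\ov M_n|W_n|)-1\big)$. The bound above is only useful if the increments have a sign; this is the main obstacle. Handle it as follows. Since $M_n'|W_n'|\ge 0$ in the row-sum inequality, $\beta_n-\beta_{n-1}\ge \beta_{n-1}(M_n'|W_n'|)/(\ov M_n|W_n|)\ge 0$ whenever $W_n'$ (that is, $W_n''$) is nonempty. If $W_n''$ is empty, the full row lies in $W_n$, and still $r_n\ge$ (row sum over $W_n$). Here I would check more carefully, since $\ov M_n|W_n|$ may exceed $r_n$. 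To sidestep this, I would instead use directly the sharper per-row bound $|W_n'|\,M_n'\ge r_n-\sum_{w\in W_n}f_{v,w}^{(n)}\ge 0$, keeping the nonnegative quantity. The main point is then that the partial sums $\beta_N-1$ of the telescoping series are bounded above by $(\wh\nu(\wh X_{\ov B})-1)/\delta$. If the increments are nonnegative, $(\beta_n)$ is a monotone nondecreasing bounded sequence, hence convergent, which gives existence of $\lim\beta_n$.

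If the monotonicity cannot be secured in the degenerate case, I would fall back to passing to a tail where $W_n''\neq\emptyset$. For finitely many exceptional $n$ nothing changes. If there are infinitely many exceptional $n$, those levels contribute zero both to the extension and to the lower bound, so the argument applies along the remaining levels.
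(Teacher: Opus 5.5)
Your main computation matches the paper's proof: you bound $f^{(n)}_{v,w}\ge m_n'$, use $H^{(n)}_w=r_0\cdots r_{n-1}$, the lower bound of \Cref{claim:bd_meas_cyl}, and $|W_n'|\ge (r_n-\ov M_n|W_n|)/M_n'$, which gives $\wh\nu(\wh X_{\ov B})\ge 1+\sum_n \frac{m_n'}{M_n'}(\beta_n-\beta_{n-1})$. You are right that replacing $m_n'/M_n'$ by $\delta$, and then concluding convergence, needs a sign on the increments. The paper's proof stops at $1+\delta\sum_n(\beta_n-\beta_{n-1})$ without addressing this.

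However, your way of securing the sign is wrong. From $r_n\le \ov M_n|W_n|+M_n'|W_n'|$ you get $\beta_n-\beta_{n-1}=\beta_{n-1}\frac{r_n-\ov M_n|W_n|}{\ov M_n|W_n|}\le \beta_{n-1}\frac{M_n'|W_n'|}{\ov M_n|W_n|}$. That is an upper bound, not a lower one. In fact $\beta_n<\beta_{n-1}$ can happen even when $W_n''\neq\emptyset$. Take $W_n=\{a,b\}$, $W_{n+1}=\{c,d\}$, $W_n''=\{x\}$, with $f^{(n)}_{c,a}=f^{(n)}_{d,b}=5$, $f^{(n)}_{c,b}=f^{(n)}_{d,a}=f^{(n)}_{c,x}=f^{(n)}_{d,x}=1$ and $r_n=7$. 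Then $m_n'=M_n'=1$, but $\beta_n=\tfrac{7}{10}\beta_{n-1}$. Your fallback of discarding the levels with $W_n''=\emptyset$ does not help either. Every level contributes its factor $r_i/(\ov M_i|W_i|)$ to $\beta_n$, and at such levels this factor is $\le 1$. So your concluding ``monotone and bounded'' step is unjustified as written.

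The missing idea is to use bounded variation instead of monotonicity. Every term $T_n$ of the series in \eqref{eq:meas_ext} is nonnegative. Your own per-row bound gives $M_n'|W_n'|\ge (r_n-\ov M_n|W_n|)^+$, so $T_n\ge \delta\,(\beta_n-\beta_{n-1})^+$ for all $n\ge 1$. If $W_n''=\emptyset$, both sides are $0$, since then $r_n\le \ov M_n|W_n|$. Hence $P:=\sum_n(\beta_n-\beta_{n-1})^+<\infty$. Now use positivity: $0<\beta_N=1+\sum_{n\le N}(\beta_n-\beta_{n-1})^+-\sum_{n\le N}(\beta_n-\beta_{n-1})^-$. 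So the negative parts are bounded by $1+P$ as well. Therefore $\sum_n(\beta_n-\beta_{n-1})$ converges absolutely, and $\lim_n\beta_n$ exists (finite, possibly $0$). This completes your argument and closes the same gap in the paper's proof.
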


\begin{proof}
We apply again the method we used in the proof of \Cref{thm meas ext finite sbdgrm}. It follows from the definition of $m'_n$ and from \eqref{eq:meas_ext} that 
\begin{align*}
    \wh \nu(\wh X_{\ov B}) = &\ 
1 + \sum_{n = 0}^{\infty} \sum_{v \in V_{n+1}}\sum_{w \in W_n'}f_{v,w}^{(n)}\ov p_v^{(n+1)}H_w^{(n)}\\
\geq &\ 1 + \sum_{n = 0}^{\infty} m_n'(r_0 \ \cdots \ r_{n-1})|W_n'|\sum_{v \in V_{n+1}}\ov p_v^{(n+1)}
\end{align*} 
with $r_{-1} =1$. By Lemma \ref{claim:bd_meas_cyl}, it follows
\[
\wh \nu(\wh X_{\ov B}) \geq 1 + \sum_{n = 0}^{\infty} m_n'(r_0 \ \cdots\ r_{n-1})|W_n'|\frac{1}{\prod_{i = 0}^n\ov M_i |W_i|}.
\]
Since 
$$
r_n \leq \ov M_n |W_n| + M_n' |W_n'|.
$$
we have 
$$
|W_n'| \geq \frac{r_n - \ov M_n |W_n|}{M_n'}.
$$
Thus, 
$$
\ba
\wh \nu(\wh X_{\ov B}) \geq &\ 1 + \sum_{n = 0}^{\infty} \frac{m_n'}{M_n'}(r_0\ \cdots \ r_{n-1})\frac{r_n - \ov M_n |W_n|}{\prod_{i = 0}^n\ov M_i |W_i|}\\
= &\ 1 + \sum_{n = 0}^{\infty} \frac{m_n'}{M_n'} \left( \prod_{i = 0}^n \frac{r_i}{\ov M_i |W_i|}  - \prod_{i = 0}^{n-1} \frac{r_i}{\ov M_i |W_i|} \right)\\ 
\geq  &\ 1 + \delta \sum_{n = 0}^{\infty} (\beta_{n} - \beta_{n-1}),
\ea
$$ 
where $\beta_{-1} =1$. This proves the theorem. 
\end{proof}

\begin{remark}
It follows from the proofs of \Cref{thm meas ext finite sbdgrm} and \Cref{thm ness cond finite sbdgrm} that if we assume that the sequence $\left(\frac{M'_n}{m'_n}\right)$ is 
increasing, then the existence of the limit 
$$
\lim_{n\to \infty}\frac{M'_n}{m'_n} \prod_{i = 0}^n \frac{r_i}
{\ov m_i |W_i|}
$$
implies that $\wh \nu(\wh X_{\ov B}) < \infty$, and if 
$\wh \nu(\wh X_{\ov B}) < \infty$ then the limit 
$$
\lim_{n\to \infty} 
\frac{m_n'}{M_n'}  \prod_{i = 0}^n \frac{r_i}{\ov M_i |W_i|}
$$
exists.

\end{remark}

\begin{example}\label{example1}
Let $f_{v,w}^{(n)} = 1$ for all $v \in W_{n+1}$,  $w \in W_n \cup W_{n}'$ and $n \in \mathbb{N}_0$. Then the condition that the sequence of matrices $(F_n)_{n=0}^{\infty}$ have equal row sum (ERS) with parameter $(r_n)_{n=0}^{\infty}$ implies that for $n \in \N_0$,
$$
|W_n| + |W_n'| = r_n.
$$
Under this assumption, we obtain that for $n \in \N_0$,
    $$
    M_n' = m_n' = \ov M_n = \ov m_n = 1,   
    $$
    for all $n$ and therefore 
    $$
    \alpha_n = \beta_n = 
    \frac{r_0\ \cdots \ r_n}{|W_0|\ \cdots\ |W_n|} = \left(1 + \frac{|W_0'|}{|W_0|}\right)\ \cdots\ \left(1 + \frac{|W_n'|}{|W_n|}\right).
    $$
The sequence $\alpha_n$ converges, for instance, if for all $n \in \N_0$ there exists $C > 0$ such that $|W_n'| \leq C$ and $|W_n| = n^2$. For the subdiagram $\ol B$ of $B$ in this example, we conclude that 
$$
\wh\nu(\wh X_{\ol B}) < \infty \ \ \Longleftrightarrow \ \ 
\prod_{i=0}^\infty \left(1 + \frac{|W_i'|}{|W_i|}\right) < \infty.
$$
\end{example}

\begin{example}\label{Ex_hor_stat_IO}
In this example, we consider a horizontally stationary (see \Cref{Def: Horz}) Bratteli diagram. Recall that in this case we identify $V_n$ with $\Z$ for every level $n$.
Let $B = B(F_n)$ where the sequence of incidence matrices $(F_n)_{n=0}^{\infty}$ is given by
$$
F_n = \left(
\begin{array}{cccccccccccc}
        \ddots & \vdots & \vdots & \vdots & \vdots & \vdots & \udots\\
\cdots & a_n & 1 & 0 & 0 & 0 & \cdots\\
\cdots & 1 & a_n & 1 & 0 & 0 & \cdots\\
\cdots & 0& 1 & a_n & 1 & 0 & \cdots\\
\cdots & 0 & 0 &  1 & a_n & 1 & \cdots\\
\cdots & 0 & 0 & 0 &  1 & a_n & \cdots\\
        \udots & \vdots & \vdots & \vdots & \vdots & \vdots & \ddots\\
\end{array}
\right).
$$
In other words, we have $F_n = (f_{v,w}^{(n)})$, where
$$
f_{v,w}^{(n)} =
\left\{
\begin{aligned}
&a_n, \quad v = w,\\
&1, \quad |v - w| = 1,\\
&0, \quad \text{otherwise}.
\end{aligned}
\right.
$$
Let $a_n \geq 1$ for all $n$. Let $\ov B = B(W_n)$ be the vertical odometer defined by a vertex $w \in V_0$, i.e. $W_n = \{w\}$ for all $n$. Without loss of generality, we can assume that $W_n = \{w\} = \{0\}, \ n \in \N_0$. Let $\nu$ be the unique invariant probability measure on $\ov B$. Then
$$
\nu([\ov e]) = \frac{1}{a_0 \cdots a_{n-1}} = \ov p_0^{(n)}, 
$$ 
where $\ov e$ is a finite path with $s(\ov e) = 0 \in V_0$ and  $r(\ov e) = 0 \in V_n$. Observe that the sequence of matrices $(F_n)_{n=0}^{\infty}$ have equal row sum (ERS) with parameter $r_n = a_n + 2$ for $n \in \N_0$. We also have $m_n' = M_n' = 1$ for all $n \in N_0$, and 
$$
\alpha_n = \beta_n = \frac{(a_0 + 2)\cdots(a_n + 2)}{a_0 \cdots a_n} = \prod_{i = 0}^n\left(1 + \frac{2}{a_i}\right)
$$
since $\ov m_n = \ov M_n = a_n$ and $|W_n| = 1$ for all $n \in \N_0$. Then
$$
\wh \nu(\wh X_{\ov B}) < \infty \Longleftrightarrow \sum_{n = 0}^{\infty} \frac{1}{a_n} < \infty.
$$
It was proved in \cite[Theorem 4.5]{BezuglyiJorgensenKarpelKwiatkowski2025} that if $\sum_{n = 0}^{\infty} {a_n^{-1}} < \infty$ then $B$ has countably infinitely many ergodic invariant probability measures and they all can be obtained as extensions from vertical odometers.
\end{example}

\begin{remark}[Measure extension for a bounded size Bratteli diagram]
The method used in the proof of \Cref{thm meas ext finite sbdgrm} and \Cref{thm ness cond finite sbdgrm} can be applied to 
a generalized Bratteli diagram of bounded size. Indeed, let $B= B(F_n)$ be a bounded size generalized Bratteli diagram with parameters $(t_n, L_n)$. Suppose that $\ol B= \ol B(W_n)$ is a vertex subdiagram defined by a sequence $\ol W = (W_n)$, where 
$W_n  = [w_n^-, w_n^+ ] \subset V_n$. We choose $W_n$ such that 
$W_0 = \{0\}$, $W_1 \subset [-t_0, t_0]$, and in general, 
$$
- (t_0 + \cdots + t_{n-1}) \leq w_n^- < w_n^+ \leq  t_0 + \cdots + t_{n-1}
$$ This case is more general than the ERS property, but we can still use the fact that $H_w^{(n)} < L_0\ \cdots\ L_{n-1}$. 
Suppose that $\nu$ is a probability measure on $X_{\ol B}$. Assuming that $\ol B$ is simple, one can prove, similar to the proofs of \Cref{thm meas ext finite sbdgrm} and \Cref{thm ness cond finite sbdgrm}, that the measure extension $\wh\nu(\wh X_{\ol B})$ is finite if
the sequence $(M'_n {m'_n}^{-1})$ is bounded and 
$$
\lim_{n\to \infty} \prod_{i=0}^n \frac{L_i}{\ol m_i |W_i|} < \infty.
$$

\end{remark}

\subsection{Measure extension from stationary subdiagrams}
Let $B = B(F_n)$ be a generalized Bratteli diagram defined by the sequence of incidence matrices $(F_n)_{n=0}^{\infty}$. Suppose that $\ov B =  B(\ol F)$ is a stationary vertex subdiagram of $B$ defined by a matrix $\ov F$. In other words, there exists a fixed set of vertices $W \subset V_n$ for every $n \in \N_0$, such that the matrix $\ol F$ is the restriction of the matrices $F_n$ to $W$. In general, we do not assume that the Bratteli diagram $B$ is stationary, but we require that the restriction of every $F_n$ to $W$ is the fixed matrix $\ol F$. We further assume that for the matrix $\ov F$ there exists an eigenpair $(\lambda, \xi)$ such that $1 <\lambda < \infty$, $\sum_{v\in W} \xi_v =1$ (note that if $\sum_{v\in W} \xi_v < \infty$, then we can always normalize $\xi$ so that the sum is $1$), and $\ol F^T\xi = \lambda \xi$.
This allows us to define the probability tail invariant measure 
$\nu$ on the path space $X_{\ov B}$ of the subdiagram $\ov B$ by  
$\lambda$ and $\xi$ (see 
\Cref{thm PFThm}). Note that the value of $\nu$ on a cylinder set $[\ov e]$ such that $r(\ol e)= w \in V_n$ is 
\be\label{eq s4 Perron measure}
\ov p_w^{(n)} = \nu([\ov e] \cap X_{\ov B}) = 
\frac{\xi_{r(w)}}{ \lambda^n}.
\ee

Let $\wh X_{\ov B} = \mathcal{R}(X_{\ov B})$ be the saturation of $X_{\ov B}$ with respect to the tail equivalence relation $\mathcal{R}$, and $\wh{\nu}$ be the 
tail invariant extension of the probability measure $ \nu$ to $\wh X_{B}$. 
We will discuss the following question: For what diagrams $B$ can the measure extension from a stationary subdiagram $\ol B$ be finite? 

We denote as usual for $n \in \N_0$
$W'_n = V_n \setminus W$ (since $W_n = W$ for $n \in \N_0$) and set
$$
\alpha_n := \sum_{w \in W'_n} H_w^{(n)}.
$$
In this setting, using \eqref{eq: exten_representation} and the fact that $W_{n+1} = W$, we express 
\be\label{eq s4 ext stat sbdgrm}
\ba
\wh \nu(\wh X_{\ov B}) = &\  1 + \sum_{n = 0}^{\infty} \sum_{v \in W} \sum_{w\in W'_n} f^{(n)}_{v,w} H_w^{(n)} \ol p^{(n+1)}_v \\
= &\ 1 + \sum_{n = 0}^{\infty} \frac{1}{\lambda^{n+1}} \sum_{w\in W'_n}  H_w^{(n)} \sum_{v \in W} f^{(n)}_{v,w} \xi_v  \\ 
\leq  &\ 1 + \sum_{n = 0}^{\infty} \frac{\alpha_n M'_n}{\lambda^{n+1}}
\sum_{v \in W} \xi_v\\
= &\ 1 + \sum_{n = 0}^{\infty} \frac{\alpha_n M'_n}{\lambda^{n+1}}
\ea
\ee
where $M'_n$ is defined in \eqref{eq s4 M'_n}. We used here that the entries $f^{(n)}_{v,w}$ for $v \in W$ and $w \in W'_{n}$ are bounded and  $\sum_{v\in W} \xi_v =1$. The quantity $M'_n \alpha_n$ estimates the number of new finite paths added from the $n$-th level that play a role in the extension of the measure.

Similarly, we can obtain the following estimate of $\wh \nu(\wh X_{\ov B})$ from below:
\be\label{eq s4 ext stat sbdgrm m'n}
\wh \nu(\wh X_{\ov B}) \geq 1 + \sum_{n = 0}^{\infty} \frac{\alpha_n m'_n}{\lambda^{n+1}}.
\ee

It follows from \eqref{eq s4 ext stat sbdgrm} and \eqref{eq s4 ext stat sbdgrm m'n} that the following sufficient condition holds.

\begin{prop}\label{prop ext from stat BD}
Suppose that the objects $B, \ol B, \lambda,\nu, (M'_n)_{n=0}^{\infty}$, and $(\alpha_n)_{n=0}^{\infty}$ are as defined above. 
\begin{enumerate}
    \item If for every $\varepsilon >0$ there exists $N_\varepsilon\ \in \N$ such that, for all $n \geq N_\varepsilon$, 
$$
\alpha_{n+1} M'_{n+1} < (\lambda - \varepsilon)\alpha_n M'_n,
$$
then we have $\wh \nu(\wh X_{\ov B}) < \infty$.
\item If for every $\varepsilon >0$ there exists $N_\varepsilon\ \in \N$ such that, for all $n \geq N_\varepsilon$, 
$$
\alpha_{n+1} m'_{n+1} > (\lambda + \varepsilon)\alpha_n m'_n,
$$
then we have $\wh \nu(\wh X_{\ov B}) = \infty$.
\end{enumerate}
\end{prop}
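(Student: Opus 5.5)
The plan is to compare the series in \eqref{eq s4 ext stat sbdgrm} and \eqref{eq s4 ext stat sbdgrm m'n} with a geometric series, using a ratio-test argument. For part (1), set $c_n = \alpha_n M'_n/\lambda^{n+1}$; by \eqref{eq s4 ext stat sbdgrm}, $\wh \nu(\wh X_{\ov B}) \le 1 + \sum_n c_n$. For part (2), set $d_n = \alpha_n m'_n/\lambda^{n+1}$; by \eqref{eq s4 ext stat sbdgrm m'n}, $\wh \nu(\wh X_{\ov B}) \ge 1 + \sum_n d_n$. Both statements therefore reduce to deciding whether these two nonnegative series converge or diverge.

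For (1), I would not use the hypothesis for every $\varepsilon$; a single fixed $\varepsilon\in(0,\lambda)$ suffices, say $\varepsilon = \lambda/2$ when $\lambda>1$ (or any $\varepsilon$ small enough that $\lambda-\varepsilon>0$). For $n \ge N_\varepsilon$ we get
\[
c_{n+1} = \frac{\alpha_{n+1}M'_{n+1}}{\lambda^{n+2}} < \frac{\lambda-\varepsilon}{\lambda}\cdot \frac{\alpha_n M'_n}{\lambda^{n+1}} = \Big(1-\frac{\varepsilon}{\lambda}\Big) c_n .
\]
Iterating gives $c_n \le c_{N_\varepsilon}(1-\varepsilon/\lambda)^{n-N_\varepsilon}$ for $n\ge N_\varepsilon$, so the tail is dominated by a convergent geometric series. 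The finitely many initial terms $c_0,\dots,c_{N_\varepsilon-1}$ are finite, since each $\alpha_n$ is a finite sum once $W'_n$ is replaced by the finite set of vertices actually connected to $W$ (as in the remark preceding \Cref{thm meas ext finite sbdgrm}), and each $M'_n<\infty$. Hence $\sum_n c_n<\infty$, and the extension is finite.

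For (2), I would fix any $\varepsilon>0$. The analogous computation gives $d_{n+1} > (1+\varepsilon/\lambda)\,d_n$ for all $n\ge N_\varepsilon$. If $d_{N_\varepsilon}>0$, the terms $d_n$ grow geometrically, so $\sum_n d_n = \infty$ and $\wh \nu(\wh X_{\ov B}) = \infty$. The strict inequality in the hypothesis forces $\alpha_{n+1}m'_{n+1}>0$, so $d_n>0$ for all $n>N_\varepsilon$; the argument can simply start one index later.

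The main obstacle is bookkeeping rather than substance. One must make sure that $\alpha_n$ and $M'_n$ are finite, which requires restricting $W'_n$ to the vertices of $W'_n$ adjacent to $W$, and that the lower bound \eqref{eq s4 ext stat sbdgrm m'n} is legitimate when $m'_n$ is taken over that restricted set. With those conventions fixed as in the remark, both parts follow directly from the ratio-test argument above.
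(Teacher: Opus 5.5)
Your ratio-test comparison of the series in \eqref{eq s4 ext stat sbdgrm} and \eqref{eq s4 ext stat sbdgrm m'n} with geometric series is exactly the argument the paper intends; the paper gives no more than the remark that the proposition follows from these two estimates. Two of your side remarks are also correct and worth keeping. A single $\varepsilon\in(0,\lambda)$ suffices in part (1), and indeed the literal ``for every $\varepsilon$'' hypothesis there cannot hold once $\varepsilon\ge\lambda$. Finiteness of the early terms $\alpha_nM'_n$ is a standing assumption of that subsection, automatic when $W$ is finite, rather than a consequence of the earlier remark.
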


\begin{remark}
A small modification of \eqref{eq s4 ext stat sbdgrm} gives a different estimate using \eqref{eq: exten_representation_1}.
$$
\ba 
\ol \nu(\wh X_{\ol B})  = &\ 1+   \sum_{n=0}^\infty \,\,\bigg( \sum_{v \in W_{n+1}}
H_v^{(n+1)}\overline p_v^{(n+1)} - 
 \sum_{w \in W_{n}} H_w^{(n)}\overline p_w^{(n)}\bigg)\\
= &\ 1+ \sum_{n = 0}^{\infty} \left( \sum_{v \in W_{n+1}} \left(\sum_{u \in V_n} f_{vu}^{(n)} H^{(n)}_u\right) \frac{\xi_v}{\lambda^{n+1}} - \sum_{w \in W_{n}} H^{(n)}_w \frac{\xi_w}{\lambda^{n}}\right)\\
= &\ 1+ \sum_{n = 0}^{\infty} \left( \sum_{u \in V_n}H^{(n)}_u \frac{\xi_v}{\lambda^{n+1}} \left(\sum_{v \in W_{n+1}} f_{vu}^{(n)}\right)  - \sum_{w \in W_{n}} H^{(n)}_w \frac{\xi_w}{\lambda^{n}}\right)\\
=&\ 1+ \sum_{n = 0}^{\infty} \left( \sum_{u \in V_n}H^{(n)}_u \frac{\xi_u}{\lambda^n} - \sum_{w \in W_{n}}
H^{(n)}_w \frac{\xi_w}{\lambda^{n}}\right)\\
= &\ 1+ \sum_{w \in W'_n}H^{(n)}_w \frac{\xi_w}{\lambda^{n}}\\
\leq &\ 1+ \sum_{w \in W'_n} \frac{\alpha_n}{\lambda^n}.
\ea
$$ Here we use that $\xi$ is a probability eigenvector for $\ol F$.
\end{remark} 

The following example illustrates Proposition \ref{prop ext from stat BD}.

\begin{example}
Let $B = B(F)$ where the incidence matrix $F$, indexed by $\N_0, $ has the form:
$$
F =  \begin{pmatrix}
    a & 1 &  0 & 0 & 0 & \cdots & \cdots \\
    0 & 1 & 1 & 0 & 0 & \cdots & \cdots\\
    0 & 0 & 1 & 1 & 0 & \cdots & \cdots  \\
\cdots & \cdots & \cdots & \cdots & \cdots & \cdots & \cdots  \\
\end{pmatrix}.
$$
Let $B_a$ denote the stationary odometer viewed as a subdiagram of $B$ sitting on the first vertex. We take the unique tail invariant measure $\nu$ on the odometer $\ol B_a$ such that $\nu ([\ol e]) = a^{-n}$ where $\ol e$ is a cylinder set
from $X_{\ol B_a}$ of length $n$. Then the measure extension is determined by the formula
$$
\wh \nu(\wh X_{\ol B_a}) = 1 + \sum_{n = 0}^{\infty} \frac{H_1^{(n)}}{a^{n+1}}
= 1 + \frac{1}{a} \sum_{n = 0}^{\infty} \frac{2^n}{a^{n}}, 
$$
where $H_1^{(n)} = 2^n$ is the height of the tower corresponding to the vertex $1 \in V_n$. 
Then $\wh \nu(\wh X_{\ol B_a}) < \infty$ iff $a \geq 3$. 
\end{example}

\begin{remark}
The statement in \Cref{prop ext from stat BD} depends on the rate of growth of $H^{(n)}_w, w \in W'_n$, as $n \to \infty$. In this connection, we recall the following result concerning infinite matrices from \cite{Kitchens1998}. 

\begin{theorem*}\cite[Theorem 7.1.3 $(f)$ and Lemma 7.1.19]{Kitchens1998} Let $A$ be an aperiodic and irreducible countably infinite matrix with nonnegative entries. Let $\lambda <\infty$ be the Perron eigenvalue. 
Denote by $a^{(n)}_{i,j}$ the entries of $A^n$.
Then 
$$
\lim_{n\to \infty} \dfrac{a^{(n)}_{i,j}}{\lambda^n} =0
$$ if $A$ is transient or null-recurrent.
\end{theorem*}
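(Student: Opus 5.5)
We treat this as a statement about the normalized matrix $A/\lambda$. We first prove the diagonal case $i=j$ and then reduce the off-diagonal entries to it using irreducibility. Throughout, write $u_n = a^{(n)}_{i,i}/\lambda^n$ for a fixed $i$.

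\emph{Reduction from $(i,j)$ to $(i,i)$.} By irreducibility, for given $i,j$ there is $k$ with $a^{(k)}_{j,i}>0$. Since all entries are nonnegative, $a^{(n+k)}_{i,i} \ge a^{(n)}_{i,j}\, a^{(k)}_{j,i}$. Hence
$$
\frac{a^{(n)}_{i,j}}{\lambda^n} \le \frac{\lambda^k}{a^{(k)}_{j,i}}\cdot \frac{a^{(n+k)}_{i,i}}{\lambda^{n+k}} = \frac{\lambda^k}{a^{(k)}_{j,i}}\, u_{n+k},
$$
where the factor $\lambda^k/a^{(k)}_{j,i}$ does not depend on $n$. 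So it suffices to show $u_n\to 0$.

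\emph{Transient case.} By definition, $\sum_n u_n<\infty$, so $u_n\to 0$ immediately.

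\emph{Null-recurrent case.} Here I would use a first-return (renewal) decomposition. Let $\ell^{(n)}_{i,i}$ be the sum over paths of length $n$ from $i$ to $i$ that do not visit $i$ in between, of the products of the corresponding entries of $A$. Set $f_n=\ell^{(n)}_{i,i}/\lambda^n$ and $u_0=1$. Decomposing a path according to its first return to $i$ gives the renewal equation
$$
u_n=\sum_{k=1}^n f_k u_{n-k}, \qquad n\ge 1.
$$
In generating functions this reads $U(s)=1/(1-F(s))$. As $s\uparrow 1$, recurrence ($U(1)=\infty$) is equivalent to $F(1)=\sum_k f_k=1$. Thus $(f_k)$ is a probability distribution on $\N$.

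Next I need that $\gcd\{k: f_k>0\}=1$. This follows from aperiodicity of $A$: every return time is a sum of first-return times, so the gcd of first-return times equals the period, which is $1$. I also need to identify null recurrence with infinite mean, $\mu=\sum_k k f_k=\infty$. This is the standard characterization (Vere-Jones), consistent with the criterion $\langle \eta,\xi\rangle=\infty$ in \Cref{thm Perron-Frobenius Thm}(3).

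With these facts, the Erd\H{o}s--Feller--Pollard renewal theorem gives $u_n\to 1/\mu$. Since $\mu=\infty$, this limit is $0$, and the reduction above finishes the proof.

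The main obstacle is the null-recurrent case. Specifically, it requires justifying the renewal equation for weighted (non-stochastic) path sums and verifying the two hypotheses of the renewal theorem: aperiodicity of the first-return distribution $(f_k)$ and $\mu=\infty$. The transient case and the reduction from $(i,j)$ to $(i,i)$ are routine.
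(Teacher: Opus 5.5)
The paper gives no proof of its own here (it only cites Kitchens), and your argument is correct and is essentially the standard proof from that source and from Vere-Jones: reduce $(i,j)$ to $(i,i)$ via irreducibility, use summability in the transient case, and in the null-recurrent case apply the Erd\H{o}s--Feller--Pollard renewal theorem to the first-return equation for $u_n=a^{(n)}_{i,i}/\lambda^n$. Two points are worth stating explicitly: first, $u_n\le 1$ (because $\lambda=\sup_n (a^{(n)}_{i,i})^{1/n}$), which makes $U(s)$ finite for $s<1$ and justifies $U=1/(1-F)$; second, in this framework null recurrence is precisely the infinite-mean condition $\sum_k k f_k=\infty$, so no detour through $\langle\eta,\xi\rangle$ is needed.
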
 

This result can be used for the construction of other examples with 
finite measure extension for generalized stationary Bratteli diagrams. 

\end{remark}

In what follows, we assume that the generalized Bratteli diagrams with equal row sum (ERS) property (see \Cref{Sec: Basics}) have stationary Bratteli diagrams as subdiagrams. 

\begin{prop}\label{prop s4 stat diagr ext}
Suppose that the generalized Bratteli diagram $B= B(F_n)$ has the ERS property with parameters $(r_n)_{n=0}^{\infty}$. Let $\ol B = B (\ol F)$ be a subdiagram of $B$ defined by a matrix $\ol F$. We assume that $\ol F$ satisfies the conditions in \Cref{thm Perron-Frobenius Thm} and $(\lambda,\xi)$ are \textit{eigenpairs} for $\ol{F}^T $, in other words $\ol{F}^T \xi = \lambda \xi$. Let $\nu $ be the probability measure defined by $(\lambda, \xi) $ as in \eqref{eq s4 Perron measure}. Then the measure extension $\wh\nu(\wh X_{\ol B}) $ is infinite. 
\end{prop}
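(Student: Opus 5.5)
The plan is to compute $\wh\nu(\wh X_{\ol B})$ exactly with \eqref{extension_method_vertex} and then compare the Perron eigenvalue $\lambda$ of $\ol F$ with the row sums $r_n$ of $B$.

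\textbf{Step 1: an exact formula.} By the ERS property, $H^{(n)}_w = r_0\cdots r_{n-1}$ for every $w\in V_n$, the same for all vertices of a level (as in the example following \Cref{thm3 sufficient cond}). By \eqref{eq s4 Perron measure}, $\ol p^{(n)}_w = \xi_w/\lambda^n$ for $w\in W$, and $\sum_{w\in W}\xi_w=1$. Substituting into \eqref{extension_method_vertex} gives
\begin{equation*}
\wh\nu(\wh X_{\ol B}) = \lim_{n\to\infty}\sum_{w\in W} r_0\cdots r_{n-1}\,\frac{\xi_w}{\lambda^n} = \lim_{n\to\infty}\prod_{i=0}^{n-1}\frac{r_i}{\lambda}.
\end{equation*}
The whole statement therefore reduces to showing that this product diverges.

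\textbf{Step 2: each factor is at least one.} Let $\rho_v=\sum_{u\in W}\ol f_{v,u}$ be the $v$-th row sum of $\ol F$. Summing the eigen-equation $\lambda\xi_u=\sum_{v\in W}\ol f_{v,u}\xi_v$ over $u\in W$ gives
\begin{equation*}
\lambda=\sum_{v\in W}\xi_v\rho_v .
\end{equation*}
Since $\ol F$ is the restriction of $F_i$ to $W$, we have $\rho_v\le r_i$ for every $i$. Hence
\begin{equation*}
\lambda = r_i-\sum_{v\in W}\xi_v\,(r_i-\rho_v) \le r_i ,
\end{equation*}
where $d^{(i)}_v:=r_i-\rho_v=\sum_{w\in W'_i}f^{(i)}_{v,w}$ counts the edges entering $v\in W$ at level $i+1$ from $W'_i$. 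Thus each factor $r_i/\lambda\ge 1$, and the product is nondecreasing.

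\textbf{Step 3: divergence of the product.} It remains to show that the factors are not eventually too close to $1$, i.e.\ that $\sum_i\bigl(1-\lambda/r_i\bigr)=\sum_i r_i^{-1}\sum_v\xi_v d^{(i)}_v$ diverges. I will split into two cases.

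First, suppose $(r_i)$ is unbounded. Since $\lambda$ is fixed, infinitely many factors $r_i/\lambda$ exceed $2$, and the product diverges.

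Second, suppose $r_i\le r$ for all $i$. I will use that $\ol B$ is a proper vertex subdiagram sitting in a fixed set $W$. Because the subdiagram is nontrivial (edges from $W'_i$ into $W$ occur at infinitely many levels, which is the standing assumption that $\ol B$ is not closed under predecessors), $\sum_v\xi_v d^{(i)}_v>0$ for infinitely many $i$. The boundedness of $r_i$ bounds the entries $d^{(i)}_v$ and the matrices $F_i$ restricted to rows in $W$ range over finitely many patterns when $W$ is finite. I would then pick a single vertex $v_0\in W$ with $d^{(i)}_{v_0}\ge1$ for infinitely many $i$. This gives $r_i/\lambda\ge r/(r-\xi_{v_0})>1$ infinitely often, so the product diverges.

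\textbf{Main obstacle.} The hard part is this uniform gap in the bounded case. When $W$ is infinite, the defects $d^{(i)}_v$ may wander to vertices $v$ with $\xi_v\to0$, and a single fixed $v_0$ need not exist. If that happens, I would instead use the characterization $\lambda=\lim_k\bigl((\ol F^k)_{ii}\bigr)^{1/k}$ together with irreducibility and aperiodicity of $\ol F$. Any vertex with a defect at level $i$ is reached by paths in $\ol B$ from a fixed $i_0\in W$ within a bounded number of levels. Comparing $(\ol F^k)_{i_0i_0}$ with $(F_{n+k-1}\cdots F_n)$ restricted to $W$, whose row sums are at most $r^k$, I would aim to obtain $\lambda\le r(1-\delta)$ for some $\delta>0$ along blocks of levels. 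This restores a geometric gap and hence $\prod_i r_i/\lambda=\infty$.
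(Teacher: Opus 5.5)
Your Steps 1 and 2 are correct, and they are sharper than the paper's proof. The paper bounds \eqref{eq: exten_representation} from below by $\sum_n r_0\cdots r_{n-1}\lambda^{-n-1}m'_n|W'_n|$ and then simply asserts $\lambda<r_i$ for all $i$. You instead obtain the exact value $\wh\nu(\wh X_{\ol B})=\lim_n\prod_{i<n}r_i/\lambda$ and prove that every factor is $\ge 1$.

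The gap is in Step 3, in the case where $(r_i)$ is bounded and $W$ is infinite. The route through $(\ol F^k)_{i_0i_0}$ is only a plan: nothing guarantees that defect vertices are reached from $i_0$ within a bounded number of levels, and no $\delta$ is ever produced. The missing idea is integrality, and it removes the case split altogether. Your identity $r_i-\lambda=\sum_{v\in W}\xi_v d^{(i)}_v$, together with $\xi_v>0$ for all $v$, gives $r_i>\lambda$ whenever some $d^{(i)}_v\ge 1$. Since $r_i\in\N$ and $\lambda$ does not depend on $i$, this forces $r_i\ge\lfloor\lambda\rfloor+1$. Hence $r_i/\lambda\ge c:=(\lfloor\lambda\rfloor+1)/\lambda>1$, with $c$ independent of $i$ and of where the defect sits. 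Therefore $\prod_{i<n}r_i/\lambda\ge c^{k_n}\to\infty$, where $k_n$ counts the levels $i<n$ at which some edge of $B$ goes from $W'_i$ into $W$. The scenario you feared cannot occur anyway: $d^{(i)}_v=r_i-\rho_v$ depends on $i$ only through $r_i$, so the weighted defect $r_i-\lambda$ cannot drift to $0$.

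Second, the condition that edges from $W'_i$ into $W$ occur at infinitely many levels is not a standing assumption in the paper. You should add it as an explicit hypothesis rather than cite it, because the proposition is false without it. If from some level $N$ on no edge enters $W$ from outside, then $d^{(i)}_v=0$, i.e.\ $r_i=\lambda$, for all $i\ge N$. Your Step 1 then gives $\wh\nu(\wh X_{\ol B})=\prod_{i<N}r_i/\lambda<\infty$. If there is no such edge at all, $\wh X_{\ol B}=X_{\ol B}$ and the extension equals $1$. The paper's proof hides exactly this in its unjustified claim $\lambda<r_i$ for all $i$, which by your identity is equivalent to having such an edge at every level. With the integrality remark, your argument proves the sharp statement: the extension is infinite if and only if such edges occur at infinitely many levels.
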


\begin{proof} As a consequence of the equal row sum property, we can assume, without loss of generality, that 
\be\label{eq W'_n finite}
|\{w \in V_n \setminus W_n : f^{(n)}_{v,w} \neq 0\ \mbox{for\ some} \ v \in V_{n+1}\}| < \infty, \quad n\in \N_0.
\ee
Hence, at each level $n \in \N_0$, the measure is extended to only finitely many vertices in $W'_n = V_n \setminus W_n$, although both $W_n$ and $W'_n$ could be infinite. Recall that by \eqref{eq: exten_representation}, we have
$$
\wh \nu(\wh X_{\ov B}) =  
1 + \sum_{n = 0}^{\infty} \sum_{v \in W_{n+1}}\sum_{w \in W_n'}f_{v,w}^{(n)}\ov p_v^{(n+1)}H_w^{(n)}
$$ 
Using \eqref{eq s4 Perron measure} and the ERS property (i.e. for all $n\in \N$ and all $w \in V_n$, $H_w^{(n)} = r_0 \ \cdots \ r_{n-1}$), we get
\begin{align*}
    \wh \nu(\wh X_{\ov B}) & = \sum_{n = 0}^{\infty} \sum_{v \in W_{n+1}}\sum_{w \in W_n'} f_{v,w}^{(n)} 
(r_0 \ \cdots \ r_{n-1}) \frac{\xi_v}{\lambda^{n+1}}\\
&\geq \sum_{n = 0}^{\infty} \frac{r_0 \ \cdots \ r_{n-1}}{\lambda^{n+1}} 
m'_n |W'_n| \sum_{v \in W_{n+1}} \xi_v. 
\end{align*} 
where $m'_n$ is defined in \eqref{eq s4 m'_n}. The result follows from the fact that $\lambda  < r_i$ for all 
$i = 0, \ldots, n-1$.
\end{proof}

\subsection{Measure extension from a fat odometer}\label{sec: fat odometer} 
We consider a class of generalized Bratteli diagrams of bounded size with a vertical subdiagram representing an odometer. Recall that, for such Bratteli diagrams, it is convenient to identify the set of vertices at each level with $\Z$.

Let $B = B(F_n)$ be a generalized Bratteli diagram with incidence matrices $F_n = (f_{v,w}^{(n)})$, for $n \in \N_0$ is given by
\begin{equation}\label{eq: fat_odo}
f_{v,w}^{(n)} =
\left\{
\begin{aligned}
&a_n, \quad v = 0,\;  w = 0,\\
&1, \quad v \neq 0,\;  |v - w| \leq t_n,\\
&0, \quad v \neq 0, \; |v - w| > t_n.
\end{aligned}
\right.
\end{equation}
and $a_n > 1$ for all $n \in \N_0$.
Then $B$ is a generalized Bratteli diagram of bounded size (see \Cref{Def:BD_bdd_size}) where the row sum (the cardinality of $r^{-1}(v)$) for $v \in V_{n+1}$, $n \in \N_0$ is determined 
by the formula 
$$
\sum_{w \in V_n} f_{v,w}^{(n)} =
\left\{
\begin{aligned}
&a_n + 2 t_n, \quad v = 0,\\
&2t_n + 1, \quad v \neq 0.\\
\end{aligned}
\right.
$$

Denote by $\ov B$ the vertex subdiagram with $W_n = \{0\}$, i.e., 
$\ol B$ is the odometer passing through zero vertices on every level
and having $a_n$ edges between the vertices of the $n$-th and $(n+1)$-th levels. We refer to this diagram informally as a \textit{fat odometer}. There is a unique probability ergodic invariant measure $\ol\nu$ on $X_{\ov B}$ which gives the value
$$
\ov p_0^{(n+1)} = \ol\nu([\ov e]) = \frac{1}{a_0\ \cdots\ a_n}
$$
to a cylinder set $[\ov e] \subset X_{\ov B}$ with $r(\ov e) = \{0\} \in V_{n+1}$.
Our goal is to determine when the 
measure extension $\wh {\ol\nu}(\wh X_{\ov B})$ is finite (or infinite). 

\begin{theorem}\label{thm: ext fat odom}
Suppose that the sequences $(a_n)_{n=0}^{\infty}$ and $(t_n)_{n=0}^{\infty}$ as in the definition of $B=(F_n)$ above (see \eqref{eq: fat_odo}) are increasing.
Then 
\be\label{eq3 finite ext}
\sum_{n = 0}^{\infty} \frac{t_n}{a_n} < \infty \ \Longrightarrow \ 
\wh {\ol\nu}(\wh X_{\ov B}) < \infty.
\ee 
\end{theorem}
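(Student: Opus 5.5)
The plan is to apply the criterion of \Cref{thm3 criterion vertex subd}, specifically the representation \eqref{eq: exten_representation}, with $W_n=\{0\}$ and $W'_n=V_n\setminus\{0\}$.

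\textbf{Reduction to a height series.} I read \eqref{eq: fat_odo} so that the row $v=0$ has entry $a_n$ at $w=0$ and entry $1$ at each $w$ with $0<|w|\le t_n$; this is what makes the stated row sum $a_n+2t_n$ correct. The only $v\in W_{n+1}$ is $v=0$, and $\ov p_0^{(n+1)}=(a_0\cdots a_n)^{-1}$. Hence \eqref{eq: exten_representation} becomes
$$
\wh{\ol\nu}(\wh X_{\ov B}) = 1+\sum_{n=0}^\infty \frac{1}{a_0\cdots a_n}\sum_{0<|w|\le t_n} H^{(n)}_w .
$$
The whole task is therefore to control the heights $H^{(n)}_w$, $w\neq 0$, relative to $a_0\cdots a_{n-1}$.

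\textbf{Height recursion.} Set $h_n=H^{(n)}_0$ and $K_n=\sup_{w\neq 0}H^{(n)}_w$. This supremum is finite because each height is bounded by $\prod_i (a_i+2t_i+1)$. Using $F_nH^{(n)}=H^{(n+1)}$ from \eqref{eq2: FH=H}, I get
$$
h_{n+1}=a_nh_n+\sum_{0<|w|\le t_n}H^{(n)}_w \le a_nh_n+2t_nK_n,
$$
$$
h_{n+1}\ge a_nh_n, \qquad K_{n+1}\le 2t_nK_n+h_n .
$$
The last bound holds because a row $v\ne0$ has at most $2t_n+1$ ones, and at most one of them sits in column $0$.

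Introduce the ratio $\rho_n=K_n/h_n$. The recursion gives
$$
\rho_{n+1}\le \frac{2t_n}{a_n}\rho_n+\frac1{a_n}.
$$
Since $\sum t_n/a_n<\infty$, we have $2t_n/a_n<1/2$ for $n\ge N$. For $n\ge N$ this yields $\rho_{n+1}\le \tfrac12\rho_n+1$, because $a_n>1$. Hence $\rho_n\le C:=\max(\rho_N,2)$ for all $n$ (enlarging $C$ to cover $n<N$).

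\textbf{Bounding the series.} From $h_{n+1}\le h_n(a_n+2t_n\rho_n)$ and $h_0=1$,
$$
\frac{h_n}{a_0\cdots a_{n-1}}\le \prod_{i=0}^{n-1}\Bigl(1+\frac{2Ct_i}{a_i}\Bigr)\le P:=\prod_{i=0}^\infty\Bigl(1+\frac{2Ct_i}{a_i}\Bigr).
$$
The constant $P$ is finite since $\sum t_i/a_i<\infty$. The $n$-th term of the series is then at most
$$
\frac{2t_nK_n}{a_0\cdots a_n}=\frac{2t_n\rho_n}{a_n}\cdot\frac{h_n}{a_0\cdots a_{n-1}}\le 2CP\,\frac{t_n}{a_n},
$$
which is summable. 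This proves \eqref{eq3 finite ext}.

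\textbf{Main obstacle.} The key step is the uniform bound on $\rho_n$, that is, showing that the off-zero towers do not outgrow the odometer tower. The contraction estimate above handles it; the monotonicity of $(a_n)$ and $(t_n)$ should play at most a convenience role, for example in choosing $N$ uniformly.
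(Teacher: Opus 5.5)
Your proposal is correct once one inequality is repaired, and it takes a different route from the paper. The repair concerns the bound $K_{n+1}\le 2t_nK_n+h_n$, which does not follow from your justification. For a row $v\neq 0$ with $|v|>t_n$, all $2t_n+1$ ones lie off column $0$, so you only get $H^{(n+1)}_v\le (2t_n+1)K_n$. Turning this into $2t_nK_n+h_n$ requires $K_n\le h_n$, which you have not shown.

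Use $K_{n+1}\le (2t_n+1)K_n+h_n$ instead. This gives $\rho_{n+1}\le \frac{2t_n+1}{a_n}\rho_n+\frac{1}{a_n}$. Since $a_n\ge 2$ and $t_n/a_n\to 0$, the coefficient is at most $3/4$ for large $n$, so $\rho_n$ stays bounded. Nothing downstream changes: your bound $h_{n+1}\le a_nh_n+2t_nK_n$ comes from the row $v=0$, which has exactly $2t_n$ ones off column $0$.

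The paper does not control the ratio of off-zero towers to the odometer tower. Instead it proceeds as follows:
\begin{itemize}
\item It expands each $H^{(n)}_w$, $0<|w|\le t_n$, one level down, splitting the window at $t_{n-1}$; this is where monotonicity of $(t_n)$ enters.
\item This bounds $\sum_{0<|w|\le t_n}H^{(n)}_w$ by roughly $t_nt_{n-1}$ times a level-$(n-1)$ height.
\item That height is then estimated by the maximal-row-sum product $\prod_{i\le n-2}(a_i+2t_i)$.
\end{itemize}
The resulting series $\sum_n \frac{t_nt_{n-1}}{a_{n-1}a_n}\prod_{i\le n-2}\bigl(1+\frac{2t_i}{a_i}\bigr)$ converges because $t_{n-1}/a_{n-1}$ is bounded. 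In other words, the in-degree factor of an off-zero vertex is absorbed by $a_{n-1}$.

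The paper's route is shorter and needs no auxiliary recursion. Your contraction argument costs a little more but yields three things:
\begin{itemize}
\item The uniform comparability $\sup_{w\neq 0}H^{(n)}_w\le C\,H^{(n)}_0$. The paper phrases its estimate through $H^{(n-1)}_0$, which tacitly uses such a comparison, although the max-row-sum bound, valid at every vertex, would also serve there.
\item A termwise bound of order $t_n/a_n$.
\item A proof that makes no use of the monotonicity of $(a_n)$ or $(t_n)$.
\end{itemize}
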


\begin{proof} By \eqref{eq: exten_representation}, we have 
\be\label{Formula_nu_ext}
\ba 
\wh \nu(\wh X_{\ov B}) &= 1 + \sum_{n = 0}^{\infty} \sum_{w \in W_n'}f_{0,w}^{(n)} H_w^{(n)}\ov p_0^{(n+1)}\\
&= 1 + \sum_{n = 0}^{\infty} \frac{1}{a_0\ \cdots\ a_n} \sum_{w \in [-t_n,t_n] \setminus \{0\}} H_w^{(n)}.
\ea
\ee
We can write 
$$
\ba 
\sum_{w \in [-t_n,t_n] \setminus \{0\}} H_w^{(n)} & = 
\sum_{|w| \leq t_{n-1}, w\neq 0} H_w^{(n)}  + 
\sum_{t_{n-1} < |w| \leq t_{n}} H_w^{(n)}  \\
& \leq 2 t_{n-1} H_0^{(n-1)} + 2 (2t_{n-1} +1) (t_n - t_{n-1}) 
H_0^{(n-1)}.
\ea 
$$ 
It is straightforward to check by induction that for all $n$
\be\label{eq4:heights estimate}
H_0^{(n)} \leq \prod_{i=0}^{n-1} (a_i + 2 t_i).
\ee
Using \eqref{eq4:heights estimate}, we get that
$$
\sum_{|w| \leq t_n, w \neq 0} H_w^{(n)} \leq 
\prod_{i=0}^{n-2} s_n (a_i + 2 t_i), 
$$
where $s_n = 2(2 t_nt_{n-1} + t_n - 2t^2_{n-1})$. Substituting this estimate into \eqref{Formula_nu_ext}, we have 
\be\label{eq4:estimate}
\ba 
\wh \nu(\wh X_{\ov B}) & \leq 1 +  \sum_{n = 0}^{\infty} \frac{2}{a_0\ \cdots\ a_n} 
\prod_{i=0}^{n-2} s_n (a_i + 2 t_i) \\
& = 1 + 2 \sum_{n=0}^\infty \frac{s_n}{a_{n-1} a_n}\left( 1 + 
\frac{2t_0}{a_0}\right) \ \cdots \ \left( 1 + \frac{2t_{n-2}}{a_{n-2}}\right).
\ea
\ee
Note that if $\sum_{n = 0}^{\infty} \frac{t_n}{a_n} < \infty$, then the product $\left( 1 + \frac{2t_0}{a_0}\right) \ \cdots \ \left( 1 + \frac{2t_{n-2}}{a_{n-2}}\right)$ converges as $n \to \infty$. Moreover, 
$$
\sum_{n=0}^\infty \frac{s_n}{a_{n-1} a_n} = \sum_{n=0}^\infty  
\left( \frac{2t_{n-1}(t_n - t_{n-1})}{a_{n-1}a_n} 
+  \frac{t_{n}}{a_{n-1}a_{n}} \right) < \infty.
$$
Thus, by \eqref{eq4:estimate} the measure extension $\wh {\ol\nu}(\wh X_{\ov B})$ is finite.
\end{proof}

\begin{remark}
We can use a slightly weaker estimate for the heights of the towers.
Note that, for $w \in [-t_n, t_n] \setminus \{0\}$, we have
$H_w^{(n)} \leq (2t_{n-1} + 1)H^{(n-1)}_0$, and therefore
$$
\sum_{w \in [-t_n, t_n] \setminus \{0\}} H_w^{(n)} \leq  
2t_n (2t_{n-1} + 1)H^{(n-1)}_0.
$$
Then, as in the proof of Theorem \ref{thm: ext fat odom}, one can see that 
\be\label{eq4:weaker cond}
\wh{\ol \nu}(\wh X_{\ov B}) \leq 
1 + 2\sum_{n = 0}^{\infty} \frac{t_n(2t_{n-1} + 1)}{a_{n-1}a_n}\left(1 + \frac{2t_0}{a_0}\right)\cdots \left(1 + \frac{2t_{n-2}}{a_{n-2}}\right).
\ee
Since $s_n = 2(2 t_nt_{n-1} + t_n - 2t^2_{n-1}) < 2t_n(2t_{n-1} + 1)$,
relation \eqref{eq4:weaker cond} is weaker than relation \eqref{eq4:estimate}. Clearly, $\wh{\ol \nu}(\wh X_{\ov B})$ is finite if the series in \eqref{eq4:weaker cond} converges. 
\end{remark}

\begin{remark}
In \cite[Section 4]{BezuglyiJorgensenKarpelKwiatkowski2025}, the measure extension from odometer was considered for irreducible generalized Bratteli diagrams with countably many vertical odometers as subdiagrams (see also \Cref{Ex_hor_stat_IO}). 
\end{remark}

\section{Step-by-step measure extension}\label{sect step-by-step}

In this section, we discuss a modification of the measure extension procedure used in \Cref{sect meas ext subdgrms} and 
\Cref{sect ext from classes sbdgrms}. We first describe the idea of this method and then illustrate it with examples. 

Let $B = B(F_n)$ be a generalized Bratteli diagram and $\ol B = B(\ol F_n)$ a vertex subdiagram, and let $\nu$ be a probability tail invariant measure defined on $X_{\ol B}$. As before, we want to study whether the extension $\wh\nu$ of $\nu$ to 
$\wh X_{\ol B}$ is finite or infinite.  

\begin{prop}\label{prop step-by-step}
For the diagrams $B$ and $\ol B$ as above, assume that we can find 
another Bratteli diagram $B'$ such that $\ol B$ is a subdiagram of
$B'$ and $B'$ is a subdiagram of $B$ i.e. $X_{\ol B} \subset X_{B'} \subset X_{B}$. As before, we assume $\nu$ to be a probability measure on $X_{\ol B}$ and denote by $\nu'$ the measure extension of $\nu$ to the space $\wh X_{\ol B} \cap X_{B'}$. 
If $\nu'(\wh X_{\ol B} \cap X_{B'}) = \infty$, then measure extension of $\nu$ to $X_B$ is infinite, i.e. $\wh\nu(\wh X_{\ol B}) = \infty$. Similarly, if the measure extension of $\nu'$ to $X_B$ is finite, i.e. $\wh{\nu'}(\wh X_{B'}) < \infty$, then $\wh\nu(\wh X_{\ol B}) < \infty.$
\end{prop}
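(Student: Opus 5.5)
The plan is to compare the relevant saturated sets and the three extended measures directly, using only the monotonicity of measures and the uniqueness of a tail invariant measure determined by its values on cylinder sets (\Cref{BKMS_measures=invlimits}). Let $\mathcal R'$ denote the tail equivalence relation on $X_{B'}$. Since $X_{\ol B}\subset X_{B'}\subset X_B$, the set $\wh X_{\ol B}\cap X_{B'}$ is exactly the $\mathcal R'$-saturation of $X_{\ol B}$ inside $X_{B'}$: a path of $B'$ is $\mathcal R'$-equivalent to a path of $\ol B$ if and only if it is $\mathcal R$-equivalent to it. Moreover, $\wh X_{\ol B}\subset \wh X_{B'}$.

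The first step is to check that $\wh\nu$ restricted to $\wh X_{\ol B}\cap X_{B'}$ coincides with $\nu'$. Every cylinder set of $X_{B'}$ determined by a finite path $f$ of $B'$ ending at a vertex $v$ of $\ol B$ is contained in the corresponding cylinder of $X_B$. By construction, $\wh\nu$ assigns to the part of this cylinder inside $\wh X_{\ol B}$ the value $\nu([\ol e])$, where $\ol e$ is any path of $\ol B$ with $r(\ol e)=v$, and $\nu'$ assigns the same value. More precisely, one decomposes $\wh X_{\ol B}\cap X_{B'}=\bigcup_n (\wh X^{(n)}_{\ol B}\cap X_{B'})$. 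Each piece is a disjoint union of sets of the form $[f]\cap(\text{tail in } \ol B)$, and both measures give such a set the value $\nu([\ol e]\cap\{\text{same tail}\})$ by tail invariance. Hence they agree on the generating algebra, and therefore on all Borel subsets, by the uniqueness in \Cref{BKMS_measures=invlimits}. Consequently
\[
\wh\nu(\wh X_{\ol B})\ \ge\ \wh\nu(\wh X_{\ol B}\cap X_{B'})=\nu'(\wh X_{\ol B}\cap X_{B'}),
\]
which gives the first assertion.

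For the second assertion, assume $\nu'$ is a finite measure on $X_{B'}$. Its extension $\wh{\nu'}$ is an $\mathcal R$-invariant measure on $\wh X_{B'}\supset \wh X_{\ol B}$. By construction, both $\wh{\nu'}$ and $\wh\nu$ give the same value $\nu([\ol e])$ to every set $[f]\cap\{x:\text{tail of }x\text{ lies in }\ol B\text{ from level }n\}$, where $f$ is a path of $B$ ending at $v\in W_n$. Indeed, $\wh{\nu'}$ first passes through $\nu'$, which extends $\nu$, and then uses tail invariance in $B$. So again by uniqueness, $\wh\nu=\wh{\nu'}|_{\wh X_{\ol B}}$, and therefore
\[
\wh\nu(\wh X_{\ol B})\le \wh{\nu'}(\wh X_{B'})<\infty.
\]
The hypothesis that $\nu'$ is finite is implicit here. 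If it were infinite, the first part would already give $\wh\nu(\wh X_{\ol B})=\infty$, so the second assertion is meaningful only in the finite case, and I would state that explicitly.

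The main obstacle is the identification of measures, that is, transitivity of the extension procedure: extending $\nu$ first to $B'$ and then to $B$ gives the same result as extending directly. The argument is that both measures are $\mathcal R$-invariant on $\wh X_{\ol B}$ and restrict to $\nu$ on $X_{\ol B}$. Since $X_{\ol B}$ is a complete countable section of $\mathcal R$ on $\wh X_{\ol B}$, an invariant measure on the saturation is uniquely determined by its restriction to the section: write the saturation as a countable union of partial-isomorphism images of subsets of $X_{\ol B}$. I would spell out this uniqueness once and use it for both parts.
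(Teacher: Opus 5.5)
Your argument is correct and spells out what the paper dismisses as ``obvious'': $\nu'$ is the restriction of $\wh\nu$ to $\wh X_{\ol B}\cap X_{B'}$, and $\wh{\nu'}$ coincides with $\wh\nu$, so your second inequality is in fact an equality and both claims follow by monotonicity. One small correction: a single cylinder can carry infinite mass under these extended measures (exactly the case relevant to the first claim), so justify the identification by partitioning each $\wh X^{(n)}_{\ol B}\cap X_{B'}$ into countably many head-change copies of pieces of $X_{\ol B}$, each of mass at most $1$ (your complete-section argument), rather than by the uniqueness in \Cref{BKMS_measures=invlimits}, which presupposes finite cylinder values.
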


\begin{proof}
This statement is obvious. 
\end{proof}

\begin{remark}\label{rem: step_step}
The purpose of \Cref{prop step-by-step} is based on the following observation. The basic formula for the measure extension 
\eqref{eq: exten_representation} is efficient if we know how to find 
(or estimate) the values $H_w^{(n)}$ where $w \in W_n'$. Our choice of a wider subdiagram $B'$ is motivated by the possibility of determining the tower heights in $B'$ relatively easily. Clearly, we can use a chain of subdiagrams (if necessary) 
$\ol B\subset B_1\subset B_2\subset \cdots \subset B$ to answer the question about the finiteness of the measure extension from $\ol B$.
\end{remark} 

In what follows, we apply the approach formulated in \Cref{prop step-by-step} (and \Cref{rem: step_step}) to a one-sided infinite Bratteli diagram that contains a 
\textit{fat odometer} (see \Cref{sec: fat odometer}). For this, we describe the three generalized Bratteli diagrams $B$, $\ol B$, and $B'$ that will play an important role in this section.

\textbf{Diagram $B.$} 
Let $(a_n)_{n=0}^{\infty}$ and $(t_n)_{n=0}^{\infty}$ be two sequences of natural numbers. We consider the most interesting case when these sequences are increasing. Let $B = B(F_n)$ be a generalized Bratteli diagram defined by the 
sequence of incidence matrices $(F_n)_{n \in \N_0}$. To define $B= B(F_n)$, we use the bounded size diagram considered in \eqref{eq: fat_odo} and take the right half of this diagram so that $F_n$ is an $\N_0 \times \N_0$ matrix for each $n \in \N_0$. It can also be described as follows:
$$
F_n = \begin{pmatrix}
    a_n & 1 & \cdots & 1 & 0 & 0 & 0 & \cdots & \cdots \\
    1 & 1 &  \cdots  & \cdots & 1 & 0 & 0 & \cdots & \cdots\\
    \cdots & \cdots & \cdots & \cdots & \cdots & \cdots & \cdots & \cdots & \cdots & \\
0 & \cdots & 0 & 1 & \cdots & 1 & 0 &\cdots &\cdots \\
\cdots & \cdots & \cdots & \cdots & \cdots & \cdots & \cdots & \cdots & \cdots & \\
\end{pmatrix},
$$
where the string of $1$s has the length $t_n$ in the zeroth row and $t_n +2$ in the first row. Then the length of such a string beginning at the zero vertex increases by 1 in each subsequent row until we reach the $t_n$th row. The $t_n$th row has exactly $2t_n +1$ entries equal to $1$. For the $t_n +k$-row, $f^{(n)}_{t_n+ k, i} =1$ if and only if $i = k, \ldots, k + 2t_n$. In other words, the entries of the $\N_0 \times \N_0$ $F_n$ are determined as follows: For $i,j \in \N_0$ we have
\[
f_{j,i}^{(n)} =
\left\{
\begin{aligned}
&a_n, \quad i = 0,\;  j = 0,\\
&1, \quad j \neq 0,\; i\neq 0, \quad |j - i| \leq t_n,\\
&0, \quad \mbox{otherwise}.
\end{aligned}
\right.
\]
\textbf{Diagram $\ol B$.} Let $\ol B$ be the odometer sitting on the vertices $0$ with $a_n$ edges between the levels $V_n$ and $V_{n+1}$ for $n \in \N_0$. The unique tail invariant probability measure on $X_{\ol B}$ is defined by 
$$
\nu([\ol e]) = \frac{1}{a_0 \ \cdots\ a_{n-1}} 
$$
where $[\ol e]$ is a cylinder set in $X_{\ol B}$ ending at $0 \in V_{n}$ for $n \in \N$. In what follows, we will also call the diagram $\ol B$ the \textit{$(a_n)$-odometer}.

\textbf{Diagram $B'$.} The standard Bratteli diagram $B'$ is defined as a rooted vertex subdiagram of $B$ whose vertices are chosen as follows: $W_0 =\{0\}$ and $W_n = [0, 1, \ldots, t_{n-1}]$ for $n \in \N$.
Recall that $t_n < t_{n+1}$ for all $n \in \N_0$.
Note that $\ol B$ is also a subdiagram of $B'$. The sequence of incidence matrices $(F'_n)_{n \in \N_0}$ that determines $B'$ can be written in the following form
$$
F'_n = \begin{pmatrix}
    a_n & 1 & \cdots & 1 \\
    1 & 1 & \cdots & 1 \\
\cdots & \cdots & \cdots  & \cdots\\
    1 & 1 & \cdots & 1 \\
\end{pmatrix} 
$$
Note that $F'_0$ is a column vector and the size of $F'_n$ is $|W_{n+1}| \times |W_n| = (t_n +1) \times (t_{n-1} +1)$ where $n \in \N$. 

The role of the diagram $B'$ becomes clear from the following discussion. For $n \in \N_0$, we denote by $h_w^{(n)}$ the height of the tower in $B'$ corresponding to the vertex $w \in W_n$. Recall that we identify $V_n$ with $\N_0$ for every level $n$ $\in \N_0$, thus, for $w =0 \in V_n$, we can use $h_0^{(n)}$ to denote the height of the tower.
It is clear that, for all $n \in \N,$ $h^{(n)} = F'_{n-1}h^{(n-1)} $ where 
$h^{(n)}$ is the vector with the entries $h_w^{(n)}$, $w \in W_n$.

We define a sequence $(k^{(n)})_{n\in \N_0}$ of positive integers as follows. Let  $k^{(0)} : = h^{(0)}_0 = 1$ and $k^{(1)} : =  h^{(0)}_w = 1$ for $w \in [1,\ldots,t_0]$. Observe that every vertex $w \in [1,..., t_{n}]$ is connected by a single edge with every vertex $w' \in W_{n}$. Thus, $h^{(n+1)}_w$ does not depend on $w \in [1,..., t_{n}]$. For $n \geq 2$, we set $k^{(n)}: = 
h^{(n)}_w$ for $w \in W_n \setminus \{0\}$.

\begin{lemma}\label{lem s5 heights B'} $h_{0}^{(1)} = a_0$ and for $n \geq 2$,  
\begin{enumerate}[label=(\alph*)]
    \item $h^{(n)}_0 = a_{n-1}h_0^{(n-1)} + t_{n-2} k^{(n-1)}$,
    \item $k^{(n)} = h^{(n-1)}_0 + t_{n-2} k^{(n-1)}$.
\end{enumerate}
\end{lemma}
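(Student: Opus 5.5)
The plan is to compute directly from the recursion $h^{(n)} = F'_{n-1} h^{(n-1)}$, using the explicit form of the matrices $F'_n$ of the standard diagram $B'$. First I settle the base case: $W_0=\{0\}$ and $h^{(0)}_0 = 1$. The matrix $F'_0$ is the column vector $(a_0,1,\dots,1)^T$ of length $t_0+1$. Hence $h^{(1)}_0 = a_0$, and $h^{(1)}_w = 1$ for $w\in[1,\dots,t_0]$, which agrees with $k^{(1)}=1$.

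For $n\ge 2$ I read off the rows of $F'_{n-1}$, whose size is $(t_{n-1}+1)\times(t_{n-2}+1)$ with columns indexed by $W_{n-1}=[0,\dots,t_{n-2}]$. The zeroth row is $(a_{n-1},1,\dots,1)$, so
$$h^{(n)}_0 = a_{n-1}h^{(n-1)}_0 + \sum_{w=1}^{t_{n-2}} h^{(n-1)}_w.$$
Every other row is $(1,1,\dots,1)$, so for $v\in W_n\setminus\{0\}$
$$h^{(n)}_v = h^{(n-1)}_0 + \sum_{w=1}^{t_{n-2}} h^{(n-1)}_w.$$
In particular, $h^{(n)}_v$ does not depend on $v\neq 0$, which justifies the definition of $k^{(n)}$.

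Next I use the inductive fact that $h^{(n-1)}_w = k^{(n-1)}$ for every $w\in W_{n-1}\setminus\{0\}=[1,\dots,t_{n-2}]$. The sum over $w\ge1$ then has exactly $t_{n-2}$ equal terms and equals $t_{n-2}k^{(n-1)}$. Substituting this gives (a) and (b) at once.

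The one point needing care is the consistency of the entries of $F'_{n-1}$ with the restriction of $F_{n-1}$ to $W_n\times W_{n-1}$. Every $v\in[1,\dots,t_{n-1}]$ and $w\in[0,\dots,t_{n-2}]$ satisfy $|v-w|\le t_{n-1}$, since $t_{n-2}<t_{n-1}$. So all these entries equal $1$, as displayed for $F'_n$; this holds even for $w=0$, where the paper's first-row description of $F_n$ gives $f_{1,0}=1$. Similarly, the zeroth row has entries $1$ at the positions $1,\dots,t_{n-2}\le t_{n-1}$. Beyond this check, the computation is a routine induction on $n$.
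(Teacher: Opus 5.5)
Your proposal is correct and is essentially the paper's argument: the paper only says the lemma follows by writing $h^{(n)} = F'_{n-1}h^{(n-1)}$ entry-wise, and you do exactly that, using that every row of $F'_{n-1}$ other than the zeroth consists of ones and that $h^{(n-1)}_w = k^{(n-1)}$ on the $t_{n-2}$ vertices $w \neq 0$ of $W_{n-1}$. Your extra check that the restriction of $F_{n-1}$ to $W_n \times W_{n-1}$ really is the displayed $F'_{n-1}$ is worth keeping. It resolves the paper's entry-wise formula for $F_n$ (which would make $f^{(n)}_{j,0}=0$ for $j\neq 0$) in favor of the displayed matrix, as the lemma requires.
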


\begin{proof} The lemma follows immediately from the relation  $h^{(n)} = F'_{n-1}h^{(n-1)}$ written in the entry-wise form. 
\end{proof}

We will now find expressions for $h^{(n)}_0$ and $k^{(n)}$ in terms of the sequences $(a_n)_{n \in \N_0}$ and $(t_n)_{n \in \N_0}$. 

\begin{lemma}\label{lem s5 k^{(n)}}
The following relation holds for all $n\geq 2$:
\be\label{eq s5 k^{(n)}}
k^{(n)} = h^{(n-1)}_0 + \sum_{i =0}^{n-2} t_{n-2}\ \cdots\ t_{n-2 -i} 
\ h_0^{(n-2 -i)},
\ee
where $h^{(0)}_0 =1$. 
\end{lemma}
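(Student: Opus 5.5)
We prove \eqref{eq s5 k^{(n)}} by induction on $n \geq 2$. The only input is the recursion of \Cref{lem s5 heights B'}(b),
$$
k^{(n)} = h^{(n-1)}_0 + t_{n-2}\, k^{(n-1)}, \quad n \geq 2,
$$
together with the initial value $k^{(1)} = 1 = h^{(0)}_0$. This initial value holds because $W_0 = \{0\}$ and every vertex $w \in [1, \ldots, t_0] \subset W_1$ is joined to $0 \in V_0$ by exactly one edge. The idea is that the recursion is a first-order linear inhomogeneous recurrence, and \eqref{eq s5 k^{(n)}} is simply its unrolled form.

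\emph{Base case $n = 2$.} The recursion gives $k^{(2)} = h^{(1)}_0 + t_0 k^{(1)} = h^{(1)}_0 + t_0 h^{(0)}_0$. The right-hand side of \eqref{eq s5 k^{(n)}} for $n = 2$ consists of $h^{(1)}_0$ plus the single summand $i = 0$, which is $t_0 h^{(0)}_0$. So the two sides agree.

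\emph{Inductive step.} Assume \eqref{eq s5 k^{(n)}} holds for some $n \geq 2$. Insert it into the recursion at level $n+1$:
$$
k^{(n+1)} = h^{(n)}_0 + t_{n-1} h^{(n-1)}_0 + \sum_{i=0}^{n-2} t_{n-1} t_{n-2} \cdots t_{n-2-i}\, h^{(n-2-i)}_0 .
$$
Now reindex the sum by $j = i+1$. Its general term becomes $t_{(n+1)-2} \cdots t_{(n+1)-2-j}\, h^{((n+1)-2-j)}_0$, and $j$ runs from $1$ to $n-1 = (n+1)-2$. The separate term $t_{n-1} h^{(n-1)}_0$ is exactly the $j = 0$ term of the same expression. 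Combining them gives
$$
k^{(n+1)} = h^{(n)}_0 + \sum_{j=0}^{(n+1)-2} t_{(n+1)-2} \cdots t_{(n+1)-2-j}\, h^{((n+1)-2-j)}_0 ,
$$
which is \eqref{eq s5 k^{(n)}} with $n$ replaced by $n+1$. This completes the induction.

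There is no real obstacle here. The only points needing care are the index bookkeeping in the reindexing, and checking that the identification $k^{(1)} = h^{(0)}_0 = 1$ is consistent with how $k^{(1)}$ was defined.
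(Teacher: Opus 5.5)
Your proof is correct and matches the paper's argument: induction on $n$ using the recursion $k^{(n)} = h^{(n-1)}_0 + t_{n-2}k^{(n-1)}$ from \Cref{lem s5 heights B'}(b), with base case $k^{(2)} = a_0 + t_0$. Your version just writes out the reindexing in the inductive step more explicitly than the paper does.
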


\begin{proof}
For $n=2$, we see that $k^{(2)} = a_0 + t_0$ and this agrees with \eqref{eq s5 k^{(n)}}. Using \Cref{lem s5 heights B'}, and then applying induction, we see that for $n \geq 3$,
$$
\ba
k^{(n)} = & \ h^{(n-1)}_0 + t_{n-2} k^{(n-1)} \\
= &\ h^{(n-1)}_0 + t_{n-2} [ h^{(n-2)}_0 + \sum_{i =0}^{n-3} t_{n-3}\ \cdots\ t_{n-3 -i} \ h_0^{(n-3 -i)}]\\
= &\ h^{(n-1)}_0 + \sum_{i =0}^{n-2} t_{n-2}\ \cdots\ t_{n-2 -i} 
\ h_0^{(n-2 -i)},
\ea
$$ as needed.
\end{proof} We note that 
\eqref{eq s5 k^{(n)}} can also be written as follows:
$$
k^{(n)} = h^{(n-1)}_0 + \sum_{i=2}^{n} t_{n-2}\ \cdots\ t_{n -i} 
\ h_0^{(n-i)}.
$$ 
Based on \Cref{lem s5 k^{(n)}}, we obtain the exact expression for the measure extension from $\ol B$ (i.e. the odometer $(a_n)$) to $\wh X_{\ol B} \cap X_{B'}$. For convenience, we set $X'_{\ol B} = \wh X_{\ol B} \cap X_{B'}$. In other words, we will find an expression for the measure extension $\wh\nu(X'_{\ol B})$ where $\nu$ is a tail invariant probability measure in $X_{\ol B}$.

\begin{thm}\label{thm s5}
For the diagrams $\ol B$ and $B'$ as above, the  extension of the measure $\nu$ to $X'_{\ol B}$ is 
\be\label{eq s5 ext to B'}
\wh\nu(X'_{\ol B}) =  1 +   \sum_{n=2}^{\infty} \sum_{k =0}^{n-1}
\frac{(t_{n-1} \ \cdots\ t_{k})\; h_0^{(k)}}{a_0\ \cdots\ a_{n-1}}.
\ee
\end{thm}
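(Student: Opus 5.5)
The plan is to apply the basic extension formula \eqref{eq: exten_representation} from \Cref{thm3 criterion vertex subd}. The ambient diagram will be the standard diagram $B'$ rather than $B$, and the subdiagram will be the $(a_n)$-odometer $\ol B$, regarded as a vertex subdiagram of $B'$ supported on $\{0\}$ at every level. The proof of \Cref{thm3 criterion vertex subd} only uses the decomposition of the saturation into the sets $\wh X^{(n+1)}_{\ol B}\setminus \wh X^{(n)}_{\ol B}$ and the tower heights of the ambient diagram. It therefore applies verbatim with $B'$ in place of $B$, with $h^{(n)}_w$ in place of $H^{(n)}_w$, and with $\wh X_{\ol B}$ replaced by $X'_{\ol B}=\wh X_{\ol B}\cap X_{B'}$. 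This yields
$$
\wh\nu(X'_{\ol B}) = 1 + \sum_{n=0}^\infty \sum_{w \in W_n\setminus\{0\}} f'^{(n)}_{0,w}\, h^{(n)}_w\, \ol p^{(n+1)}_0 ,\qquad \ol p^{(n+1)}_0 = \frac{1}{a_0\cdots a_n}.
$$

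Next I evaluate each term. At level $0$ we have $W_0=\{0\}$, so the complement is empty and the $n=0$ term vanishes. For $n\ge1$, every $w\in W_n\setminus\{0\}$ has the common height $k^{(n)}$ in $B'$. Row $0$ of $F'_n$ is $(a_n,1,\dots,1)$, so every such $w$ contributes exactly one edge to the vertex $0\in W_{n+1}$. The $n$-th term is therefore
$$
\frac{|W_n\setminus\{0\}|\; k^{(n)}}{a_0\cdots a_n},
$$
where $|W_n\setminus\{0\}|$ is the number of $1$'s in row $0$ of $F'_n$. I then substitute the closed form for $k^{(n)}$ from \Cref{lem s5 k^{(n)}}, namely
$$
k^{(n)} = h_0^{(n-1)} + \sum_{j} t_{n-2}\cdots t_j\, h_0^{(j)},
$$
together with $k^{(1)}=1=h^{(0)}_0$. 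Multiplying by the leading factor $t$ turns each summand into a product of consecutive $t$'s times $h_0^{(j)}$, with $j$ running from $0$ to the top level. This gives exactly the inner sum $\sum_k (t_{\cdot}\cdots t_k)\,h^{(k)}_0$ of \eqref{eq s5 ext to B'}, divided by the product of the $a_i$. A final shift of the summation index, which puts the denominator in the form $a_0\cdots a_{n-1}$, makes the outer sum start at $n=2$.

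The step I expect to be the main obstacle is purely the bookkeeping of indices; the argument itself involves no analytic difficulty. The delicate points are:
\begin{itemize}
\item the exact number of $1$'s in row $0$ of $F'_n$, which must be read as $t_{n-1}$ from the restriction of $F_n$ to the columns $W_n=[0,t_{n-1}]$;
\item the base cases $k^{(0)}, k^{(1)}$;
\item the shift between the level at which a path leaves the odometer and the level of the range vertex.
\end{itemize}
Each of these has to be aligned with the convention in \eqref{eq s5 ext to B'}, in particular with its top factor $t_{n-1}$ and with $k$ ranging up to $n-1$. I would first verify the first two terms of the series by hand, for instance the contribution of paths leaving the odometer between levels $1$ and $2$. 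Then I would fix the shift so that these terms match, and only after that carry out the general reindexing.
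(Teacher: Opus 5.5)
Your route is the paper's: you apply the extension formula inside $B'$, use that the $t_{n-1}$ vertices of $[1,t_{n-1}]\subset W_n$ share the height $k^{(n)}$ and each send one edge to $0\in V_{n+1}$, and substitute the closed form of \Cref{lem s5 k^{(n)}}. Your intermediate formula is correct. The cylinders counted at step $n$ end at $0\in V_{n+1}$, so each carries $\ol p^{(n+1)}_0=1/(a_0\cdots a_n)$, and you get
\[
\wh\nu(X'_{\ol B}) = 1+\sum_{n=1}^{\infty}\frac{t_{n-1}k^{(n)}}{a_0\cdots a_n}
= 1+\sum_{n=1}^{\infty}\frac{1}{a_0\cdots a_n}\sum_{k=0}^{n-1}(t_{n-1}\cdots t_k)\,h_0^{(k)} .
\]
This can be checked independently. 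Divide \Cref{lem s5 heights B'}(a) by $a_0\cdots a_{n-1}$. This shows that $h_0^{(n)}/(a_0\cdots a_{n-1})$, the $\wh\nu$-mass of the paths that follow the odometer from level $n$ on, increases from step to step by exactly these amounts.

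The gap is your final step: no shift of index turns this series into \eqref{eq s5 ext to B'}. Replacing $n$ by $n-1$ does give the denominator $a_0\cdots a_{n-1}$, but then the top factor is $t_{n-2}$ and $k$ runs only to $n-2$. Put differently, for every $n\ge 2$ the $n$-th term of \eqref{eq s5 ext to B'} is $a_n$ times your $n$-th term. Your $n=1$ term $t_0/(a_0a_1)$, which comes from the paths $0\to w\to 0\to 0\to\cdots$ with $w\in[1,t_0]\subset V_1$, has no counterpart there. The hand check you propose would expose this at once: the first term of \eqref{eq s5 ext to B'} is $t_1(t_0+a_0)/(a_0a_1)$, not $t_0/(a_0a_1)$.

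The paper reaches its displayed formula because the first line of its proof weights the $t_{n-1}k^{(n)}$ paths through $[1,t_{n-1}]\subset V_n$ by $1/(a_0\cdots a_{n-1})$. That is the mass of an odometer cylinder ending at level $n$, whereas these cylinders end at level $n+1$; the paper also starts the sum at $n=2$. So this is not bookkeeping you can align. What your argument actually proves is
\[
\wh\nu(X'_{\ol B}) = 1+\sum_{n=2}^{\infty}\sum_{k=0}^{n-2}\frac{(t_{n-2}\cdots t_k)\,h_0^{(k)}}{a_0\cdots a_{n-1}},
\]
which in general differs from the printed right-hand side. The statement should be corrected to this form rather than proved as printed.
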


\begin{proof} From \Cref{lem s5 k^{(n)}}, it follows that
$$
\ba
\wh\nu(X'_{\ol B}) = & \ 1 + \sum_{n=2}^{\infty} 
\frac{1}{a_0\ \cdots\ a_{n-1}} \sum_{w =1}^{t_{n-1}} h_w^{(n)}\\
= & \ 1 + \sum_{n=2}^{\infty} 
\frac{1}{a_0\ \cdots\ a_{n-1}} t_{n-1} k^{(n)}\\
= &\ 1  + \sum_{n=2}^{\infty} 
\frac{1}{a_0\ \cdots\ a_{n-1}} t_{n-1} \left(h^{(n-1)}_0 + \sum_{i=0}^{n-2} t_{n-2}\ \cdots\ t_{n -2 -i} \ h_0^{(n -2-i)}\right)\\
= &\ 1 +  \sum_{n=2}^{\infty} 
\frac{1}{a_0\ \cdots\ a_{n-1}}  \left(t_{n-1}h_0^{(n-1)} + 
\sum_{j =1}^{n-1} t_{n-1} \ \cdots\ t_{n-1 -j} h_0^{(n-1-j)}\right)
\\
= &\ 1  +  \sum_{n=2}^{\infty} \sum_{i =0}^{n-1}
\frac{(t_{n-1} \ \cdots\ t_{n-i-1})\; h_0^{(n-i-1)}}{a_0\ \cdots\ a_{n-1}}\\
= &\ 1  +  \sum_{n=2}^{\infty} \sum_{k =0}^{n-1}
\frac{(t_{n-1} \ \cdots\ t_{k})\; h_0^{(k)}}{a_0\ \cdots\ a_{n-1}}.
\ea
$$
\end{proof}

\begin{corol} \label{cor s5}
The following statements hold:
\begin{enumerate}
    \item If $\sum_{n =0}^{\infty} \frac{1}{a_n} = \infty$, then $\wh\nu(X'_{\ol B}) = \infty$. 
    \item If $\sum_{n =0}^{\infty} \frac{1}{a_n} < \infty$, then there exists a sequence $(t_n)_{n \in \N_0}$ such that $\wh\nu(X'_{\ol B}) = \infty$. 
\end{enumerate}
In both cases, the measure extension $\wh\nu(\wh X_{\ol B})$ to the diagram $B$ is infinite. 
\end{corol}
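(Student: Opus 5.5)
Everything rests on the exact formula \eqref{eq s5 ext to B'} of \Cref{thm s5}. Since every summand is nonnegative, I will throw away all but a convenient subfamily of terms and show that this subfamily already diverges. The claim about $B$ then follows from \Cref{prop step-by-step}. Indeed, $X_{\ol B}\subset X_{B'}\subset X_B$, and $\wh\nu$ restricted to $X'_{\ol B}$ is exactly the extension computed in \Cref{thm s5}. So $\wh\nu(X'_{\ol B})=\infty$ forces $\wh\nu(\wh X_{\ol B})=\infty$.

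For (1), I keep only the term $k=n-1$ in the inner sum. It equals $t_{n-1}h_0^{(n-1)}/(a_0\cdots a_{n-1})$. By \Cref{lem s5 heights B'}(a), $h_0^{(n)}\ge a_{n-1}h_0^{(n-1)}$ with $h_0^{(0)}=1$, so $h_0^{(n-1)}\ge a_0\cdots a_{n-2}$. The kept term is therefore at least $t_{n-1}/a_{n-1}\ge 1/a_{n-1}$, using $t_{n-1}\ge 1$ (the $t_n$ are increasing natural numbers). Summing over $n\ge 2$ gives $\wh\nu(X'_{\ol B})\ge 1+\sum_{n\ge 1}1/a_n=\infty$. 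Since $t_n\ge1$, this in fact gives infinitely many summands at least $1/a_{n-1}$.

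For (2), the $a_n$ are given with $\sum 1/a_n<\infty$, and I am free to choose $(t_n)$. The same term estimate gives a lower bound $\sum_n t_{n-1}/a_{n-1}$. The plan is to pick an increasing sequence with $t_n\ge a_n$, for instance $t_n=\max(a_n,t_{n-1}+1)$ or simply $t_n=a_n+n$. Then every kept term is at least $1$ and the series diverges. If one prefers to keep $t_n$ smaller, then any increasing choice with $\sum t_n/a_n=\infty$ works equally well.

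The only point needing care is the lower bound $h_0^{(n-1)}\ge a_0\cdots a_{n-2}$. It follows directly from the recursion in \Cref{lem s5 heights B'}, with $h_0^{(1)}=a_0$ as the base case. I do not expect a real obstacle anywhere. Note also that case (1) does not even use the structure of $t_n$ beyond $t_n\ge1$.
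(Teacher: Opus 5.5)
Your argument is the paper's own proof almost verbatim, so relative to \Cref{thm s5} it is fine. Like the paper, you keep the $k=n-1$ terms of \eqref{eq s5 ext to B'}, use $h_0^{(n-1)}\ge a_0\cdots a_{n-2}$, compare with $\sum t_n/a_n\ge\sum 1/a_n$, take $t_n$ large for (2), and pass to $B$ by \Cref{prop step-by-step}. The trouble is that the only real input, \eqref{eq s5 ext to B'}, is off by one level, so you should not take it on trust. By \eqref{extension_method_vertex}, $\wh\nu(X'_{\ol B})=\lim_n h_0^{(n)}/(a_0\cdots a_{n-1})$, and \Cref{lem s5 heights B'}(a) gives
\[
\frac{h_0^{(n+1)}}{a_0\cdots a_n}-\frac{h_0^{(n)}}{a_0\cdots a_{n-1}}=\frac{t_{n-1}k^{(n)}}{a_0\cdots a_n},\qquad n\ge 1 .
\]
The reason is that the towers over $w\in[1,t_{n-1}]\subset V_n$ are glued to $0\in V_{n+1}$, whose cylinders have mass $1/(a_0\cdots a_n)$; compare \eqref{Formula_nu_ext}, where this is done correctly. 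So the denominator in \eqref{eq s5 ext to B'} should be $a_0\cdots a_n$, and the sum should start at $n=1$. For example, $a_n=n+2$ and $t_n=n+1$ give $h_0^{(2)}=7$ and $h_0^{(3)}=34$. The true increment is then $34/24-7/6=1/4$, whereas the $n=2$ term of \eqref{eq s5 ext to B'} is $t_1k^{(2)}/(a_0a_1)=1$.

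With the correct denominator, your kept term is only bounded below by $t_{n-1}/(a_{n-1}a_n)$, and the comparison with $\sum 1/a_n$ breaks down. In fact (1) fails in general. Set $x_n=h_0^{(n)}/(a_0\cdots a_{n-1})$ and $y_n=k^{(n)}/(a_0\cdots a_{n-2})$. Then \Cref{lem s5 heights B'} gives $x_{n+1}=x_n+\varepsilon_n y_n$ and $y_{n+1}=x_n+\rho_n y_n$, with $\rho_n=t_{n-1}/a_{n-1}$ and $\varepsilon_n=t_{n-1}/(a_{n-1}a_n)$. Take $a_n=\lceil (n+3)\ln(n+3)\rceil$ and $t_n=n+1$. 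Then $\rho_n\to 0$ and $\sum_n\varepsilon_n<\infty$, so eventually $y_{n+1}\le 2x_n+C$ and $x_{n+1}\le (1+2\varepsilon_n)x_n+C\varepsilon_n$. Hence $\wh\nu(X'_{\ol B})=\lim_n x_n<\infty$, although $\sum 1/a_n=\infty$. The same estimate with $\sup_{w\ge 1}H^{(n)}_w$ in place of $k^{(n)}$ shows that the extension to $B$ is finite too. Part (2) survives, but not with your choice. For $t_n=a_n+n$, your lower bound for the kept terms is of order $1/a_n$, which is summable in case (2); for $a_n=(n+2)^3$ this choice really gives a finite extension. Something like $t_n=a_na_{n+1}$ works instead, since then every kept term is at least $1$.
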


\begin{proof}
For (1), we consider the series in \eqref{eq s5 ext to B'}, and show that it diverges. Obviously, we can drop a finite number of terms, proving the divergence. Clearly, we have $h_0^{(1)} = a_0$ and 
$$
h_0^{(k)} > a_0 \ \cdots \ a_{k-1}, \ k\geq 2.  
$$
Using \eqref{eq s5 ext to B'} we get 

\be\label{eq s5 corol}
\ba
\wh\nu(X'_{\ol B}) =  & \sum_{n=2}^{\infty}\left( \frac{t_{n-1} \ \cdots\ t_{0}}{a_{n-1}\ \cdots\ a_{0}} +  \sum_{k =1}^{n-1}
\frac{(t_{n-1} \ \cdots\ t_{k})\; h_0^{(k)}}
{a_0\ \cdots\ a_{n-1}}\right) \\
> &\ \sum_{n=2}^{\infty}\left( \frac{t_{n-1} \ \cdots\ t_{0}}{a_{n-1}\ \cdots\ a_{0}} + \sum_{k =1}^{n-1}
\frac{(t_{n-1} \ \cdots\ t_{k})\; (a_0\ \cdots \ a_{k-1})}
{a_0\ \cdots\ a_{n-1}}\right)\\
 = &\ \sum_{n=2}^{\infty} \ \sum_{k =0}^{n-1} 
 \frac{t_{n-1} \ \cdots\ t_k}{a_{n-1}\ \cdots\ a_k}\\ 
> &\ \sum_{n=1}^{\infty} \frac{t_n}{ a_n} \\
> &\ \sum_{n=1}^{\infty} \frac{1}{ a_n} \\
 = & \ \infty.
 \ea 
\ee
(2) is obvious.
\end{proof}

\begin{remark} Note that
\begin{enumerate}
\item there are other conditions on the sequences $(a_n)_{n \in \N_0}$ and $(t_n)_{n \in \N_0}$ that will imply the divergence in \eqref{eq s5 corol}. For example, take $t_n = a_n = n^2$ for $n \in \N_0$.
\item if we use the inequality $h_0^{(n)} > t_0 \ \cdots \ t_{n-2}$ for $n \geq 2$ (which follows from \Cref{lem s5 k^{(n)}}), 
then we get another lower bound for $\wh\nu(X'_{\ol B})$:
$$
\wh\nu(X'_{\ol B}) > \sum_{n=2}^{\infty} \sum_{k= 0}^{n-1} \frac{1}{t_{k-1}} \left( \prod_{i=0}^{n-1} \frac{t_i}{a_i}\right).
$$
\end{enumerate}
\end{remark}

We now consider the case where the measure extension $\wh\nu(X'_{\ol B})$
can be finite. 

\begin{lemma}
In the notations introduced above, we have $h_0^{(1)} = a_0$ and  for $i \geq 2$,
\be\label{eq s5 h_0 from above}
h_0^{(i)} \leq  (a_{i-1} + t_{i-2}) \ \cdots\ (a_1 + t_0)
a_0.
\ee
\end{lemma}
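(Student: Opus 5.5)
The plan is to use the recursions of \Cref{lem s5 heights B'} and prove the bound by induction on $i$, together with the auxiliary inequality
\[
k^{(i)} \leq h_0^{(i-1)}, \qquad i \geq 2 .
\]
The intuition is that the tower over a nonzero vertex of $W_i$ should be no taller than the tower over $0 \in W_{i-1}$. However, \Cref{lem s5 heights B'}(b) gives $k^{(i)} = h_0^{(i-1)} + t_{i-2}k^{(i-1)}$, which is strictly larger than $h_0^{(i-1)}$. So this naive auxiliary inequality fails, and a cruder comparison is needed.

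Instead I will compare $k^{(i)}$ with $h_0^{(i)}$ itself, aiming for
\[
k^{(i-1)} \leq h_0^{(i-1)} \quad \text{for } i \geq 2 .
\]
Substituting this into (a) gives
\[
h_0^{(i)} = a_{i-1}h_0^{(i-1)} + t_{i-2}k^{(i-1)} \leq (a_{i-1}+t_{i-2})\,h_0^{(i-1)} ,
\]
and iterating down to $h_0^{(1)} = a_0$ yields \eqref{eq s5 h_0 from above}.

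The comparison $k^{(n)} \leq h_0^{(n)}$ should hold by induction, starting from the following cases.
\begin{itemize}
\item $n=1$: $k^{(1)} = 1 \leq a_0 = h_0^{(1)}$.
\item $n=2$: $k^{(2)} = a_0 + t_0$, while $h_0^{(2)} = a_1a_0 + t_0$, so the inequality holds since $a_1 \geq 1$.
\end{itemize}
For the inductive step, suppose $k^{(n-1)} \leq h_0^{(n-1)}$. Then by (a) and (b),
\[
h_0^{(n)} - k^{(n)} = (a_{n-1}-1)\,h_0^{(n-1)} \geq 0 ,
\]
because $a_{n-1} \geq 1$. Notably, the difference $h_0^{(n)} - k^{(n)}$ does not depend on $k^{(n-1)}$ at all. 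So the comparison actually holds for every $n \geq 1$ directly from the two recursions, with no induction needed beyond $n=1$.

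The order of steps is therefore:
\begin{enumerate}
\item Read off $h_0^{(1)} = a_0$ and $k^{(1)} = 1$ from $F'_0$.
\item Prove $k^{(n)} \leq h_0^{(n)}$ for all $n \geq 1$ by subtracting (b) from (a).
\item Plug this into (a) to get $h_0^{(i)} \leq (a_{i-1}+t_{i-2})\,h_0^{(i-1)}$.
\item Telescope to obtain \eqref{eq s5 h_0 from above}.
\end{enumerate}

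The only delicate point is bookkeeping at the boundary, namely the indexing conventions for $k^{(1)}$ versus $h_0^{(1)}$ and the size of $W_1 = [0,t_0]$ used in \Cref{lem s5 heights B'}. Beyond checking those conventions, I expect no real obstacle.
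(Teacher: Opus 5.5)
Your proposal is correct and follows essentially the same route as the paper: establish $k^{(i)} \le h_0^{(i)}$ for $i \ge 1$, substitute this into recursion (a) to get $h_0^{(i)} \le (a_{i-1}+t_{i-2})\,h_0^{(i-1)}$, and induct down to $h_0^{(1)} = a_0$. The one small difference is how you justify the comparison. You use the identity $h_0^{(n)} - k^{(n)} = (a_{n-1}-1)\,h_0^{(n-1)} \ge 0$, whereas the paper appeals to the closed-form expression for $k^{(n)}$; your derivation is more direct, and it yields the non-strict inequality, which is all that is needed (the paper's strict version would fail if some $a_{n-1} = 1$).
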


\begin{proof} 
From \Cref{lem s5 k^{(n)}} we deduce that for $i\geq 1$, 
$k^{(i)} < h_0^{(i)}$. Thus, for $i \geq 2$, we have
$$
h_0^{(i)} \leq (a_{i-1} + t_{i-2} )h_0^{(i-1)}.
$$ 
Therefore, the result follows by induction. 
\end{proof}

Based on this lemma, we can obtain a sufficient condition for the finiteness of the measure extension $\wh\nu(X'_{\ol B})$. 

\begin{thm} \label{thm s5-2}
For the diagrams $\ol B$ and $B'$ as above, suppose that the sequences $(a_n)_{n \in \N_0}$ and $(t_n)_{n \in \N_0}$ are taken such that 
$$
\sum_{n = 0}^{\infty} \frac{t_n}{a_n} < \infty.
$$
Then $\wh\nu(X'_{\ol B}) < \infty$. 
\end{thm}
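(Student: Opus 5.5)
We start from the exact formula of \Cref{thm s5},
$$
\wh\nu(X'_{\ol B}) = 1 + \sum_{n=2}^{\infty} \sum_{k =0}^{n-1}
\frac{(t_{n-1} \cdots t_{k})\, h_0^{(k)}}{a_0\cdots a_{n-1}},
$$
and insert the upper bound \eqref{eq s5 h_0 from above} for $h_0^{(k)}$. This gives $h_0^{(0)}=1$, $h_0^{(1)}=a_0$, and for $k\ge 2$
$$
\frac{h_0^{(k)}}{a_0\cdots a_{k-1}} \le \prod_{i=1}^{k-1}\Bigl(1+\frac{t_{i-1}}{a_i}\Bigr)\le \prod_{i=1}^{k-1}\Bigl(1+\frac{t_{i}}{a_i}\Bigr)\le P:=\prod_{i=0}^{\infty}\Bigl(1+\frac{t_i}{a_i}\Bigr),
$$
where the second inequality uses that $(t_n)$ is increasing. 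The constant $P$ is finite because $\sum_n t_n/a_n<\infty$. Hence the general term of the double sum is bounded by
$$
P\cdot \frac{t_{n-1}\cdots t_k}{a_{n-1}\cdots a_k} = P\prod_{i=k}^{n-1} c_i,\qquad c_i:=\frac{t_i}{a_i}.
$$
For $k=0$ there is an extra factor, but $h_0^{(0)}=1$, so the same bound holds.

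It therefore suffices to show that
$$
S:=\sum_{n\ge 2}\sum_{k=0}^{n-1}\prod_{i=k}^{n-1}c_i<\infty
$$
whenever $c_i\ge 0$ and $\sum c_i<\infty$. Reindex by the starting point $k$ and the length $m=n-k\ge 1$, so that $S\le \sum_{k\ge 0}\sum_{m\ge1}c_k c_{k+1}\cdots c_{k+m-1}$.

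\textbf{Main step.} Since $c_i\to 0$, pick $N$ with $c_i\le 1/2$ for all $i\ge N$, and let $C=\max(1,\sup_i c_i)<\infty$. Then for every $k$ and $m$,
$$
c_k\cdots c_{k+m-1}\le c_k\, C^{N}\,(1/2)^{\max(0,m-1-N)}.
$$
The reason is that the factors with index $<N$ are at most $C$, and there are at most $N$ of them after $c_k$. The factors with index $\ge N$ beyond $c_k$ number at least $m-1-N$, and each is at most $1/2$. Summing over $m$ gives a bound $c_k C^N (N+2)$, and summing over $k$ gives $S\le C^N(N+2)\sum_k c_k<\infty$. Hence $\wh\nu(X'_{\ol B})\le 1+P\,S<\infty$.

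The only delicate point is the index shift in \eqref{eq s5 h_0 from above}: it produces $t_{i-1}/a_i$ rather than $t_i/a_i$. This is handled by the monotonicity of $(t_n)$, which the section assumes. If monotonicity were not available, one could instead bound $t_{i-1}/a_i\le t_{i-1}/a_{i-1}$ using that $(a_n)$ is increasing.
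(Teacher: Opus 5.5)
Your proof is correct and follows the paper's route. You insert the bound \eqref{eq s5 h_0 from above} into the formula of \Cref{thm s5} and use the monotonicity of $(t_n)$ to dominate the prefactor by the convergent product $\prod_i(1+t_i/a_i)$; you then control the double sum $\sum_k\sum_{m\ge 1} c_k\cdots c_{k+m-1}$ by geometric decay once $c_i\le 1/2$. Your uniform estimate $c_k\,C^N(1/2)^{\max(0,m-1-N)}$ covers the finitely many initial indices in one step, which is slightly cleaner than the paper's ``by the same method'' treatment of them.
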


\begin{proof} 
We recall that the finiteness of $\wh\nu(X'_{\ol B})$ is determined by the convergence of the series \eqref{eq s5 ext to B'}.  
We apply the estimate from \eqref{eq s5 h_0 from above} to \eqref{eq s5 ext to B'}. 
We also use the fact that the sequence $(t_n)_{n \in \N_0}$ is increasing. 
$$
\ba 
\wh\nu(X'_{\ol B}) < \infty \ \Longleftrightarrow \ & \sum_{n=2}^{\infty}  \left( \frac{t_{n-1} \ \cdots\ t_{0}}{a_{n-1}\ \cdots\ a_{0}} + \frac{t_{n-1} \ \cdots\ t_{1}}{a_{n-1}\ \cdots\ a_{1}} +\sum_{k =2 }^{n-1}
\frac{(t_{n-1} \ \cdots\ t_{k})\; h_0^{(k)}}{a_0\ \cdots\ a_{n-1}}
\right)\\
< &\  \sum_{n=2}^{\infty} \left( \frac{t_{n-1} \ \cdots\ t_{0}}{a_{n-1}\ \cdots\ a_{0}} + \frac{t_{n-1} \ \cdots\ t_{1}}{a_{n-1}\ \cdots\ a_{1}} +
\sum_{k =2}^{n-1}
\frac{(t_{n-1} \ \cdots\ t_{k})(a_{k-1} + t_{k-2}) \ \cdots\ (a_1 + t_0)a_0}{a_0\ \cdots\ a_{n-1}}\right)\\
< &\  \sum_{n=2}^{\infty} \left( \frac{t_{n-1} \ \cdots\ t_{0}}{a_{n-1}\ \cdots\ a_{0}} + \frac{t_{n-1} \ \cdots\ t_{1}}{a_{n-1}\ \cdots\ a_{1}} +
\sum_{k=2}^{n-1} \frac{(t_{n-1} \ \cdots\ t_{k})(a_{k-1} + t_{k-1}) \ \cdots\ (a_0 + t_0)}{a_0\ \cdots\ a_{n-1}}\right)\\
= &\   \sum_{n=2}^{\infty} \left( \frac{t_{n-1} \ \cdots\ t_{0}}{a_{n-1}\ \cdots\ a_{0}} + \frac{t_{n-1} \ \cdots\ t_{1}}{a_{n-1}\ \cdots\ a_{1}} +
\sum_{k=2}^{n-1} 
\left(1 + \frac{t_0}{a_0}\right) \ \cdots \ \left(1 + \frac{t_{k-1}}{a_{k-1}}\right)
\frac{t_{k}}{a_{k}}\ \cdots\ \frac{t_{n-1}}{a_{n-1}}\right).\\
\ea
$$

To simplify the notation, set $y_k = \frac{t_k}{a_k}$ for all $k$.
We need to show that the  convergence of $\sum_{k=0}^{\infty} y_k$ implies that the series $S_1$ and $S_2$ converge, where 
\be\label{eq s5 conv}
S_1 := \sum_{n \geq 2} y_0\ \cdots \ y_n, \quad 
S_2 := \sum_{n=2}^{\infty} \sum_{k=2}^{n-1} (1 + y_0) \ \cdots\ (1 + y_{k-1})
y_k \ \cdots\ y_{n-1} < \infty.
\ee
It is obvious that the convergence of $\sum_{k=0}^{\infty} y_k$ implies that $S_1 < \infty$. 
Since $\prod_{k=0}^{\infty} (1 + y_k)$ converges, there exists $M >0$ such that
$$
\prod_{j=0}^{k-1} (1 + y_j) < M, \quad \mbox{for all}\;\; k \in \N.
$$
Hence, relation \eqref{eq s5 conv} will follow if we prove that 
\be\label{eq s5 finiteness}
\sum_{n=2}^{\infty} \sum_{k=2}^{n-1}y_k \ \cdots\ y_{n-1} < \infty.
\ee
Choose $K \in \N_0$, such that $y_k < \frac{1}{2}$ for all $k \geq K$.  Reversing the order of summation, we write 
$$
\ba
\sum_{n=2}^{\infty} \sum_{k=2}^{n-1}y_k \ \cdots\ y_{n-1} = & 
\sum_{k= 0}^K \sum_{n= k+1}^{\infty} y_k \ \cdots\ y_{n-1} 
+ \sum_{k\geq K} y_k (1 + y_{k+1} + y_{k+1}y_{k+2} + \ \cdots) \\
<  & \sum_{k =2}^K \sum_{n= k+1}^{\infty} y_k \ \cdots\ y_{n-1}
 + \sum_{k\geq K}  y_k \left( 1 + \frac{1}{2} + \frac{1}{4} + \ \cdots \right)\\
= & \sum_{k =2}^K \sum_{n= k+1}^{\infty} y_k \ \cdots\ y_{n-1} 
+ 2 \sum_{k\geq K}  y_k.\\
\ea 
$$
By the same method, we can show that 
$$
\sum_{k\geq 2}^K \sum_{n= k+1}^{\infty} y_k \ \cdots\ y_{n-1} < \infty, 
$$
because $y_n < 1/2$ for all sufficiently large $n$. 
Thus, \eqref{eq s5 finiteness} holds, and this completes the proof of the theorem.
\end{proof}

\begin{remark}
We note that the convergence of the series in \eqref{eq s5 conv}
implies that 
$$
\sum_{n =0}^{\infty} \frac{t_n}{a_n} < \infty, 
$$
assuming that for all $n$ there exists $c < 1$ such that $\frac{t_n}{a_n} < c$. 
\end{remark} 

\section{Approximation by subdiagrams} \label{sect Approximation}

In this section, we discuss how a generalized Bratteli diagram can be approximated by a sequence of standard subdiagrams. This problem is related to the approximation of infinite matrices by sequences of finite matrices and leads to a notion of convergence of measures (see \Cref{Def: weak_conv}) supported by the path spaces of subdiagrams. We highlight that the convergence of measures using sequences of finite subgraphs or sublattices is a crucial technique in fields such as thermodynamic formalism and statistical mechanics. We can mention, for instance, the proof of the existence of an eigenmeasure for the Ruelle-Perron-Frobenius operator for a class of countable shift spaces and potentials \cite{MauUr2001}, which uses an approximation of subgraphs in the symbolic graph, generating a sequence of eigenmeasures in the subshifts that converges to an eigenmeasure for the whole space. In the case of lattices, the notion of thermodynamic limit is realized by using a sequence of increasing finite sublattices that covers $\mathbb{Z}^d$, $d \in \mathbb{N}$. We focus primarily on sequences of vertex subdiagrams.

\subsection{Approximation by stationary standard subdiagrams} 

The problem of approximating a stationary generalized Bratteli diagram by standard Bratteli diagrams is motivated by the following theorem, taken from \cite{Kitchens1998} (we have adopted this statement for the case of generalized Bratteli diagrams). 

\begin{thm}\cite[Theorem 7.1.4]{Kitchens1998}\label{Thm_approximation_Kitchens} 
Let $A$ be an infinite $\N \times \N$ matrix with nonnegative entries. Assume that $A$
is irreducible, aperiodic, and recurrent. For $i\in \N$, let $W_i = [1, \ldots, i]$ and $A_i$ be the $W_i \times W_i$ matrix obatined by restricting $A$ to $W_i$ rows and columns. Assume that $(A_i)_{i \in \N}$ is a sequence of finite, aperiodic, and irreducible matrices. Let $(\lambda, \ell = (\ell_v)_{v \in \N})$ be the Perron eigenpair for $A$, i.e. $A\ell = \lambda\ell$, normalized such that $\ell_1 =1$. 
For every submatrix $A_i$, let $(\lambda_i, \ell(i))$ be the Perron eigenpair, normalized such that $\ell_1(i) =1$. Then 
$$
\lim_{i \to\infty} \lambda_i = \lambda, \qquad \lim_{i\to\infty} 
\ell_v(i) = \ell_v, \ \ (v\in \N).
$$
\end{thm}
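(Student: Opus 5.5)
We prove the two convergences in the order: first $\lambda_i \to \lambda$, then $\ell_v(i) \to \ell_v$ by a compactness-plus-uniqueness argument.

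\emph{Step 1: monotonicity of $\lambda_i$.} Since $A_i$ is a principal submatrix of $A_{i+1}$ and of $A$, we have $(A_i^n)_{1,1} \le (A_{i+1}^n)_{1,1} \le (A^n)_{1,1}$ for every $n$. Taking $n$-th roots and the supremum (using $\lambda = \sup_n \sqrt[n]{(A^n)_{1,1}}$, and likewise for the finite matrices) gives $\lambda_i \le \lambda_{i+1} \le \lambda$. Hence $\lambda_\infty := \lim_i \lambda_i$ exists and satisfies $\lambda_\infty \le \lambda$.

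\emph{Step 2: the reverse inequality.} Fix $n$. Any closed path of length $n$ at vertex $1$ uses only finitely many vertices, so $(A_i^n)_{1,1} = (A^n)_{1,1}$ for all $i$ large. Therefore
\[
\lambda_\infty \ge \sqrt[n]{(A^n)_{1,1}}
\]
for every $n$, and taking the supremum over $n$ yields $\lambda_\infty \ge \lambda$.

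\emph{Step 3: uniform bounds on the eigenvectors.} Fix $v$. By irreducibility, choose $m$ with $a := (A^m)_{1,v} > 0$ and $m'$ with $b := (A^{m'})_{v,1} > 0$. For $i$ large, the paths realizing these entries lie in $W_i$. From $A_i^m \ell(i) = \lambda_i^m \ell(i)$, evaluated at row $1$ with $\ell_1(i)=1$, we get
\[
a\,\ell_v(i) \le \lambda_i^m \le \lambda^m,
\]
so $\ell_v(i) \le \lambda^m/a$. Evaluating the analogous identity at row $v$ gives $\ell_v(i) \ge b\,\lambda^{-m'}$. Thus each coordinate is eventually confined to a compact subset of $(0,\infty)$.

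\emph{Step 4: passing to the limit.} By a diagonal argument, every subsequence has a further subsequence along which $\ell(i) \to \tilde\ell$ pointwise, with $\tilde\ell > 0$ and $\tilde\ell_1 = 1$. Each row of $A$ has only finitely many nonzero entries in the Bratteli setting, since $A$ is an incidence matrix acting on column vectors. The row-$u$ identity $\sum_w a_{u,w}\ell_w(i) = \lambda_i \ell_u(i)$ is therefore a finite sum once $i$ is large, and passing to the limit gives $A\tilde\ell = \lambda\tilde\ell$.

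\emph{Step 5: uniqueness, the main obstacle.} We must show the limit $\tilde\ell$ equals $\ell$. This is the step I expect to be hardest. By \Cref{thm Perron-Frobenius Thm}(2), the recurrence of $A$ gives uniqueness of the positive $\lambda$-eigenvector up to scaling, so with the normalization $\tilde\ell_1 = 1$ we get $\tilde\ell = \ell$. Since every subsequence has a further subsequence converging to $\ell$, the whole sequence converges.

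The care is needed if rows of $A$ are infinite, because then Step 4 breaks down. In that case I would instead use Fatou's lemma to obtain only $A\tilde\ell \le \lambda\tilde\ell$, that is, a positive $\lambda$-subinvariant vector. I would then invoke the standard fact (Vere-Jones, Seneta) that for a recurrent matrix every positive subinvariant vector is actually invariant and unique up to scaling. That again forces $\tilde\ell = \ell$.
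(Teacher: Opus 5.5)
Your proof is correct. The paper gives no argument of its own here (the statement is quoted from Kitchens), so yours is an independent, self-contained route. Monotonicity over principal submatrices together with $\lambda=\sup_n\sqrt[n]{(A^n)_{1,1}}$ gives $\lambda_i\uparrow\lambda$. The two-sided coordinate bounds give compactness. Recurrence enters exactly once, through uniqueness of positive $\lambda$-(sub)invariant vectors. This compactness-plus-uniqueness argument needs no explicit formula for the eigenvectors, but it yields no rate and no monotonicity of $\ell(i)$. The more hands-on alternative expands the eigenvectors in first-passage (taboo) series and uses the fact that, for a recurrent matrix, the first-return generating function at $1$ evaluated at $\lambda^{-1}$ equals $1$.

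Two points need fixing, neither fatal.

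\textbf{Row-finiteness.} Your claim that rows of $A$ are finite ``in the Bratteli setting'' is backwards. In the approximation section the paper uses $A=F^{T}$ with $F$ row-finite, so $A$ is column-finite, not row-finite. Its main example, the infinite Leslie matrix, has the entirely nonzero first row $(b_1,b_2,\ldots)$. The statement itself also assumes no row-finiteness. So the finite-sum branch of your Step 4 is unavailable exactly where the result is applied, and what you call the fallback must be the main line:
\begin{enumerate}
\item extend $\ell(i)$ by $0$ off $W_i$;
\item apply Fatou to get a positive $\tilde\ell$ with $A\tilde\ell\le\lambda\tilde\ell$;
\item invoke the Vere-Jones/Seneta theorem that for an irreducible recurrent matrix every positive $\lambda$-subinvariant vector is $\lambda$-invariant and unique up to scaling.
\end{enumerate}
This is precisely where recurrence is indispensable: in the transient case strictly subinvariant positive vectors exist, and nothing would pin down the limit.

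\textbf{Steps 2 and 3.} Without finiteness assumptions, $(A^n)_{1,1}$ and $(A^m)_{1,v}$ can be sums over infinitely many paths. So in Step 2, ``equal for $i$ large'' should be replaced by $(A_i^n)_{1,1}\uparrow(A^n)_{1,1}$, by monotone convergence. In Step 3, replace $a$ and $b$ by the weight of a single positive path, which lies in $W_i$ for all large $i$. In the paper's column-finite setting these two steps are literally correct as written, since only finitely many paths of a given length end at a given vertex.
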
 Note that this result can also be reformulated for a two-sided infinite matrix, i.e., matrix indexed by $\Z \times \Z$.

Given a generalized Bratteli diagram $B$, we consider a sequence of standard subdiagrams $(\overline B_i)_{i \in \N}$ that in some sense ``converge'' to $B$. If $B = B(F)$ is stationary, then the $\N \times \N$ incidence matrix $F$ can be approximated by finite truncations $(F_i)$, $i \in \N$ of $F$, where $F_i$ is the north-west corner of $F$ of size $i \times i$. By \Cref{Thm_approximation_Kitchens}, the Perron eigenvalues and corresponding eigenvectors of the finite truncations $F_i$ converge to the Perron eigenvalue and eigenvector of $F$. Let $\mu$ and $\mu_i$ be the tail invariant measures on $X_B$ and $X_{\ol B_i}$ respectively, which are defined in \Cref{thm PFThm} and \Cref{Standard_stationary}. In this section, our goal is to study the following ``In what sense do the measures $\mu_i$ converge (or not) to $\mu$ as $i \rightarrow \infty$"? Since $\ol B_i$ for $i \in \N$ is a subdiagram of $B$, it makes sense to study the convergence of $\wh \mu_i$ to $\mu$. Here, $\wh \mu_i$ denotes the extension of the measure $\mu_i$ to $\wh X_{\overline B_i}$. To make our study more consistent, we set $\wh \mu_i (X_B \setminus \wh X_{\overline B_i} ) = 0$. Thus, we can consider $\wh \mu_i$ as a measure on $X_{B}$.

Let the matrix $F = A^T$, where $A$ satisfies the conditions of \Cref{Thm_approximation_Kitchens}. As mentioned above, let $B(F)$ be a stationary generalized Bratteli diagram defined by the incidence matrix $F$ with vertices indexed by natural numbers, that is, 
$V_n = \N$. As before, let $(W_i = [1, \ldots, i])_{i \in \N}$ be the sequence of subsets of vertices taken at every level
$V_n$. To emphasize that the set $W_i$ is viewed as a subset of $V_n$, we will denote it $W_{i, n}$. In this notation, we obtain a sequence $(\ol B_i)_{i \in \N}$ of standard stationary vertex subdiagrams,
where every $\ol B_i$, $i \in \N$ 
is determined by the $ i\times i$ submatrix $F_i$ of $F$ with the index set $W_i \times W_i$. 

For $i \in \N$, let $\lambda_i$ denote the Perron eigenvalue for the matrix $F_i$, and
let $\ol x(i)$ be the eigenvector for $A_i = F_i^T, A_i \ol x(i) = \lambda_i \ol x(i)$, normalized by the condition $x_1(i) =1$, where $\ol x(i) = (x_1(i),\ldots, x_i(i)) $. For a standard stationary Bratteli diagram with an irreducible incidence matrix, there exists a unique finite tail invariant measure $\mu_i$,
(see \Cref{Standard_stationary}). This unique invariant finite measure $\mu_i$ is defined as follows : Let $[\overline e] \subset X_{\ol B_i}$ is the cylinder set corresponding to a finite path $\overline e$ such that $r(\overline e) = v \in W_{i,n}$, then 
\begin{equation}\label{mui}
    \mu_i ([\overline e]) = \dfrac{1}{\sigma_i} \,\dfrac{x_v (i)}{\lambda_i^n},
\end{equation} 
where the coefficient
$$
\sigma_i = \sum_{v=1}^i x_v(i)
$$
normalizes the measure $\mu_i$ to a probability measure.

As in previous sections, we denote by $\wh X_{\ol B_i}: = \mathcal{R}(X_{\ol B_i}) \subset X_B$ the saturation of $X_{\ol B_i}$ under the tail equivalence relation, and let $\wh \mu_i$ be the measure extension of $\mu_i$, i.e., for a cylinder set $[\ol{f}] \in \wh X_{\ol B_i}$ such that $r(\ol{f}) = r(\ol{e})$ for some finite path $\ol{e} \in X_{\ol B_i}$ we set $\wh \mu_i([\ol{f}]) = \mu_i([\ol{e}])$.

We set $A = F^T$ and assume that $A$ satisfies the conditions of \Cref{Thm_approximation_Kitchens} (and \Cref{thm Perron-Frobenius Thm}). Since $B$ is a stationary generalized Bratteli diagram, we use \Cref{thm PFThm} and define a tail invariant measure $\mu$ on $X_B$. We denote by $(\ol \xi = (\xi_v)_{v \in \N}, \lambda)$ the Perron eigenpair for $A = F^T$, i.e. $A \ol \xi = \lambda \ol \xi$. By assuming that $A$ satisfies \Cref{thm PFThm}, part $(2)$ i.e.
\[
\sigma = \underset{v \in \N}{\sum} \xi_v < \infty, \qquad  \xi = (\xi_v : v \in \N).
\] we obtain that $\mu$ is a finite measure. The fact that $\sigma < \infty$ also allows us to normalize so that $\xi_1 =1$. 
More precisely, (see \Cref{thm PFThm}) the value of $\mu$ on a cylinder set $[\ol e] \subset X_B$ is determined by the formula
\begin{equation}\label{mu}
\mu ([\overline e]) = \frac{1}{\sigma} \frac{\xi_v}{\lambda^n}, \qquad r(\ol e) = v \in V_n. 
\end{equation}Observe that the term $\frac{1}{\sigma}$ in the expression above is to ensure that $\mu$ is a probability measure.

\vspace{3mm}

\noindent We now define the notion of \textit{weak measure convergence on cylinder sets}. 

\begin{definition}\label{Def: weak_conv}
Let $(\Omega, \B, \nu)$ be a probability measure space where $\Omega$ is a zero-dimensional Polish space (a countable family of cylinder sets generates the topology). Let $(\Omega_i)_{i \in \N}$ be an increasing sequence of Borel subsets of $\Omega$ such that for every cylinder set $C$ there exists some $i'$ such that $C \cap \Omega_i\neq \emptyset$ 
for all $i \geq i'$. Let $\nu_i$ be a probability measure supported by $\Omega_i$. We say that the sequence $(\nu_i)_{i \in \N}$ \textit{weakly converges to $\nu$ on cylinder sets} if, 
for every cylinder set $C \subset \Omega$, 
$$
\lim_{i\to \infty} \nu_i (C \cap \Omega_i) = \nu (C).
$$ 
We will denote convergence on cylinders by the symbol $\nu_i \xrightarrow{w-[C]} \nu$. 
\end{definition}

\begin{remark}
In \cite{Iommi_Veloso_2021}, a similar notion of converging on cylinders is defined (see \cite[Definition 3.14]{Iommi_Veloso_2021}), where the authors considered subprobability invariant measures on the path space of a countable Markov shift. The difference is that in our definition, measures $\nu_i$ are supported by subsets $\Omega_i$ of $\Omega$ that may have zero measure $\nu$. The notion of convergence on cylinder sets is discussed in \Cref{Sec: conv of measures} (see \Cref{def_conv_cyl_set}).
\end{remark}

The following statement shows that the natural approximation of a stationary Bratteli diagram by an increasing sequence of standard Bratteli diagrams leads to the convergence of the corresponding measures on cylinder sets. 

\begin{prop} \label{prop appr truncated matrices}
Suppose that a generalized stationary Bratteli diagram $B$ is determined by the incidence matrix $F$ and its subdiagram $\ol B_i$ is constructed by the matrix $F_i$, the truncation of $F$ on the set $[1, ..., i], i \in \N,$ as described above in this subsection. Suppose that the matrices $F$ and $F_i$
satisfy \Cref{Thm_approximation_Kitchens}. If $\ol \xi$ and
$\ol x(i)$ are the right Perron eigenvectors for $A = F^T$ and 
$A_i = F_i^T$, respectively, such that 
\be\label{eq s7 lim x is xi}
\lim_{i\to \infty}|| \ol x(i)||_{\ell^1} = || \ol \xi||_{\ell^1},
\ee
then $\wh \mu_i \xrightarrow{w-[C]} \mu$ on $(X_B)$, where  
the measure $\mu$ and the sequence of measures $(\mu_i)_{i \in \N}$ are defined as in \eqref{mu} and \eqref{mui}, respectively, and $\wh \mu_i$ is the extension of the measure of $\mu_i$ for $i \in \N$. 
 \end{prop}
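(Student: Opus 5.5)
By \Cref{def: tail inv meas} and the fact that $\mu$ is determined by its values on cylinder sets, it suffices to treat a cylinder $C=[\ol e]$ with $\ol e$ a finite path in $B$ from $V_0$ to a vertex $v\in V_n$. I will show that $\wh\mu_i(C)\to\mu(C)$; since $\wh\mu_i$ is supported on $\wh X_{\ol B_i}$, this is the statement $\wh\mu_i(C\cap \wh X_{\ol B_i})\to \mu(C)$ required by \Cref{Def: weak_conv}, with $\Omega_i=\wh X_{\ol B_i}$. First I will check that $(\wh X_{\ol B_i})$ is an increasing sequence whose members eventually meet every cylinder. Monotonicity holds because $W_i\subset W_{i+1}$, so $X_{\ol B_i}\subset X_{\ol B_{i+1}}$. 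For the meeting condition, take $i\ge v$. Since $A_i$ is irreducible, there is an infinite path in $\ol B_i$ starting at $v\in W_{i,n}$, and concatenating it with $\ol e$ gives a path in $C\cap\wh X_{\ol B_i}$.

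The key step is to compute $\wh\mu_i(C)$ explicitly. Let $i\ge v$. The vertex $v\in V_n$ belongs to $W_{i,n}$, and because $F_i$ is irreducible there is a finite path $\ol e'$ in $\ol B_i$ from $W_{i,0}$ to $v$. By the construction of the extension, $\wh\mu_i([\ol e])=\mu_i([\ol e'])$, and \eqref{mui} gives
\[
\wh\mu_i(C)=\frac{1}{\sigma_i}\,\frac{x_v(i)}{\lambda_i^{\,n}} .
\]
In the same way \eqref{mu} gives $\mu(C)=\sigma^{-1}\xi_v\lambda^{-n}$.

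It remains to pass to the limit $i\to\infty$ with $n$ and $v$ fixed. By \Cref{Thm_approximation_Kitchens}, $\lambda_i\to\lambda$ and $x_v(i)\to\xi_v$, using the normalizations $x_1(i)=1=\xi_1$. Hence $\lambda_i^{\,n}\to\lambda^n$, which is a finite positive number. Because the vectors are nonnegative, $\sigma_i=\|\ol x(i)\|_{\ell^1}$ and $\sigma=\|\ol\xi\|_{\ell^1}$, so hypothesis \eqref{eq s7 lim x is xi} gives exactly $\sigma_i\to\sigma$, with $0<\sigma<\infty$. Taking the quotient of these limits yields $\wh\mu_i(C)\to\mu(C)$.

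The main point needing care is this normalization step. Pointwise convergence of the eigenvectors alone only gives $\liminf\sigma_i\ge\sigma$, by Fatou's lemma, and not the equality that is needed. That is exactly why assumption \eqref{eq s7 lim x is xi} is imposed, so this step is resolved by the hypothesis rather than proved from the other assumptions. A secondary check is that the formula for $\wh\mu_i(C)$ does not depend on the choice of $\ol e'$. This holds because $\mu_i$ is tail invariant, so all cylinders in $X_{\ol B_i}$ ending at $v$ have the same $\mu_i$-measure.
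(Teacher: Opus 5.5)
Your route is the paper's route: identify the measure of a cylinder by tail invariance, then use \Cref{Thm_approximation_Kitchens} for $\lambda_i$ and $x_v(i)$, and the hypothesis \eqref{eq s7 lim x is xi} for $\sigma_i$. Your Fatou remark on why that hypothesis is needed is correct.

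The gap is the key step $\wh\mu_i(C)=\mu_i([\ol e'])$ for a cylinder $C=[\ol e]$ of $X_B$. The rule ``$\wh{\ol\mu}([f])=\ol\mu([\ol e])$'' from the extension construction holds only for the set of paths that follow $f$ and \emph{then stay in} $\ol B_i$, i.e.\ for $C\cap \wh X^{(n)}_{\ol B_i}$. The set $C\cap\wh X_{\ol B_i}$ also contains paths that leave $\{1,\dots,i\}$ after level $n$ and return later. By $\mathcal R$-invariance,
\[
\wh\mu_i(C\cap\wh X_{\ol B_i})=\lim_{N\to\infty}\frac{1}{\sigma_i\lambda_i^{N}}\sum_{w=1}^{i}(F^{N-n})_{w,v}\,x_w(i)\;\ge\;\frac{x_v(i)}{\sigma_i\lambda_i^{n}}\,\limsup_{N\to\infty}\frac{(F^{N-n})_{v,v}}{\lambda_i^{N-n}} .
\]
This value is $+\infty$. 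Indeed $((F^M)_{v,v})^{1/M}\to\lambda$, while $\lambda_i<\lambda$, because a proper finite truncation of an irreducible matrix has strictly smaller Perron value (for the Leslie matrices below this is just $p_k<p$). So with $\Omega_i=\wh X_{\ol B_i}$ and $\nu_i=\wh\mu_i$, the convergence you claim fails for every cylinder with $v\le i$. Note also that $\wh\mu_i$ is not a probability measure, which \Cref{Def: weak_conv} requires of $\nu_i$.

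The paper's proof writes the same identification, $\wh\mu_i([\ol e]\cap\wh X_{\ol B_i})=\mu_i([\ol e]\cap X_{\ol B_i})$. However, it only evaluates paths $\ol e$ lying entirely in $\ol B_i$ (for $i\ge i'$), and it checks the non-emptiness condition with $X_{\ol B_i}$. What the computation actually proves is
\[
\mu_i(C\cap X_{\ol B_i})\to\mu(C),
\]
i.e.\ the convergence of \Cref{Def: weak_conv} with $\Omega_i=X_{\ol B_i}$ and $\nu_i=\mu_i$. Equivalently, it proves convergence of $\wh\mu_i$ on the sets $C\cap\wh X^{(n)}_{\ol B_i}$, not on $C\cap\wh X_{\ol B_i}$.

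To repair your write-up, work with $\Omega_i=X_{\ol B_i}$. Take $i$ large enough that all vertices of $\ol e$ lie in $\{1,\dots,i\}$. Then $C\cap X_{\ol B_i}$ is the cylinder of $\ol e$ in $X_{\ol B_i}$, and its $\mu_i$-measure is $x_v(i)/(\sigma_i\lambda_i^n)$ directly by \eqref{mui}. No auxiliary path $\ol e'$ is needed, and the rest of your limit argument goes through unchanged.
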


\begin{proof}
Since $(\ol B_i)_{i \in \N}$ is an increasing sequence of subdiagrams of $B$, the sequence of path spaces $X_{\ol B_i}$ is also increasing and, for every cylinder set $[\ol e]$ in $X_B$, we have
$[\ol e] \cap X_{\ol B_i} \neq\emptyset$ for all sufficiently large $i$. We need to show that 
\be\label{eq s7 conv in meas}
\lim_{i\to\infty}\wh \mu_i ([\ol e] \cap \wh X_{\ol B_i}) = \mu ([\ol e])
\ee
Note that for every finite path $\ol e = (e_0, \ldots, e_{m-1})$, there exists $i' \in \N$ such that $s(e_j) \in [1, \ldots, i']$, $j = 0, \dots, m-1$, and $r(\ol e) = r(e_{m-1}) = v \in [1, \ldots, i']$. In other words, $\ol e$ belongs to all standard Bratteli diagrams $\ol B_i$ for $i \geq i'$. By tail invariance, for $i \geq i'$, 
$$
\wh \mu_i([\ol e] \cap \wh X_{\ol B_i}) = \mu_i([\ol e] \cap X_{\ol B_i})= 
\frac{1}{\sigma_i}\,\frac{x_{v}(i)}{\lambda_i^m}.
$$ 
Hence, \eqref{eq s7 conv in meas} will hold if,
$$
\lim_{i \to \infty} \frac{1}{\sigma_i} \,\frac{x_{v} (i)}{\lambda_i^m} = \frac{1}{\sigma} \frac{\xi_v}{\lambda^m}. 
$$
Note that by \Cref{Thm_approximation_Kitchens}, for every $v \in \Z$, 
 $x_{v}(i) \xrightarrow{i \rightarrow \infty} \xi_v$ and $\lambda_i \xrightarrow{i \rightarrow \infty} \lambda$. 
Thus, it remains to check that $\sigma_i \xrightarrow{i \rightarrow \infty} \sigma$ to finish the proof. But the latter convergence is given in \eqref{eq s7 lim x is xi}.

\end{proof}
\subsection{Infinite Leslie matrix} 
In this subsection, we consider a class of infinite matrices for which the conditions of \Cref{prop appr truncated matrices} hold. Define the following sequence of square matrices:
$$
A_k = \begin{pmatrix}\label{eq: Leslie_finite}
b_1 & b_2 & b_3 &  \ldots &b_{k-1} & b_k \\
s_1 & 0& 0 &   \ldots & 0 & 0 \\
0 &  s_2 & 0 &   \ldots & 0 & 0 \\
\vdots & \vdots & \vdots & \vdots & \vdots & \vdots \\
0 & 0& 0& \ldots & s_{k-1} & 0 \\ 
\end{pmatrix}, \quad  k \in \N,
$$
where all $s_i$ and $b_i$ are positive integers.
These matrices are known as \textit{Leslie matrices} and arise naturally in models of age-structured population growth
(see, e.g., \cite{Leslie1948}).
Our interest in this family of matrices is motivated by the application of \Cref{Thm_approximation_Kitchens} and \Cref{prop appr truncated matrices}. 

We first calculate the Perron eigenvalue $\lambda_k$ and eigenvector 
$\ol x(k) =(x_i(k) :i =1,..., k)$ for $A_k$. Using the relation
$A_k \ol x(k) = \lambda_k \ol x(k)$, we have the following system of equations:
\be\label{eq Leslie eigenpair A_k}
\ba 
b_1 x_1(k) + \cdots + b_k x_k(k) = & \lambda_k x_1\\
s_i x_i(k) = & \lambda_k x_{i+1}(k), \quad i = 1, ..., k-1.
\ea
\ee
We normalize the vectors $\ol x(k)$ by choosing $x_1(k) =1,\ k \in \N$. It follows from \eqref{eq Leslie eigenpair A_k},
\be\label{eq x(k) for Leslie A_k}
\ol x(k)  = \left(1, \frac{s_1}{\lambda_k}, \ldots, \frac{s_1\ \cdots \ s_{k-1}}{\lambda_k^{k-1}}\right).
\ee
To find $\lambda_k$, we use the first row of $A_k$:
\be\label{eq s7 lambda_k}
b_1 + b_2 \frac{s_1}{\lambda_k} + \cdots + b_k\frac{s_1\ \cdots\  s_{k-1}}{\lambda_k^{k-1}} = \lambda_k. 
\ee
Let 
$$
p_k(\lambda) := \dfrac{b_1}{\lambda} + \sum_{i=2}^k \frac{b_i s_1\cdots s_{i-1} }{\lambda^{i}}.
$$ 
It follows from \eqref{eq s7 lambda_k} that $\lambda_k$ must be a solution to equation $p_k(\lambda) =1$. The following observations ensure the existence of such a solution: 
$p_k(\lambda)$ is continuous, $p_k(\lambda)> 1$ for sufficiently small $\lambda$, and $p_k(\lambda) < 1$ sufficiently large $\lambda$. This means that $\lambda_k$ is the Perron eigenvalue of $A_k$, and its corresponding Perron eigenvector is $\ol x(k)$. Note that since $A_k$ is a truncation of $A_m$ for $m>k$, the sequence $(\lambda_k)_{k \in \N}$ is increasing monotonically. 

Let now $A$ be the infinite Leslie matrix given by
\begin{equation}\label{Mat: Les_inf}
    A = \begin{pmatrix}
b_1 & b_2 & b_3 &  \ldots & b_i & b_{i+1} &\ldots\\
s_1 & 0& 0 &   \ldots & 0 & 0 &\ldots \\
0 &  s_2 & 0 &   \ldots & 0 & 0 &\ldots \\
\vdots & \vdots & \vdots & \vdots & \vdots & \vdots & \\
0 & 0& 0& \ldots & s_i & 0 & \ldots \\
\vdots & \vdots & \vdots & \vdots & \vdots & \vdots & \ddots\\
\end{pmatrix},
\end{equation} 
where $b_i,s_i$ are positive integers for $i \in \N$. For $k \in \N$, the matrix $A_k$ defined in \eqref{eq: Leslie_finite} is the $k \times k$ truncation of $A$. Observe that if $b_i = s_i = 1$ for all $i \in \mathbb{N}$, then the matrix $A$ generates a shift space known as a \textit{renewal shift} (see Example 7.16 in \cite{BezuglyiJorgensenKarpelSanadhya2025} and \cite{Raszeja2021}, \cite{BEFR2022}).

\begin{remark}
The renewal shift class \cite{Iommi2006} is well known in thermodynamic formalism for countable Markov shifts \cite{SarigTFCMS}. For example, several works concerning phase transitions in shift spaces have the renewal shift as an example \cite{Beltran_2021, BEFR2022, Raszeja2021, Sarig2001PT}. Its nature allows us to obtain explicit central objects, such as conformal measures and pressure. This is no different for generalized Bratteli diagrams: the path space of the stationary Bratteli diagram given by the renewal shift incidence matrix, its Perron value (it coincides with the Gurevich entropy \cite{Gurevichtopological1969, Gurevichshift1970}), left and right eigenvectors, and therefore its corresponding tail invariant measure \cite{BezuglyiJorgensenKarpelSanadhya2025}.
\end{remark}

For our purpose, we will work with the transpose $F = A^{T}$ and consider generalized Bratteli diagrams $B(F)$. 

Let $\lambda_0$ be the Perron eigenvalue for $A$ and $\ol \xi$ the corresponding positive eigenvector such that $A\ol\xi = \lambda_0 \ol\xi$. To find $\lambda_0$ and $\ol\xi = (\xi_i)$, we solve the following infinite system of equations:
\be\label{eq Leslie eigenpair}
\ba 
b_1 \xi_1 + \cdots + b_i \xi_i + \cdots = & \lambda_0 \xi_1\\
s_i \xi_i = & \lambda_0 \xi_{i+1}, \quad i = 1, 2, ...
\ea
\ee
Normalizing $\ol\xi$ so that $\xi_1 =1$, we obtain
\be\label{eq xi for Leslie}
\ol\xi = \left(1, \frac{s_1}{\lambda_0}, \ldots, \frac{s_1\ \cdots \ s_{i-1}}{\lambda_0^{i-1}}, \ldots\right).
\ee
We see from the first equation in 
\eqref{eq Leslie eigenpair} that $\lambda_0$ must satisfy equation $p(\lambda) =1$ where 
\be\label{eq s7 p(lambda)}
p(\lambda) := \dfrac{b_1}{\lambda}+ \sum_{i=2}^\infty \frac{b_i s_1\ \cdots\ s_{i-1} }{\lambda^{i}}.
\ee
The series \eqref{eq s7 p(lambda)} converges for 
\begin{equation}\label{eq: lambda_L}
    \lambda > L: =\limsup_{i\to \infty} \dfrac{b_{i+1}s_i}{b_i}.
\end{equation}
Note that we can choose a finite $\lambda$ if and only if the sequence $(s_i)_{i \in \N}$ is bounded and $\sup_i \frac{b_{i+1}}{b_i} < \infty$.

For such $\lambda$, the function $p(\lambda)$ is continuous in $\lambda$ and decreasing. Note that when $\lambda$ runs over the convergence interval, $p(\lambda)$ takes values as close to $1$ from above and below as needed. This, together with continuity of $p(\lambda)$, implies the existence of a solution to $p(\lambda) = 1 $. To illustrate this approach, we consider a particular case when
\be\label{eq bounded Leslie}
s_i = s,  \qquad m \leq b_i \leq M, \quad  i \in \N.
\ee
Hence, we can find the upper and lower bounds for $p(\lambda)$ as follows :
$$
\frac{m}{\lambda} \sum_{i = 0}^{\infty} \left(\frac{s}{\lambda}\right)^i \leq p(\lambda) \leq  \frac{M}{\lambda} \sum_{i = 0}^{\infty} \left(\frac{s}{\lambda}\right)^i.
$$
The series $\sum_{i = 0}^{\infty} \left(\frac{s}{\lambda}\right)^i$ converges for $\lambda >s$, and
$$
\frac{m}{\lambda -s} \leq p(\lambda) \leq \frac{M}{\lambda -s}.
$$
Note that, if $\lambda \in (m +s, M +s)$, then $p(\lambda)$ takes the values as close to $1$ from above and below as needed. Thus by continuity of $p(\lambda)$ there exists $\lambda'$ such that $p(\lambda') = 1$.

Now we are ready to deduce the following results. 

\begin{lemma}\label{Lem: Leslie_conv}
Let the matrices $A$ and $A_k$ be defined as above. Then $\lambda_k \to \lambda_0$ and $\ol x(k) \to \ol\xi$ as $k \to \infty$. Moreover, the sequence $(\lambda_k)$ is increasing and 
the sequence $(\ol x(k))$ is decreasing.  
\end{lemma}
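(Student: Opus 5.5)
The plan is to work directly with the defining functions $p_k(\lambda)$ and $p(\lambda)$ from \eqref{eq s7 lambda_k} and \eqref{eq s7 p(lambda)}, rather than invoking \Cref{Thm_approximation_Kitchens} as a black box. Two facts do most of the work. First, all terms of these series are positive, so $p_k(\lambda)\le p_{k+1}(\lambda)\le p(\lambda)$ and $p_k(\lambda)\uparrow p(\lambda)$ pointwise. Second, each of these functions is strictly decreasing and continuous in $\lambda$ on its domain.

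\textbf{Monotonicity of $\lambda_k$.} Since $p_{k+1}(\lambda_k)=p_k(\lambda_k)+b_{k+1}s_1\cdots s_k/\lambda_k^{k+1}>1$ and $p_{k+1}$ is strictly decreasing, the unique root $\lambda_{k+1}$ of $p_{k+1}=1$ satisfies $\lambda_{k+1}>\lambda_k$. Likewise $p(\lambda_k)>p_k(\lambda_k)=1$, so $\lambda_k<\lambda_0$.

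\textbf{Convergence of $\lambda_k$.} The sequence $(\lambda_k)$ is increasing and bounded by $\lambda_0$, so it has a limit $\lambda^*\le\lambda_0$. Suppose $\lambda^*<\lambda_0$. Since $\lambda_0>L$, $p$ is finite near $\lambda_0$. For every fixed $N$ and all $k\ge N$ we have $1=p_k(\lambda_k)\ge p_N(\lambda_k)\ge p_N(\lambda^*)$. Letting $N\to\infty$ gives $p(\lambda^*)\le 1$. But $p$ is strictly decreasing, so $p(\lambda^*)>p(\lambda_0)=1$, a contradiction. Hence $\lambda_k\to\lambda_0$.

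This step is the main obstacle. If $\lambda^*$ lies at or below the convergence threshold $L$, then $p(\lambda^*)=\infty$, and the contradiction is even stronger. So it suffices to use monotone convergence of $p_N(\lambda^*)\uparrow p(\lambda^*)\in(1,\infty]$. One must also be careful that $\lambda_0$ is the root of $p=1$ (the Perron value), which the text assumes exists.

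\textbf{Eigenvectors.} By \eqref{eq x(k) for Leslie A_k} and \eqref{eq xi for Leslie}, the coordinates are $x_i(k)=s_1\cdots s_{i-1}/\lambda_k^{i-1}$ and $\xi_i=s_1\cdots s_{i-1}/\lambda_0^{i-1}$, with $x_i(k)$ defined for $i\le k$. As $\lambda_k$ increases to $\lambda_0$, each coordinate $x_i(k)$ decreases in $k$ and converges to $\xi_i$ by continuity. This gives the claimed decreasing monotonicity of $\ol x(k)$, understood coordinatewise on common indices, together with $\ol x(k)\to\ol\xi$ pointwise. This is consistent with \Cref{Thm_approximation_Kitchens}.
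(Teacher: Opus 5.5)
Your proof is correct and follows the paper's route. The paper's proof is a one-line appeal to the continuity of the $p_k$, the limit $p_k \to p$, and the explicit eigenvector formulas \eqref{eq x(k) for Leslie A_k} and \eqref{eq xi for Leslie}; you supply the missing details, namely strict monotonicity via $p_{k+1}(\lambda_k)>1$ and the extended-real monotone-convergence contradiction giving $\lambda_k\to\lambda_0$. Your aside that $p(\lambda^*)=\infty$ whenever $\lambda^*\le L$ is not accurate in general, since the ratio $\limsup$ only bounds the convergence threshold from above, but nothing depends on it: your argument only needs $p(\lambda^*)\in(1,\infty]$, which always holds.
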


\begin{proof} The statement follows from the continuity of $p_k$ for $k \in \N$, the fact that $p(\lambda) = \lim_{n\to \infty} p_k (\lambda_k)$ and using \eqref{eq x(k) for Leslie A_k}, \eqref{eq xi for Leslie}.
\end{proof}

\begin{prop}\label{Prop norm conv Leslie} 
Suppose that the entries of the infinite Leslie matrix $A$ satisfy conditions \eqref{eq bounded Leslie}. Then for $\lambda_0 > s$, $|| \ol \xi||_{\ell^1} < \infty$ and 
$$
\lim_{k\to \infty}|| \ol x(k)||_{\ell^1} = || \ol \xi||_{\ell^1}.
$$
\end{prop}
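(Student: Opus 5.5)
Under \eqref{eq bounded Leslie} the eigenvectors are explicit geometric sequences, so the plan is to compute both norms in closed form and pass to the limit in $\lambda_k$. By \eqref{eq xi for Leslie} with $s_i=s$ we have $\xi_i=(s/\lambda_0)^{i-1}$. Hence
\[
\|\ol\xi\|_{\ell^1}=\sum_{i\ge1}\Big(\frac{s}{\lambda_0}\Big)^{i-1}=\frac{\lambda_0}{\lambda_0-s}<\infty
\]
whenever $\lambda_0>s$, which gives the first claim. Likewise, \eqref{eq x(k) for Leslie A_k} gives $x_i(k)=(s/\lambda_k)^{i-1}$ for $i\le k$. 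Therefore
\[
\|\ol x(k)\|_{\ell^1}=\sum_{i=1}^{k}\Big(\frac{s}{\lambda_k}\Big)^{i-1}.
\]

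Next we show $\lambda_k\to\lambda_0$ and keep $s/\lambda_k$ uniformly away from $1$. By Lemma \ref{Lem: Leslie_conv}, $\lambda_k\to\lambda_0$ and $(\lambda_k)$ is increasing, so $\lambda_k\le\lambda_0$. We also need $\lambda_k>s$ eventually, with a uniform margin. Since $\lambda_0>s$ and $\lambda_k\to\lambda_0$, there is $K$ with $\lambda_k\ge\lambda^*:=(s+\lambda_0)/2>s$ for $k\ge K$. Set $q=s/\lambda^*<1$; then $s/\lambda_k\le q$ for $k\ge K$. If one prefers not to lean on the proof of Lemma \ref{Lem: Leslie_conv}, the convergence can be checked directly. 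The functions $p_k$ increase pointwise to $p$ on $(s,\infty)$, and all are strictly decreasing. If $\limsup\lambda_k=\lambda'<\lambda_0$, then $p_k(\lambda')\ge p_k(\lambda_k)=1$ for all $k$, while $p_k(\lambda')\to p(\lambda')$. Since $p$ is strictly decreasing and $p(\lambda_0)=1$, we get $p(\lambda')>1$, but monotonicity in $k$ only gives $p_k(\lambda_k)\le p(\lambda_k)$. The cleanest argument instead picks any $\lambda''\in(s,\lambda_0)$. Then $p(\lambda'')>1$, so $p_k(\lambda'')>1$ for large $k$, which forces $\lambda_k>\lambda''$. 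Together with $\lambda_k\le\lambda_0$, this gives convergence.

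Finally we pass to the limit. We split
\[
\|\ol\xi\|_{\ell^1}-\|\ol x(k)\|_{\ell^1}=\sum_{i=1}^{k}\Big[\Big(\frac{s}{\lambda_0}\Big)^{i-1}-\Big(\frac{s}{\lambda_k}\Big)^{i-1}\Big]+\sum_{i>k}\Big(\frac{s}{\lambda_0}\Big)^{i-1}.
\]
The tail term tends to $0$ because $s/\lambda_0<1$. For the first sum, each summand tends to $0$ termwise, and for $k\ge K$ it is dominated by $2q^{i-1}$, which is summable. Dominated convergence (applied to the sequences extended by zero for $i>k$) then gives the limit $0$. Equivalently, one can write $\|\ol x(k)\|_{\ell^1}=\dfrac{1-(s/\lambda_k)^k}{1-s/\lambda_k}$ and note that $(s/\lambda_k)^k\le q^k\to0$ while $1-s/\lambda_k\to1-s/\lambda_0$.

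The main obstacle is the uniform bound $s/\lambda_k\le q<1$ for large $k$, that is, ruling out $\lambda_k$ approaching $s$. The comparison argument with $p_k(\lambda'')$ above handles it, since $p_k(\lambda'')\ge b_1/\lambda''$ and it increases to $p(\lambda'')>1$.
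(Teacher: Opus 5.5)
Your proposal is correct and follows the paper's route: write $\|\ol x(k)\|_{\ell^1}=\frac{1-(s/\lambda_k)^k}{1-s/\lambda_k}$, note that $\lambda_k>s$ for large $k$, and pass to the limit using $\lambda_k\to\lambda_0$ to get $\frac{1}{1-s/\lambda_0}=\|\ol\xi\|_{\ell^1}$; your direct argument (for $\lambda''\in(s,\lambda_0)$ one has $p_k(\lambda'')>1$ for large $k$, hence $\lambda_k>\lambda''$) usefully justifies what the paper takes from Lemma \ref{Lem: Leslie_conv}. Delete the abandoned $\limsup$ aside, which has the inequality backwards: $\lambda_k\le\lambda'$ gives $p_k(\lambda')\le p_k(\lambda_k)=1$, and with the correct direction that aside would in fact yield the contradiction.
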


\begin{proof}
Indeed, for sufficiently large $k$, we have $\lambda_k > s$. 
Then we see that 
$$
\lim_{k\to \infty} \frac{1 - (\frac{s}{\lambda_k})^k}{1 - \frac{s}{\lambda_k} } = \frac{1}{1 - \frac{s}{\lambda_0}}.
$$
This gives the required result.
\end{proof}
Thus, by \Cref{Lem: Leslie_conv} and \Cref{Prop norm conv Leslie}, we obtain the following result.

\begin{corol} Let $A$ be an infinite Leslie matrix as defined in \eqref{Mat: Les_inf} such that the parameters $(s_i)_{i \in \N}$ and $(b_i)_{i \in \N}$ satisfy \eqref{eq bounded Leslie}. For $k \in \N$, let $A_k$ for be the $k \times k$ truncation of $A$ and set $F = A^T$ and $F_k = A_k^T$. Let $B(F)$ be the generalized Bratteli diagram with incidence matrix $F$ and $(B(F_k))_{k \in \N}$ be a sequence of standard subdiagrams of $B(F)$ corresponding to incidence matrices $(F_k)_{k \in \N}$. Then $\wh \mu_k \xrightarrow{w-[C]} \mu$ on $(X_B)$, where  
the measure $\mu$ and sequence of measures $(\mu_k)_{k \in \N}$ are as in \eqref{mu} and \eqref{mui}, respectively, and $\wh \mu_k$ is the measure extension of $\mu_k$ for $k \in \N$. 
\end{corol}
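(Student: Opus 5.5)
The corollary is a direct application of \Cref{prop appr truncated matrices}, so I would verify its hypotheses for the Leslie setting and then invoke it. The proposition requires three things. First, $A=F^T$ must satisfy the conditions of \Cref{Thm_approximation_Kitchens} and \Cref{thm PFThm}: irreducible, aperiodic, recurrent, with $\sigma=\sum_v\xi_v<\infty$. Second, the truncations $A_k$ must be irreducible and aperiodic. Third, the $\ell^1$-convergence \eqref{eq s7 lim x is xi} must hold. The last point is exactly \Cref{Prop norm conv Leslie}, and pointwise convergence $\lambda_k\to\lambda_0$, $\ol x(k)\to\ol\xi$ is \Cref{Lem: Leslie_conv}. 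So the remaining work is the structural checks and confirming $\lambda_0>s$.

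\textbf{Irreducibility and aperiodicity.} Since all $b_i>0$ and $s_i>0$, in the graph of $A$ every state $i$ returns to $1$ in one step (via $b_i$), and $1$ reaches $i$ along the subdiagonal chain $1\to2\to\cdots\to i$. Hence $A$ and every truncation $A_k$ are irreducible. The entry $b_1>0$ gives a loop at state $1$, so they are aperiodic.

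\textbf{The bound $\lambda_0>s$.} Under \eqref{eq bounded Leslie}, $p(\lambda)$ is finite exactly for $\lambda>s$. The text already shows that $p(\lambda)=1$ has a root $\lambda_0\in[m+s,M+s]$, which gives $\lambda_0>s$. Then $\xi_i=(s/\lambda_0)^{i-1}$ is summable, so $\sigma<\infty$.

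\textbf{Recurrence.} I would use the renewal structure: first returns to state $1$ occur only along loops $1\to2\to\cdots\to i\to1$, with weight $s_1\cdots s_{i-1}b_i$. The first-return generating function at $z=1/\lambda$ is therefore exactly $p(\lambda)$. The Perron value is the point $\lambda_0$ with $p(\lambda_0)=1$, which is reached inside the region of convergence. By standard renewal theory (\cite{Kitchens1998}), this means $A$ is recurrent. Moreover $p'(\lambda_0)$ is finite because $\lambda_0>s$ lies strictly inside the radius of convergence, so $A$ is in fact positive recurrent. This is consistent with $\langle\eta,\xi\rangle<\infty$.

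\textbf{Main obstacle.} The one delicate point is identifying the root of $p(\lambda)=1$ with the Perron value $\lambda=\lim\sqrt[n]{(A^n)_{11}}$, and confirming recurrence rather than transience. I would handle this with the renewal equation $\sum_n (A^n)_{11}z^n=1/(1-\Phi(z))$, where $\Phi(z)=p(1/z)$ is the first-return generating function. This gives $\sum_n (A^n)_{11}\lambda_0^{-n}=\infty$ precisely because $\Phi(1/\lambda_0)=1$.

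With all hypotheses verified, \Cref{prop appr truncated matrices} yields $\wh\mu_k\xrightarrow{w-[C]}\mu$.
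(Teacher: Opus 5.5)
Your reduction matches the paper's: the paper obtains the corollary by feeding \Cref{Lem: Leslie_conv} and \Cref{Prop norm conv Leslie} into \Cref{prop appr truncated matrices}, and so do you. Your checks are correct and supply hypotheses the paper never verifies: irreducibility, aperiodicity, $\lambda_0>s$, $\sigma<\infty$, and recurrence via $\sum_n (A^n)_{11}z^n=(1-\Phi(z))^{-1}$ with $\Phi(1/\lambda)=p(\lambda)$.

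\textbf{The gap.} The gap is the last step, and it lies inside \Cref{prop appr truncated matrices}, so the paper's argument has it too. That proof asserts $\wh\mu_k([\ol e]\cap\wh X_{\ol B_k})=\mu_k([\ol e]\cap X_{\ol B_k})$ ``by tail invariance''. But $[\ol e]\cap\wh X_{\ol B_k}$ also contains every path that leaves $\{1,\dots,k\}$ after level $m$ and re-enters it later, and these paths carry infinite mass.

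To see this, note that $p_k<p$ forces $\lambda_k<\lambda_0$. Take $\ol e$ ending at $1\in V_m$ and any $N>m$, and count the paths of $[\ol e]$ that stay in $\ol B_k$ from level $N$ on:
\[
\wh\mu_k([\ol e])\;\ge\;\sum_{u=1}^{k}(A^{N-m})_{1,u}\,\frac{x_u(k)}{\sigma_k\lambda_k^{N}}\;\ge\;\frac{(A^{N-m})_{1,1}}{\sigma_k\lambda_k^{N}}.
\]
The right-hand side is unbounded in $N$, because your own renewal argument gives $\limsup_n\sqrt[n]{(A^n)_{11}}=\lambda_0>\lambda_k$. Every vertex $v$ descends to $1$ along $v\to v-1\to\cdots\to 1$, so in fact $\wh\mu_k(C)=\infty$ for every cylinder $C$ and every $k$. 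Hence $\wh\mu_k$ is not a probability measure as \Cref{Def: weak_conv} requires, and $\wh\mu_k(C\cap\wh X_{\ol B_k})\not\to\mu(C)$.

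\textbf{What is actually proved.} The argument establishes the statement for the inner measures. Take $\Omega_k=X_{\ol B_k}$ and a path $\ol e$ of length $m$ lying in $\ol B_k$ with $r(\ol e)=v$. Then $\mu_k([\ol e]\cap X_{\ol B_k})=x_v(k)/(\sigma_k\lambda_k^m)\to\xi_v/(\sigma\lambda_0^m)=\mu([\ol e])$, i.e.\ $\mu_k\xrightarrow{w-[C]}\mu$. You should state the conclusion in that form. Equivalently, evaluate $\wh\mu_k$ only on $C\cap X_{\ol B_k}$, where it coincides with $\mu_k$, and not on $C\cap\wh X_{\ol B_k}$.
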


\begin{remark}
One can apply a similar analysis to a more general class of matrices of the form:
$$
C = \begin{pmatrix}
b_1 & b_2 & b_3 &  \ldots & b_i & b_{i+1} &\ldots\\
s_1 & c_1& 0 &   \ldots & 0 & 0 &\ldots \\
0 &  s_2 & c_2 &   \ldots & 0 & 0 &\ldots \\
\vdots & \vdots & \vdots & \vdots & \vdots & \vdots & \\
0 & 0& 0& \ldots & s_i & c_i & \ldots \\
\vdots & \vdots & \vdots & \vdots & \vdots & \vdots & \ddots\\
\end{pmatrix},
$$
where $(c_i)_{i \in \N}$ is a sequence of nonnegative integers. If all nonzero entries of $C$ are equal to 1, then this matrix describes the so-called \textit{lazy renewal shift}. For $k \in \N$, we denote by $C_k$ the $k \times k$-truncation of $C$.

For the matrix $C$ and matrices $(C_k)_{k \in \N}$, we can easily compute the right eigenvector $\ol\xi$ and $\ol x(k)$, respectively. We show the vector $\ol\xi$ below.
$$
\ol\xi = \left(1, \frac{s_1}{\lambda - c_1}, \ldots, \frac{s_1\ \cdots\ s_{i-1}}{(\lambda - c_1) \ \cdots \ (\lambda - c_{i-1})}, \ldots\right). 
$$
Here $\lambda$ is the Perron eigenvalue that satisfies the equation $p(\lambda) =1$ where
$$
p(\lambda) := \frac{b_1}{\lambda} + \lambda^{-1} 
\sum_{i\geq 2} \frac{b_is_1\ \cdots\ s_{i-1}}{(\lambda - c_1) \ \cdots \ (\lambda - c_{i-1})}.
$$

\end{remark}

\begin{remark}
We can also find a left eigenvector $\ol\eta = (\eta_i)_{i \in \N}$ for the infinite Leslie matrix $A$ given by \eqref{Mat: Les_inf}:
$$
\ol\eta = \left(1, \frac{\lambda - b_1}{s_1}, \frac{\lambda^2 - 
\lambda b_1 - b_2s_1}{s_1s_2}, \ldots, \frac{\lambda^{i-1} - \lambda^{i-2}b_1 - \lambda^{i-3}b_2s_1 - \cdots - b_{i-1}s_1\ \cdots \ s_{i-2}} {s_1\ \cdots\ s_{i-1}}, \ldots\right).
$$ Here $\lambda = \lambda_0$ is the Perron eigenvalue of $A$. Then the inner product $\langle \ol\xi, \ol \eta\rangle$ is determined by the series: 
$$
\ba 
\langle \ol\xi, \ol \eta\rangle = & 1 + \frac{\lambda - b_1}{\lambda} + \cdots + \frac{\lambda^{i-1} - \lambda^{i-2}b_1  - \cdots - b_{i-1}s_1\ \cdots \ s_{i-2}}{\lambda^{i-1}} + \ldots \\
= & 1 + \sum_{n=2}^{\infty} \left(1 - \frac{b_1}{\lambda} - \frac{b_2s_1}{\lambda^2} - \cdots - \frac{b_{i-1}s_1 \cdots s_{i-2}}{\lambda^{i-1}}\right). 
\ea
$$
It follows that if $b_i = b$ and $s_i =s$ for $i \in \N$, then 
$A$ is an equal column sum (ECS) matrix with $\lambda = b+s$. In this case, we can easily see that 
$\langle \ol\xi, \ol \eta\rangle < \infty$ and the matrix $A$ is \textit{positive recurrent}. Indeed,
$$
\ba
\langle \ol\xi, \ol \eta\rangle = & 1 + \sum_{i\geq 2} \left(1 - 
\frac{b}{\lambda} \sum_{j=0}^{i-2} (\frac{s}{\lambda})^j \right)\\
= &  1 + \sum_{i\geq 2} \left(1 - \frac{b}{\lambda} \cdot 
\frac{1- (\frac{s}{\lambda})^{i-1}}{1 - \frac{s}{\lambda}}\right)\\
= & 1 + \sum_{i\geq 2}\left(\frac{s}{\lambda}\right)^{i-1} < \infty. 
\ea
$$
\end{remark}

\subsection{Approximation by vertex subdiagrams} 
In this subsection, we focus on the following problem. 
Given a generalized Bratteli diagram $B$ and a probability tail
invariant measure $\mu$ on the path space $X_B$, can one find an
increasing sequence of vertex subdiagrams $(\ol B_k)_{k \in \N}$ such that
\be\label{eq meas appr by sbdgrms}
\mu \left(X_B \setminus \bigcup_{k=1}^{\infty} \wh X_{B_k}\right) = 0?
\ee
We first provide examples of generalized Bratteli diagrams for which relation \eqref{eq meas appr by sbdgrms} fails and where it holds.

We define a lower-triangular generalized Bratteli diagram $B_{\Delta}$ as follows (see \cite[section 8]{BezuglyiKarpelKwiatkowskiWata2024}):
For all $n \in \N_{0}$, identify $V_n$ with $\N$ and let
$B_{\Delta} = B_{\Delta}(F)$ be a stationary generalized Bratteli diagram where $F = (f_{j,i})$ is given by 
\[
f_{j,i} = 
\left\{
\begin{aligned}
& 1, \mbox{ for } i \leq j,\\
& 0, \mbox{ otherwise. }
\end{aligned}
\right.
\] 
In other words, the incidence matrix for $B_{\Delta}$ is a lower-triangular infinite matrix
\begin{equation}\label{eq: lo_T_matrix}
F = 
\begin{pmatrix}
1 & 0 & 0 & 0 &\ldots\\
1 & 1 & 0 & 0 &\ldots\\
1 & 1 & 1  & 0 &\ldots\\
1 & 1 & 1  & 1 & \ldots\\
\vdots & \vdots & \vdots & \vdots & \ddots
    \end{pmatrix}.
\end{equation}

In \cite{BezuglyiKarpelKwiatkowskiWata2024}, we described all ergodic probability tail invariant measures $\mu$ on the path space $X_{B_\Delta}$.
It was proven that  $B_\Delta$ has uncountably many ergodic probability measures $\{\mu_a : a \in (0,1)\}$ (see \cite[Theorem 8.7.]{BezuglyiKarpelKwiatkowskiWata2024}). For every $a \in (0,1)$, the measure $\mu_a$ is completely determined by the sequence of infinite vectors $(\ov{p}^{(n)})_{n \in \N_0} = (\ov{p}^{(n)}_i : i \in \N)$ (see \Cref{BKMS_measures=invlimits}), where
\[
\ov{p}^{(n)}_i = \bigg(\dfrac{a}{1+a}\bigg)^{i-1} \dfrac{1}{(1+a)^n}.
\] 

\begin{prop}\label{prop meas triangle BD}
Let the stationary Bratteli diagram $B_{\Delta} = B_{\Delta} (F)$ be the lower-triangular generalized given by $F$ as in \eqref{eq: lo_T_matrix}, and $\mu$ a probability tail invariant measure 
on $X_{B_\Delta}$. Denote by $(W_k)_{k \in \N}$ a nested increasing sequence of finite sets such that $\bigcup_{k=1}^{\infty} W_k = \N$. Let $(\ol B_k)_{k \in \N}$ be a stationary standard Bratteli subdiagram of $B_{\Delta}$ defined by the vertex set $W_k$. Then
$$
\mu \left(X_B \setminus \bigcup_{k=1}^{\infty} \wh X_{\ol B_k} \right) = 1.
$$
\end{prop}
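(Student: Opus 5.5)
The plan is to reduce the statement to one quantitative estimate on the towers of $B_\Delta$ and then settle that estimate using the explicit ergodic measures $\mu_a$ from \cite[Theorem 8.7]{BezuglyiKarpelKwiatkowskiWata2024}. The structural fact I use is that in $B_\Delta$ an edge from $i\in V_n$ to $j\in V_{n+1}$ exists only if $i\le j$. Hence along any path $x=(x_n)$ the labels $s(x_n)$ are nondecreasing in $n$.

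\emph{Step 1 (reduction).} Put $N_k=\max W_k$. If $x\in\wh X_{\ol B_k}$, then $s(x_n)\in W_k$ for all large $n$. By monotonicity of the labels this gives $s(x_n)\le N_k$ for \emph{all} $n$. So
$$
\bigcup_{k}\wh X_{\ol B_k}\subset\bigcup_{N\in\N}Y_N,\qquad Y_N=\{x\in X_{B_\Delta}: s(x_n)\le N \text{ for all } n\in\N_0\}.
$$
It therefore suffices to prove $\mu(Y_N)=0$ for every $N$.

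\emph{Step 2 (tower bound).} For each $n$ we have $Y_N\subset\bigcup_{i\le N}X_i^{(n)}$. By \eqref{eq: Measure_Xv_H_P},
$$
\mu(Y_N)\le\sum_{i=1}^N H_i^{(n)}p_i^{(n)}.
$$
The height $H_i^{(n)}$ counts nondecreasing label sequences $(i_0\le\dots\le i_{n-1}\le i)$. It equals $\binom{n+i-1}{i-1}$, which is a polynomial in $n$ of degree $i-1$.

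\emph{Step 3 (decay for ergodic measures).} For $\mu=\mu_a$ with $a\in(0,1)$ we have $p_i^{(n)}=(a/(1+a))^{i-1}(1+a)^{-n}$. Each of the $N$ terms is then a polynomial in $n$ times $(1+a)^{-n}$, so the bound tends to $0$ as $n\to\infty$. Thus $\mu_a(Y_N)=0$ for all $N$.

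\emph{Step 4 (arbitrary $\mu$).} A general $\mu\in M_1(\mathcal R)$ is an integral of the ergodic measures. By \cite[Theorem 8.7]{BezuglyiKarpelKwiatkowskiWata2024} these are exactly the $\mu_a$, $a\in(0,1)$; recall that measures are assumed non-atomic, which excludes degenerate endpoints. The set $Y_N$ is Borel and tail invariant, since monotonicity makes it equal to the set of paths whose labels are eventually $\le N$. So $\mu(Y_N)=\int\mu_a(Y_N)\,d\tau(a)=0$.

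The main obstacle is Step 4: I rely on the completeness of the list of ergodic measures from the cited theorem and on a standard ergodic decomposition over the Polish space $X_{B_\Delta}$. If I want to avoid that, I would instead use $F^T\ol p^{(n+1)}=\ol p^{(n)}$ to show directly that $\max_{i\le N}p_i^{(n)}$ decays faster than $n^{-(N-1)}$. This should follow because $p_1^{(n)}=\sum_j p_j^{(n+1)}$ forces geometric loss of mass to large labels, but I expect that direct argument to be more delicate.
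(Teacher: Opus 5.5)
Your argument is correct, granting the ergodic decomposition and the completeness of the list $\{\mu_a\}$ from \cite{BezuglyiKarpelKwiatkowskiWata2024}. In Step 4 you should say explicitly that non-atomicity of $\mu$ forces the decomposing measure to give zero weight to the Dirac mass at the all-ones path, which is the only atomic ergodic component. Even so, the route is much heavier than necessary.

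The paper's proof is a two-line observation that your Step 1 nearly contains. Every entry of $F$ is $0$ or $1$, so a path in $X_{B_\Delta}$ is determined by its label sequence $(s(x_n))$, and that sequence is nondecreasing. A nondecreasing integer sequence bounded by $N$ is eventually constant. Hence your set $Y_N$ is countable, and so is $X_{\ol B_k}$. Your monotonicity argument also shows $X_{\ol B_k}=\wh X_{\ol B_k}$, which is exactly the paper's observation (a). Since measures in this paper are non-atomic by convention, $\mu(Y_N)=0$ immediately. This needs no tower heights, no explicit $\mu_a$, and no decomposition.

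Your approach does give a quantitative decay rate for $\mu_a\bigl(\bigcup_{i\le N}X_i^{(n)}\bigr)$. However, its qualitative conclusion rests on an external classification theorem, whereas the countability argument is self-contained.

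Your proposed fallback cannot work as stated: you cannot derive decay of $\max_{i\le N}p_i^{(n)}$ from $F^T\ol p^{(n+1)}=\ol p^{(n)}$ alone. The Dirac mass at the path with all labels equal to $1$ satisfies that recursion with $p_1^{(n)}=1$ for all $n$, and it gives $Y_1$ full measure. So non-atomicity must enter somewhere, and once it does, countability finishes the proof at once.
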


\begin{proof}
Without loss of generality, for $k \in \N$, we set $W_k = \{1, \ldots, k\}$. Observe that

$(a)$ $X_{\ov{B}_k} = \wh{X}_{\ov{B}_k}$ for each $k \in \N$.

$(b)$ the set $X_{\ov{B}_k}$ (and therefore the set 
$\bigcup_{k=1}^{\infty} \wh X_{\ov{B}_k}$)
is countable for each $k \in \N$.

These facts prove the proposition. 
\end{proof}

\Cref{prop meas triangle BD} describes an atypical case. 
We consider generalized Bratteli diagrams of bounded size below, where the opposite situation is realized.

Let $B = B(F_n)$ 
be a generalized Bratteli diagram of bounded size determined by the parameters $(t_n, L_n)$, see \Cref{Def:BD_bdd_size}. We will show that there is a naturally defined sequence of subdiagrams of $B = B(F_n)$ that exhaust the path space $X_{B}$  in the sense of \eqref{eq meas appr by sbdgrms} for any probability tail invariant measure $\mu$ on $X_{B}$.

As usual, we identify the vertex set at each level with $\Z$. Since $B(F_n)$, is bounded size the incidence matrices $(F_n)_{n \in \N_0}$ are banded with the width of the band equal to $2t_n +1$ for $n \in \N$. Define the sequence of standard Bratteli diagrams $(\ol B_k)_{k \in \N}$ as follows. For fixed $k \in \N$, 
we define a set of vertices $W_n(k)$ as follows: Let $W_0(k) = \{-k, \ldots, k\}$, for $n \in \N$, set $W_n(k)$ as
\begin{equation}\label{eq: wnk}
    W_n(k) = \left\{-k - \sum_{i=0}^{n-1} t_i,\ldots,  k + 
    \sum_{i=0}^{n-1} t_i\right\}.
\end{equation}
Clearly, for $n \in \N_0$, $\bigcup_{k\in \N} W_n(k) = \Z $. 

The subdiagram $\ov{B}_k$ is defined as the restriction of the diagram $B$ to the sequence of vertices from the sets $\{W_n(k)\}_{n \in \N_0}$. Thus, for $k \in \N$, $X_{\ov{B}_k}$ denotes the path space of $\ov{B}_k$ and
$\wh{X}_{\ov{B}_k}$ is the extension of $X_{\ov{B}_k}$ under the tail equivalence relation, i.e., $\wh{X}_{\ov{B}_k} = \mathcal{R} (X_{\ov{B}_k})$. For $n \in \N_0$, 
we denote
\[
\wh{X}^{(n)}_{\ov{B}_k} = \{x = (x_i) \in \wh{X}_{\ov{B}_k} : s(x_i) \in W_{i}(k), i \geq n\}. 
\]
Thus, we have
\[
\wh{X}^{(n)}_{\ov{B}_k} = \underset{w \in W_n(k)}{\bigcup} X_w^{(n)}, \ \ \  k \in \N,\  n \in \N_0,
\] 
where $X_w^{(n)}$ is as in \Cref{Def:Kakutani-Rokhlin}. Let $\mu$ be a probability tail invariant measure on $X_B$. Then 
we have
\begin{equation}\label{eq:measure_whX}
    \mu(\wh{X}^{(n)}_{\ov{B}_k}) = \underset{w \in W_n(k)}{\sum} H_w^{(n)}p_w^{(n)}.
\end{equation} 
Here, for all $n \in \N$ and $w \in W_n(k)$, we have $p_w^{(n)} = \mu( [\ov{e}])$, where $r(\ov{e}) = w\in W_n(k) $ and 
$F_n^{T} \ov{p}^{(n+1)} =\ov{p}^{(n)}$. Note that if $w \in W_n(k)$ for some $n \in \N$, then $w \in W_i(k)$ for all $i>n$. 
It is easy to see that
$\wh{X}^{(n)}_{\ov{B}_k} \subset \wh{X}^{(n+1)}_{\ov{B}_k}$. We will also use the fact that for every $n\in \N_0$, $\underset{v \in V_n}{\sum} H_v^{(n)}p_v^{(n)} = 1$.

\begin{theorem}\label{thm: muX_B_k} Let $B= B(t_n)$ be a bounded size generalized Bratteli diagram. For $k\in \N$, let $\ov{B}_k$ be the restriction of the diagram $B$ to the sequence of vertices belonging 
to the sets $\{W_n(k)\}_{n \in \N_0}$ as defined in \eqref{eq: wnk}. Let $\mu$ be a probability tail invariant measure on $X_B$. Then
 \[
 \mu \, \big(X_B \setminus \underset{k \in \N}{\bigcup} \wh{X}_{\ov{B}_k} \big) = 0.
\] 
 \end{theorem}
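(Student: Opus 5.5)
Observe first that for fixed $n$ and $k$ the set $\wh X^{(n)}_{\ov B_k}$ is, by the display just before the theorem, the union of the towers $X_w^{(n)}$ over $w\in W_n(k)$. The plan is to show that for each fixed $n$ these sets exhaust $X_B$ as $k\to\infty$, which already gives the claim without any limit in $n$. Concretely, I would prove
\[
\bigcup_{k\in\N}\wh X_{\ov B_k}\supset \bigcup_{k\in\N}\wh X^{(0)}_{\ov B_k}=\bigcup_{k\in\N}\bigcup_{w\in W_0(k)}X^{(0)}_w = X_B ,
\]
and conclude that the complement is in fact empty, hence of $\mu$-measure zero.

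The key step is to verify that every path $x=(x_i)\in X_B$ with $s(x_0)=w\in\Z$ and $|w|\le k$ actually lies in $X_{\ov B_k}$, i.e.\ $s(x_i)\in W_i(k)$ for all $i$. This follows from the bounded size condition \eqref{eq: Bndd size}. Every edge $e\in E_i$ satisfies $|s(e)-r(e)|\le t_i$, so by induction
\[
|s(x_i)|\le |w|+\sum_{j=0}^{i-1}t_j\le k+\sum_{j=0}^{i-1}t_j .
\]
This is exactly the membership $s(x_i)\in W_i(k)$ from \eqref{eq: wnk}.

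One must also check that $\ov B_k$ is a genuine (admissible) subdiagram. Every vertex $w\in W_i(k)$ must have an outgoing edge into $W_{i+1}(k)$, and every vertex of $W_{i+1}(k)$ must have an incoming edge from $W_i(k)$ when $i\ge0$. The first holds by the same estimate: any edge out of $w$ lands within $t_i$ of $w$. For the second, I would argue that $s(r^{-1}(v))\subset\{v-t_i,\dots,v+t_i\}$. If $|v|\le k+\sum_{j\le i}t_j$, some incoming edge comes from a vertex of $W_i(k)$ except possibly near the boundary. This is where some care (or telescoping/pruning of vertices without predecessors) is needed, but it does not affect the measure statement, since $X_{\ov B_k}$ only needs to contain the paths constructed above.

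Finally, $\mu(X_B\setminus\bigcup_k \wh X_{\ov B_k})=\mu(\emptyset)=0$. Alternatively, one can phrase the argument via \eqref{eq:measure_whX}: $\mu(\wh X^{(0)}_{\ov B_k})=\sum_{|w|\le k}H^{(0)}_w p^{(0)}_w\to\sum_{w\in\Z}p^{(0)}_w=1$ by monotone convergence, using $\sum_v H_v^{(n)}p_v^{(n)}=1$. I expect the only delicate point to be the admissibility of $\ov B_k$ at its boundary vertices; the measure conclusion itself is straightforward.
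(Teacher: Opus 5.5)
Your proof is correct, but its main line differs from the paper's. The paper argues measure-theoretically at an arbitrary level $N$. Given $\varepsilon>0$, it chooses $L$ with $\sum_{|v|\le L}\mu(X^{(N)}_v)>1-\varepsilon$, takes $k_0$ with $[-L,L]\subset W_N(k_0)$, and uses $\wh X^{(N)}_{\ov B_{k_0}}=\bigcup_{w\in W_N(k_0)}X^{(N)}_w$ to get $\mu(\wh X_{\ov B_{k_0}})>1-\varepsilon$. That identity tacitly rests on exactly your bounded-size estimate: a path through $w\in W_N(k)$ stays in $W_i(k)$ for all $i\ge N$. Your ``alternative phrasing'' via monotone convergence is just the paper's argument with $N=0$.

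Your main observation goes further. Applied at level $0$, it shows $X_{\ov B_k}=\bigcup_{|w|\le k}X^{(0)}_w$, so $\bigcup_k X_{\ov B_k}=X_B$. The exceptional set is therefore empty, not merely $\mu$-null. The conclusion then holds for every measure (finite or infinite, invariant or not), and no approximation is needed. The paper's formulation is expressed through the tower masses $\sum_{w}H^{(N)}_w p^{(N)}_w$, the same quantities that appear in the converse result proved right after this theorem, so it fits the paper's measure-approximation framework. For the specific cones $W_n(k)$, however, your pointwise argument is simpler and gives a stronger statement.

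Your worry about boundary vertices of $W_{i+1}(k)$ having no predecessor in $W_i(k)$ is legitimate, and the paper does not address it. Your resolution is right: removing such vertices does not change $X_{\ov B_k}$, so the statement is unaffected.
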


\begin{proof} 
For any level $V_N$ and any $\varepsilon> 0$, we can find $L\in \N$ such that 
 \[
 \sum_{v=-L}^{L} \mu(X_v^{(N)}) = \sum_{v=-L}^{L} H_v^{(N)}p_v^{(N)} > 1-\varepsilon.
 \] Find $k_0 \in \N$ such that $[L,-L] \subset W_n(k_0)$. Thus, we get
\[
\mu(\wh{X}_{\ov{B}_{k_0}}) = \underset{w \in W_N(k)}{\sum} H_w^{(N)}p_w^{(N)}  > 1-\varepsilon.
\]
Hence, 
\[
\mu \, \big(X_B \setminus \underset{k \in \N}{\bigcup} \wh{X}_{\ov{B}_k} \big) < \mu (X_B)  - \mu(\wh{X}_{\ov{B}_{k_0}})  < \varepsilon. 
\] 
This completes the proof.
\end{proof}

\begin{remark} The statement of \Cref{thm: muX_B_k} holds for weaker assumptions on the choice of sequence of subdiagrams $\{\ov{B}_k\}_{k \in \N_0}$. Let $B$ be a generalized Bratteli diagram of bounded size defined by the parameters $(t_n)_{n\in \N_0}$. Suppose that for every $k \in \N$, 
the subdiagram $\ol B_k$, 
obtained by restricting $B$ to a sequence of vertices given by sets $W_n(k)$ for $n \in \N_0$, 
has the property: there exist infinitely many $n\in \N_0$ such that 
\[
 \bigg[-\sum_{i=0}^{n-1} t_i,\,\, \sum_{i=0}^{n-1} t_i\bigg] \subset  W_n(k). 
\] 
Then the statement of \Cref{thm: muX_B_k} holds.
    
\end{remark}

Let $B$ be a generalized Bratteli diagram and $\{\ov{B}_k\}_{k \in \N}$ be a sequence of vertex subdiagrams of $B$. For $k \in \N$, let $\ov{B}_k$ be the restriction of $B$ to a sequence of vertices $\{U_n(k) \subset V_n : n \in \N_0\}$. We assume that for every $n \in \N_0$, $k \in \N$, we have $U_n(k) \subset U_n(k+1)$ and 
$$
\underset{k\in \N}{\bigcup} U_n(k) = V_n.
$$ 

\begin{theorem} 
Let $\mu$ be a tail invariant probability measure on $X_B$. If  
\[
\mu \, \big(X_B \setminus \underset{k \in \N}{\bigcup} \wh{X}_{\ov{B}_k} \big) = 0,
\]
then
\begin{equation}\label{H_w,p_w_limit}
\lim_{n\to \infty} \lim_{k\to \infty}
   \sum_{w \in U_n(k)} H_w^{(n)}p_w^{(n)}  = 1.
\end{equation}
\end{theorem}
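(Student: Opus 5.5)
The plan is to identify the inner sum as the $\mu$-measure of a union of Kakutani--Rokhlin towers and then let $k\to\infty$ for fixed $n$. For fixed $n\in\N_0$ and $k\in\N$, the towers $X_w^{(n)}$, $w\in V_n$, are pairwise disjoint. By \eqref{eq: Measure_Xv_H_P} each has measure $H_w^{(n)}p_w^{(n)}$. Hence
\[
S_n(k):=\sum_{w\in U_n(k)} H_w^{(n)}p_w^{(n)} = \mu\Big(\bigcup_{w\in U_n(k)} X_w^{(n)}\Big)\le 1 .
\]

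\textbf{Step 1 (inner limit).} Since $U_n(k)\subset U_n(k+1)$ and $\bigcup_k U_n(k)=V_n$, the sets $\bigcup_{w\in U_n(k)}X_w^{(n)}$ increase in $k$ to $\bigcup_{w\in V_n}X_w^{(n)}=X_B$. By continuity of $\mu$ from below, or equivalently by monotone convergence of the nonnegative series together with $\sum_{v\in V_n}H_v^{(n)}p_v^{(n)}=1$, we get $\lim_{k\to\infty}S_n(k)=1$ for every $n$.

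\textbf{Step 2 (outer limit).} The outer limit in \eqref{H_w,p_w_limit} is therefore the limit of the constant sequence $1$, which gives the claim.

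Step 1 already suffices and uses only the exhaustion property of $(U_n(k))$, not the full-measure hypothesis. Since this makes the statement nearly tautological, I would also check the version that genuinely uses the hypothesis, namely with the limits taken in the opposite order.

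\textbf{Step 3 (role of the hypothesis).} Let
\[
\wh X^{(n)}_{\ol B_k}=\{x\in \wh X_{\ol B_k}: s(x_i)\in U_i(k),\ i\ge n\}.
\]
This set is contained in $\bigcup_{w\in U_n(k)}X_w^{(n)}$, so $\mu(\wh X^{(n)}_{\ol B_k})\le S_n(k)\le 1$. Also $\wh X^{(n)}_{\ol B_k}\uparrow \wh X_{\ol B_k}$ as $n\to\infty$, as in \eqref{extension_method}. Moreover $\wh X_{\ol B_k}\subset \wh X_{\ol B_{k+1}}$, because the $U_n(k)$ increase in $k$ and so $X_{\ol B_k}\subset X_{\ol B_{k+1}}$. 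The hypothesis gives $\mu(\wh X_{\ol B_k})\to 1$ as $k\to\infty$. Squeezing then yields
\[
\lim_{k\to\infty}\liminf_{n\to\infty} S_n(k)=1,
\]
which is the nontrivial order of limits.

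The only point needing care is this monotonicity and inclusion bookkeeping in Step 3. The main computation is just continuity of measure, so I expect no real obstacle.
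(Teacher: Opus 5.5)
Your proof is correct, and your Step 1 takes a genuinely different and much shorter route than the paper. The paper does not use the exhaustion $\bigcup_k U_n(k)=V_n$ at all. It starts from the hypothesis and writes $\mu(X_B\setminus\bigcup_k\wh X_{\ov B_k})$ as an iterated limit of $\mu(X_B\setminus\wh X^{(n)}_{\ov B_k})$. It then uses the inclusion $\wh X^{(n)}_{\ov B_k}\subset\bigcup_{w\in U_n(k)}X_w^{(n)}$ to get $1-S_n(k)\le\mu(X_B\setminus\wh X^{(n)}_{\ov B_k})$, with $S_n(k)$ as in your notation. This is exactly the mechanism of your Step 3. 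Since $\mu(\wh X^{(n)}_{\ov B_k})$ is nondecreasing in both $n$ and $k$, the paper's ``joint'' limit is legitimate, and under the hypothesis its bound gives the conclusion in either order of limits.

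Your Step 1 shows that the displayed statement, with $k\to\infty$ innermost, holds for every tail invariant probability measure and every exhausting family $(U_n(k))$. So the hypothesis is superfluous for it. The paper's route, and your Step 3, give the reversed order $\lim_k\liminf_n S_n(k)=1$, which is where the hypothesis has real content.

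The diagram $B_\Delta$ of \Cref{prop meas triangle BD} with $U_n(k)=\{1,\dots,k\}$ confirms your suspicion. There vertex labels are nondecreasing along paths, so $\bigcup_{w\le k}X_w^{(n)}$ decreases in $n$ to the countable, hence $\mu$-null, set $X_{\ov B_k}$. Thus $\lim_n S_n(k)=0$ for every $k$, while $\lim_n\lim_k S_n(k)=1$ by your Step 1.
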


\begin{proof} Since $X_B \setminus \underset{k \in \N}{\bigcup} \wh{X}_{\ov{B}_k} = \underset{k \in \N}{ \bigcap} (X_B \setminus \wh{X}_{\ov{B}_k})$ and $(X_B \setminus \wh{X}_{\ov{B}_{k+1}}) \subset (X_B \setminus \wh{X}_{\ov{B}_k})$, we obtain
\begin{equation}\label{X_B_minusX_Bbar}
    \mu \, \big(X_B \setminus \underset{k \in \N}{\bigcup} \wh{X}_{\ov{B}_k} \big) = \underset{k \rightarrow \infty}{\textrm{lim}} \mu (X_B \setminus \wh{X}_{\ov{B}_k}). 
\end{equation} Now we use the fact that $\wh{X}_{\ov{B}_k} = \underset{n \in \N_0}{\bigcup} \wh{X}^{(n)}_{\ov{B}_k}$. Since this is an increasing sequence of sets, we get $X_B \setminus \wh{X}_{\ov{B}_k} = \underset{n \in \N_0}{\bigcap} (X_B \setminus \wh{X}^{(n)}_{\ov{B}_k})$. In other words,
\[
\mu (X_B \setminus \wh{X}_{\ov{B}_k}) =  \underset{n \rightarrow \infty}{\textrm{lim}} \mu (X_B \setminus \wh{X}^{(n)}_{\ov{B}_k}).
\] Thus by \eqref{X_B_minusX_Bbar}, we get

\begin{equation}\label{X_b_minus}
    0=\mu \, \big(X_B \setminus \underset{k \in \N}{\bigcup} \wh{X}_{\ov{B}_k} \big) = \underset{ \underset{k \rightarrow \infty}{ n \rightarrow \infty}}{\mathrm{lim}} \,\,\, \mu (X_B \setminus \wh{X}^{(n)}_{\ov{B}_k}).
\end{equation}
Observe that for $n \in \N_0$, $k \in \N$, we have $\wh{X}^{(n)}_{\ov{B}_k} \subset \underset{w \in U_n(k)}{\bigcup} X_w^{(n)}$. Hence $\mu(\wh{X}^{(n)}_{\ov{B}_k} ) \leq \underset{w \in U_n(k)}{\sum} H_w^{(n)}p_w^{(n)}$. This implies that
\[
\mu \, \big(X_B \setminus \wh{X}^{(n)}_{\ov{B}_k} \big) \geq 1 - \underset{w \in U_n(k)}{\sum} H_w^{(n)}p_w^{(n)}.
\] 
Thus, \eqref{H_w,p_w_limit} follows from \eqref{X_b_minus}.  

\end{proof}

\section{Generalized Bratteli diagrams with no probability tail invariant measures}\label{sect B(2^k)} 

In this section, we consider a class of generalized Bratteli diagrams of bounded size with unusual dynamics on the path space. It will be shown that every cylinder set is a wandering set under the Vershik map.
In particular, this implies that there is no probability tail invariant measure on the path space. These diagrams will be defined using a sequence of natural numbers $(t_n)_{n \in \N_0}$ (see \eqref{eq: t_n_eq} ). We denote them by $B = B(t_n)$ (see \Cref{def:GBD_t_n}). Before we discuss the properties of diagrams $B(t_n)$, we consider a particular case where $t_n = 2^n$ for a more transparent way to see the dynamics on the path space.

\begin{remark} Note that the notation $B=B(t_n)$ refers to a single generalized Bratteli diagram defined by the sequence $(t_n)_{n \in \N_0}$. A similar notation is applied for the diagram $B= B(2^n)$. 
This approach is consistent with our earlier notation $B(F_n)$ where the sequence of incidence matrices $(F_n)$ determined a Bratteli diagram.
\end{remark}

\subsection{The diagram $B=B(2^n)$}\label{ssect B(2n)} 

We define generalized Bratteli diagram $B=B(2^n) = B(V,E)$ (see \Cref{Fig:ContinVMap1}) as follows : 
\begin{itemize}
    \item let $V_n = \Z$, for every $n \in \N_0$,
    \item for every $n \in \N_0$, $i \in V_n$ and $j \in V_{n+1}$ we have $|r^{-1} (j)| = |s^{-1} (i)|=2$, 
\item for every $n \in \N_0$, $i \in V_n$, $s^{-1} (i) = \{e_{\ell}^i, e_r^i\}$, where $r(e_{\ell}^i) = i -2^n \in V_{n+1}$, $r(e_{r}^i) = i + 2^n \in V_{n+1}$ (notation $\ell$ and $r$ stand for ``left'' and ``right'').
\end{itemize} Note that this diagram is also considered in \Cref{Ex:OmegaisXB}. 

    \begin{figure}[ht]
\unitlength=1cm
\begin{tikzpicture}[scale=1, every node/.style={circle, draw, fill=black, minimum size=0.2cm, inner sep=0pt}]

\node (V11) at (2,7) {};
\node (V12) at (4,7) {};
\node (V13) at (6,7) {};
\node (V14) at (8,7) {};
\node (V15) at (10,7) {};

\node (V21) at (2,5) {};
\node (V22) at (4,5) {};
\node (V23) at (6,5) {};
\node (V24) at (8,5) {};
\node (V25) at (10,5) {};

\node (V31) at (2,3) {};
\node (V32) at (4,3) {};
\node (V33) at (6,3) {};
\node (V34) at (8,3) {};
\node (V35) at (10,3) {};

\node (V41) at (2,1) {};
\node (V42) at (4,1) {};
\node (V43) at (6,1) {};
\node (V44) at (8,1) {};
\node (V45) at (10,1) {};

\draw[thin] (V22) -- (V11);
\draw[thin] (V21) -- (V12);
\draw[thin] (V23) -- (V12);
\draw[thin] (V22) -- (V13);
\draw[thin] (V24) -- (V13);
\draw[thin] (V23) -- (V14);
\draw[thin] (V25) -- (V14);
\draw[thin] (V24) -- (V15);

\draw[thin] (V32) -- (V21);
\draw[thin] (V31) -- (V22);
\draw[thin] (V33) -- (V22);
\draw[thin] (V32) -- (V23);
\draw[thin] (V34) -- (V23);
\draw[thin] (V33) -- (V24);
\draw[thin] (V35) -- (V24);
\draw[thin] (V34) -- (V25);

\draw[thin] (V41) -- (V33);
\draw[thin] (V43) -- (V31);
\draw[thin] (V42) -- (V34);
\draw[thin] (V44) -- (V32);
\draw[thin] (V43) -- (V35);
\draw[thin] (V45) -- (V33);

\node[draw=none, fill=none] at (10.9,7) {$\ldots$};
\node[draw=none, fill=none] at (10.9,5) {$\ldots$};
\node[draw=none, fill=none] at (10.9,3) {$\ldots$};
\node[draw=none, fill=none] at (10.9,1) {$\ldots$};
\node[draw=none, fill=none] at (1,7) {$\ldots$};
\node[draw=none, fill=none] at (1,5) {$\ldots$};
\node[draw=none, fill=none] at (1,3) {$\ldots$};
\node[draw=none, fill=none] at (1,1) {$\ldots$};
\node[draw=none, fill=none] at (2,0.5) {$\vdots$};
\node[draw=none, fill=none] at (4,0.5) {$\vdots$};
\node[draw=none, fill=none] at (6,0.5) {$\vdots$};
\node[draw=none, fill=none] at (8,0.5) {$\vdots$};
\node[draw=none, fill=none] at (10,0.5) {$\vdots$};

\end{tikzpicture}

\caption{The diagram $B(2^n)$}\label{Fig:ContinVMap1}
\end{figure}
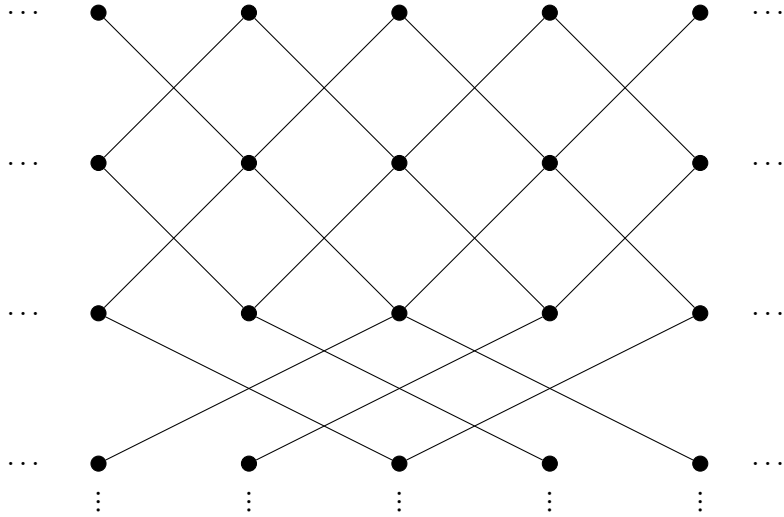

It is convenient to use the following notation: for $n \in \N_0$, $j \in V_{n+1}$, we write $r^{-1} (j) = \{f_{\ell}^j, f_r^j\}$, where $s(f_{\ell}^j) = j -2^n$ and $s(f_{r}^j) = j + 2^n$. Hence, $e^i_r = f_\ell^{i+2^n}$ and $e^i_\ell = f_r^{i-2^n}$ where the vertices $i \pm 2^n$ are in the level $V_{n+1}$.
This definition shows that $B(2^n)$ belongs to the class of generalized horizontally stationary Bratteli diagrams. 
Thus, $B(2^n) = B(F_n)$ is defined by the sequence of Toeplitz matrices $(F_n)_{n\in \N_0}$ where there are exactly two 1's in every row and every column, and the rest of the entries are $0$'s. Moreover, the distance between the nonzero entries is $2n +1$ for every row (column) of the matrix $F_n$. 
For example, $F_0$ and $F_1$ are the matrices shown below. 
$$F_0 = 
    \begin{pmatrix}
    \ddots & \vdots & \vdots & \vdots &\vdots &\vdots  &\vdots \\
     \ldots & 0 & 1 & \textbf 0 & 0  & 0 &\ldots\\
     \ldots & 1 & 0 & \textbf 1 & 0 & 0 &\ldots\\
     \ldots & \textbf 0 & \textbf 1 & \textbf 0  & \textbf 1 & \textbf 0 &\ldots\\
     \ldots & 0 & 0 & \textbf 1  & 0 & 1 &\ldots\\
     \vdots & \vdots & \vdots & \vdots& \vdots & \vdots & \ddots
    \end{pmatrix}, \quad
F_1 = 
    \begin{pmatrix}
    \ddots & \vdots & \vdots & \vdots & \vdots &\vdots &\vdots &\vdots &\vdots &\vdots\\
     \ldots &\ldots & 0 & 0 & 0 & \textbf 1 &\ldots &\ldots &\ldots &\ldots\\
     \ldots &\ldots & 1 & 0 & 0 & \textbf 0 & 1 & \ldots &\ldots &\ldots\\
     \ldots &\ldots &\textbf 0 & \textbf 1 & \textbf 0  & \textbf 0 & \textbf 0 & \textbf 1 &\ldots &\ldots\\
     \ldots &\ldots & 0 & 0 & 1  & \textbf 0 & 0 & 0 & 1 &\ldots \\
     \vdots & \vdots & \vdots & \vdots & \vdots & \vdots & \vdots & \vdots &\vdots & \ddots
    \end{pmatrix}. 
$$
(We use the bold font to indicate the 0-th row and 0-th column in the matrix.) In other words, $F_n = (f^{(n)}_{j,i})$ is a banded matrix where the $j$-th row has all entries equal to $0$, except for entries
$f^{(n)}_{j,j-2^n}$ and $f^{(n)}_{j,j+2^n}$ which are equal to $1$. 

We consider the generalized Bratteli diagram $B(2^n)$ equipped with the left-to-right order (denoted by $\omega$). This means that for every $n \in \N$ and $j \in V_{n}$, 
we order $r^{-1} (j)$ (which has two edges) from left to right. Fix $n \in \N$ and $j \in V_{n}$ and denote by $\ol{e}_{\mathrm{min}} (j)$ (respectively $\ol{e}_{\mathrm{max}} (j)$) the minimal finite path (respectively maximal) with $r(\ol{e}_{\mathrm{min}} (j)) = r(\ol{e}_{\mathrm{max}} (j)) = j$, between $V_0$ and $j\in V_n$. Note that
\[
s(\ol{e}_{\mathrm{min}} (j)) = j-  2^{n-1} - \cdots - 2 - 1 = j - (2^n-1) \in V_0
\] and 
\[
s(\ol{e}_{\mathrm{max}} (j)) = j+  2^{n-1} + \cdots + 2 + 1 = j + (2^n-1) \in V_0
\] 
In the remainder of the section, we will assume that $B(2^n)$ is equipped with order $\omega$.

Observe that in the space $X_{B(2^n}(\omega)$, the sets of infinite maximal and infinite minimal paths are countable. Indeed, for every $i \in V_0$, there is a unique infinite maximal and unique minimal path (denoted by
$\ol{x}_{\mathrm{max}} (i)$ and $\ol{x}_{\mathrm{min}} (i) $) with the source $i$. In other words, $\ol{x}_{\mathrm{max}} (i) = (x_n)_{n\in \N_0}$ is the path such that $x_n = e_\ell^{i - (2^n-1)}$ for every $n \in \N_0$ (see the definition of $B(2^n) = B(V,E)$ above). We can similarly define $\ol{x}_{\mathrm{min}} (i) $ for $i\in V_0$. Hence, $X_{\mathrm{max}} (\omega)$ and $X_{\mathrm{min}} (\omega)$ are countable sets. 

For $i \in V_0$, we denote by $X_i$ the set of all infinite paths with source $i$. Since at every level exactly two edges emerge from a vertex, $X_i$ can be identified with $\{0,1\}^\N$. Hence,
\[
X_{B(2^n)} = \bigsqcup_{i \in \Z} X_i =\bigsqcup_{i \in \Z} \,\, \{0,1\}^\N. 
\]
For ease of notation, we denote $X_{B(2^n)} = X_B$ and consider the dynamical system $(X_B,\varphi)$ where $\varphi: X_B \rightarrow X_B$ is the Vershik map defined by the order $\omega$. 

\begin{remark}
In \cite[Theorem 3.10]{BezuglyiJorgensenKarpelSanadhya2025}, 
we proved that the Vershik map $\varphi$ defined on $X_B$ (where $t_n = 2^n, n \in \N_0$) is discontinuous at every infinite extreme path.
It is worth noting that if the sequence $(t_n)$ is modified such that $t_0 = t_1 =1$ and $t_n = 2^{n-1}$ for $n \geq 2$, then $\varphi $ is a homeomorphism of $X_B$, see \cite[Theorem 3.12]{BezuglyiJorgensenKarpelSanadhya2025} for details.
\end{remark}

\begin{theorem}\label{Th: X_i_2}
For every $i \in V_0$, we have \[\varphi(X_i) =  X_{i+2}.\]    
\end{theorem}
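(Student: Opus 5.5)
We will work directly with the definition of the Vershik map. Encode a path $x=(x_n)\in X_i$ by its source $i\in V_0$ and the binary sequence $(\epsilon_n)$, where $\epsilon_n=0$ if $x_n$ is the left edge $e_\ell$ out of $s(x_n)$ and $\epsilon_n=1$ if it is the right edge $e_r$. Following $x$ up to level $n$, the vertex reached is
\[
s(x_n)=i+\sum_{k<n}(2\epsilon_k-1)2^k .
\]
An edge $e_\ell^u$ (going to $u-2^k$) is the right incoming edge $f_r^{u-2^k}$ at its range. Hence it is maximal in $r^{-1}(u-2^k)$ under the left-to-right order. An edge $e_r^u$ is the left incoming edge $f_\ell$, so it is minimal. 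Thus $x_k$ is maximal iff $\epsilon_k=0$, and minimal iff $\epsilon_k=1$.

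First I compute $\varphi(x)$ for a non-maximal $x\in X_i$. Let $m$ be the least index with $\epsilon_m=1$, so $x_0,\dots,x_{m-1}$ are maximal and $x_m$ is not. Write $v=r(x_m)\in V_{m+1}$. By definition of the Vershik map, $\varphi(x)$ agrees with $x$ from level $m+1$ on, replaces $x_m=f_\ell^v$ by its successor $f_r^v$, and then takes the minimal path from $V_0$ to $s(f_r^v)$. Since $s(f_\ell^v)=v-2^m$ and $s(f_r^v)=v+2^m$, the source at level $m$ increases by $2^{m+1}$. The prefix $x_0,\dots,x_{m-1}$ is the maximal path to $v-2^m$, so it starts at $(v-2^m)+(2^m-1)$. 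The new prefix is the minimal path to $v+2^m$, so it starts at $(v+2^m)-(2^m-1)$. Their difference is $2$, so $s(\varphi(x))=i+2$. Equivalently, in the coordinates $\epsilon$, the map $\varphi$ is the binary odometer "add one with carry, first coordinate least significant" applied to $(1-\epsilon_n)$, combined with the shift $i\mapsto i+2$ of the source. This shows $\varphi(X_i\setminus X_{\max})\subset X_{i+2}$.

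Next I handle the extreme paths. $X_i$ contains exactly one maximal path, the path with all $\epsilon_n=0$, and $X_{i+2}$ contains exactly one minimal path, the path with all $\epsilon_n=1$. Following the convention used for Vershik maps on $B(2^n)$ in \cite[Section 3]{BezuglyiJorgensenKarpelSanadhya2025}, $\varphi$ is extended by sending the maximal path $\ol x_{\max}(i)$ to the minimal path $\ol x_{\min}(i+2)$. I would check that this is the extension used there, because that convention is exactly what makes the statement hold. If instead it is defined as the limit along paths, the displayed formula $s(\varphi(x))=i+2$ forces the same choice.

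Finally I prove surjectivity onto $X_{i+2}$. The odometer on $\{0,1\}^{\N}$ is a bijection once the all-$1$ point (in the $1-\epsilon$ coordinates) is sent to the all-$0$ point. Hence the map $X_i\to X_{i+2}$ above is a bijection; concretely, its inverse is the predecessor map, computed symmetrically, which lowers the source by $2$. Therefore $\varphi(X_i)=X_{i+2}$. The main obstacle is the bookkeeping of left/right and max/min. The key fact is that $e_\ell$ is maximal at its range, which is what yields the constant shift of $+2$ for every $m$; I will also verify the case $m=0$ separately.
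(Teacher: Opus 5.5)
Your proposal is correct and follows essentially the same route as the paper. You locate the first non-maximal edge $x_m$ and observe that the old prefix is the maximal path to $v-2^m$ while the new prefix is the minimal path to $v+2^m$, so the sources differ by exactly $2$; the reverse inclusion then comes from the predecessor map, which is the paper's ``working with $\varphi^{-1}$''. Your odometer identification and your explicit remark that equality requires $\varphi(\ol x_{\max}(i))=\ol x_{\min}(i+2)$ make precise a point the paper's proof leaves implicit. That choice is also the continuous extension, because the image of a path agreeing with $\ol x_{\max}(i)$ on its first $m$ edges begins with the length-$m$ minimal prefix from $i+2$; the fact that its source is $i+2$ is not by itself enough to pin down the limit.
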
 

\begin{proof} Let $\ol x \in X_i$ be an infinite non-maximal path. Let $m \in \N$ be such that $x_m$ is the first edge in $\ol x$ which is not maximal in $r^{-1} (r(x_m))$. Observe that $s(\ol x_m) = i - (2^m-1) \in V_m$. For brevity, set $i - (2^m-1) = k$. Then $\ol e_{\mathrm{max}}(k)$ is the finite, unique maximal path between $V_0$ and $k = s(\ol x_m) = i - (2^m-1) \in V_m$. Thus, we can write $\ol x = (\ol e_{\mathrm{max}}(k),(x_n)_{n \geq m})$. 

Observe that $r(x_m) = i - (2^m-1) + 2^m = i+1$. Let $x'_m $ be the maximal edge in $r^{-1} (r(x_m))$. It follows from the definition of the Bratteli diagram $B(2^k)$ that
\[
s(x'_m) = i+1 + 2^m \in V_m.
\] For brevity, let $\ell = s(x'_m) =  i+1 + 2^m \in V_m$. We denote by $\ol e_{\mathrm{min}} (\ell)$ the finite, unique minimal path between $V_0$ and $\ell$. Observe that
\[
s(e_{\mathrm{min}} (\ell)) = s(x'_m) - 1-2- \cdots - 2^{m-1} = i+1 + 2^m - (2^m - 1) = i+2.
\] 
This shows that $s(\varphi (\ol x)) = i+2$. 

Now we consider the case where $m = 0$. In other words, $x_0$ is minimal in $r^{-1}(r(x_0))$. Since $r(x_0) = i+1$, it follows that $s(\varphi (\ol x)) = i+2$. This shows that $\varphi(X_i) \subset  X_{i+2}$. Similarly, working with $\varphi^{-1}$, we can show $\varphi^{-1}(X_{i+2}) \subset  X_{i}$, which completes the proof. 
    \end{proof} 

The following result is deduced directly from \Cref{Th: X_i_2}. 

\begin{corol}\label{cor: phi_wander} The following statements hold for the generalized Bratteli diagram $B(2^n)$:
\begin{enumerate}
    \item For every $i \in V_0$ the set $X_i$ is a $\varphi$-wandering set. In other words, $X_i \cap \varphi^k(X_i) = \emptyset$ for all $k \in \Z$. 
    \item If $X_{\mathrm{even}} = \underset{i\in 2\Z}{\bigsqcup}X_i$ and $X_{\mathrm{odd}} = \underset{i\in 2\Z+1}{\bigsqcup}X_i$, then $\varphi(X_{\mathrm{even}}) = X_{\mathrm{even}}$ and $\varphi(X_{\mathrm{odd}}) = X_{\mathrm{odd}}$. 
    \item There is no $\varphi$-invariant probability measure on $X_{B(2^n)}$.
    \item Every cylinder set in $X_{B(2^n)}$ is a 
       $\varphi$-wandering set. 
\end{enumerate} 
\end{corol}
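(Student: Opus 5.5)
The plan is to derive all four items from \Cref{Th: X_i_2}, i.e.\ $\varphi(X_i)=X_{i+2}$ for every $i\in V_0$. Iterating gives $\varphi^k(X_i)=X_{i+2k}$ for all $k\in\Z$. For negative $k$ we use that $\varphi$ is a bijection on $X_B$ off the countable set of extreme paths. Since the $X_j$, $j\in\Z$, are pairwise disjoint, $X_i\cap\varphi^k(X_i)=X_i\cap X_{i+2k}=\emptyset$ for $k\neq 0$, which is item (1). Taking the union over $i$ of a fixed parity, $\varphi$ permutes the sets $\{X_i: i\in 2\Z\}$ by the shift $i\mapsto i+2$. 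Since this shift is a bijection of $2\Z$, we get $\varphi(X_{\mathrm{even}})=X_{\mathrm{even}}$, and the same argument for $2\Z+1$ gives item (2).

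For item (3), suppose $\mu$ is a $\varphi$-invariant probability measure. Invariance together with item (1) gives $\mu(X_{i+2k})=\mu(\varphi^k X_i)=\mu(X_i)$ for all $k\in\Z$. Thus within each parity class $\mu$ assigns the same mass $c_0$ (even) or $c_1$ (odd) to every $X_i$. Countable additivity then gives $1=\mu(X_B)=\sum_{i\in\Z}\mu(X_i)$, which is $0$ if $c_0=c_1=0$ and $\infty$ otherwise, a contradiction. The only subtlety is that $\varphi$ may be undefined or discontinuous on the countable set of extreme paths. Since measures in this paper are non-atomic, that set is null, so invariance $\mu(\varphi A)=\mu(A)$ holds for Borel $A$ up to this null set and the argument is unaffected.

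For item (4), take a cylinder set $[\ol e]$ with $s(\ol e)=i\in V_0$. Then $[\ol e]\subset X_i$, so $\varphi^k([\ol e])\subset X_{i+2k}$, which is disjoint from $X_i\supset[\ol e]$ for $k\neq0$. The same reasoning works for two-sided cylinders $[e_n,\dots,e_m]$, which are finite unions of one-sided cylinders. However, such a union may intersect several $X_i$, so more care is needed there. The main obstacle I expect is this interpretation issue: deciding whether ``cylinder'' means paths from $V_0$ (as in the paper's definition, where the argument is immediate) or includes two-sided cylinders. For the latter I would use that $\varphi^k$ moves sources by exactly $2k$ while the cylinder only constrains levels $n$ through $m$, and I would check whether disjointness still holds there.
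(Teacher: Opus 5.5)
Your proposal is correct and follows the paper's route: the paper simply says all four items are deduced directly from \Cref{Th: X_i_2}, i.e.\ from $\varphi^k(X_i)=X_{i+2k}$, and your arguments for (1)--(4) (with $k\neq 0$ in (1)) supply exactly that deduction. Your worry about two-sided cylinders needs no further work, but it resolves negatively: a two-sided cylinder $[e_1,e_2]$ with $s(e_1)=v\in V_1$ is not wandering, because $\varphi$ maps the path $(f_\ell^v,e_1,e_2,\dots)$ to $(f_r^v,e_1,e_2,\dots)$, so item (4) holds only for cylinders $[\ol e]$ starting at $V_0$ (the paper's definition), which is precisely the case your argument covers.
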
 

\begin{remark} It is worth mentioning that if we slightly change 
the diagram $B(2^n)$, then we get a different type of dynamics on the path space. More precisely, take $t'_0 = t'_1 =1$ and $t'_n = 2^{n-1}$. Then, as proved in 
 \cite[Example 3.13]{BezuglyiJorgensenKarpelSanadhya2025}, $\varphi(X_i) = X_i$, and the Vershik map is a homeomorphism of the path space. In this case, there are no wandering cylinder sets. 
\end{remark}

\subsection{Bratteli diagram $B(t_n)$}

As mentioned above, the definition of the Bratteli diagram discussed in  \Cref{ssect B(2n)} can be extended to a wider family
of Bratteli diagrams. To this end, we consider a sequence of natural numbers $\{t_{n}\}_{n\in \N_0}$ satisfying the following conditions:
\begin{equation}\label{eq: t_n_eq}
    t_0 >0,\hspace{5mm} \mathrm{and} \hspace{5mm} t_n > \sum_{i=0}^{n-1} t_i \hspace{5mm} \mathrm{for \,\,every} \hspace{2mm} n \in \N.
\end{equation} Observe that if $\{t_{n}\}_{n\in \N_0}$ satisfies \eqref{eq: t_n_eq} then for every $n \in \N$, $t_n \geq 2^n$.  We now consider the following generalization of the $B(2^n)$ diagram.  

\begin{definition}\label{def:GBD_t_n}
For a fixed sequence $\{t_{n}\}_{n\in \N}$ as in \eqref{eq: t_n_eq}, we define the generalized Bratteli diagram $B(t_n)$ as follows: 
\begin{itemize}
    \item Let $V_n = \Z$, for every $n \in \N_0$. 

\item For every $n \in \N_0$, $i \in V_n$, $s^{-1} (i) = \{e_{-i}^n, e_{+i}^n\}$, where $r(e_{-i}^n) = i -t_n$, $r(e_{+i}^n) = i + t_n$.
\end{itemize}
    
\end{definition}

We consider the generalized Bratteli diagram $B(t_n)$ with the left-to-right order (denoted by $\omega$). In other words, for every $n\in \N_0$ and $j \in V_{n+1}$, the set $r^{-1}(j)$ consists of two edges $e(j-t_n, j)$ and $e(i+t_n, j)$ where $j \pm t_n \in V_{n}$. We label these edges with 0 and 1 from left to right.

Fix $n \in \N$ and $j \in V_n$; denote by $\ol{e}_{\mathrm{min}} (j)$ ($\ol{e}_{\mathrm{max}} (j)$, respectively) the minimal finite path (maximal finite path, respectively), where 
$r(\ol{e}_{\mathrm{min}} (j)) = r(\ol{e}_{\mathrm{max}} (j)) = j$. Observe that for $j \in V_n$,
\[
s(\ol{e}_{\mathrm{min}} (j)) = j + \sum_{i=0}^{n} t_i \in V_0
\] and 
\[
s(\ol{e}_{\mathrm{max}} (j)) = j- \sum_{i=0}^{n} t_i \in V_0
\] 
On the other hand, if we fix $j \in V_0$ and consider the infinite maximal (minimal, respectively) path with source at $j$, then it consists of the left (right, respectively) outgoing edges. Note that the maximal path goes through the vertices
\[j \in V_0,\ \ j-t_0 \in V_1,\ \ j-t_0-t_1 \in V_2, \ldots, j- \sum_{i=0}^n j-t_i \in V_{n+1}\] 
and so forth. Similarly, the minimal path goes through
\[j \in V_0,\ \ j+t_0 \in V_1,\ \ j+t_0+t_1 \in V_2, \ldots, j+ \sum_{i=0}^n j-t_i \in V_{n+1}\] 
and so forth. 

As in the case of $X_{B(2^k)}$, there are countably many infinite maximal and minimal paths in the path space of $B(t_n)$: for every $i \in V_0$, there exists a uniquely determined pair of extreme paths with a source as $i$. Thus, the left-to-right order $\omega$ on the path space of $B(t_n)$ gives rise to a Vershik map $\varphi_{B(t_n)}: X_{B(t_n)} \rightarrow X_{B(t_n)}$. As in the case of $X_{B(2^k)}$ diagrams, the Vershik map defined by using left-to-right order is also discontinuous. As mentioned above, it follows from an argument similar to \cite[Theorem 3.10]{BezuglyiJorgensenKarpelSanadhya2025}.

Since we have fixed the sequence $\{t_n\}_{n\in \N}$, for brevity in the rest of the section, we will denote $X_{B(t_n)} $ by $X_{B} $ and $\varphi_{B(t_n)}$ by $\varphi_{B}$. For $i \in V_0$, let $[i]$ denote the cylinder set consisting of all infinite paths with the source $i$. Similarly, for $j\in V_n$ and $i \in V_m, m <n$, we denote by $[i, j]$ the cylinder set defined by the finite path $\ol e(i,j)$ (assuming that it is unique). In this notation, $[i]$ is partitioned into two cylinder sets $[i,i+t_0]$ and $[i,i-t_0]$ where $i\pm t_0 \in V_1$. 

Our first observation is that 
\be\label{eq s7 i, i_t_0}
\varphi_{B}([i,i+t_0])  = [i+2t_0, i+t_0] 
\ee
where $i$ and $i+2 t_0$ are in $V_0$ and $i + t_0 \in V_1$. 

We set $P_0 = t_0$ and for $ n \in \N$, let $P_n := t_n - \sum_{i=0}^{n-1} t_i$, $n \in \N$. The, for $n\in \N_0$, 
\[
P_{n+1} - P_n = t_{n+1} - \sum_{i=0}^{n} t_i - \left( t_n - \sum_{i=0}^{n-1} t_i \right) = t_{n+1} - 2 t_n.
\]
Thus, the sequence $\{P_n\}_{n\in \N_0}$ increases if and only if $t_{n+1} - 2 t_n >0$ for every $n \in \N_0$. We have already considered the extreme case where $t_n = 2^n$ for $n \in \N_0$ and 
$P_n =0$. In what follows, we assume that $t_n > 2^n$ for $n \in \N_0$.  

For $i\in V_0$, we consider the cylinder set $[i,i-t_0]$. Note  that
\begin{equation}\label{eq:i,i-t_0}
    [i,i-t_0] = \{\ol{e}_{\mathrm{max}} (i)\}\cup  \bigsqcup_{n\geq 1} C_n(i)
\end{equation}
where the cylinder set 
\begin{equation}\label{eq:i,C_n(i)}
    C_n(i) = [x_0,x_1,\ldots,x_{n-1},y_n]
\end{equation}
is such that $x_0 = e^0_{-i}$ and for $k \in \N$, $x_k = e^k_{-(r(x_{k-1}))}$, and $y_k = e^k_{+(r(x_{k-1}))}$. 

\begin{theorem}\label{thm:varphi_i_i-t0} 
For $i \in V_0$ and the cylinder set $[i,i-t_0]$ as in
\eqref{eq:i,i-t_0}, we have
\[
\varphi_B([i,i-t_0]) = \{\ol{e}_{\mathrm{min}} (i)\} \cup \bigsqcup_{n\geq 1} D_n(i)
\] where, for $n\in \N$,
\[
D_n(i) = [f_0,f_1,\ldots,f_{n-1},z_n].
\] 
Here $r(z_n) = r(y_n)$ and $z_n$ is the successor of $y_n$, $s(z_n) = r(f_{n-1})$, and $(f_0,f_1,\ldots,f_{n-1})$ is the minimal path connecting the vertex $i+2P_n \in V_0$ and $s(z_n)$. 
    
\end{theorem}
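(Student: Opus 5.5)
The plan is to compute $\varphi_B$ separately on each piece of the decomposition \eqref{eq:i,i-t_0}. Since $\varphi_B$ is a bijection away from the extreme paths, the image of $[i,i-t_0]$ is then the union of the images of the pieces. For a non-maximal path $\ol x$, $\varphi_B$ is defined in the usual way. Let $x_m$ be the first non-maximal edge of $\ol x$. Then $x_m$ is replaced by its successor $z_m$ in $r^{-1}(r(x_m))$, the initial segment $(x_0,\dots,x_{m-1})$ is replaced by the minimal finite path ending at $s(z_m)$, and the tail $(x_k)_{k>m}$ is kept unchanged. The infinite maximal path $\ol e_{\mathrm{max}}(i)$ is sent to $\ol e_{\mathrm{min}}(i)$ by the convention used in defining the Vershik map on the countable set of extreme paths. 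This accounts for the singleton in the statement.

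The first step is to identify which edges are maximal. The edges into $j\in V_{k+1}$ come from $j-t_k$ (label $0$, minimal) and from $j+t_k$ (label $1$, maximal). A left-outgoing edge $e^k_{-u}$ goes from $u$ to $u-t_k$, so it enters its range from the right and is therefore maximal. A right-outgoing edge $e^k_{+u}$ is minimal. Consequently, in $C_n(i)=[x_0,\dots,x_{n-1},y_n]$ the edges $x_0,\dots,x_{n-1}$ are all maximal and $y_n$ is minimal. So $n$ is exactly the first non-maximal level of every path in $C_n(i)$. Since every vertex has only two incoming edges, the successor $z_n$ of $y_n$ is the maximal edge into $r(y_n)$.

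The second step is a vertex bookkeeping computation. Put $S_n=\sum_{k=0}^{n-1}t_k$. Then
\[
r(x_{n-1})=i-S_n,\qquad r(y_n)=i-S_n+t_n,\qquad s(z_n)=i-S_n+2t_n .
\]
The minimal finite path ending at a vertex $u\in V_n$ uses left incoming edges throughout, so its source is $u-S_n$. Hence the minimal path $(f_0,\dots,f_{n-1})$ ending at $s(z_n)$ starts at
\[
i-2S_n+2t_n=i+2P_n\in V_0 .
\]
This gives $\varphi_B(\ol x)\in D_n(i)$ for every $\ol x\in C_n(i)$. Because the tail beyond level $n$ is unchanged and arbitrary, $\varphi_B$ maps $C_n(i)$ bijectively onto the cylinder $D_n(i)$.

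The final step is to take the union over $n\ge1$ together with the extreme path, which gives the claimed formula. The sets $D_n(i)$ are pairwise disjoint because they are images of disjoint sets under an injective map; this is also visible directly, since they have distinct first non-minimal levels. The main obstacle I expect is keeping the orientation conventions consistent, namely which incoming edge carries label $0$ and the sign in the source of minimal versus maximal finite paths. The displayed formulas for $s(\ol e_{\mathrm{min}}(j))$ and $s(\ol e_{\mathrm{max}}(j))$ preceding the theorem should be rechecked against the left-to-right order. With the order fixed as above, the source $i+2P_n$ of $D_n(i)$ matches the statement, and this agreement serves as the consistency check.
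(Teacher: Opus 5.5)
Your proposal is correct and follows the same route as the paper's proof. You split $[i,i-t_0]$ into $\{\ol e_{\mathrm{max}}(i)\}$ and the cylinders $C_n(i)$, note that $y_n$ is the first non-maximal edge, and compute $s(z_n)=i-\sum_{k=0}^{n-1}t_k+2t_n$, so the minimal path to $s(z_n)$ starts at $i+2P_n$. Your orientation check is also justified: the paper's proof takes the source $s(z_n)-\sum_{k=0}^{n-1}t_k$ as you do, which contradicts the signs and summation range in the displays for $s(\ol e_{\mathrm{min}}(j))$ and $s(\ol e_{\mathrm{max}}(j))$ given just before the theorem.
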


\begin{proof}
We need to show $\varphi_B(C_n(i)) = D_n(i)$ for $n \in \N$. Let $\overline{x} \in C_n$. Note that $y_n$ is the first nonmaximal edge in $\overline{x}$, therefore $\varphi_B(\overline{x})$ is defined by $z_n$ (the successor of $y_n$ in $r^{-1}(r(yn))$) and the minimal path connecting $s(z_n)$ to $V_0$. It follows from the definition of $B(t_n)$ that
\[
s(y_n) = r(x_{n-1}) = i - \sum_{k=0}^{n-1} t_k,\hspace{6mm} r(y_n) = i - \sum_{k=0}^{n-1} t_k + t_n = r(z_n).
\]
and 
\[
s(z_n) = i - \sum_{k=0}^{n-1} t_k + 2t_n.
\] 
For $j \in V_0$, let $\ol{e}_{\mathrm{min}}^{(n)} (j)$ be the minimal path connecting $j$ and $s(z_n)$. Then we find that 
\[
j = i - \sum_{k=0}^{n-1} t_k + 2t_n - \sum_{m=0}^{n-1} t_m = i + 2\left( t_n - \sum_{k=0}^{n-1} t_k\right) = i+2P_n.
\] 
This shows that the minimal path connecting the vertex $s(z_n)$ and $V_0$, that is, $(f_0,f_1,\ldots,f_{n-1})$, has the source in $i+ 2P_n$. This proves that $\varphi(C_n(i)) = D_n(i)$ as needed.
\end{proof}

\begin{corol} \label{s8 cor1}
The generalized Bratteli diagrams $B(t_n)$, defined by a sequence $(t_n)_{n\in\N_0}$ as in \eqref{eq: t_n_eq}, has no probability tail invariant measure. 
\end{corol}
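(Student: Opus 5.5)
Suppose, aiming at a contradiction, that $\mu$ is a probability tail invariant measure on $X_B$, $B = B(t_n)$. The plan is to show that $\mu$ must give the same mass to infinitely many pairwise disjoint sets. The natural candidates are the level-zero cylinders $[i]$, $i \in \Z$.

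\textbf{Step 1: equal masses at level $0$.} Every vertex of every level of $B(t_n)$ has exactly two incoming edges and two outgoing edges. By induction, $H^{(n)}_v = 2^n$ for all $v \in V_n$. Now apply \Cref{BKMS_measures=invlimits}: the vectors $\ol p^{(n)}$ satisfy $p^{(n)}_w = p^{(n+1)}_{w-t_n} + p^{(n+1)}_{w+t_n}$ and $\sum_w 2^n p^{(n)}_w = 1$. This alone does not force the $p^{(0)}_i$ to be equal, so I will use the dynamics instead.

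\textbf{Step 2: the Vershik map.} The Vershik map $\varphi_B$ is a Borel bijection off the countable set of extreme paths. It maps each point to a tail-equivalent point, so $\mu\circ\varphi_B = \mu$: the measure is non-atomic, and $\varphi_B$ is a countable union of partial Borel maps, each sending a cylinder set to a cylinder set with the same range vertex. By \eqref{eq s7 i, i_t_0}, $\mu([i,i+t_0]) = \mu([i+2t_0,i+t_0])$, which is in fact immediate from tail invariance. By \Cref{thm:varphi_i_i-t0}, $\varphi_B$ sends $[i,i-t_0]$ onto $\bigsqcup_n D_n(i)$ up to a point, and $D_n(i) \subset [i+2P_n]$.

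\textbf{Step 3: main obstacle and contradiction.} The main obstacle is that the images land in different level-$0$ cylinders, so a clean statement $\varphi(X_i)=X_{i+2}$, as in \Cref{Th: X_i_2}, is not available. I would therefore argue with wandering sets, as in \Cref{cor: phi_wander}.

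For a fixed cylinder $C=[\ol e]$ with $r(\ol e)=v\in V_m$, I would show that the $\varphi_B$-orbit of any point moves the source off to infinity. The key estimate is this: any path passing through $v\in V_m$ has source in the window $v\pm\sum_{k<m}t_k$. After applying $\varphi_B$ enough times to change a level-$\ge m$ edge, the source is shifted by a positive amount, namely $2P_n>0$ or $2t_0$. Here $P_n>0$ by \eqref{eq: t_n_eq}, so all shifts are strictly to the right. Consequently the source coordinate, $s(\varphi_B^k(x))$, is strictly increasing in $k$. Hence for each $i$ the sets $\varphi_B^k([i])$, $k\in\Z$, meet $[i]$ only for $k=0$, so each $[i]$ is wandering.

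Concretely I would define $g(x)=s(x_0)$. From \eqref{eq s7 i, i_t_0} and \Cref{thm:varphi_i_i-t0}, $g(\varphi_B x) - g(x) \in \{2t_0\}\cup\{2P_n: n\ge1\}$, all of which are positive.

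A wandering set $C$ of positive measure for a measure-preserving map gives pairwise disjoint sets $\varphi_B^k(C)$, $k\in\N$, all of measure $\mu(C)$, contradicting $\mu(X_B)=1$. So $\mu(C)=0$ for every cylinder $C$, and hence $\mu=0$. This contradicts $\mu$ being a probability measure, so no probability tail invariant measure exists.
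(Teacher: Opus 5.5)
Your proof is correct and follows the paper's route. Both arguments show that each level-$0$ cylinder $[i]$ is $\varphi_B$-wandering, because by \eqref{eq s7 i, i_t_0} and \Cref{thm:varphi_i_i-t0} every application of $\varphi_B$ moves the source from $i$ to $i+2P_n$ with $P_n>0$, and then rule out a finite invariant measure. Your monotone source function $g(x)=s(x_0)$ gives a rigorous version of the paper's ``simple induction'' for higher iterates. Your disjoint-translates finish, which discards the countable, $\mu$-null orbits of extreme paths, is cleaner than the paper's strict inclusion $\varphi_B(A_i)\subsetneqq A_i$.
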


\begin{proof} We show that, for every $i \in V_0$, the cylinder set 
$[i]$ is wandering, that is
\begin{equation}\label{eq:i_disjoint}
    [i] \cap \varphi_B^n([i]) = \emptyset, \qquad n \in \N.
\end{equation}
To see this, we note that $[i] = [i,i-t_0] \bigsqcup [i,i+t_0]$ and
by \eqref{eq s7 i, i_t_0}
\begin{equation}\label{eq:i+t_wander}
    \varphi_{B}([i,i+t_0])  = [i+2t_0, i+t_0] \subset [i+2P_0].
\end{equation}  This implies $[i,i+t_0] \cap \varphi_{B}([i,i+t_0]) = \emptyset$. By \Cref{thm:varphi_i_i-t0}, it follows
\begin{equation}\label{eq:i-t_wander}
    \varphi_B([i,i-t_0]\setminus \{\ol{e}_{\mathrm{max}} (i)\}) = \bigcup_{n\in \N} [i+2P_n],\,\,\mathrm{and}\,\, \varphi_B(\ol{e}_{\mathrm{max}} (i)) = \ol{e}_{\mathrm{min}} (i) \notin [i,i-t_0]. 
\end{equation}
Thus, we have shown $[i] \cap \varphi_B([i]) = \emptyset$. To see that $\varphi^2_B[i]$ and $[i]$ are disjoint, we apply the Vershik map to cylinder sets of the form $[i+P_k]$ for $k\geq 0$. For $k\in\N_0$, using an argument similar to that used above, it can be shown that $\varphi_B([i+2P_k])$ is the union of cylinder sets determined by vertices in $V_0$, each of which is indexed by a natural number greater than $[i+2P_k]$. This proves that $[i] \cap \varphi^2_B([i]) = \emptyset$. A simple induction argument proves \eqref{eq:i_disjoint}. Now for $i \in \Z$, consider the set
\[
A_i = \{\overline{x} \in X_B : s(\overline{x}) \geq i\}.
\]
It follows from \eqref{eq:i_disjoint} that $\varphi_B (A_i) \subsetneqq A_i$ for every $i \in \Z$. Hence, the generalized Bratteli diagram defined by \eqref{eq: t_n_eq} has no probability tail invariant measure.
\end{proof}

\begin{corol} Every cylinder set in the generalized Bratteli diagrams $B(t_n)$ defined by a sequence $(t_n)_{n\in\N_0}$ as in \eqref{eq: t_n_eq} is a wandering set.
    
\end{corol}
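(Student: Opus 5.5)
The plan is to reduce everything to the structure of the cylinder sets $[i]$, $i\in V_0$, and their images under $\varphi_B$, exactly as in the proof of \Cref{s8 cor1}. A general cylinder set $C=[\ol e]$ with $\ol e=(e_0,\dots,e_{m-1})$ lies inside $[i]$ for $i=s(\ol e)$. Hence it suffices to show that for every $i\in V_0$ and every $n\in\N$ the orbit images $\varphi_B^n([i])$ are disjoint from $[i]$, and symmetrically for negative $n$. Indeed, $C\cap\varphi_B^n(C)\subset [i]\cap\varphi_B^n([i])=\emptyset$, and $\varphi_B^{-n}(C)\cap C=\varphi_B^{-n}(C\cap\varphi_B^n(C))=\emptyset$. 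So the corollary follows once we know that every source cylinder $[i]$ is wandering in the two-sided sense.

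The key step is a monotonicity statement for the source coordinate. Write $s(\ol x)\in V_0=\Z$ for the source of a path $\ol x$. I would show that $s(\varphi_B(\ol x))>s(\ol x)$ for every $\ol x\in X_B$. For non-maximal $\ol x\in[i,i+t_0]$, equation \eqref{eq s7 i, i_t_0} gives $s(\varphi_B\ol x)=i+2t_0=i+2P_0$. For non-maximal $\ol x\in[i,i-t_0]$, \Cref{thm:varphi_i_i-t0} gives $\varphi_B(C_n(i))=D_n(i)\subset[i+2P_n]$. It then remains to check that $P_n>0$ for all $n$, which is precisely the hypothesis \eqref{eq: t_n_eq}, namely $t_n>\sum_{k<n}t_k$. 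Finally, the single maximal path $\ol e_{\max}(i)$ is sent to $\ol e_{\min}(i)$, which I take (as the paper does) to be the chosen convention for the Vershik map on the countable set of extreme paths.

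The first place where care is needed is that $\ol e_{\max}(i)$ is mapped to a path with the same source $i$. This does not break wandering: $\ol e_{\min}(i)$ is a minimal path, hence not maximal, so its forward images strictly increase the source. Likewise, backward images of $\ol e_{\max}(i)$ strictly decrease the source, because $\varphi_B^{-1}$ is the mirror statement. To handle this I would argue directly from the two displays above rather than through a global inequality. For $n\ge1$, any point of $\varphi_B^n([i])$ is $\varphi_B^n(\ol x)$ with $\ol x\in[i]$. Along the orbit $\ol x,\varphi_B\ol x,\dots$ the source can stay equal to $i$ for at most one step, namely the step $\ol e_{\max}(i)\mapsto\ol e_{\min}(i)$. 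This happens only if $\ol x=\ol e_{\max}(i)$, and then for $n=1$ the image $\ol e_{\min}(i)$ does lie in $[i]$.

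This is the main obstacle: with the convention $\varphi_B(\ol e_{\max}(i))=\ol e_{\min}(i)$, the source cylinder $[i]$ is wandering only modulo this countable set of extreme paths. I would therefore prove the statement in the form in which it is used for measures. Since we consider non-atomic measures, I would remove the $\varphi_B$-invariant countable set $X_{\max}\cup X_{\min}$ together with its orbit. After this removal, strict monotonicity of the source gives $C\cap\varphi_B^n(C)=\emptyset$ for all $n\neq0$. Alternatively, I would fix the convention that $\varphi_B$ sends maximal paths to minimal paths with a different source, using the same shift $2P$-type rule, and then verify that the orbit still moves to larger sources.
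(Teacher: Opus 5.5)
Your reduction to the source sets $[i]$ is where the argument breaks down. As you notice yourself, with the paper's convention $\varphi_B(\ol e_{\max}(i))=\ol e_{\min}(i)$ the set $[i]$ is \emph{not} wandering, because $\ol e_{\min}(i)\in[i]\cap\varphi_B([i])$. So the equality $[i]\cap\varphi_B^n([i])=\emptyset$ used in your first paragraph fails for $n=1$, and the proof of Corollary~\ref{s8 cor1} cannot simply be reused at this point. Neither of your two ways out proves the stated corollary. Removing the orbits of the extreme paths only gives wandering modulo a countable set, and redefining $\varphi_B$ on maximal paths changes the map. More importantly, the obstruction you identify does not exist for cylinder sets $[\ol e]$, $\ol e=(e_0,\dots,e_m)$, of positive length. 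These are what the statement and the paper's definition of cylinder sets refer to, and for them the corollary holds exactly, with no exceptional set.

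The missing idea is to localize one edge further, which is what the paper does. Every such $[\ol e]$ lies in $[i,i-t_0]$ or in $[i,i+t_0]$, where $i=s(e_0)$. The only steps of $\varphi_B$ that do not increase the source are $\ol e_{\max}(j)\mapsto\ol e_{\min}(j)$. For $j=i$ such a step goes from $[i,i-t_0]$ (the maximal path starts with the left edge) to $[i,i+t_0]$ (the minimal path starts with the right edge, and $t_0>0$).

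Your monotonicity then finishes the job. The source never decreases along a forward orbit, and it increases by $2P_m>0$ at every non-maximal point. Since $\ol e_{\min}(i)$ is not maximal, you get $s(\varphi_B^n\ol x)>s(\ol x)$ for all $n\ge 2$, and also for $n=1$ unless $\ol x=\ol e_{\max}(s(\ol x))$. Hence if $\ol x\in C$ and $\varphi_B^n\ol x\in C$ for some $n\ge 1$, then $n=1$ and $\ol x=\ol e_{\max}(i)$. But $\ol e_{\max}(i)$ and $\ol e_{\min}(i)$ have different first edges, so they cannot both lie in $C$. Thus $C\cap\varphi_B^n(C)=\emptyset$ for all $n\ge 1$, and for $n\le -1$ by your identity $\varphi_B^{-n}(C)\cap C=\varphi_B^{-n}(C\cap\varphi_B^{n}(C))$.

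This is the paper's route: every cylinder set lies in $[i,i+t_0]$ or $[i,i-t_0]$, these two sets are wandering by \eqref{eq:i+t_wander} and \eqref{eq:i-t_wander}, and subsets of wandering sets are wandering. If $[i]$ itself is counted as a cylinder set, as the notation of this section suggests, your observation shows the statement fails for it under this convention. The corollary is correct for cylinders of positive length.
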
 

\begin{proof} Observe that any cylinder set in $B(t_n)$ is a subset of either $[i,i+t_0]$ or $[i,i-t_0]$ for some $i \in V_0$. Note that equations \eqref{eq:i+t_wander} and \eqref{eq:i-t_wander} imply that both sets are wandering. Hence, all their subsets are wandering.
\end{proof}

At the end of this section, we consider the case where $(t_n)$ is an arbitrary sequence of natural numbers. Then we can define the Bratteli diagram $B(t_n)$ as above, and make it an ordered Bratteli diagram by taking the left-to-right order. Let $\varphi_B= \varphi_{B(t_n)}$ be the Vershik map on the path space $X_{B(t_n)}$. It is well defined as a Borel automorphism.

Denote $L_0 = 2 t_0$ and
\begin{equation}\label{eq:def_Ln}
   L_n = 2(t_n - \sum_{k=0}^{n-1}t_k), \quad n \in \N.
\end{equation}
We note that $L_n$ can be positive or negative. 

\begin{theorem}\label{Thm:wand_1} Let $\{t_{n}\}_{n\in \N_0}$ be a sequence of natural numbers that defines the generalized Bratteli diagram $B(t_n)$ as in Definition \ref{def:GBD_t_n}. 
Then for every $i \in V_0$, the cylinder set $[i]$ is $\varphi_{B(t_n)}$-wandering if for every finite set $s_1, \ldots, 
s_n \in \N_0$,  
$$
L_{s_1}+\cdots+L_{s_n} \neq 0.
$$
\end{theorem}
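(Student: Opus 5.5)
The plan is to track how the Vershik map $\varphi_B$ acts on the \emph{source} of a path: we aim to show that $s(\varphi_B(\ol x)) - s(\ol x)$ always lies in $\{L_n : n \in \N_0\}$. Once this holds, iteration gives that for $\ol x \in [i]$ and $k \geq 1$ the source of $\varphi_B^k(\ol x)$ equals $i + L_{s_1} + \cdots + L_{s_k}$ for some indices $s_1,\ldots,s_k \in \N_0$. For $k \leq -1$ we use $\varphi_B^{-1}$, whose source increments are the negatives $-L_n$. So $\varphi_B^k([i]) \cap [i] \neq \emptyset$ would force $L_{s_1}+\cdots+L_{s_{|k|}} = 0$ for some finite family of indices, which the hypothesis forbids. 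Hence $[i]$ is wandering.

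The key computation is the one-step source shift, which generalizes \eqref{eq s7 i, i_t_0} and \Cref{thm:varphi_i_i-t0} to arbitrary $(t_n)$. Take a non-maximal path $\ol x$ with source $i$, and let $n$ be the first level at which the edge $x_n$ is non-maximal in $r^{-1}(r(x_n))$. Maximal edges are the left-labelled ones, i.e.\ those whose source is to the right of the range. Hence the edges $x_0,\ldots,x_{n-1}$ are maximal, and $s(x_n) = i - \sum_{k<n} t_k$. Since $x_n$ is minimal (the left-to-right order on two edges), its range is $s(x_n)+t_n$. Its successor $z_n$ then has source $s(x_n)+2t_n$. Finally, the minimal path from $V_0$ to $s(z_n)$ has source $s(z_n) - \sum_{k<n} t_k$. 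Together this gives
$$
s(\varphi_B(\ol x)) = i + 2\Big(t_n - \sum_{k=0}^{n-1} t_k\Big) = i + L_n ,
$$
with the case $n=0$ giving $i + 2t_0 = i + L_0$. This mirrors exactly the proof of \Cref{thm:varphi_i_i-t0} and requires no growth assumption on $(t_n)$. The symmetric computation for $\varphi_B^{-1}$, with maximal and minimal swapped, gives increment $-L_n$.

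The main obstacle is the countable set of extreme paths. There $\varphi_B$ is defined by some Borel convention, for instance mapping $X_{\max}$ onto $X_{\min}$, and the source shift need not be of the form $L_n$. Since this set is countable, and the statement concerns wandering of cylinder sets in the measure-theoretic and dynamical sense used in \Cref{cor: phi_wander}, we plan to handle it separately. One option is to check that, under the paper's convention, the maximal path at $i$ is sent to the minimal path at $i$. If that is the convention, it contributes no source shift, which is consistent with \eqref{eq:i-t_wander}, although literally it produces a return. We would then argue modulo this countable (null for non-atomic measures) set, or restrict the claim to $X_B \setminus (X_{\max}\cup X_{\min})$ together with its orbit. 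The first thing to try is to verify the convention in \cite[Theorem 3.10]{BezuglyiJorgensenKarpelSanadhya2025} and phrase wandering up to this countable exceptional set.
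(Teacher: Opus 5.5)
Your argument is correct and is essentially the paper's proof. The paper splits $[i]$ into $[i,i+t_0]$, $\{\ol e_{\max}(i)\}$ and the cylinders $C_n(i)$, and shows $\varphi_B(C_n(i))\subset[i+L_n]$; this is exactly your pointwise fact that the source moves by $L_n$, where $n$ is the first non-maximal level, and the paper then iterates as you do. Your separate treatment of $\varphi_B^{-1}$ is harmless but unnecessary, since $[i]\cap\varphi_B^{-k}([i])=\varphi_B^{-k}\bigl(\varphi_B^{k}([i])\cap[i]\bigr)$.

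Your worry about extreme paths is justified, and the paper's proof simply skips it. With the convention $\ol e_{\max}(i)\mapsto\ol e_{\min}(i)$ used in \Cref{thm:varphi_i_i-t0}, the set $[i]\cap\varphi_B([i])$ contains $\ol e_{\min}(i)$. So the statement must either be read modulo the countable invariant orbit of the extreme paths, or the map on maximal paths must be chosen as $\ol e_{\max}(j)\mapsto\ol e_{\min}(j+L_m)$ for a fixed $m$. Either choice completes your argument.
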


\begin{proof} The proof repeats the approach used in Theorem \ref{thm:varphi_i_i-t0}, so we skip some details. 

We describe the forward $\varphi_B$-orbit of $[i]$. As mentioned above, $[i] = [i,i-t_0] \bigsqcup [i,i+t_0]$ and $\varphi_{B}([i,i+t_0])  = [i+2t_0, i+t_0]$. Consider the forward orbit of $[i,i-t_0]$. By \eqref{eq:i,i-t_0}, we have
$$
[i,i-t_0] = \{\ol{e}_{\mathrm{max}} (i) \}\cup  \bigsqcup_{n\geq 1} C_n(i)
$$ 
where $C_n(i) = C_n(i) = [x_0,x_1,\ldots,x_{n-1},y_n]$ is as in \eqref{eq:i,C_n(i)}. It can be shown that $\varphi_{B}(C_n(i)) = 
D_n(i)$ where 
\[
D_n(i) = [f_0,f_1,\ldots,f_{n-1},z_n].
\] 
Here for $n\in \N$, $r(z_n) = r(y_n)$ and $z_n$ is the successor of $y_n$, $s(z_n) = r(f_{n-1})$ and $(f_0,f_1,\ldots,f_{n-1})$ is the minimal path connecting vertex $s(f_0)$ at level $V_0$ and $s(z_n)$. We claim that $s(f_0) = i+L_n$. To see this, observe that
\[
s(z_n) = i - \sum_{k=0}^{n-1} t_k.
\]
This implies
\[
r(y_n) =r(z_n) = \left(i - \sum_{k=0}^{n-1} t_k\right) + t_n.
\]
Hence, the source of $z_n$ is given by
\[
s(z_n) = \left(i - \sum_{k=0}^{n-1} t_k\right) + 2t_n.
\] 
Thus, the source of $f_0$ is given by
\[
s(f_0) = s(z_n) - \sum_{k=0}^{n-1} t_k = i+ 2\left(t_n - \sum_{k=0}^{n-1} t_k\right) = i + L_n
\] 
The vertex $s(f_0) = i+ L_n$ can be to the right of $i$ or to the left of $i$ depending on the sign of $L_n$. 

This result and the fact that $\varphi_{B}([i,i+t_0])  = [i+2t_0, i+t_0]$ imply that
\[
\varphi_B([i]) \subseteq \bigcup_{\ell_1=0}^{\infty} [i+L_{s_1}],
\] 
where $L_{s_1}$ is as in \eqref{eq:def_Ln}. It follows 
\[
\varphi_B^n([i]) \subseteq \bigcup_{s_1=0}^{\infty} \ldots \bigcup_{s_n=0}^{\infty}[i+L_{s_1}+\cdots+L_{s_n}].
\] 
Hence if $L_{s_1}+\cdots+L_{s_n} \neq 0$, then $[i] \cap \varphi_B^n([i]) = \emptyset$ and the result follows. 
\end{proof} 

The following corollary follows immediately from \Cref{Thm:wand_1}.

\begin{corol} \label{s8 cor2}
For the generalized Bratteli diagrams $B(t_n)$ defined by the sequence $(t_n)_{n\in\N_0}$ as in \eqref{eq: t_n_eq}, we have 

\begin{enumerate}
    \item If for every finite collection $s_1,\ldots,s_n \in \N_0$, the sum $L_{s_1}+\cdots+L_{s_n} \neq 0$, then there is no probability tail invariant measure on $X_{B(t_n)}$. 
    \item There is no $\varphi_{B(t_n)}$ dense orbit in $X_{B(t_n)}$. 
\end{enumerate}

\end{corol}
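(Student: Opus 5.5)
Both parts follow from \Cref{Thm:wand_1} and the orbit description in its proof. Throughout, the standing hypothesis is that every finite sum $L_{s_1}+\cdots+L_{s_n}$ is nonzero.

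For part (1), I would argue by contradiction. Suppose $\mu$ is a probability tail invariant measure on $X_{B(t_n)}$. I would use the standard fact that tail invariant measures are exactly the measures invariant under the Vershik map $\varphi_B$; this works here because the sets of extreme paths are countable, hence $\mu$-null since measures are non-atomic. Since $X_B=\bigsqcup_{i\in\Z}[i]$ and $\mu(X_B)=1$, some $i$ has $\mu([i])=c>0$. By \Cref{Thm:wand_1}, $[i]$ is wandering: $[i]\cap\varphi_B^n([i])=\emptyset$ for all $n\geq 1$. Applying $\varphi_B^{m}$ gives $\varphi_B^m([i])\cap\varphi_B^{m+n}([i])=\emptyset$, so the sets $\varphi_B^n([i])$, $n\in\N_0$, are pairwise disjoint. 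Each has measure $c$, so their union has infinite measure, a contradiction. A shortcut is available: tail invariance applied to the paths from $i$ and from $i+L_s$ that go through the same vertices shows the measures of these cylinders are linked, but the Poincaré-recurrence style argument above is cleaner.

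For part (2), I would show that no orbit can return to the cylinder it starts in. Let $x\in X_B$ and put $i=s(x)$, so $x\in[i]$. By the proof of \Cref{Thm:wand_1}, $\varphi_B^n([i])\subseteq\bigcup[i+L_{s_1}+\cdots+L_{s_n}]$. By the hypothesis, $\varphi_B^n(x)\notin[i]$ for every $n\geq1$. For negative powers, if $\varphi_B^{-n}(x)\in[i]$ then $x\in\varphi_B^n([i])$, which is disjoint from $[i]$, again impossible. So the orbit of $x$ meets the open set $[i]$ only at $x$ itself. Since path spaces are assumed to have no isolated points, $[i]\setminus\{x\}$ is a nonempty open set that the orbit misses, and the orbit is not dense.

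The main obstacle is rigor in the inclusion $\varphi_B^n([i])\subseteq\bigcup[i+\sum L_{s_k}]$. The proof of \Cref{Thm:wand_1} shows this only for one step, together with the claim $\varphi_B([i,i+t_0])=[i+2t_0,i+t_0]$, which corresponds to $L_0$. I would iterate it by applying the one-step inclusion to each $[j]$ in the union, which is an honest induction. I would also handle the extreme paths $\ol e_{\max}(i)\mapsto\ol e_{\min}(i)$ separately, since these stay in $[i]$ by the convention for $\varphi_B$. Because they form a countable set, they are irrelevant to the measure in part (1). For part (2) I would simply choose $x$ with non-extreme orbit, or note that the argument still leaves an open set missed.
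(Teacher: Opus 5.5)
Your proposal is correct and is essentially the argument the paper has in mind when it says the corollary ``follows immediately'' from \Cref{Thm:wand_1}: the cylinders $[i]$ cover $X_{B(t_n)}$ and are wandering, which rules out a $\varphi_B$-invariant probability measure (hence a tail invariant one), and forces every orbit to meet the open set $[s(x)]$ only finitely often. Your handling of the extreme paths is more careful than the paper's, since under its convention $\ol e_{\mathrm{max}}(i)\mapsto\ol e_{\mathrm{min}}(i)$ the set $[i]$ is wandering only modulo a countable set; two small points: in (2) you cannot choose $x$, so you need your second option (every orbit meets $[s(x)]$ in at most two points), and the standing hypothesis is automatic here because \eqref{eq: t_n_eq} gives $L_n>0$ for all $n$.
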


\section{Path space of a generalized Bratteli diagram}
\label{ssect path space}

\subsection{Properties of path spaces}
In this subsection, we discuss the path space of generalized Bratteli diagrams and their subdiagrams. We also consider when a given closed subset of the path space of a generalized Bratteli diagram corresponds to the path space of a Bratteli subdiagram.

Let $B = B(F_n)$ be a generalized Bratteli diagram. For $n \in \N$, we identify the set $V_n$ with $\Z$ for convenience. 

\begin{definition}\label{def_Om_Y} Let $B = B(F_n)$ be a generalized Bratteli diagram. We denote by $\Omega \subset X_B$ the set of all paths $\ov y = (y_n) \in X_B$ such that $\{|s(y_n)|\}_{n \in \N_0}$ is an unbounded sequence. In other words,
$$
\Omega = \{\ov x = (x_n) \in X_B\ | \ \forall M \in \N, \; \exists n \in \N_0, \; \mbox{ such\ that} \ |s(x_n)| > M\}.
$$
We set $Y := X_B \setminus \Omega$ which is the set of all infinite paths $\ov y = (y_n)$ such that $\{|s(y_n)|\}_{n \in \N_0}$ is bounded:
$$
\ov y= (y_n) \in Y \ \Longleftrightarrow\  \exists C \in \N \; 
\mbox{\ such\ that}\ \forall n \in \mathbb{N}_0 \; \ |s(y_n)| \leq C.
$$
\end{definition}

\begin{lemma}\label{lem s4 Y and Omega}
The path space $X_B$ is the disjoint union of $Y$ and $\Omega$. 
The set $Y$ is an $\mathcal{R}$-invariant $F_\sigma$-subset of $X_B$. Hence, the set $\Omega$ is an $\mathcal{R}$-invariant $G_{\delta}$-subset of $X_B$.
\end{lemma}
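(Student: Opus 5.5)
The plan is to prove the three assertions of the lemma in order: the disjoint decomposition, the $\mathcal R$-invariance of $Y$, and the Borel complexity of $Y$. The complexity of $\Omega$ then follows by taking complements.

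\emph{Disjoint decomposition.} This is immediate from the definitions. By \Cref{def_Om_Y}, a path $\ov x$ belongs to $\Omega$ exactly when the sequence $(|s(x_n)|)_{n}$ is unbounded. The set $Y = X_B\setminus\Omega$ is defined as the complement, so $X_B = Y\sqcup\Omega$ holds trivially.

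\emph{Invariance of $Y$.} Let $\ov x\in Y$ and let $\ov y$ be tail equivalent to $\ov x$, so $x_n=y_n$ for all $n\ge N$.
\begin{itemize}
\item For $n\ge N$, $|s(y_n)| = |s(x_n)|\le C$.
\item The finitely many values $|s(y_0)|,\dots,|s(y_{N-1})|$ are bounded by some $C'$.
\end{itemize}
Hence $(|s(y_n)|)$ is bounded by $\max(C,C')$, and $\ov y\in Y$. Since tail equivalence is symmetric, the same argument shows $\Omega$ is invariant as well.

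\emph{$Y$ is $F_\sigma$.} Write
$$
Y=\bigcup_{C\in\N} Y_C,\qquad Y_C=\bigcap_{n\in\N_0}\{\ov x\in X_B : |s(x_n)|\le C\}.
$$
Each set $\{\ov x : |s(x_n)|\le C\}$ is a union of the Kakutani--Rokhlin towers $X_w^{(n)}$ with $|w|\le C$. Each $X_w^{(n)}$ is clopen: membership is determined by the first $n+1$ edges, so it is a union of cylinder sets, and its complement is the union of the other towers of the same level. A union over the finitely many $w$ with $|w|\le C$ is therefore closed; indeed it is clopen, since the complement is again a union of towers.

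It follows that each $Y_C$ is an intersection of closed sets, hence closed, and $Y$ is $F_\sigma$. Consequently $\Omega = X_B\setminus Y$ is $G_\delta$, and it is $\mathcal R$-invariant because its complement $Y$ is.

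The only point requiring any care is the closedness of the level sets. I would justify it through the metric. If $\ov x^{(k)}\to\ov x$, then for large $k$ the paths agree with $\ov x$ on the first $n+1$ edges. Their $n$-th sources therefore coincide with $s(x_n)$, so the bound $|s(x_n)|\le C$ passes to the limit.
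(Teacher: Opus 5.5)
Your proof is correct and takes essentially the same route as the paper: you use the same decomposition $Y=\bigcup_C Y_C$ with each $Y_C$ closed, and the same finite-prefix argument for $\mathcal R$-invariance. For closedness you note that the towers are clopen, while the paper uses a convergent-sequence argument, which you also supply. Your derivation of the $G_\delta$ property of $\Omega$ by complementation is cleaner than the paper's version, which writes $\Omega=\bigcap_M \Omega_M$ with $\Omega_M=X_B\setminus Y_M$ and calls $\Omega_M$ closed when it is actually open.
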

 
\begin{proof}
Note that     
\begin{equation*}
    Y = \bigcup_{C \in \mathbb{N}} Y_C,
\end{equation*}
where
$$
Y_C = \{\ov y= (y_n) \in X_B \ : \ \forall n \in \mathbb{N}_0, \; | s(y_n) |\leq C\}.
$$
For every $C \in \mathbb{N}$, the set $Y_C$ is closed. Indeed, pick a sequence $\ov y_k = (y^{(k)}_i) \in Y_C$ that converges to $\ov y = (y_i)$ as $k$ tends to infinity. Hence, for any $n$, there exists some sufficiently large $k$ such that $s(y^{(k)}_i) = s(y_i)$ for $i = 0, \ldots, n$. It follows that $|s(y_n)| < C$ for all $n \in N_0$, that is, $\ov y \in Y_C$, and $Y$ is an $F_{\sigma}$-set.

Let $\ov y = (y_i) \in Y$ and $\ov z \mathcal{R} \ov y$ for some $\ov z = (z_i) \in X_B$. Since $\ov y \in Y$, there is $C = C(\ov y)$ such that $| s(y_i) |\leq C$ for all $i$.
Since $\ov z \mathcal{R} \ov y$, there exists $N$ such that $z_i = y_i$ for all $i > N$. Hence $|z_i| < C$ for all $i > N$. Since $\{z_i\}_{i \leq N}$ is a finite set, there is $C_1$ such that $|z_i| < C_1$ for all $i \in \N_0$. Thus, $\ov z \in Y$ and $Y$ is $\mathcal{R}$-invariant. Observe that
$$
\Omega = \bigcap_{M \in \mathbb{N}} \Omega_M,
$$
where
$$
\Omega_M = \{\ov x= (x_n) \in X_B \ :\  \exists n \in \N_0 \; \mbox{ s.t. } |s(x_n)| > M\}. 
$$ 
It follows from the definition of $\Omega_M$ that $\Omega_M = X_B \setminus Y_M$. Hence $\Omega_M$ is closed and $\Omega$ is a $G_{\delta}$-subset of $X_B$. The $\mathcal{R}$-invariance of $\Omega$ follows from the $\mathcal{R}$-invariance of $Y$.
\end{proof}

The following examples demonstrate various possibilities for the sets $\Omega$ and $Y$. In the case where $B$ has no ``vertical'' paths, the set $\Omega$ can be equal to the whole path space $X_B$, see \Cref{Ex:OmegaisXB}. In \Cref{Ex:Omegaisempty}, we show that the set $\Omega$ can be empty. In \Cref{Ex:OmegaYdense}, both sets $Y$ and $\Omega$ are dense in $X_B$.

\begin{example}[$\Omega = X_B$]\label{Ex:OmegaisXB} Consider the generalized Bratteli diagram $B(2^n)$ defined in \Cref{ssect B(2n)} (see \Cref{Fig:ContinVMap1}). It is a bounded size diagram (see \Cref{Def:BD_bdd_size}) with parameters $t_n = 2^n$ and $L_n = 2$. Note that all infinite paths are unbounded. In other words, we have $X_B = \Omega$. Recall that this diagram has interesting properties, as discussed in \Cref{ssect B(2n)}. 

\end{example}

In the \Cref{Ex:Omegaisempty} below we assume that for every $i \in V_n$, $n \in \N_0$, we have $f^{(n)}_{i,i} \neq 0$. In other words, for every $i \in V_0$, the diagram $B$ has a ``vertical'' path that passes through the vertices $i \in V_n$, $n \in \mathbb{N}$.

\begin{example}[$Y = X_B$]\label{Ex:Omegaisempty} Consider the diagram given in \Cref{Fig:Z-DIOExample}. It is a generalized Bratteli diagram that contains infinitely many odometers (see \cite[Section 5]{BezuglyiKarpelKwiatkowski2024}), 
where $V_n = \mathbb{Z}$ for all $n$. The diagram consists of infinitely many odometers 
connected with neighboring odometers by single edges. It is easy to see that all the paths in $X_B$ eventually become vertical; thus, we have $Y = X_B$. 

\begin{figure}[hbt!]
\centering
\begin{tikzpicture}[scale=0.9,
    vertex/.style={circle, draw, fill=black, minimum size=0.2cm, inner sep=0pt},
    every edge/.style={thin, black}]

\node[vertex] (V11) at (1,5) {};
\node[vertex] (V12) at (3,5) {};
\node[vertex] (V13) at (5,5) {};
\node[vertex] (V14) at (7,5) {};
\node[vertex] (V15) at (9,5) {};

\node[vertex] (V21) at (1,3) {};
\node[vertex] (V22) at (3,3) {};
\node[vertex] (V23) at (5,3) {};
\node[vertex] (V24) at (7,3) {};
\node[vertex] (V25) at (9,3) {};

\node[vertex] (V31) at (1,1) {};
\node[vertex] (V32) at (3,1) {};
\node[vertex] (V33) at (5,1) {};
\node[vertex] (V34) at (7,1) {};
\node[vertex] (V35) at (9,1) {};


\draw (V21) to [out=130, in=230, looseness=0.9] (V11);
\draw (V21) to [out=110, in=250, looseness=0.8] (V11);
\draw (V21) to [out=70, in=280, looseness=0.8] (V11);
\draw (V21) to [out=50, in=310, looseness=0.9] (V11);
\draw (V22) -- (V11);

\draw (V22) to [out=120, in=240, looseness=0.8] (V12);
\draw (V22) to [out=60, in=300, looseness=0.8] (V12);
\draw (V22) -- (V12);
\draw (V23) -- (V12);

\draw (V23) to [out=120, in=240, looseness=0.8] (V13);
\draw (V23) to [out=60, in=300, looseness=0.8] (V13);
\draw (V23) -- (V14);

\draw (V24) to [out=120, in=240, looseness=0.8] (V14);
\draw (V24) to [out=60, in=300, looseness=0.8] (V14);
\draw (V24) -- (V14);

\draw (V25) to [out=130, in=230, looseness=0.9] (V15);
\draw (V25) to [out=110, in=250, looseness=0.8] (V15);
\draw (V25) to [out=70, in=280, looseness=0.8] (V15);
\draw (V25) to [out=50, in=310, looseness=0.9] (V15);
\draw (V24) -- (V15);


\draw (V31) to [out=130, in=230, looseness=0.9] (V21);
\draw (V31) to [out=110, in=250, looseness=0.8] (V21);
\draw (V31) to [out=70, in=280, looseness=0.8] (V21);
\draw (V31) to [out=50, in=310, looseness=0.9] (V21);
\draw (V32) -- (V21);

\draw (V32) to [out=120, in=240, looseness=0.8] (V22);
\draw (V32) to [out=60, in=300, looseness=0.8] (V22);
\draw (V32) -- (V22);
\draw (V33) -- (V22);

\draw (V33) to [out=120, in=240, looseness=0.8] (V23);
\draw (V33) to [out=60, in=300, looseness=0.8] (V23);
\draw (V33) -- (V24);

\draw (V34) to [out=120, in=240, looseness=0.8] (V24);
\draw (V34) to [out=60, in=300, looseness=0.8] (V24);
\draw (V34) -- (V24);

\draw (V35) to [out=130, in=230, looseness=0.9] (V25);
\draw (V35) to [out=110, in=250, looseness=0.8] (V25);
\draw (V35) to [out=70, in=280, looseness=0.8] (V25);
\draw (V35) to [out=50, in=310, looseness=0.9] (V25);
\draw (V34) -- (V25);

\node[draw=none, fill=none] at (9.9,5) {$\ldots$};
\node[draw=none, fill=none] at (9.9,3) {$\ldots$};
\node[draw=none, fill=none] at (9.9,1) {$\ldots$};

\node[draw=none, fill=none] at (0,5) {$\ldots$};
\node[draw=none, fill=none] at (0,3) {$\ldots$};
\node[draw=none, fill=none] at (0,1) {$\ldots$};

\node[draw=none, fill=none] at (1,0.1) {$\vdots$};
\node[draw=none, fill=none] at (3,0.1) {$\vdots$};
\node[draw=none, fill=none] at (5,0.1) {$\vdots$};
\node[draw=none, fill=none] at (7,0.1) {$\vdots$};
\node[draw=none, fill=none] at (9,0.1) {$\vdots$};

\end{tikzpicture}
\caption{$Y = X_B$.}\label{Fig:Z-DIOExample}
\end{figure}

\end{example}

\begin{example}[$\Omega$ and $Y$ are dense in $X_B$] \label{Ex:OmegaYdense} Consider the diagram given in \Cref{Fig:ERSECSnotHS}. It is a stationary, horizontally stationary (see \Cref{Def: Horz}), and bounded size Bratteli diagrams.
Every finite path can be prolonged in two ways: either to a path from $\Omega$ or to a path from $Y$. The sets $\bigcup Z_w^{+}$ and $\bigcup Z_w^{-}$ are $\mathcal{R}$-invariant dense subsets of $X_B$, where we set
   $$
Z_w^+ = \left\{x = (x_n) \in X_B : s(x_0) \geq w \mbox{ and } 
r(x_n) \geq w + \sum_{i = 0}^{n} t_i \mbox{ for all } n \in 
\mathbb{N}_0\right\}.
$$
and
$$
Z_w^- = \left\{x = (x_n) \in X_B : s(x_0) \leq w \mbox{ and } 
r(x_n) \leq w - \sum_{i = 0}^{n} t_i \mbox{ for all } n \in 
\mathbb{N}_0\right\}.
$$ 
Recall that $(t_i)_{i \in \N_0}$ are the parameters of the bounded size diagram (see \Cref{Def:BD_bdd_size}).
Note that for bounded size diagrams, the slanted sets 
$Z_w^{+}$ and $Z_w^{-}$ are closed and $\mathcal R$-invariant (see \cite[Lemma 5.3.]{BezuglyiJorgensenKarpelSanadhya2025}) 

    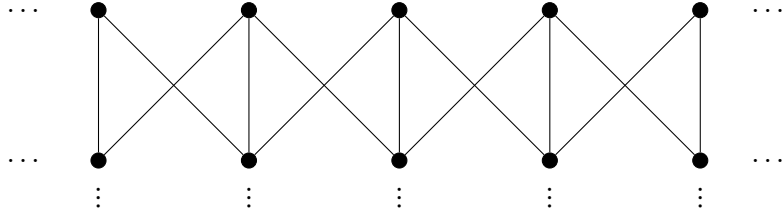
\begin{figure}
\unitlength=0,7cm

\begin{tikzpicture}[scale=1,
    vertex/.style={circle, draw, fill=black, minimum size=0.2cm, inner sep=0pt},
    every edge/.style={thin, black}]

\node[vertex] (V11) at (2,3) {};
\node[vertex] (V12) at (4,3) {};
\node[vertex] (V13) at (6,3) {};
\node[vertex] (V14) at (8,3) {};
\node[vertex] (V15) at (10,3) {};

\node[vertex] (V21) at (2,1) {};
\node[vertex] (V22) at (4,1) {};
\node[vertex] (V23) at (6,1) {};
\node[vertex] (V24) at (8,1) {};
\node[vertex] (V25) at (10,1) {};

\draw (V21) -- (V11);
\draw (V22) -- (V11);
\draw (V21) -- (V12);
\draw (V22) -- (V12);
\draw (V22) -- (V13);
\draw (V23) -- (V12);
\draw (V23) -- (V13);
\draw (V24) -- (V13);
\draw (V23) -- (V14);
\draw (V24) -- (V14);
\draw (V25) -- (V14);
\draw (V24) -- (V15);
\draw (V25) -- (V15);

\node[draw=none, fill=none] at (10.9,3) {$\ldots$};
\node[draw=none, fill=none] at (10.9,1) {$\ldots$};

\node[draw=none, fill=none] at (1,3) {$\ldots$};
\node[draw=none, fill=none] at (1,1) {$\ldots$};

\node[draw=none, fill=none] at (2,0.5) {$\vdots$};
\node[draw=none, fill=none] at (4,0.5) {$\vdots$};
\node[draw=none, fill=none] at (6,0.5) {$\vdots$};
\node[draw=none, fill=none] at (8,0.5) {$\vdots$};
\node[draw=none, fill=none] at (10,0.5) {$\vdots$};

\end{tikzpicture}
\caption{$cl(\Omega) = cl(Y) = X_B$.}\label{Fig:ERSECSnotHS}
\end{figure}
\end{example}

The following statement gives a sufficient condition for a generalized Bratteli diagram $B$ to have $X_B = \Omega$, i.e., all infinite paths are unbounded in the sense discussed above.

\begin{prop} Let $B = B(F_n)$ be a two-sided infinite generalized Bratteli diagram (identify $V_n = \Z$ for $n\in \N_0$). Assume that there exist infinitely many levels $\{n_k\}$ and an increasing sequence of natural numbers $(M_{n_k})$ such that for every $e \in E_{n_k}$  
$$
|r(e) - s(e)| \geq M_{n_k}.
$$
    Then $X_B = \Omega$.
\end{prop}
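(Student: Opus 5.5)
We will show directly that every path $\ov x = (x_n) \in X_B$ has an unbounded sequence $\{|s(x_n)|\}_{n\in\N_0}$. Equivalently, $Y = \emptyset$ in the notation of \Cref{def_Om_Y}, and then $X_B = \Omega$ follows from \Cref{lem s4 Y and Omega}. The argument is by contradiction. Suppose $\ov y = (y_n) \in Y$, and fix $C \in \N$ such that $|s(y_n)| \leq C$ for all $n \in \N_0$. Since $s(y_{n+1}) = r(y_n)$, we also have $|r(y_n)| \leq C$ for every $n$.

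For each level $n_k$, the edge $y_{n_k} \in E_{n_k}$ satisfies $|r(y_{n_k}) - s(y_{n_k})| \geq M_{n_k}$ by hypothesis. On the other hand, the triangle inequality and the bound $C$ give
$$
|r(y_{n_k}) - s(y_{n_k})| \leq |r(y_{n_k})| + |s(y_{n_k})| \leq 2C.
$$
Hence $M_{n_k} \leq 2C$ for all $k$.

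To reach a contradiction we need $M_{n_k} \to \infty$. The one subtle point is the meaning of ``increasing'': a strictly increasing sequence of natural numbers is unbounded, so it eventually exceeds $2C$, which is the contradiction. If ``increasing'' were read as nondecreasing, the hypothesis would have to be understood as requiring $M_{n_k} \to \infty$. The statement clearly intends unboundedness, and I would note this explicitly.

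Consequently no path in $X_B$ stays in a bounded set of vertices, so $Y = \emptyset$ and $X_B = \Omega$. No measure theory and no earlier theorem beyond \Cref{def_Om_Y} and \Cref{lem s4 Y and Omega} is needed, and I expect no real obstacle beyond the reading of ``increasing'' discussed above.
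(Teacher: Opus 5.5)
Your proof is correct and is essentially the paper's argument: take a path in $Y$ bounded by $C$, choose $k$ with $M_{n_k} > 2C$, and contradict the hypothesis at level $n_k$ using the triangle inequality. Your remark that ``increasing'' has to mean unbounded (for instance, strictly increasing) makes explicit an assumption the paper uses without stating it.
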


\begin{proof} Assume by contradiction $X_B \neq \Omega$, and $Y$ is non-empty, where $Y \subset X_B$ is as in \Cref{def_Om_Y}. Consider $\ov y = (y_i) \in Y$, then there exists $C > 0$ such that for all $n \in \mathbb{N}_0$, we have $| s(y_n) |\leq C$. There exists $k$ large enough such that $M_{n_k} \geq 2C + 1$. Hence, at level $n_k$ we have $|r(y_{n_k})| = |s(y_{n_k+1})| > C$, and we get a contradiction.
\end{proof}

\begin{remark}
 Denote by $Erg_1(X_B)$ the set of all ergodic probability tail invariant measures on $X_B$.
It follows from \Cref{lem s4 Y and Omega} and the $\mathcal{R}$-invariance of the sets $Y$ and $\Om$ that if $\mu \in Erg_1(X_B)$ then either $\mu(\Omega) = 0$ or $\mu(Y) = 0$. Thus, all ergodic probability tail invariant measures are divided into two sets: 
$$
Erg(Y) = \{\mu \in Erg_1(X_B) \; | \; \mu(Y) = 1\}
$$ 
and 
$$
Erg (\Om) = \{\mu \in Erg_1(X_B) \; | \; \mu(\Omega) = 1\}.
$$

As an example, we observe that finite extensions of measures from stationary standard Bratteli subdiagrams belong to the class $Erg(Y)$. 
\end{remark}

\subsection{Path space of subdiagrams}
Suppose $B$ is a generalized Bratteli diagram and $\ol B$ is a vertex subdiagram of $B$ defined by the sequence $\ol W= (W_n)$ of proper subsets $W_n$ (see \Cref{def: vertex sbdgr}). In what follows, we describe the path space of $\ol B$.

For every path $\ol x = (x_n)_{n \in \N_0} \in X_B$, we associate the sequence of vertices $(v_n)_{n \in \N_0}$ where $v_n = s(x_n)$ for $n \in \N_0$. This defines a map
$$
\tau : X_B \to \prod_{n \in \N_0} V_n. 
$$
If $c= [v_o, \ldots ,v_n]$ is a cylinder set in $\prod_{n \in \N_0} V_n$,
then the set $\tau^{-1}(c)$ consists of all the paths that go through 
$v_0, \ldots, v_n$. Hence $\tau^{-1}(c)$ is a finite union of cylinder sets in $X_B$. Note that $\tau$ is not surjective. Recall that for $n \in \N_0$, we set $W'_n = V_n \setminus W_n$, (see \Cref{def: vertex sbdgr}).

\begin{prop} \label{prop path space sbdgrm}
The map $\tau$ is continuous and, in the above settings,
$$
X_{\ol B} = X_B \setminus  \bigcup_{n \geq 0} \ 
\bigcup_{w_i \in W_i, v_n \in W'_n } \tau^{-1}( [w_0, \ldots, w_{n-1}, v_n]) 
$$
\end{prop}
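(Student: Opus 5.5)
The proof splits into two independent parts: continuity of $\tau$, and the description of $X_{\ol B}$ as the complement of the union of the sets $\tau^{-1}([w_0,\ldots,w_{n-1},v_n])$.

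\emph{Continuity.} We use the product topology on $\prod_{n\in\N_0} V_n$, where each $V_n$ is discrete. It suffices to check that preimages of basic cylinders $c=[v_0,\ldots,v_n]$ are open. As observed just before the statement, $\tau^{-1}(c)$ is the set of paths whose first $n+1$ edges pass through $v_0,\ldots,v_n$. Since every vertex has finitely many incoming edges, the set $E(v_0,\ldots,v_n)$ of finite paths $(x_0,\ldots,x_{n})$ with $s(x_k)=v_k$ is finite. Hence $\tau^{-1}(c)=\bigcup[\ol e]$ over this finite set, a finite union of cylinder sets, and so it is clopen. Alternatively, in terms of the metric: if $\mathrm{dist}(x,y)<2^{-N}$ then $x_i=y_i$ for $i\le N$, so $s(x_i)=s(y_i)$, and $\tau(x),\tau(y)$ agree on coordinates $0,\ldots,N$. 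Thus $\tau$ is even $1$-Lipschitz for the analogous metric on the product.

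\emph{The set identity.} A path $x\in X_B$ lies in $X_{\ol B}$ if and only if every edge $x_n$ lies in $G_n$. For a vertex subdiagram this holds exactly when $s(x_n)\in W_n$ for all $n$, since then $r(x_n)=s(x_{n+1})\in W_{n+1}$ as well. So $X_{\ol B}=\tau^{-1}(\prod_n W_n)$. Consequently, $x\notin X_{\ol B}$ if and only if there is a first index $n$ with $s(x_n)\in W'_n$. For that minimal $n$ we have $s(x_i)=w_i\in W_i$ for all $i<n$, and therefore $x\in\tau^{-1}([w_0,\ldots,w_{n-1},v_n])$ with $v_n=s(x_n)\in W'_n$.

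Conversely, any path in such a preimage has $s(x_n)\notin W_n$ and so is not in $X_{\ol B}$. Taking complements gives the displayed formula; the case $n=0$ corresponds to $\tau^{-1}([v_0])$ with $v_0\in W'_0$. Both inclusions of the set identity thus follow directly.

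\emph{Main obstacle.} The only subtle point is the identification $X_{\ol B}=\tau^{-1}(\prod W_n)$. One must observe that, because $\ol G_n$ contains \emph{all} edges of $B$ between $W_n$ and $W_{n+1}$, membership in $X_{\ol B}$ depends only on the vertex sequence and not on the particular edges chosen. This fails for edge subdiagrams, which is why the statement is restricted to vertex subdiagrams.

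As a byproduct, the formula shows that $X_{\ol B}$ is closed: it is the complement of a union of clopen sets, each open by the continuity part.
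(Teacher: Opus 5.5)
Your argument is correct and is the straightforward verification the paper leaves to the reader (it gives no proof). You identify $X_{\ol B}=\tau^{-1}\bigl(\prod_n W_n\bigr)$ and split the complement according to the first index $n$ with $s(x_n)\in W'_n$.

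One minor slip concerns the finite set of paths. Since $s^{-1}(v_n)$ may be infinite in a generalized diagram, that set should consist of the paths $(x_0,\ldots,x_{n-1})$ with $s(x_k)=v_k$ and $r(x_{n-1})=v_n$, rather than paths ending in an edge $x_n$. This does not affect the openness of $\tau^{-1}(c)$, which is all you need, and your metric argument proves continuity independently in any case.
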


The proof of \Cref{prop path space sbdgrm} is straightforward.

\begin{remark}
 From this statement, it follows that $X_{\ol B}$ is closed. Moreover, if $w \in W_n$, $w' \in W_{n+1}$ and there exists a path $\ol y = (y_n)_{n \in \N_0} \in X_{\ol B}$ such that $y_n \in E(w, w')$, then, by \Cref{prop path space sbdgrm}, for every $e \in E(w, w')$ there exists a path $\ol y' = (y'_n)_{n \in \N_0} \in X_{\ol B}$ such that $e = y'_n$. 
\end{remark}

\begin{remark} We can similarly consider an edge subdiagram $\ol B$ of $B$ (see \Cref{Def_edge_Sub_dig}). Let $\ol G = (G_n)$ be a sequence of subsets $G_n$ of the sets of
edges $E_n$ that corresponds to the subdiagram $\ol B$. Observe that we can identify the path space $X_B$ with a subset 
of $\prod_n E_n$ because every infinite path $\ol x = (x_n)$
is a sequence of edges $x_n \in E_n$. Hence, if $\ol y = (y_n)$ 
is not in 
$X_{\ol B}$, then there exists a cylinder set $[e_0, \ldots, e_{k-1}, f_k]$ where $e_i \in G_i$, but $f_k$ is not in $G_k$ for some $k$. 
Thus, we have  
$$
X_{\ol B} = X_B \setminus  \bigcup_{k \geq 0} \ 
\bigcup_{e_i \in G_i, f_k \notin G_k} [e_0, \ldots, e_{k-1}, f_k]). 
$$
\end{remark}

We now consider the converse question. Let $B$ be a given generalized Bratteli diagram and $P$ a subset of $X_B$. We ask the following question: Under what conditions does $P \subset X_B$ correspond to the path space $X_{\ol B}$ of a subdiagram $\ol B$ of $B$? In other words, does there exist a vertex (or edge) subdiagram $\ol B$ of $B$ such that $P = X_{\ol B}$? As mentioned in \Cref{Sec: Basics}, we are interested in the Bratteli diagrams whose path space $X_B$ has no isolated points and the tail equivalence relation $\mathcal R$ is an aperiodic CBER (countable Borel equivalence relation).

Suppose that $B = (V, E)$ is a generalized Bratteli diagram with path space $X_B$. We say that a perfect subset $P$ of $X_B$ is \textit{full} if it satisfies the following conditions:

\begin{enumerate}[label=(\roman*)]
\item $(P \times P)|_{\ol{\mathcal R}}$ 
is an aperiodic CBER where $\ol{\mathcal R}$ is the restriction of $\mathcal R$ to $P$.
\item If $C$ is a cylinder set in $X_B$ and $C \cap P \neq \emptyset$, then $C \cap P$ is perfect.
\item If $\ol y = (y_n)_{n \in \N_0} \in P$ and $v_n = s(y_n)$, then for any $k \in \N_0,m \in \N$ and any finite path $\ol f = (f_k, \ldots, f_{k+m}) \in E(v_k, v_{k+m+1})$, there exists $\ol z = (z_n)_{n \in \N_0} \in P$ such that $f_i = z_i,\ i = k, \ldots k+m$. 
\end{enumerate}

Recall that in \Cref{rem on edge diagram} we considered the case where a subset $Y$ of the path space $X_B$ is obtained by forbidding cylinder sets. We will denote this by $Y \notin \mc F_B $. We explained there that such subsets cannot be path spaces of edge subdiagrams. 

\begin{thm}\label{thm s8 path space}
Let $B=B(V, E)$ be a generalized Bratteli diagram with the path space $X_B$. Suppose that a closed perfect subset $P$ of $X_B$ is full, that is, $P$ satisfies conditions  $(i), (ii)$ and $(iii)$ above. Then there exists a vertex subdiagram $\ol B$ of $B$ such that $X_{\ol B} = P$.

Let $P$ be a perfect set which is not in $\mc F_B$. Suppose that $P$ satisfies $(i)$ and $(ii)$ of the definition of fullness above, and instead of property $(iii)$, it satisfies the property

 $(iii')$ for every $n\in \N_0$ and every $w \in V_n$, there exists an edge $e \in s^{-1}(w)$ such that $[e] \cap P$ is a nonempty perfect set. 

Then there exists an edge subdiagram $\ol B$ of $B$ such that $X_{\ol B} =P$.
\end{thm}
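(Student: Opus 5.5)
For the first part, I will read the candidate subdiagram directly off $P$. Set
$$W_n = \{ s(y_n) : \ol y=(y_i)\in P\} \subset V_n, \qquad n\in\N_0,$$
and let $\ol B$ be the vertex subdiagram supported by $\ol W=(W_n)$, as in \Cref{def: vertex sbdgr}.

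The first step is to check that $\ol B$ is a (generalized or standard) Bratteli diagram.
\begin{enumerate}[label=(\alph*)]
\item Every $w\in W_n$ is of the form $s(y_n)$ for some $\ol y\in P$. Then $y_n$ is an edge of $\ol B$ leaving $w$, since $r(y_n)=s(y_{n+1})\in W_{n+1}$.
\item Every $v\in W_{n+1}$ likewise has an incoming edge $y_n$ of $\ol B$.
\item Finiteness of $r^{-1}(v)$ is inherited from $B$.
\item If some $W_n$ happens to equal $V_n$, I regard $\ol B$ as a degenerate vertex subdiagram. Nothing in the argument uses $W_n\neq V_n$.
\end{enumerate}

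The inclusion $P\subset X_{\ol B}$ is immediate, because every edge of every $\ol y\in P$ has both endpoints in $\ol W$.

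The main work, and the step I expect to be the obstacle, is the reverse inclusion $X_{\ol B}\subset P$. Since $P$ is closed, it suffices to show that every cylinder $[e_0,\dots,e_m]$ with all $s(e_i)\in W_i$ and $r(e_m)\in W_{m+1}$ meets $P$. Condition (iii) gives this only along a single path of $P$: we know there is some $\ol y\in P$ with $s(y_0)=s(e_0)$ and some $\ol y'\in P$ with $s(y'_{m+1})=r(e_m)$, but (iii) applies only to paths passing through both $s(e_0)$ and $r(e_m)$.

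My plan is to proceed by induction on the length of the cylinder, at each step using (iii) with $k=0$ applied to a path of $P$ that passes through the relevant vertices. To produce such a path, I will use tail invariance within $P$. By (i), $\ol{\mc R}$ is the restriction of $\mc R$ to $P$ and is a CBER on $P$. Together with (ii), I will try to argue that if one path of $P$ passes through $v\in W_{n+1}$, then every finite path of $B$ from $V_0$ to $v$ whose vertices lie in $\ol W$ extends, via the tail of that path, to an element of $P$. If this step resists, the fallback is to interpret fullness (iii) as also allowing $k=0$ with the first vertex unrestricted along $\ol W$, which is exactly what is needed; I would state that reading explicitly. Granting it, every cylinder of $X_{\ol B}$ meets the closed set $P$. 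Since cylinders form a base, $X_{\ol B}\subset \overline{P}=P$.

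For the edge subdiagram part, define
$$G_n=\{y_n : \ol y\in P\}\subset E_n .$$
Property (iii$'$) ensures that every vertex has an outgoing edge in $G_n$, so all vertices are kept. Incoming edges in $G$ exist because of the range of any path of $P$ through the vertex, and finiteness of $r^{-1}(v)$ is again inherited from $B$. So $\ol B=(V,\ol G)$ is an edge subdiagram, and $P\subset X_{\ol B}$ as before.

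For the converse inclusion I use the description of $X_{\ol B}$ as the complement of the cylinders $[e_0,\dots,e_{k-1},f_k]$ with $f_k\notin G_k$, given in the Remark preceding the theorem. The hypothesis $P\notin\mc F_B$ means that $P$ is not cut out by forbidding cylinders other than those forced by deleted edges. Hence every cylinder built from edges of $G$ meets $P$, and closedness of $P$ yields $X_{\ol B}=P$. Condition (ii), perfectness, is used to guarantee that $X_{\ol B}$ has no isolated points, which makes $\ol B$ a legitimate subdiagram with an aperiodic tail relation, matching (i).
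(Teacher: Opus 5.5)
Your reduction is correct. The choice $W_n=\{s(y_n):\ol y\in P\}$ is forced, since every vertex of a subdiagram lies on one of its paths. The inclusion $P\subset X_{\ol B}$ is trivial, so everything hinges on showing that every cylinder of $X_{\ol B}$ meets $P$.

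The step you propose for this fails. Condition (i) only says that the restriction of $\mc R$ to $P$ is an aperiodic CBER. It does not make $P$ a union of $\mc R$-classes, so continuing a finite $\ol W$-path by the tail of a path of $P$ need not land in $P$. In fact this claim, and with it the first statement, fails under (i)--(iii) as written.

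Here is a counterexample. Take $V_n=\Z$, two edges from $v\in V_n$ to $v\in V_{n+1}$ for every $v$, and one edge from $0\in V_n$ to $1\in V_{n+1}$ for every $n$. Let $P=P_0\sqcup P_1$, where $P_j$ is the set of paths with $s(x_n)=j$ for all $n$. Then:
\begin{enumerate}
\item $P$ is closed and perfect.
\item $\mc R|_P$ has only infinite (odometer) classes.
\item For every cylinder $C$, a nonempty $C\cap P$ is perfect.
\item Condition (iii) holds: there is no edge from $1$ back to $0$, so every finite path from $j\in V_k$ to $j\in V_{k+m+1}$ stays at $j$ and extends inside $P_j$.
\end{enumerate}
Yet any vertex subdiagram with $P\subset X_{\ol B}$ has $0,1\in W_n$ for all $n$, hence contains the cross edges. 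The path that stays at $0$ up to level $k$, crosses to $1$, and then stays at $1$ therefore lies in $X_{\ol B}\setminus P$. This path is exactly a finite $\ol W$-path ending at $1\in W_{k+1}$ continued by the tail of a path of $P_1$, so it refutes the extension claim you planned to prove.

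Consequently your fallback is not a reading of (iii) but an additional hypothesis. It says that every finite path of $B$ starting in $W_0$ with all its vertices in $\ol W$ is an initial segment of a path of $P$. For closed $P$ this is in fact equivalent to $X_{\ol B}=P$, and with it your closure argument is complete.

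The paper constructs the same $W_n$ and makes the same leap. It asserts that (iii) implies every $f\in E(w,v)$ with $w\in W_n$, $v\in W_{n+m}$ occurs in a path of $P$. But (iii) gives this only when $w$ and $v$ lie on a common path of $P$.

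For the edge subdiagram part, your argument, like the paper's ``$P=X_{\ol B}$ from the construction'', rests entirely on reading $P\notin\mc F_B$ as ``every cylinder built from edges of the $G_n$ meets $P$''. That is the conclusion itself. It is acceptable only as a restatement of an informal hypothesis, not as a derivation from (i), (ii), (iii$'$).
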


\begin{proof}
Every infinite path $\ol y = (y_n)_{n \in \N_0}\in P$ determines the sequence of vertices
$(v_n)_{n \in \N_0}$ where $v_n = s(y_n)$, $n \in \N_0$. We construct by induction, a sequence $(W_n)_{n \in \N_0}$ where for each $n \in \N_0$, $W_n$ is a subsets of $V_n$. Here for $n \in \N_0$, $V_n$ denotes the vertex set in $B$ at level $n$.  

Let $W_0 = \{w \in V_0 : [w] \cap P \neq \emptyset\}$, where $[w]$ denotes the subset of $X_B$ that contains all infinite paths that begin at the vertex $w \in V_0$. Having $W_0$ defined, we take $w_0 \in W_0$ and consider the set
of vertices $v\in r(s^{-1}(w_0)) \subset V_1$ and take $W_1(w_0) = \{w_1 \in r(s^{-1}(w_0)) \ :\ [w_0, w_1] \cap Y \neq \emptyset\}$. Here, $[w_0, w_1]$ is the set of all paths that pass through the vertices $ w_0$ and $ w_1$. Then we set 
$$
W_1 = \bigcup_{w_0 \in W_0}W_1(w_0).
$$
Note that this construction can be repeated for cylinder sets of any finite length. In such a way, we define inductively the sequence $(W_n)_{n \in \N_0}$ where for each $n \in \N_0$, $W_n$ is a subset of $V_n$. 

We now set $\ol B= B(\ol W)$ to be the vertex subdiagram of $B$ supported by the sets $(W_n)_{n \in \N_0}$. 
Set $ G_n \subset E_n$, where for every $n\in \N_0$, edge $e \in E_n$ lies in $G_n$ if and only if $s(e) \in W_{n}$ and $r(e) \in W_{n+1}$.

We claim that $P = X_{\ol B}$. By construction, 
$P \subset X_{\ol B}$. On the other hand, condition $(iii)$ implies that
if two vertices, $w \in W_n$ and  $v \in W_{n+m}$, are connected by a finite path in $E(w, v)$, then they are connected in $\ol B$. Moreover, every finite path $f \in E(w, v)$ appears as a finite segment in some infinite path from $P$. This shows that $X_{\ol B} \subset P$ for the vertex subdiagram $\ol B$. 

For the second statement, we use a similar approach to construct an 
edge subdiagram $\ol B$. We need to find the subset of edges $G_n$ of $E_n$ that will form an edge subdiagram. Take $w_0 \in V_0$ and choose the set $G(w_0)$ of all edges $e$ from $s^{-1}(w_0)$ satisfying condition $(iii')$. Set $G_0 = \bigcup_{w_0 \in V_0} G(w_0)$. We note that every vertex $w_1 \in V_1$ is the range of some edge from
$G_0$. Indeed, condition $(iii')$ says that there exists a path  $ \ol y = (y_n) \in [f] \cap P$ where $s(f) = v_1$. But then we see that $y_0 \in G_0$ by the definition of this set and $r(y_0) = v_1$. 
This process can be applied to every level $V_n$ inductively, and this will give us the sequence $\ol G = ( G_n)$ of subsets of edges in the diagram $B$. 
Hence, we constructed an edge subdiagram $\ol B$. It remains to observe that $P = X_{\ol B}$ from the construction of $\ol B$. 

\end{proof}

\subsection{$0-1$ procedure}
In this subsection, we discuss a procedure that allows one to pass from an arbitrary (standard or generalized) Bratteli diagram to a Bratteli diagram with $0-1$ incidence matrices. In particular, this procedure allows the reduction of the study of edge subdiagrams to the study of vertex subdiagrams. However, it may increase the amount of computations, for instance, needed to compute the measure extension. The discussed procedure is well known in the case of standard Bratteli diagrams, see e.g. \cite[Section 5]{Fisher2009}. The $0-1$ procedure can also be obtained using the procedure of microscoping described in \cite[Section 3]{GiordanoPutnamSkau1995}.

The $0-1$ procedure can be described as follows. We start with a Bratteli diagram (standard or generalized) $B = (V, E)$ and construct a new diagram $\tl B  = (\tl V, \tl E)$ such that its incidence matrices are $0-1$ matrices. Let the set of vertices $\tl V_n$ of level $n \in \N_0$, for the new diagram $\tl B$ be the set of edges $E_n$ for the diagram $B$, i.e., there is a bijection $f \colon  E\rightarrow \tl V$ such that $f(E_n) = \tl V_n$ for all $n \in \N_0$. Two vertices $\tl v \in \tl V_{n+1}$ and $\tl w \in \tl V_n$ are connected by an edge in $\tl B$ if and only if $s(f^{-1}(\tl v)) = r(f^{-1}(\tl w))$. This means that the edges $f^{-1}(\tl v)$ and $f^{-1}(\tl w)$ from $B$, corresponding to $\tl v$ and $\tl w$, are concatenated in $B$. This construction induces a homeomorphism $\psi \colon \ X_B\rightarrow X_{\tl B}$ between the path spaces of $B$ and $\tl B$.
The cylinder sets in $X_B$ of length  $n \in \N$ are mapped to cylinder sets of length 
$n - 1$ in $X_{\tl B}$.
Remark that $\psi$ preserves the tail equivalence relations: $(\psi\times \psi) (\mc R_B) = \mc R_{\tl B}$. This implies that the measure $\mu$ is $\mathcal{R}_B$-invariant if and only if the measure $\nu = \mu \circ \psi^{-1}$ is $\mc R_{\tl B}$-invariant, and every $\mc R_{\tl B}$-invariant measure $\nu$ can be represented as $\mu \circ \psi^{-1}$ for some  $\mathcal{R}_B$-invariant measure $\mu$ (see also \cite[Lemma 5.2]{Fisher2009} for a detailed discussion).

The following proposition easily follows from the definition of the $0-1$ procedure. Here the notation $f_{v,w}^{(n)}$ corresponds to the$(v,w)$-entry of the incidence matrix $F_n$ for the diagram $B = B(F_n)$. 
\begin{prop}\label{prop:0-1procedure}
    Let $B = (V,E) = B(F_n)$ be a standard Bratteli diagram and $\tl{B} = (\tl V, \tl E) = \tl B(\tl F_n)$ be a diagram obtained from $B$ using the $0-1$ procedure. Then for $n \in \N_0$,
    $$
    |\tl V_n| = \sum_{v\in V_{n+1}}\sum_{w \in V_n} f_{v,w}^{(n)}
    $$
    and
    $$
    |\tl E_n| = \sum_{w \in V_{n}}\sum_{u \in V_{n+1}}\sum_{v \in V_{n+2}}f^{(n+1)}_{v,u}f^{(n)}_{u,w}.
    $$ 
    In other words for $n \in \N_0$, $|\tl V_n|$ is the number of all edges in $B$ on level $n$ and $\tl E_n$ is the number of all finite paths of length $2$ between the vertices of $|V_n|$ and $|V_{n+2}|$.
\end{prop}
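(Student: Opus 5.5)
The plan is a direct double count using the bijection $f\colon E\to\tl V$ from the definition of the $0-1$ procedure. For the vertex count, I use that $f(E_n)=\tl V_n$ with $f$ a bijection, so $|\tl V_n|=|E_n|$. Then I count $E_n$ by partitioning it according to the pair $(s(e),r(e))\in V_n\times V_{n+1}$. The fibre over $(w,v)$ is $E(w,v)$ restricted to single edges, whose cardinality is by definition $f^{(n)}_{v,w}$. Summing over all pairs gives $|\tl V_n|=\sum_{v\in V_{n+1}}\sum_{w\in V_n} f^{(n)}_{v,w}$. Since $B$ is standard, all sums are finite, so no convergence issues arise.

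For the edge count, I use the defining rule of $\tl E_n$. An edge of $\tl E_n$ joins $\tl w\in\tl V_n$ and $\tl v\in\tl V_{n+1}$ precisely when $s(f^{-1}(\tl v))=r(f^{-1}(\tl w))$. I should be careful about whether $\tl B$ has exactly one edge for each such concatenable pair. The incidence matrices are declared to be $0-1$, so there is exactly one edge per admissible pair.

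Hence $\tl E_n$ is in bijection with the set of pairs $(e,e')\in E_n\times E_{n+1}$ with $r(e)=s(e')$. These pairs are exactly the finite paths of length $2$ from $V_n$ to $V_{n+2}$. I partition them by the triple $w=s(e)\in V_n$, $u=r(e)=s(e')\in V_{n+1}$, $v=r(e')\in V_{n+2}$. The number of pairs over a fixed triple is $f^{(n)}_{u,w}f^{(n+1)}_{v,u}$, because $e$ and $e'$ are chosen independently. Summing over triples yields the stated formula. Equivalently, this is the sum of all entries of $F_{n+1}F_n$.

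There is no real obstacle here. The only point needing care is confirming from the construction that $\tl B$ has no multiple edges, so that edges correspond one-to-one with concatenable pairs. I also need to check that the indexing convention ($\tl V_n\leftrightarrow E_n$, $\tl E_n$ between $\tl V_n$ and $\tl V_{n+1}$) matches the formula's levels $n,n+1,n+2$.
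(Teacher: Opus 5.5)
Your proposal is correct and is essentially the paper's argument: the paper gives no separate proof, stating only that the proposition follows directly from the definition of the $0-1$ procedure. Your double count — via the bijection $f\colon E\to\tl V$ and one edge of $\tl B$ per concatenable pair $(e,e')\in E_n\times E_{n+1}$ — is exactly that unwinding.
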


\begin{example}\label{Ex:0-1ProcOdometer}
    Let $B$ be an odometer with $F_n = (a_n)$ for all $n \in \N_0$, where $(a_n)_{n \in \N_0}$ is a sequence of positive integers. Then for the image $\tl B$ of $B$ under the $0-1$ procedure, we have $|\tl V_n| = a_n$ for all $n \in \N_0$, and $\tl F_n$ is an $a_{n+1} \times a_n$ matrix such that all its entries are equal to $1$. 
    We can reverse the $0-1$ procedure: starting with the diagram $\tl B$, we obtain that its preimage $B$ under the $0-1$ procedure should have $a_n$ edges on level $n$. The fact, that all vertices of $\tl V_n$ and $\tl V_{n+1}$ are pairwise connected means that for all $e \in E_n$ and $f \in E_{n+1}$, we have $r(e) = s(f)$, hence the diagram $B$ is an odometer (up to the level $V_0$). The inverse of the $0-1$ procedure cannot retrieve the information on the number of vertices on the level $V_0$. 
\end{example}

\begin{example}
    Let $B = B(F)$ be a stationary standard
Bratteli diagram with the incidence matrix
$$
    F_n = \begin{pmatrix}
        2 & 0 \\
        1 & 3
    \end{pmatrix}
$$
for all $n \in \N_0$. 
Then for all $n \in \N_0$, we have $\tl V_n = 6$, $\tl E_n = 18$ and 
$$
\tl F_n = \begin{pmatrix}
    1 & 1 & 0 & 0 & 0 & 0\\
    1 & 1 & 0 & 0 & 0 & 0\\
    1 & 1 & 0 & 0 & 0 & 0\\
    0 & 0 & 1 & 1 & 1 & 1\\
    0 & 0 & 1 & 1 & 1 & 1\\
    0 & 0 & 1 & 1 & 1 & 1\\
\end{pmatrix}.
$$
\end{example}

Note that the image $\tl B = (\tl V, \tl E)$ of $B = (V, E)$, under the $0-1$ procedure, is defined up to isomorphism. Indeed, we can enumerate the edges of $E_n$ differently and obtain a different enumeration of the set $\tl V_n$. We define a \textit{canonical image} of $B$ under the $0-1$ procedure by enumerating $E_n, n \in N_0$ as follows. Suppose that for all $n \in \N_0$, we identify $V_n$ with $\N_0$. If $B$ is an unordered diagram, then we begin with the vertex $0 \in V_{n+1}$ and enumerate the edges of $r^{-1}(0)$ from left to right by the numbers $0, \ldots, |r^{-1}(0) - 1|$. Then we consider the vertex $1 \in V_{n+1}$ and enumerate all edges of $r^{-1}(1)$ from left to right starting with the number $|r^{-1}(0)|$, and so on. In general, for any $l \geq 0$, the edges of $r^{-1}(l)$ are enumerated from left to right by the numbers $\sum_{i = 0}^{l-1}|r^{-1}(i)|, \ldots, \sum_{i = 0}^{l}|r^{-1}(i)| - 1$. If for $n \in \N_0$ the set $V_{n}$ is identified with $ \Z$, then we enumerate the edges of $r^{-1}(-l)$ from left to right by the numbers $- \sum_{i=1}^{l}|r^{-1}(i)|, \ldots, -\sum_{i=1}^{l-1}|r^{-1}(i)| - 1$ for all $l > 0$. If $B$ is ordered, we use the given order on each set $r^{-1}(l)$ rather than the left-to-right order. Recall the equal row sums (ERS ($r_n$)) and equal column sums (ECS($c_n$)) properties for a diagram where $(r_n)_{n \in \N_0}$ and $(c_n)_{n \in \N_0}$ are sequence of positive integers (see \Cref{Sec: Basics}). 

\begin{prop}\label{prop:0-1properties} Let $B = B(F_n)$ be a (standard or generalized) Bratteli diagram and $\tl B = \tl B(\tl F_n)$ be its canonical image under the $0-1$ procedure. Let $(r_n)_{n \in \N_0}$ and $(c_n)_{n \in \N_0}$ be two sequence of positive integers. Then
    \begin{enumerate}[label=\upshape(\roman*), leftmargin=*, widest=iii]

        \item\label{item:0-1ers} If $B$ has the $ERS(r_n)$ (respectively $ECS(c_n)$) property, then $\tl B$ has the $ERS(r_n)$ (respectively $ECS(c_{n+1})$) property.

        \item\label{item:0-1stat} If $B$ is stationary then $\tl B$ is stationary.
        
        \item\label{item:0-1horstat} Assume that $B \in ERS(r_n)$ is a horizontally stationary generalized Bratteli diagram. Then the diagram $\tl B$ is not horizontally stationary. 
        

        
    \end{enumerate}
\end{prop}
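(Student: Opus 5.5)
The plan is to work directly from the definition of the canonical image. In $\tl B$, a vertex $\tl v \in \tl V_{n+1}$ is an edge $e' \in E_{n+1}$, and a vertex $\tl w \in \tl V_n$ is an edge $e \in E_n$. They are joined by exactly one edge iff $r(e) = s(e')$. Hence
\[
\tl f^{(n)}_{e',e} = \mathbbm{1}[r(e) = s(e')].
\]
All three items reduce to counting with this formula.

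For (i), fix the row indexed by $e' \in E_{n+1}$. Its row sum is $|\{e \in E_n : r(e) = s(e')\}| = |r^{-1}(s(e'))| = \sum_{w} f^{(n)}_{s(e'),w}$. If $B \in ERS(r_n)$, this equals $r_n$, so $\tl B \in ERS(r_n)$. For columns, fix $e \in E_n$. Its column sum is $|\{e' \in E_{n+1} : s(e') = r(e)\}| = |s^{-1}(r(e))| = \sum_{v \in V_{n+2}} f^{(n+1)}_{v,r(e)}$. Under $ECS(c_n)$ this is $c_{n+1}$. Hence $\tl B \in ECS(c_{n+1})$, which explains the index shift. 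Each column contains finitely or countably many ones, and each row finitely many, so $\tl B$ is again a (generalized) Bratteli diagram.

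For (ii), if $F_n = F$ for all $n$, then the sets $E_n$ are "the same" for every $n$. The canonical enumeration depends only on the ordering of $V_{n+1}$ and on the row structure $|r^{-1}(l)|$, which is independent of $n$. So the bijection $E_n \to \tl V_n$ is the same labeling for every $n$. The indicator $\mathbbm{1}[r(e)=s(e')]$ is then computed from the same data, so $\tl F_n = \tl F$ for all $n$.

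For (iii), the canonical enumeration places the edges of $r^{-1}(l)$ in the consecutive block $\{l r_n, \ldots, l r_n + r_n - 1\}$ of $\Z$, using $ERS(r_n)$ (for negative $l$ as well). Thus an edge labeled $k \in \tl V_n$ has range $\lfloor k/r_n \rfloor$. By the formula above, $\tl f^{(n)}_{k',k} = 1$ iff $\lfloor k/r_n\rfloor = s(e'_{k'})$, where $e'_{k'}$ is the edge labeled $k'$ in $E_{n+1}$. By horizontal stationarity, the source $s(e'_{k'})$ equals $\lfloor k'/r_{n+1}\rfloor + d$, where $d$ depends only on $k' \bmod r_{n+1}$.

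The key step is to show that the Toeplitz condition $\tl f_{k',k} = \tl f_{k'+1,k+1}$ fails. Consider a row $k'$ and its set of ones. This is the full block $\{r_n m, \ldots, r_n m + r_n - 1\}$ with $m = s(e'_{k'})$, so it always begins at a multiple of $r_n$. Shifting $k' \mapsto k'+1$ would, by Toeplitz invariance, require the block of row $k'+1$ to be the shift of that block by one. A block starting at $r_n m + 1$ is not aligned to a multiple of $r_n$. This gives a contradiction as long as $r_n \ge 2$.

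The main obstacle is the degenerate case $r_n = 1$ for all $n$. Then $B$ is essentially a shift map, and the canonical labeling coincides with the vertex labeling, so horizontal stationarity could survive. I expect the statement implicitly requires $r_n \ge 2$ for some $n$ (or that the path space be non-trivial, i.e. no isolated points), and I would add this remark. It then suffices to apply the block-alignment argument at one such level $n$.
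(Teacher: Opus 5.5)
Your proposal is correct. For (i) and (ii) it is the paper's argument; the indicator formula $\tl f^{(n)}_{e',e}=\mathbbm{1}[r(e)=s(e')]$ simply makes the paper's edge counting explicit.

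For (iii) you take a different, though closely related, route.

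\textbf{The paper's argument (columns).} At a level with $r_n\ge 2$ (it writes $r_0\ge 2$), the edges carrying the canonical labels $0$ and $1$ both lie in $r^{-1}(0)$. So columns $0$ and $1$ of $\tl F_0$ coincide. If $\tl f^{(0)}_{i,j}$ depended only on $i-j$, that function would be invariant under a unit shift and hence constant. This contradicts the fact that the columns have finite, nonzero sums, which holds because the matrices of a horizontally stationary diagram are banded.

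\textbf{Your argument (rows).} Using ERS, the canonical labels of $r^{-1}(l)$ form the aligned block $\{lr_n,\dots,lr_n+r_n-1\}$ for every $l\in\Z$, including negative $l$. Hence every row of $\tl F_n$ is exactly one aligned block. A unit diagonal shift of an aligned block of length $r_n\ge 2$ is never aligned.

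\textbf{Comparison.} Your version is purely arithmetic and does not need finiteness of column sums. In exchange, it uses the precise position of every label in the canonical enumeration. The paper's version needs only that two consecutive labels share a range, together with bandedness.

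\textbf{Degenerate case.} Both arguments need just one level with $r_n\ge 2$. Your remark that the claim genuinely fails when $r_n=1$ for all $n$ is correct: then $\tl F_n$ is a shift matrix, which is Toeplitz. The paper excludes this case via its standing assumption that path spaces have no isolated points; it says the diagram would otherwise be degenerate with countable path space.
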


\begin{proof}
    \ref{item:0-1ers} Assume that $B$ have $ERS(r_n)$ property. Then for every $n \in \N_0$ and every $w \in V_{n+1}$ we have $|r^{-1}(w)| = r_n$. Hence, every edge $e \in E_{n+1}$ with $s(e) = w$ will have exactly $r_n$ edges that can be concatenated with it from the level above. Hence, for $\tl B$ we have $|{\tl r}^{-1}(\tl v)| = r_n$ for all $\tl w \in \tl V_n$. Similarly, for all $n \in \N_0$, the number of edges that can be concatenated with the edges of $E_n$ from a level below is $c_{n+1}$. Hence, $\tl B$ has $ECS(c_{n+1})$ property for $n \in \N_0$.

    \ref{item:0-1stat} The statement is obvious since we pick the same way of enumerating $E_n$ on all levels.

    \ref{item:0-1horstat} Without loss of generality, we can assume that there are infinitely many levels $n \in \N_0$ such that $r_n \geq 2$, and in particular, $r_0 \geq 2$. Otherwise, the diagram would be degenerated with a countable path space. Then, there are edges $e_0^{(0)}, e_1^{(0)} \in E_0$ from the set of edges of the diagram $B$ (the lower index corresponds to the enumeration defined for the canonical image under the $0-1$ procedure), both end at the vertex $v_0^{(1)}$ on level $V_1$. The edges $e_0^{(0)}$, $e_1^{(0)}$ correspond to the vertices $\tl v_0^{(0)}$, $\tl v_1^{(0)}$ on level $0$ for $\tl B$. Since the ranges of $e_0^{(0)}$, $e_1^{(0)}$ in $B$ are the same, these edges are connected to the same set of edges on the level $E_1$ below. Thus, the vertices $\tl v_0^{(0)}$, $\tl v_1^{(0)}$ are connected to the same set of vertices in $\tl V_1$ of $\tl B$. In other words, the incidence matrix $\tl F_0$ of $\tl B$ has the property $\tl f^{(0)}_{i,0} = \tl f^{(0)}_{i,1}$ for all $i \in \Z$. Assume that $\tl B$ is horizontally stationary. Then $\tl f^{(0)}_{i,j}$ is a function of $i - j$ (see \cite[Proposition 3.2]{BezuglyiJorgensenKarpelKwiatkowski2025}). Denote this function by $\tl f^{(0)}$. Then $\tl f^{(0)}_{i,0} = \tl f^{(0)}(i - 0) = \tl f^{(0)}_{i,1} = \tl f^{(0)}(i-1)$ for all $i \in \Z$. Thus, the function $\tl f^{(0)}$ is constant on $\Z$ and $\tl f^{(0)}_{i,0} = \tl f^{(0)}_{j,0}$ for all $i, j$. We get a contradiction, since $F_0$ is a non-zero matrix with the finite row and column sums.
\end{proof}

\begin{remark}
    The part \ref{item:0-1ers} of \Cref{prop:0-1properties} holds for an arbitrary image $\tl B$ of $B$, not necessarily a canonical one. Part \ref{item:0-1stat} is true if we pick the same order on $E_n$ for all $n \in \N_0$.
\end{remark}

The following example illustrates \Cref{prop:0-1properties}.

\begin{example}\label{ex:0-1proc}
Let $B = B(F)$ be a two-sided (vertically) stationary and horizontally stationary generalized Bratteli diagram defined by the incidence matrix $F = (f_{i,j})$:
$$
f_{i,j} = 
\left\{
\begin{aligned}
& 1, \mbox{ for } |i - j| = 1,\\
& 0, \mbox{ otherwise. }
\end{aligned}
\right.
$$
Then $B$ has both $ERS(r_n)$ and $ ECS(c_n)$ properties, where $r_n = c_n =2$ for all $n \in \N_0$. Let $\tl B = \tl B(\tl F)$ be the canonical image of $B$ under the $0-1$ procedure. Then $\tl B$ also have $ ERS(r_n)$ and $ ECS(r_n)$ property, where $r_n = 2$ for all $n \in \N_0$ and $\tl B$ is (vertically) stationary. Though $\tl B$ is not horizontally stationary (see \Cref{fig:0-1procex}, we draw only the edges that connect the vertices shown on the picture), we have $\tl f_{i,j}^{(n)} = \tl f_{i+2,j+2}^{(n)}$ for every $n \in \N_0$, $i \in \tl V_{n+1}$ and $j \in \tl V_n$.

\begin{figure}
\centering
\begin{minipage}[c]{0.45\textwidth}
\centering
\begin{tikzpicture}[scale=0.8,
    vertex/.style={circle, draw, fill=black, minimum size=0.2cm, inner sep=0pt},
    every edge/.style={thin, black}]

\node[vertex] (V11) at (1,5) {};
\node[vertex] (V12) at (3,5) {};
\node[vertex] (V13) at (5,5) {};

\node[vertex] (V21) at (1,3) {};
\node[vertex] (V22) at (3,3) {};
\node[vertex] (V23) at (5,3) {};

\node[vertex] (V31) at (1,1) {};
\node[vertex] (V32) at (3,1) {};
\node[vertex] (V33) at (5,1) {};

\draw (V21) -- (V12);
\draw (V22) -- (V11);
\draw (V22) -- (V13);
\draw (V23) -- (V12);

\draw (V31) -- (V22);
\draw (V32) -- (V21);
\draw (V32) -- (V23);
\draw (V33) -- (V22);

\node[draw=none, fill=none] at (1.3,3.4) {$-1$};
\node[draw=none, fill=none] at (2.7,3.4) {$0$};
\node[draw=none, fill=none] at (3.3,3.4) {$1$};
\node[draw=none, fill=none] at (4.7,3.4) {$2$};

\node[draw=none, fill=none] at (1.3,1.4) {$-1$};
\node[draw=none, fill=none] at (2.7,1.4) {$0$};
\node[draw=none, fill=none] at (3.3,1.4) {$1$};
\node[draw=none, fill=none] at (4.7,1.4) {$2$};

\node[draw=none, fill=none] at (0.1,5) {$\ldots$};
\node[draw=none, fill=none] at (0.1,3) {$\ldots$};
\node[draw=none, fill=none] at (0.1,1) {$\ldots$};

\node[draw=none, fill=none] at (5.9,5) {$\ldots$};
\node[draw=none, fill=none] at (5.9,3) {$\ldots$};
\node[draw=none, fill=none] at (5.9,1) {$\ldots$};

\node[draw=none, fill=none] at (1,0.1) {$\vdots$};
\node[draw=none, fill=none] at (3,0.1) {$\vdots$};
\node[draw=none, fill=none] at (5,0.1) {$\vdots$};

\end{tikzpicture}
\end{minipage}
\quad
\begin{minipage}[c]{0.45\textwidth}
\centering
\begin{tikzpicture}[scale=0.8,
    vertex/.style={circle, draw, fill=black, minimum size=0.2cm, inner sep=0pt},
    every edge/.style={thin, black}]

\node[vertex] (V11) at (1,5) {};
\node[vertex] (V12) at (3,5) {};
\node[vertex] (V13) at (5,5) {};
\node[vertex] (V14) at (7,5) {};

\node[vertex] (V21) at (1,3) {};
\node[vertex] (V22) at (3,3) {};
\node[vertex] (V23) at (5,3) {};
\node[vertex] (V24) at (7,3) {};

\node[vertex] (V31) at (1,1) {};
\node[vertex] (V32) at (3,1) {};
\node[vertex] (V33) at (5,1) {};
\node[vertex] (V34) at (7,1) {};

\node[draw=none, fill=none] at (1,5.4) {$-1$};
\node[draw=none, fill=none] at (3,5.4) {$0$};
\node[draw=none, fill=none] at (5,5.4) {$1$};
\node[draw=none, fill=none] at (7,5.4) {$2$};

\node[draw=none, fill=none] at (1,3.4) {$-1$};
\node[draw=none, fill=none] at (3,3.4) {$0$};
\node[draw=none, fill=none] at (5,3.4) {$1$};
\node[draw=none, fill=none] at (7,3.4) {$2$};

\node[draw=none, fill=none] at (1,1.4) {$-1$};
\node[draw=none, fill=none] at (3,1.4) {$0$};
\node[draw=none, fill=none] at (5,1.4) {$1$};
\node[draw=none, fill=none] at (7,1.4) {$2$};

\draw (V21) -- (V12);
\draw (V22) -- (V11);
\draw (V21) -- (V13);
\draw (V24) -- (V12);
\draw (V24) -- (V13);
\draw (V23) -- (V14);

\draw (V31) -- (V22);
\draw (V32) -- (V21);
\draw (V31) -- (V23);
\draw (V34) -- (V22);
\draw (V34) -- (V23);
\draw (V33) -- (V24);

\node[draw=none, fill=none] at (0.1,5) {$\ldots$};
\node[draw=none, fill=none] at (0.1,3) {$\ldots$};
\node[draw=none, fill=none] at (0.1,1) {$\ldots$};

\node[draw=none, fill=none] at (7.9,5) {$\ldots$};
\node[draw=none, fill=none] at (7.9,3) {$\ldots$};
\node[draw=none, fill=none] at (7.9,1) {$\ldots$};

\node[draw=none, fill=none] at (1,0.1) {$\vdots$};
\node[draw=none, fill=none] at (3,0.1) {$\vdots$};
\node[draw=none, fill=none] at (5,0.1) {$\vdots$};
\node[draw=none, fill=none] at (7,0.1) {$\vdots$};

\end{tikzpicture}
\end{minipage}
\caption{Diagrams $B$ (on the left) and $\tilde B$ (on the right) from the \Cref{ex:0-1proc}}
\label{fig:0-1procex}
\end{figure}
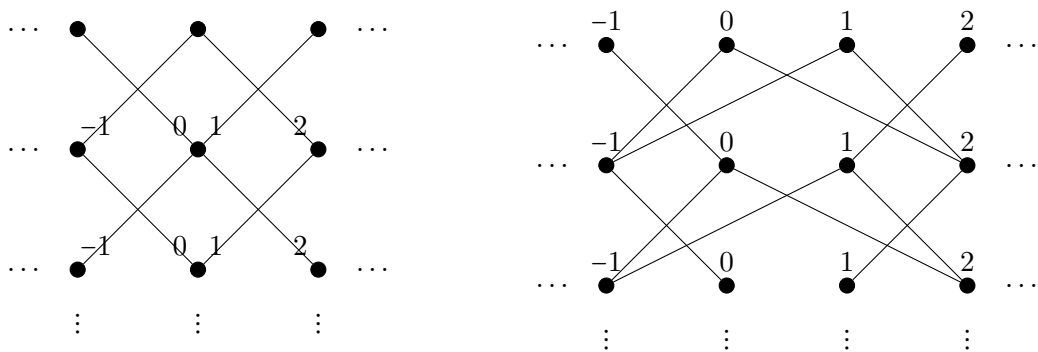

\end{example}

\begin{example}
In this example, we show how the $0-1$ procedure allows us to pass from edge to vertex subdiagrams. Let $(b_n)_{n \in \N_0}$ be a sequence of positive integers and $B = B(F_n)$ be an odometer with $F_n = (b_n)$ for $n \in \N_0$. Let $(a_n)_{n \in \N_0}$ be a sequence of positive integers such that $a_n \leq b_n$ for $n \in \N_0$. Thus $\ov B = \ov B(\ov F_n)$ is an edge subdiagram of $B$ where $\ov F_n = (a_n)$ for $n \in \N_0$. After applying $0-1$ procedure to $B$, we obtain a diagram $\tl B$ with $b_{n+1} \times b_n$ incidence matrices with all entries equal to $1$ and a vertex subdiagram $\widetilde{\ov B}$ with $a_{n+1} \times a_n$ incidence matrices whose all entries are equal to $1$ (see \Cref{Ex:0-1ProcOdometer}). Let $\ov \mu$ and $\tl {\ov \mu}$ be the unique probability invariant measures defined on $\ov B$ and $\tl {\ov B}$, respectfully. Then 
$(a_0 \cdots a_n)^{-1}$ is a measure of a cylinder set of length $n+1$ for $\ov \mu$ and of length $n$ for $\tl{\ov \mu}$. The finiteness of the measure extension from the edge subdiagram $\ov B$ is determined by the convergence of the series \eqref{eq:edge_sbd_mu_ext}:
$$
\ba 
\sum_{n=0}^{\infty} \sum_{v \in V_{n+1}} \sum_{w \in V_{n}} f_{v,w}^{'(n)} H_w^{(n)}\overline{p}_v^{(n+1)} = &\ \sum_{n=0}^{\infty} \frac{(b_n - a_n)b_0 \ \cdots\ b_{n-1}}{a_0 \cdots a_n}\\
=&\  \sum_{n=0}^{\infty} \left(\prod_{i = 0}^n\frac{b_i}{a_i} - \prod_{i = 0}^{n-1}\frac{b_i}{a_i}\right).
\ea
$$
Similarly, finiteness of the measure extension from the vertex subdiagram $\tl{\ov B}$ is determined by convergence of the series \eqref{eq3:three criteria}:
$$
\ba 
\sum_{n=0}^\infty \sum_{v \in W_{n+1}} 
\sum_{w \in W'_n} f^{(n)}_{v,w} H_w^{(n)} \ol p_v^{(n+1)} = &\ \sum_{n=0}^\infty a_{n+1}(b_{n} - a_{n})\frac{b_0\ \cdots\ b_{n-1}}{a_0 \ \cdots \ a_{n+1}}\\
= & \sum_{n=0}^{\infty} \left(\prod_{i = 0}^n\frac{b_i}{a_i} - \prod_{i = 0}^{n-1}\frac{b_i}{a_i}\right).
\ea
$$
Naturally, we obtain the same condition for the finiteness of measure extension. Both series converge if and only if the product $\prod_{i =0}^\infty \frac{b_i}{a_i}$ converges.
\end{example}

\section{Convergence of measures}\label{Sec: conv of measures}

In this section, we consider the convergence of (finite and infinite) Borel measures defined on the path space of a (standard or generalized) Bratteli diagram $B$.

Let $\mu$ be a Borel measure defined on the path space $X_B$ of a (standard or generalized) Bratteli diagram $B$. 
We say that a measure $\mu$ on $X$ \textit{takes finite values on cylinder sets} if for every cylinder set $C$ we have $\mu(C) < \infty$. The following definition of convergence on cylinder sets appears in \cite{Iommi_Veloso_2021} in the context of Markov shifts. Here we apply it for the measures defined on a path space of a (standard or generalized) Bratteli diagram:

\begin{definition} \cite[Definition 3.14]{Iommi_Veloso_2021} \label{def_conv_cyl_set} Let $B$ be a (standard or generalized) Bratteli diagram and $X_B$ be its path space. 
Let $(\mu_n)_{n \in \N}$ and $\mu$ be measures on $X_B$ that take finite values on cylinder sets. Then the sequence $(\mu_n)_{n \in \N}$ \textit{converges on cylinders} to the measure $\mu$ if $$
\underset{n\rightarrow \infty}{\textrm{lim}} \mu_n (C) = \mu (C)
$$ for every cylinder set $C \subset X_B$. We will denote convergence on cylinders by $\mu_n \xrightarrow{[C]} \mu$. \end{definition}

\begin{remark}
In \Cref{sect Approximation}, we introduced the notion of weak convergence of measures on cylinder sets (see \Cref{Def: weak_conv}). The difference between that notion and \Cref{def_conv_cyl_set} is the following. In \Cref{def_conv_cyl_set}, the measures $(\mu_n)_{n \in \N}$ have full support $X$, whereas in \Cref{Def: weak_conv} the measures $(\nu_i)_{i \in \N}$ are supported by an increasing sequence of Borel subsets $(\Omega_i)_{i \in \N}$. 
\end{remark}

\begin{remark}
In \cite[Lemma 3.17]{Iommi_Veloso_2021}, it was proved that 
a sequence of Borel probability measures defined on a phase space of a countable Markov shift converges in weak* topology if and only if it converges on cylinder sets. The same proof works for Borel probability measures defined on a path space of a generalized Bratteli diagram. 
\end{remark}

\begin{prop}\label{prop_cyl_sets_conv_subprob}
Let $X_B$ be a path space of a (standard or generalized) Bratteli diagram.
Let $(\mu_n)_{n \in \N}$ be Borel subprobability measures on $X$ such that 
\[
\lim_{n \rightarrow \infty}\mu_n(C) = \mu(C)
\]
for every cylinder set $C$, where $\mu$ is a probability measure on $X$. Then 
\[
\lim_{n \rightarrow \infty}\mu_n(X) = \mu(X).
\]
\end{prop}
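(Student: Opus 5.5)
The plan is to get the upper bound from subprobability and the lower bound from countable additivity over the partition of $X_B$ into level-zero cylinders. For $v\in V_0$ write $[v]$ for the cylinder set of all infinite paths starting at $v$. The sets $[v]$, $v\in V_0$, are pairwise disjoint cylinder sets whose union is $X_B$, and $V_0$ is countable. Hence for every Borel measure $\nu$ on $X_B$ we have $\nu(X_B)=\sum_{v\in V_0}\nu([v])$.

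The upper bound is immediate. Every $\mu_n$ is a subprobability measure and $\mu$ is a probability measure, so
$$
\limsup_{n\to\infty}\mu_n(X_B)\le 1=\mu(X_B).
$$

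For the lower bound we use that $\mu$ is a probability measure. Fix $\varepsilon>0$ and choose a finite set $F\subset V_0$ with $\sum_{v\in F}\mu([v])>1-\varepsilon$; this is possible because the series $\sum_{v\in V_0}\mu([v])$ converges to $1$. Convergence on the finitely many cylinders $[v]$, $v\in F$, then gives
$$
\liminf_{n\to\infty}\mu_n(X_B)\ \ge\ \liminf_{n\to\infty}\sum_{v\in F}\mu_n([v])=\sum_{v\in F}\mu([v])>1-\varepsilon .
$$
Letting $\varepsilon\to 0$ yields $\liminf_n\mu_n(X_B)\ge 1$. Together with the upper bound this gives $\lim_n\mu_n(X_B)=1=\mu(X_B)$.

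There is no real obstacle here. The only point needing care is that $V_0$ may be infinite, so convergence on each $[v]$ does not directly give convergence of the total mass. The step that prevents mass from escaping to infinity in $V_0$ is the tail truncation to the finite set $F$, combined with the a priori bound $\mu_n(X_B)\le 1$. This is also where the hypothesis that the limit $\mu$ is a probability measure is used.
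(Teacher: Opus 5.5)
Your proof is correct and follows the paper's argument. The upper bound comes from $\mu_n(X_B)\le 1$, and the lower bound comes from a finite union of cylinders carrying $\mu$-mass close to $1$. The paper phrases this as a contradiction along a subsequence with $\mu_{n_k}(X)<c<1$; your direct $\liminf$ version additionally makes explicit why such a finite union exists.

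One small caveat: in a generalized diagram $s^{-1}(v)$ may be infinite. So if cylinder sets are read strictly as $[\bar e]$ for nonempty finite paths, $[v]$ need not be a cylinder and the hypothesis does not directly give $\mu_n([v])\to\mu([v])$. The fix is to partition $X_B$ into the countably many cylinders $[e]$, $e\in E_0$, instead; the rest of your argument is unchanged.
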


\begin{proof}
First, we note that for every finite union of cylinder sets $C_i$
$$
D = \bigcup_{i \in I} C_i, \quad |I| < \infty,
$$ 
we have
\[
\lim_{n \rightarrow \infty}\mu_n(D) = \mu(D).
\]
Now suppose that $\mu_n(X)$ does not converge to $\mu(X) = 1$. 
This means that there exists a subsequence $(n_k)_{k \in \N}$ of positive integers and $c < 1$ such that 
$$
\mu_{n_k}(X) < c
$$
for all $(n_k)_{k \in \N}$. Hence, there exists $\alpha \leq c$ and a subsequence $(n_i)_{i \in \N}$ of $(n_k)_{k \in \N}$ such that
\[
\lim_{n \rightarrow \infty}\mu_{n_i}(X) = \alpha.
\] {Without loss of generality, we can assume that 
\[
\lim_{n \rightarrow \infty}\mu_{n}(X) = c < 1.
\]
and $\mu_n(X) \leq c$ for all $n \in \N$.}

Consider a finite union of cylinder sets $D \subset X$
such that $\mu(D) > c_1 > c$ for some $c_1 < 1$. Then we have 
$$
\mu_n(D) \rightarrow \mu(D)
$$
and
$$
\lim_{n \rightarrow \infty} \mu_n(D) > c_1 > c.
$$
On the other hand, 
$$
\mu_n(D) \leq \mu_n(X) < c.
$$
Thus, we obtain a contradiction. Hence 
$$
\mu_n(X) \rightarrow \mu(X) = 1.
$$
\end{proof}

\begin{remark} 
Note that the proof above holds for any finite measure $\mu$ on $X$ and a sequence of measures $\mu_n$ which converge to $\mu$ on cylinder sets with $\mu_n(X) < \mu(X)$. 
\end{remark}

\Cref{Prop_general_conv_meas_fin} provides a sequence of non-subprobability finite measures $(\mu_n)_{n \in \N}$ for which the statement of \Cref{prop_cyl_sets_conv_subprob} still holds.

\begin{example}\label{Prop_general_conv_meas_fin}
Let $X_B$ be a path space of a (standard or generalized) Bratteli diagram and $\mu$ and $\nu$ be Borel probability measures on $X_B$. For $n \in \N$, we define a sequence of non-subprobability finite measures $\mu_n$ as
$$
\mu_n = \mu + \frac{1}{n} \nu.
$$ 
Then $\mu_n \xrightarrow{[C]} \mu$ and 
$$
\lim_{n \rightarrow \infty} \mu_n(X) = \mu(X) = 1.
$$
\end{example}
\Cref{Prop_general_conv_meas_inf} shows that in general \Cref{prop_cyl_sets_conv_subprob} is not true if the measures $(\mu_n)_{n \in \N}$ are not subprobability measures. In particular, it shows that infinite $\sigma$-finite measures can converge to a probability measure on cylinder sets. 
\begin{example}\label{Prop_general_conv_meas_inf}
Let $B$ be a generalized Bratteli diagram and $X_B$ be its path space.
Let $\mu$ be a probability measure on $X_B$.  
Let $\nu$ be an infinite $\sigma$-finite measure which takes finite values on cylinder sets. Note that such measures can exists only on the path spaces of generalized Bratteli diagrams, not of standard ones. Examples of such tail invariant measures can be found for instance in \cite[Proposition 7.10]{BezuglyiJorgensenKarpelSanadhya2025} and \cite[Theorem 5.5]{BezuglyiKarpelKwiatkowski2024}. For $n \in \N$, we define a sequence of $\sigma$-finite measures $\mu_n$ as
$$
\mu_n = \mu + \frac{1}{n} \nu.
$$
Then $\mu_n \xrightarrow{[C]} \mu$, but $\mu_n(X)$ is infinite for all $n$.
\end{example}
Now we consider a standard Bratteli diagram $B$ (with a compact path space $X_B$) and a non-compact locally compact subspace $X \subset X_B$. We give example of a sequence of finite measures $(\mu_n)_{n \in \N}$ on $X$ that converge to a probability measure $\mu$ on cylinder sets i.e. $\mu_n \xrightarrow{[C]} \mu$, but the values $\mu_n(X)$ tend to infinity as $n$ tends to infinity.

\begin{figure}[h]
\unitlength=1cm
\begin{tikzpicture}[scale=0.9,
    vertex/.style={circle, draw, fill=black, minimum size=0.2cm, inner sep=0pt},
    every edge/.style={thin, black},
    bow/.style={to path={.. controls ++(#1,0.2) and ++(#1,-0.2) .. (\tikztotarget)}}]

\node[vertex, label={[shift={(-0.7,-.5)}]$w_1^{(0)}$}] (V11) at (2,5) {};
\node[vertex, label={[shift={(0.7,-.5)}]$w_2^{(0)}$}] (V12) at (4,5) {};

\node[vertex, label={[shift={(-0.7,-.5)}]$w_1^{(1)}$}] (V21) at (2,3) {};
\node[vertex, label={[shift={(0.7,-.5)}]$w_2^{(1)}$}] (V22) at (4,3) {};

\node[vertex, label={[shift={(-0.7,-.5)}]$w_1^{(2)}$}] (V31) at (2,1) {};
\node[vertex, label={[shift={(0.7,-.5)}]$w_2^{(2)}$}] (V32) at (4,1) {};

\draw (V21) to [out=120, in=240, looseness=0.8] (V11);
\draw (V21) to [out=60, in=300, looseness=0.8] (V11);
\draw (V22) -- (V11);

\draw (V22) to [out=120, in=240, looseness=0.8] (V12);
\draw (V22) to [out=60, in=300, looseness=0.8] (V12);

\draw (V31) to [out=120, in=240, looseness=0.8] (V21);
\draw (V31) to [out=60, in=300, looseness=0.8] (V21);
\draw (V32) -- (V21);

\draw (V32) to [out=120, in=240, looseness=0.8] (V22);
\draw (V32) to [out=60, in=300, looseness=0.8] (V22);

\node[draw=none, fill=none, font=\Large] at (3,0.1) {$\cdots$};


\end{tikzpicture}
\caption{The Bratteli diagram for \Cref{example_measure_convergence_BD_inf}.}
\label{dig_example_9_7}
\end{figure}

\begin{example}\label{example_measure_convergence_BD_inf} Let $B$ be a stationary Bratteli diagram of rank $2$ with the vertex sets $V_n = \{w_1^{(n)}, w_2^{(n)}\}$ for $n \in \N_0$ and the incidence matrix
$$
F = \begin{pmatrix}
2 & 0\\
1 & 2
\end{pmatrix}
$$
(see \Cref{dig_example_9_7}). Let $B_1$ be a vertex subdiagram of $B$ which consists of the paths passing only through the vertices $w_1^{(n)}$, for $n \in \N_0$, and $B_2$ be a vertex subdiagram which consists of the paths passing only through the vertices $w_2^{(n)}$, for $n \in \N_0$.
For every $n \in \mathbb{N}$, let $C_n$ be the union of cylinder sets of length $n$ which pass through the vertex $w_1^{(n-1)}$ and end in the vertex $w_2^{(n)}$. 
In particular, the set $C_1$ consists of one cylinder set which starts at $w_1^{(0)}$ and ends in $w_2^{(1)}$, the set $C_2$ consists of two cylinder sets which pass through the vertices $w_1^{(0)}$, $w_1^{(1)}$ and $w_2^{(2)}$, and the set $C_n$ consists of all paths that pass through the vertices $w_1^{(0)}, \ldots, w_1^{(n-1)}$ and $w_2^{(n)}$.
Let $C_0$ be a cylinder set corresponding to the vertex $w_2^{(0)}$.   Let 
$$
X = \bigsqcup_{n = 0}^{\infty} C_n.
$$
Then $X$ is a locally compact Cantor set and it consists of all the paths in $X_B$ that do not belong to subdiagram $B_1$. Consider subdiagram $B_2$, and let $\mu$ be a unique probability $\mathcal{R}$-invariant measure defined on $X_{B_2} = C_0$. For the cylinder set $[\ov e^{(n)}_2]$ lying in $B_2$ and ending in $w_2^{(n)}$ we have
$$
\mu([\ov e^{(n)}_2]) = \frac{1}{2^{n}}.
$$
We extend the measure $\mu$ by $\mathcal{R}$-invariance to the measure $\wh{\mu}$ on $X$. Then we have
$$
\wh {\mu}|_{C_0} = {\mu}|_{C_0}, \quad
\wh {\mu} (C_n) = 2^{n-1} \mu ([\ov e^{(n)}_2]) = \frac{2^{n-1}}{2^{n}} = \frac{1}{2}
$$
for all $n \in \mathbb{N}$. Thus the extension $\wh {\mu}$ of the measure $\mu$ to the space $X$ by $\mathcal{R}$-invariance gives an infinite $\sigma$-finite measure $\wh {\mu}$ on $X$. 

Now we define a sequence of finite measures $(\mu_n)_{n \in \N}$ on $X$ such that $\mu_n \xrightarrow{[C]} \mu$, i.e. $(\mu_n)_{n \in \N}$ converges to $\mu$ on cylinder sets. Recall that the set $X$ consists of all infinite paths in $X_B$ that eventually pass through the vertices $\{w_2^{(n)}\}_{n = 0}^{\infty}$. By cylinder sets in $X$ we mean the cylinder sets generated by the finite paths which end in the vertices $\{w_2^{(n)}\}_{n = 0}^{\infty}$. These sets generate the induced topology on $X$.
For $n \in \N$, we obtain $\mu_n$ by extending $\mu|_{X_{B_2}}$ only to the finite number of sets $\bigsqcup_{i = 1}^n C_i$ and using coefficients to make the obtained measures converge to $\mu$ on cylinder sets.
Set
$$
{\mu}_n|_{C_0} = {\mu}|_{C_0} = {\wh \mu}|_{C_0}, \quad
\mu_n|_{C_i} = \frac{1}{n + 1 - i} \, {\wh \mu}|_{C_i} \;
\mbox{ for } i = 1, 2 , \ldots n$$
and
$$
 \mu_n|_{X \setminus \left(\bigsqcup_{i = 0}^n C_i\right)} = 0.
$$
Thus we have 
$$
\mu_n(X) = 1 + \frac{1}{n} \wh \mu(C_{1}) + \frac{1}{n-1} \wh \mu(C_{2}) + \ldots + \frac{1}{2} \wh \mu(C_{n-1}) + \wh \mu(C_{n}) = 1 + \frac{1}{2}\sum_{i = 1}^{n} \frac{1}{i}
$$
and $\mu_n(X)$ grows to infinity as $n$ tends to infinity. On the other hand, for every cylinder set $U \subset X \setminus C_0$, there exists $m \in \mathbb{N}$ such that
$$
U \subset \bigsqcup_{i = 1}^m C_i.
$$
We have
$$
\mu_n(U) \leq \mu_n\left(\bigsqcup_{i = 1}^m C_i\right) = \frac{1}{2} \sum_{i = 1}^m  \frac{1}{n + 1 - i} \rightarrow 0
$$
as $n$ tends to infinity.
For every cylinder set $U \subset C_0$ we have $\mu_n(U) = \mu(U)$. Thus, the measures $\mu_n$ converge to the measure $\mu$ on cylinder sets. 
\end{example}

\begin{remark} We can modify the example above to obtain finite measures $\mu_n$ whose values $\mu_n(X)$ tend to any number greater than $1$. Indeed, we can use any convergent series instead of $\sum_{n = 1}^{\infty} \frac{1}{n}$. For any $s > 0$, we can pick a nonnegative series $\sum_{n = 1}^\infty a_n$ with $\sum_{n = 1}^\infty a_n= 2s$ and define the measures $\mu_n$ by setting
$$
\mu_n|_{C_i} = a_n \, {\wh \mu}|_{C_i}.
$$
Thus, we would obtain finite measures $\mu_n$ which converge to $\mu$ on cylinder sets and such that $\lim_{n \rightarrow \infty} \mu_n (X) = 1 + s$.
\end{remark}
\medskip

\textbf{Acknowledgements.} 
The authors are pleased to thank our colleagues and collaborators, especially, R. Curto, J. Kwiatkowski, P. Muhly, W. Polyzou for valuable and stimulating discussions. S.B. and O.K. are also grateful to the Nicolas Copernicus University in Torun for its hospitality and support. S.B. acknowledges the hospitality of AGH University during his visit to Krakow. O.K. acknowledges the hospitality of the University of Iowa during her visits to Iowa City.

This work was partially supported by the Simons Foundation grant (award no. SFI-MPS-T-Institutes-00010825) and from State Treasury funds as part of a task commissioned by the Minister of Science and Higher Education under the project “Organization of the Simons Semesters at the Banach Center - New Energies in 2026-2028” (agreement no. MNiSW/2025/DAP/491).
S.B. is supported by ``AGH World 3M'' program of the AGH University of Krakow. 

O.K. is partially supported by a subsidy from the Polish Ministry of Science and Higher Education for the AGH University of Krakow. 
S.S. would like to thank the support of the Australian Research Council via the grant DP240100472 and Israel Science Foundation via grant No. 1180/22. T.R. is supported by Coordenação de Aperfeiçoamento de Pessoal de Nível Superior – Brasil (CAPES) – Finance Code 001.

\bibliographystyle{alpha}
\bibliography{ReferencesGBD2}

\end{document}